\newtheorem{thm}{Theorem}[section]
\newtheorem{corol}[thm]{Corollary}
\newtheorem{prop}[thm]{Proposition}
\newtheorem{lemma}[thm]{Lemma}
\newtheorem*{ithm}{Theorem}
\newtheorem*{icorol}{Corollary}
\theoremstyle{definition}
\newtheorem{question}{Question}
\newtheorem*{ack}{Acknowledgements}
\theoremstyle{remark}
\newtheorem{rem}[thm]{Remark}
\def\R{\mathbb{R}}  \def\Z{\mathbb{Z}}
\def\N{\mathbb{N}} \def\Q{\mathbb{Q}}
\def\G{\mathit{SL}(2,\mathbb{R})} \renewcommand\P{\mathbb{P}}
\def\F{\mathbb{F}}  
\newcommand{\cB}{\mathcal{B}}\newcommand{\cC}{\mathcal{C}}
\newcommand{\cE}{\mathcal{E}}\newcommand{\cF}{\mathcal{F}}\newcommand{\cH}{\mathcal{H}}
\newcommand{\cM}{\mathcal{M}}\newcommand{\cO}{\mathcal{O}}
\newcommand{\M}{\mathbf{M}}
\def\eps{\varepsilon}
\renewcommand{\setminus}{\smallsetminus}
\renewcommand{\emptyset}{\varnothing}
\renewcommand{\epsilon}{\varepsilon}
\def\SL{\mathit{SL}}
\newcommand{\id}{\mathrm{id}}
\newcommand{\tr}{\operatorname{tr}}
\renewcommand{\Im}{\operatorname{Im}} 
\newcommand{\cl}{\operatorname{cl}}
\newcommand{\Fix}{\operatorname{Fix}}
\begin{document}

\title{Uniformly Hyperbolic Finite-Valued $\SL(2,\R)$-Cocycles}
\author{A.~Avila, J.~Bochi, and J.-C.~Yoccoz}

\date{\today}

\maketitle 

\begin{abstract}
We consider finite families of $\SL(2,\R)$ matrices whose products display uniform exponential growth.
These form open subsets of $(SL(2,\R))^N$, and
we study their components, boundary, and complement.
We also consider the more general situation where the allowed products of matrices satisfy a Markovian rule.
\end{abstract}

\tableofcontents

%
%
%

\section{Introduction}

Let $\pi:E \to X$ be a vector bundle over a compact metric space $X$
and let $f: X \to X$ be a homeomorphism defining a dynamical systems in $X$.
A \emph{linear cocycle} mover $f$ is a vector bundle map $F:E\to E$ which is fibered over $F$.
The most important example occurs when $X$ us a manifold, $f$ is a diffeomorphism,
$E$ is the tangent bundle $TX$, and $F$ is the tangent map $Tf$.
But it is very profitable to consider larger classes of linear cocycles, allowing in particular
to separate the base dynamics from the fiber dynamics.

The most powerful tool
in the study of linear cocycles is Oseledets' Multiplicative Ergodic Theorem; see e.g.~\cite{Arnold_RDS}.
Given a probability measure on $X$ which is invariant
and ergodic under the basic dynamics $f$,
it allows to define Lyapunov exponents and split accordingly the fiber $E_x$
over almost all points of $x$.
In this context, one says that $F$ is hyperbolic if none of the Lyapunov exponents is
equal to zero.

There is a stronger notion of hyperbolicity, called uniform hyperbolicity,
which is of purely topological nature.
One requires that $E$ splits into a continuous direct sum $E^s \oplus E^u$,
with both $E^s$, $E^u$ invariant under $F$,
$E^s$ being contracted under $F$ and $E^u$ contracted under $F^{-1}$
(after suitable choices of norms on $E$).

The easiest non-commutative setting, and one of the most studied, is when
$E = X \times \R^2$ is trivial and $2$-dimensional,
and $F$ comes from a continuous map $A:X \to \SL(2,\R)$.
In this case, one is led to consider the products
\begin{equation}\label{e.cocycle}
A^n(x) :=
\begin{cases}
A(f^{n-1} x) \cdots A(x) &\text{for $n \ge 0$,}\\
A(f^n x)^{-1} \cdots A(f^{-1} x)^{-1} &\text{for $n <0$.}
\end{cases}
\end{equation}
The case where $X$ is a torus and $f$ is an irrational rotation
has attracted a lot of attention in recent years,
in particular in connection with the spectral properties of $1$-d discrete
Schr\"odinger operators with quasiperiodic potential: see for instance
\cite{Bourgain_book}, \cite{Damanik_survey}, \cite{Eliasson_ICMP05}
and references therein.
The values of the spectral parameter (energy)
corresponding to uniform hyperbolicity are those in the resolvent,
and the Lyapunov exponent is the main tool to study the spectrum.

The case where the base dynamics are chaotic is obviously also important.
Starting from the fundamental work of Furstenberg~\cite{Furstenberg_Noncommuting},
control of Lyapunov exponents has been obtained in several more general
settings: see \cite{GuivarchRaugi_84}, \cite{GoldsheidMargulis_89}, 
\cite{BonattiGMViana}, \cite{BonattiViana_ETDS04}.

In this work, we will consider, after \cite{Yoccoz_SL2R},
$\SL(2,\R)$-valued cocycles over chaotic base dynamics from the point of view of uniform hyperbolicity.
More precisely, $N$ will be an integer~$\ge 2$,
and the base $X = \Sigma \subset N^\Z$
will be a transitive subshift of finite type
(also called topological Markov chain),
equipped with the shift map
$\sigma: \Sigma \to \Sigma$.
We will only consider cocycles defines by a map $A: \Sigma \to \SL(2,\R)$
depending only on the letter in position zero.
The parameter space will be therefore the product $(\SL(2,\R))^N$.
The parameters $(A_1, \ldots, A_N)$ which correspond to a uniformly hyperbolic cocycle
form an open set $\cH$ which is the object of our study:
we would like to describe its boundary, its connected components, and its complement.
Roughly speaking, we will see that this goal is attained for the full shift on two symbols,
but that new phenomena appear with at least $3$ symbols such make such a complete
description much more difficult and complicated.

\medskip

Let us now review the contents of the following sections.

Associated to a $\SL(2,\R)$-valued cocycle
$A:X \to \SL(2,\R)$ over a base $f:X \to X$,
we have a fibered map $\bar A: X \to \P^1 \to X \times \P^1$.
The standard cone criterion says that $A$ is uniformly hyperbolic iff
one can find an open interval $I(x) \subset \P^1$ depending continuously on $x$
such that $A(x) I(x)$ is compactly contained in $I(f(x))$ for all $x \in X$.
In our setting, $A$ depends only on the zero coordinate $x_0$ of $x \in \Sigma$
and we would like for $I(x)$ to do the same.
This is in general not possible but nevertheless a result in this direction
exists if one allows several components for $I(x)$,
leading to the notion of multicone.
In the full shift  case the result is as follows:

\begin{ithm}[\ref{t.multicone full}]
A parameter $(A_1, \ldots, A_N)$ is uniformly hyperbolic (over the full shift $N^\Z$)
iff there exists a non-empty open set $M \neq \P^1$
with finitely many components having disjoint closures
which satisfies $A_\alpha M \Subset M$ for $1 \le \alpha \le N$.
\end{ithm}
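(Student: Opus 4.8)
\emph{The plan.} I would prove the two implications separately; the first is short, and essentially all the work is in the second.

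\emph{A multicone forces uniform hyperbolicity.} Let $M$ be as in the statement, with components $M_1,\dots,M_k$. Since $A_\alpha\overline M\subset M$ and $A_\alpha$ is a homeomorphism of $\P^1$, it carries each $\overline{M_j}$, with compact containment, into a single component. Equip each component with its Poincar\'e metric; by the Schwarz--Pick lemma and compactness (there are finitely many pairs $(\alpha,j)$) one gets $\theta<1$ such that every $A_\alpha$ contracts the Poincar\'e metric of $M_j$ into that of the target component by a factor $\le\theta$ on $\overline{M_j}$. Iterating, $A_{\alpha_1}\cdots A_{\alpha_n}\overline{M_j}$ lies in one component and has Poincar\'e diameter $\le\theta^{\,n-1}D$, where $D$ bounds the diameters of the $A_\alpha\overline{M_j}$; since all these sets stay a definite distance from $\partial M$, their spherical diameter is $\le C\theta^{\,n}$ uniformly. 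Thus $A^n(\sigma^{-n}x)\overline{M_j}=A_{x_{-1}}\cdots A_{x_{-n}}\overline{M_j}$ is the image of the fixed arc $\overline{M_j}$ (whose spherical length is bounded below) under a projective map of average derivative $\le C'\theta^{\,n}$; choosing a direction $\xi=[v]$ where the derivative is that small and using that the derivative of the projective action of $g\in\SL(2,\R)$ at $[v]$ is $\|v\|^2/\|gv\|^2$, we get $\|A^n(\sigma^{-n}x)\|\ge c\,\theta^{-n/2}$. Letting $x$ run over $\Sigma$ and setting $\lambda:=\theta^{-1/2}>1$, this says $\min_x\|A^n(x)\|\ge c\lambda^n$ for all $n$, i.e.\ uniform exponential growth of the cocycle, which is equivalent to uniform hyperbolicity (cf.\ \cite{Yoccoz_SL2R}).

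\emph{Uniform hyperbolicity produces a multicone.} Let $E^u\oplus E^s$ be the invariant splitting. Because $A$ depends only on the zeroth coordinate, $E^u(x)$ depends only on the past $(x_{-1},x_{-2},\dots)$ of $x$ and $E^s(x)$ only on the future, with exponential loss of memory; for the \emph{full} shift the past and the future vary independently, so the compact sets $\cU:=E^u(\Sigma)$ and $\cS:=E^s(\Sigma)$ are disjoint --- this is the one place the full-shift hypothesis is used. Moreover $E^u(\sigma x)=A_{x_0}E^u(x)$ and $E^u$ ignores $x_0$, so $A_\alpha\cU\subset\cU$ for every $\alpha$. The multicone will be a neighbourhood of $\cU$, and there are two difficulties: making it invariant with \emph{compact} containment even though a single $A_\alpha$ may momentarily expand near $\cU$, and making it have only \emph{finitely many} components even though $\cU$ may be a Cantor set. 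For the first I would use a variable-width cone field built from the past: with $\mu(x):=\|A(x)v\|$ for a unit $v\in E^u(x)$, uniform hyperbolicity gives $\prod_{j=1}^n\mu(\sigma^{-j}x)=\bigl\|A^n(\sigma^{-n}x)|_{E^u}\bigr\|\ge c\lambda^n$, so the relevant Birkhoff-type sums along the past go to $+\infty$ uniformly; hence, for a sufficiently negative $m_0$ and a small $\eta>0$,
\[
\tau(x):=\sup_{n\ge0}\Bigl(m_0-2\log\bigl\|A^n(\sigma^{-n}x)|_{E^u(\sigma^{-n}x)}\bigr\|+n\eta\Bigr)
\]
is continuous, bounded, depends only on the past of $x$, and satisfies $\tau(\sigma x)-\tau(x)\ge-2\log\mu(x)+\eta$. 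Taking $\theta(x):=e^{\tau(x)}$ (uniformly small, so the second-order errors of the projective maps stay $<\eta$) and $I(x):=\{\xi:\angle(\xi,E^u(x))<\theta(x)\}$, the inequality on $\tau$ is exactly what gives $A(x)I(x)\Subset I(\sigma x)$, while smallness of $\theta$ keeps $\overline{I(x)}$ off $\cS$. For the second difficulty, set $M':=\bigcup_{x\in\Sigma}I(x)$: it is open, satisfies $A_\alpha M'\Subset M'$ (here one uses that $I$ is past-determined, to replace a given $x$ by one with the correct zeroth coordinate), is $\neq\P^1$ (disjoint from $\cS$), and contains a fixed neighbourhood of $\cU$, so each of its components meeting $\cU$ has length bounded below and there are finitely many of them; let $M''$ be their union. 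Using $A_\alpha\cU\subset\cU$ one checks that $A_\alpha$ sends a $\cU$-meeting component of $M'$ into a $\cU$-meeting component, so $A_\alpha M''\Subset M''$; and $M''\supset\cU$, $M''\neq\P^1$, $M''$ has finitely many components. Replacing $M''$ by the interior of its closure (to fuse components whose closures touch) yields the desired multicone $M$.

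\emph{Main obstacle.} The heart of the proof is the second implication, and within it the tension between the possibly fractal set $\cU$ and the finiteness demanded of the multicone. The two points that resolve it are the variable-width cone field --- whose existence rests squarely on the uniform positivity of the Lyapunov exponent along $E^u$ --- and the device of discarding all components of $\bigcup_x I(x)$ that miss $\cU$. Compact containment and disjointness of the closures are routine once these are in hand.
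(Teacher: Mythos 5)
Your proof is correct, and the second implication follows a genuinely different route from the paper's. The easy direction (multicone $\Rightarrow$ uniform hyperbolicity) is essentially the paper's argument: the Poincar\'e metric on a component of $M$ is the Hilbert metric, the strict contraction factor comes from the compact containment $A_\alpha M \Subset M$, and the norm lower bound comes from comparing the contracted Hilbert/Poincar\'e diameter to spherical length. For the converse, the paper does not build a cone field around $E^u$ directly; instead it introduces the \emph{cores} $U$, $S$ — compact sets with finitely many components obtained from $K^u=\cU$, $K^s=\cS$ by filling in the complementary gaps that miss the other set — and then proves (Lemma~\ref{l.core}/\ref{l.core full}) that a small $\epsilon$-neighbourhood of $U$ in the Hilbert metric of $\P^1\setminus A_\alpha S$ becomes strictly invariant after a bounded number $n_0$ of iterates, via a pigeonhole argument on the components; the multicone is then assembled as $\bigcup_{n<n_0} U^n_\alpha(\epsilon_{n+1})$. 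You instead build a variable-width cone field $I(x)$ whose width $\theta(x)=e^{\tau(x)}$ is an explicit Lyapunov function along the past orbit, get $A(x)I(x)\Subset I(\sigma x)$ at every step, take $M'=\bigcup_x I(x)$, and then obtain finiteness by discarding components missing $\cU$ and fusing touching closures. Both resolve the Cantor-set obstruction, but by different devices: the paper's cores turn $\cU$ into a finite union of intervals \emph{before} introducing any dynamics of neighbourhoods; your cone field produces an open invariant set \emph{first} and then prunes it. Your method is more analytic and self-contained but, as you note, is tied to the full shift (you use that $\cU$ and $\cS$ are globally disjoint); the paper's core machinery gives the subshift version (Theorem~\ref{t.multicone sub}) directly and also feeds the rest of the paper — limit cores in \S\ref{s.boundaries}, combinatorial multicones in \S\ref{s.abstract combinatorics} — so the extra structure is not incidental. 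One small caution about your write-up: the claim that the second-order errors of the M\"obius maps stay below $\eta$ because $\theta$ is uniformly small needs $\eta$ chosen first with $\eta<2\log\lambda$, and then $m_0$ pushed down so that $C\theta(x)<\mu(x)^{-2}(e^\eta-1)$ uniformly in $x$ and $\alpha$; this is fine since the $A_\alpha$ are finitely many and $\tau$ is bounded above by $m_0$ plus a constant, but the order of the choices is worth making explicit.
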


There is a similar statement (Theorem~\ref{t.multicone sub})
for general subshifts of finite type.

Section~\ref{s.full 2} is dedicated to the case where $\Sigma$
is the full shift on two symbols.
We have a rather complete understanding of the hyperbolicity locus $\cH$ in this case.
The simplest components of $\cH$ are the $4$ principal components;
they consist of parameters for which the multicone $M$ in
Theorem~\ref{t.multicone full} is connected and are deduced from each other by change of
signs of the matrices.
Next there are the so-called free components of $\cH$ ($8$ of them), consisting
of parameters for which the multicone has two components.
All the other non-principal components of $\cH$ are obtained by taking
the preimage of one of the free components by a diffeomorphism of $(\SL(2,\R))^2$
belonging to the free monoid generated by
$$
F_+(A,B) = (A,AB), \qquad F_-(A,B) = (BA, B) \, .
$$
Moreover, any two distinct components of $\cH$ have disjoint closures,
and any compact set in parameter space meets only finitely many components of $\cH$.
In Subsection~\ref{ss.full2 combin}, the combinatorics
and dynamics of the multicones are described for each component of $\cH$.

Recall that a matrix $A \in \SL(2,\R)$ is said to be
hyperbolic (resp.\ parabolic, resp.\ elliptic)
if $|\tr A|>2$  (resp.\ $|\tr A|=2$, resp.\ $|\tr A|<2$).
Denote by $\cE$ the set of parameters for which there exists a periodic point
$x\in \Sigma$ (of period $k$) such that
$A^k(x)$ is elliptic.
Obviously, $\cE$ is an open set disjoint from $\cH$.
Avila has proved that for a general subshift of finite type, the closure
of $\cE$ is equal to the complement of $\cH$.
When $\Sigma$ is the full shift on two symbols,
we prove the stronger statement that
$\cE$ and $\cH$ have the same boundary, the complement of their union.

The main result of Section~\ref{s.boundaries}
is the following result (for general subshifts of finite type):

\begin{ithm}[\ref{t.general boundary}]
Let $(A_1, \ldots, A_N)$ belong to the boundary of a component of $\cH$.
Then one of the following possibilities hold:
\begin{itemize}
\item There exists a a periodic point $x$ of $\Sigma$, of period $k$,
such that $A^k(x)$ is parabolic;
\item There exist periodic points $x$, $y$ of $\Sigma$,
of respective periods $k$, $\ell$,
an integer $n \ge 0$,
and a point $z \in W^u_\text{loc}(x) \cap \sigma^{-n} W^s_\text{loc}(y)$
such that $A^k(x)$, $A^\ell(y)$ are hyperbolic and
$$
A^n(z) u(A^k(x)) = s(A^\ell(y)) \, .
$$
\end{itemize}
\end{ithm}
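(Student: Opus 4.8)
The plan is to argue by contradiction, in the following sharpened form. Let $\cH_{0}$ be the component in question and let $a=(A_{1},\dots,A_{N})\in\partial\cH_{0}$; assuming that neither of the two alternatives holds for $a$, I will show $a\in\cH$, which is absurd since $\cH$ is open and $\partial\cH_{0}\cap\cH=\emptyset$ (as $\cH_{0}$ is a component of the open set $\cH$). First I fix hyperbolic parameters $a^{(j)}\in\cH_{0}$ with $a^{(j)}\to a$. Because $\cE$ is open and disjoint from $\cH$, the limit $a$ has no elliptic periodic orbit, and since the first alternative fails it has no parabolic one either; hence for every periodic point $x\in\Sigma$ of period $k$ the matrix $A^{k}(x)$ is hyperbolic, with well-defined directions $u(A^{k}(x)),s(A^{k}(x))\in\P^{1}$.

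The core of the argument is a compactness analysis of the invariant multicones provided by Theorem~\ref{t.multicone sub}. I would first establish that a canonical invariant multicone — for instance the maximal one (built from the closures of the forward orbits of the unstable directions $u(A^{k}(x))$ over all periodic points $x$) — varies continuously with the parameter throughout $\cH_{0}$, so that in particular its combinatorial type (the number of components and the way the generators permute and nest them, compatibly with the Markov structure) is constant on $\cH_{0}$. Let $M^{(j)}$ be this multicone for $a^{(j)}$. After passing to a subsequence, the closures of its components converge in the Hausdorff metric to finitely many arcs whose union $\overline{M^{\infty}}$ satisfies $A_{\alpha}\overline{M^{\infty}}\subseteq\overline{M^{\infty}}$ for every admissible letter $\alpha$. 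If each of these inclusions were of the strict form $A_{\alpha}M^{\infty}\Subset M^{\infty}$, then Theorem~\ref{t.multicone sub} would yield $a\in\cH$ and we would be done; so we may assume the limiting multicone is \emph{degenerate}, meaning that some image $A_{\alpha}(\overline{M^{\infty}})$ contains an endpoint $p$ of a component of $\overline{M^{\infty}}$.

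I would then follow the degenerate endpoint. Writing $p=A_{\alpha}(q)$ with $q\in\overline{M^{\infty}}$ and tracing the backward itinerary of $q$ through the graph of admissible words — at each step recording which endpoint is being hit — one of two things happens. Either the itinerary eventually enters a loop of the graph, in which case the return map along the loop is $A^{k}(x)$ for a periodic point $x$: it fixes an endpoint of $\overline{M^{\infty}}$ from the side of the complement while preserving $\overline{M^{\infty}}$, but for a \emph{hyperbolic} matrix the repelling fixed point moves the complement outward, which would make the inclusion strict there; hence $A^{k}(x)$ must be parabolic, contradicting the failure of the first alternative. Or the itinerary never closes up, in which case one invokes the structure of the boundary of the maximal multicone: its endpoints are accumulated by stable directions $s(A^{\ell}(y))$ of periodic points, while its interior points are limits of forward images of unstable directions $u(A^{k}(x))$ of periodic points. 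The degeneracy then says precisely that, along a heteroclinic point $z\in W^{u}_{\mathrm{loc}}(x)\cap\sigma^{-n}W^{s}_{\mathrm{loc}}(y)$ produced by transitivity of $\Sigma$, one has $A^{n}(z)\,u(A^{k}(x))=s(A^{\ell}(y))$; that is the second alternative, again a contradiction. Thus the limiting multicone cannot be degenerate, $a\in\cH$, and the theorem follows.

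The hard part will be the last step: upgrading the soft assertion ``in the limit an unstable direction meets a stable direction'' to the \emph{exact} identity of the second alternative, attached to \emph{genuine} periodic points $x,y$ and a \emph{genuine} heteroclinic intersection $z$, rather than to an approximate coincidence produced by a diagonal argument over the sequence $a^{(j)}$. This forces one to work with a canonical multicone whose dependence on the parameter is robust under Hausdorff limits — the maximal invariant multicone together with its semicontinuity properties is the natural tool — and to prove a precise structural description of its boundary identifying each endpoint with a point in the closure of the orbit of an invariant direction of a periodic orbit, so that the ``first collision'' as $j\to\infty$ is actually attained and not merely approached. Carrying this through demands control of the multicone combinatorics along $\cH_{0}$, which, as emphasized in the introduction, is markedly more intricate for a general subshift of finite type than for the full shift on two symbols; this is where I expect the bulk of the technical work to lie.
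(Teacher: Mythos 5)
Your overall strategy---take a sequence in a component $H$ converging to the boundary point, pass to a Hausdorff limit of invariant objects, and read the two alternatives off the way the limit degenerates---is exactly the paper's. Where you diverge is in \emph{which} invariant object you track and in the lemmas needed to make the limiting argument bite, and this is precisely where the gaps are.

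First, the ``canonical invariant multicone built from closures of forward orbits of unstable directions'' that you propose to track is asserted to vary continuously and to have constant combinatorial type on $H$, but you give no argument for either claim, and an open multicone can degenerate in the Hausdorff limit in ways (components merging, components or gaps shrinking to points) that your dichotomy does not cover. The paper instead tracks the \emph{families of cores} $(U_\alpha,S_\alpha)$ of \S\ref{ss.onlyif}: these are compact, their rank is manifestly constant on a component by continuity, and---this is the key technical input you flagged as ``the hard part''---their boundary points are rigidly constrained. Lemmas~\ref{l.preimage} and~\ref{l.preperiodic} show that every $v\in\partial U_\alpha$ is of the form $A^m(z)\cdot u(A^k(x))$ with $x$ periodic, $z\in W^u_{\mathrm{loc}}(x)$, and $k,m$ bounded by the rank $n_0$ of the cores; the proof is a pigeonhole on the (at most $2n_0$) boundary points of the $U_\alpha$'s, forcing the backward itinerary of any boundary point to close up within $n_0$ steps.

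Second, your dichotomy ``either the backward itinerary enters a loop, in which case the return map must be parabolic; or it never closes up'' is flawed on both horns. The itinerary always closes up (pigeonhole again), and when it does the return map $A^k(x)$ need not be parabolic: an endpoint of the core being $u(A^k(x))$ for a \emph{hyperbolic} return map is the generic situation, valid for every parameter well inside $\cH$, not a pathology of the boundary. What produces the contradiction at the boundary is not the nature of the return map but the failure of disjointness between the limiting unstable and stable cores. If they remain disjoint, Lemma~\ref{l.core} applies and the boundary point would lie in $\cH$, a contradiction. If they touch at some $p$, then $p = A^m(z)\cdot u(A^k(x))$ from the $U$-side and $p = A^{-p'}(w)\cdot s(A^\ell(y))$ from the $S$-side, and splicing $z$ and $w$ into a single heteroclinic point gives alternative (iii), or alternative (ii) if one of the return maps is parabolic. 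The paper organizes this via the finite sets $U^*_\alpha,S^*_\beta$ of~\eqref{e.finite}, which contain the boundaries of the limiting cores \emph{a priori}; it also separates out a case $A^k(x)=\pm\id$ (subsumed in ``parabolic'' in the abbreviated statement), since Lemma~\ref{l.core} requires no bounded-period product to equal $\pm\id$---a hypothesis your argument never verifies.
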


We denote here by $u(A)$ or $u_A$ (resp.\ $s(A)$ or $s_A$)
the unstable (resp.\ stable) direction of a hyperbolic matrix $A$.
(When $A$ is parabolic and $A \neq \pm \id$, we still write
$u_A = s_A$ for the unique invariant direction.)
The second case in the statement of the theorem is called an heteroclinic connection.
The integers $k$, $\ell$, $n$ occurring in Theorem~\ref{t.general boundary}
are actually bounded by a constant depending only on the component
of $\cH$ considered in the statement.
It follows easily that:

\begin{icorol}[\ref{c.semialgebraic}]
Every connected component of $\cH$ is a semialgebraic set.
\end{icorol}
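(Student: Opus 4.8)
The plan is to deduce the corollary from Theorem~\ref{t.general boundary} together with the stated bound on the integers $k,\ell,n$ occurring there. Fix a connected component $\cC$ of $\cH$ and let $K = K(\cC)$ be such a bound. The first step is to observe that, \emph{for fixed combinatorial data}, each of the two alternatives of Theorem~\ref{t.general boundary} cuts out a semialgebraic subset of $(\SL(2,\R))^N$. For the parabolic alternative this is immediate: for a periodic word $w$ of $\Sigma$ of length $\le K$ the matrix $A_w$ obtained by multiplying the corresponding $A_\alpha$'s has entries that are polynomials in the entries of $(A_1,\dots,A_N)$, and ``$A_w$ parabolic'' is the polynomial condition $(\tr A_w)^2 = 4$. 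For the heteroclinic alternative, a periodic point $x$ of period $k\le K$, a periodic point $y$ of period $\ell\le K$, and a point $z\in W^u_{\mathrm{loc}}(x)\cap\sigma^{-n}W^s_{\mathrm{loc}}(y)$ with $n\le K$ are all determined by finite words in $\Sigma$, of which there are only finitely many, and $A_w:=A^k(x)$, $A_{w'}:=A^\ell(y)$, $A_v:=A^n(z)$ again have polynomial entries. The condition ``$A_w$, $A_{w'}$ hyperbolic and $A_v\,u(A_w)=s(A_{w'})$'' is expressed by the first-order formula over the reals: there exist $\lambda,\mu\in\R$ and nonzero $v,v'\in\R^2$ with $A_w v=\lambda v$, $\lambda^2>1$, $A_{w'}v'=\mu v'$, $\mu^2<1$, and $\det[\,A_v v\mid v'\,]=0$; by Tarski--Seidenberg its projection onto parameter space is semialgebraic. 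Taking the union over the finitely many admissible combinatorial configurations produces a semialgebraic set $Z=Z(\cC)\subset(\SL(2,\R))^N$ with $\partial\cC\subseteq Z$.

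The second step is to note that $Z$ is disjoint from $\cH$. A parameter with a periodic point $x$ of period $k$ such that $A^k(x)$ is parabolic is not uniformly hyperbolic, since $(A^k(x))^m$ then grows at most linearly. A parameter admitting a heteroclinic connection is not uniformly hyperbolic either: for a uniformly hyperbolic parameter the invariant splitting $E^s\oplus E^u$ over $\Sigma$ must equal $s(A^k(x))\oplus u(A^k(x))$ at a hyperbolic periodic point $x$, and since $A$ depends only on the zero coordinate, $E^u$ depends only on the past and $E^s$ only on the future; hence for $z\in W^u_{\mathrm{loc}}(x)\cap\sigma^{-n}W^s_{\mathrm{loc}}(y)$ one has $A^n(z)\,u(A^k(x)) = E^u_{\sigma^n z}\neq E^s_{\sigma^n z} = s(A^\ell(y))$, contradicting the connection. (Both facts are contained in the material of the preceding sections.) Thus $Z\cap\cH=\emptyset$, and since $\cC$ is open in $(\SL(2,\R))^N$ it follows that $\cC\cap Z=\emptyset$, i.e.\ $\cC\subseteq U:=(\SL(2,\R))^N\setminus Z$.

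The third step is to recognize $\cC$ as a connected component of $U$. It is open in $U$, being open in the ambient space and contained in $U$; it is closed in $U$, because its frontier $\partial\cC$ is contained in $Z$, hence disjoint from $U$, so that the closure of $\cC$ in $U$ is $\overline{\cC}\cap U=\cC$. A connected subset of $U$ that is both open and closed in $U$ is a connected component of $U$. Finally, $(\SL(2,\R))^N$ is an algebraic subset of $\R^{4N}$ and $Z$ is semialgebraic, so $U$ is semialgebraic, and a semialgebraic set has finitely many connected components, each semialgebraic; therefore $\cC$ is semialgebraic.

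Essentially all the content is already in Theorem~\ref{t.general boundary} and in the bound on its constants, and the remaining argument is routine packaging. The one point that requires a little care in the first step is that the heteroclinic condition, although it refers to the stable and unstable \emph{eigendirections} of hyperbolic matrices (and hence involves eigenvalues given by square roots), is nonetheless semialgebraic --- this is exactly where quantifier elimination is used --- and that it is the uniform bound $K(\cC)$, valid over the whole component, that makes the relevant union of configurations \emph{finite}, and so keeps $Z$ semialgebraic.
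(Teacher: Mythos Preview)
The proposal is correct and follows essentially the same approach as the paper: build a semialgebraic ``bad set'' $Z$ from the finitely many parabolic and heteroclinic configurations bounded by $K$, note it contains $\partial\cC$ and is disjoint from $\cH$, and conclude that $\cC$ is a connected component of the semialgebraic complement (hence semialgebraic, by the theorem from \cite{BochnakCosteRoy} that connected components of a semialgebraic set are semialgebraic). Your treatment is slightly more explicit in justifying $Z\cap\cH=\emptyset$ and in spelling out why $\cC$ is clopen in $U$, but the strategy and the use of Tarski--Seidenberg for the heteroclinic locus are the same as in the paper.
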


In the full-shift case, for parameters on the boundary of non-principal components,
no product of the matrices can be equal to $\pm \id$.
The result we prove in Subsection~\ref{ss.nonprincipal},
together with similar results,
is actually stronger.

In Subsections~\ref{ss.ex connection}--\ref{ss.bifurcation},
we investigate what happens along parameter families
going through an heteroclinic connection.
Starting with a single component of $\cH$ (for the full shift on $3$ symbols),
it may happen that the complement of $\cH \cup \cE$ is locally a smooth hypersurface;
but it may also happen that the boundary of the starting component is
accumulated by a sequence of distinct components of $\cH$.

In Section~\ref{s.abstract combinatorics},
we consider from a purely combinatorial point of view the dynamics on the components
of the multicones for positive and negative iteration:
this leads to the concept of combinatorial multicones and monotone correspondences.
Necessary conditions on these objects
to come from a matrix realization are introduced.
It is shown that these conditions are also sufficient in the case of the full-shift on two symbols.
An example is provided to show that the conditions are no longer sufficient
for full-shifts with more symbols.

Except for the case of the full-shift on two symbols,
many questions are still open and are discussed in Section~\ref{s.questions}.

In Annex~\ref{ss.compactness criterium},
a criterium characterizing relative compactness modulo conjugacy in parameter space is
proved: $\tr A_i$ and $\tr A_i A_j$ have to stay bounded.

\medskip

There is one part of the study of the components of the hyperbolicity locus $\cH$
which is only briefly mentioned in this paper, and deserves further work:
this is the group vs monoid question.
In the full shift case, a parameter $(A_1,\ldots,A_N)$ is hyperbolic if and only if
matrices in the \emph{monoid} generated by $A_1$, \ldots, $A_N$ grow exponentially
with word length.
For certain components of $\cH$, but not all,
it actually implies that the matrices in the (free) group generated by $A_1$, \ldots, $A_N$
grow exponentially with word length.
For instance, for the full-shift on two symbols,
this is true for non-principal components,
but not true for principal components.
In a further paper we plan to characterize which components have this property
for the full-shift on $3$ or more symbols.

\bigskip
\begin{ack}
During the long preparation of this paper, the authors benefited from support from
CNPq (Brazil), CAPES (Brazil), CNRS (France), the Franco-–Brazilian cooperation agreement in Mathematics.
This research was partially conducted
during the period A.A.\  served as a Clay Research Fellow.
J.B.\ is partially supported by a CNPq research grant.
\end{ack}

\section{Multicones}


We recall the following result from~\cite{Yoccoz_SL2R},
that says that uniform exponential growth of the products in \eqref{e.cocycle}
guarantees uniform hyperbolicity:

\begin{prop}\label{p.unif hyp}
If $f:X \to X$ is a homeomorphism of a compact space and $A: X \to \SL(2,\R)$
is a continuous map, then
the cocycle $(T,A)$ is uniformly hyperbolic
iff there exist $c>0$ and $\lambda>1$ such that
$\left\|A^n(x)\right\| \ge c \lambda^n$ for all $x\in \Sigma$, $n \ge 0$.
\end{prop}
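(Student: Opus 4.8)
The ``only if'' direction is immediate: if $(f,A)$ is uniformly hyperbolic then, since $\det A(x)\equiv1$, the bundle $E^u$ is one-dimensional and uniformly expanded, so a unit vector $v\in E^u_x$ gives $\|A^n(x)\|\ge\|A^n(x)v\|\ge c\lambda^n$, the constants being read off from the expansion rate (compactness of $X$ lets one replace the adapted norm by the Euclidean one). So assume the growth condition and let us build the splitting. Note first that the same condition holds for the inverse cocycle, since $\|M^{-1}\|=\|M\|$ for $M\in\SL(2,\R)$, whence $\|A^{-n}(x)\|=\|A^n(f^{-n}x)\|\ge c\lambda^n$.

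\emph{Invariant line fields.} For $M\in\SL(2,\R)$ with $\|M\|$ large write $s(M)\in\P^1$ for its most contracted singular direction. The one elementary fact needed is that, in the sine metric $d$ on $\P^1$, prepending a bounded matrix barely moves $s(M)$ when $\|M\|$ is large: $d\big(s(BM),s(M)\big)\le\|B\|^2\|M\|^{-2}$, because $\|BM\,s(M)\|\le\|B\|\|M\|^{-1}$ while $\|BM\|\ge\|M\|\|B\|^{-1}$, and a unit vector $w$ with $\|Nw\|\le\varepsilon$ and $\|N\|\ge R$ satisfies $d(w,s(N))\le\varepsilon/R$. Taking $B=A(f^nx)$, $M=A^n(x)$ and $\Lambda=\sup_x\|A(x)\|$, we get $\sum_n d\big(s(A^n(x)),s(A^{n+1}(x))\big)\le\Lambda^2c^{-2}\sum_n\lambda^{-2n}<\infty$ uniformly in $x$. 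Hence $E^s_x:=\lim_n s(A^n(x))$ exists uniformly; $E^s$ is a continuous line field with $d\big(E^s_x,s(A^n(x))\big)\le C\lambda^{-2n}$ for a uniform $C$; and a companion estimate (appending $A(x)$ on the right) shows $A(x)\,s(A^n(x))$ stays within $O(\lambda^{-2n})$ of $s(A^{n-1}(fx))$, so $A(x)E^s_x=E^s_{fx}$. Applying the same construction to the inverse cocycle yields a continuous $A$-invariant line field $E^u_x:=\lim_n s(A^{-n}(x))$ with $d\big(E^u_x,s(A^{-n}(x))\big)\le C\lambda^{-2n}$.

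\emph{Hyperbolicity.} It remains to show $E^s_x\ne E^u_x$ for every $x$ — continuity and compactness then give $\angle(E^s_x,E^u_x)\ge\alpha>0$ uniformly — and that $E^s$ is uniformly contracted. Granting the uniform angle, the continuous frame $(e^s_x,e^u_x)$ diagonalizes each $A^n(x)$, so $\|A^n(x)\|\asymp\max\big(\|A^n(x)|_{E^s_x}\|,\|A^n(x)|_{E^u_x}\|\big)$ (constants depending on $\alpha$), while $\det A^n(x)=1$ keeps the product of these two numbers bounded; with $\|A^n(x)\|\ge c\lambda^n$ this forces one of them to be $\gtrsim\lambda^n$ and the other $\lesssim\lambda^{-n}$. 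The set of $x$ for which $\|A^n(x)|_{E^s_x}\|\le\lambda^{-n/2}$ for all large $n$, and its complement, are both closed and $f$-invariant, so it suffices to exclude the possibility that $E^s$ is uniformly expanded somewhere; but on such a set $\|A^n(x)\|$ would be comparable to $\|A^n(x)|_{E^s_x}\|$, contradicting the exact identity $\|A^n(x)|_{E^s_x}\|^2=\cos^2\theta_n\|A^n(x)\|^{-2}+\sin^2\theta_n\|A^n(x)\|^2$ with $|\sin\theta_n|\le C\lambda^{-2n}$ once $n$ is large. This same identity, together with its analogue for $E^u_x$ relative to $s(A^{-n}(x))$, read against the growth hypothesis forwards and backwards, is also how I would establish the pointwise transversality.

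The pointwise transversality $E^s_x\ne E^u_x$ is the real obstacle — granting it, everything above is routine. The bare estimate $d(E^s_x,s(A^n(x)))\le C\lambda^{-2n}$ is strong enough to run this scheme only when the cocycle does not grow much faster than rate $\lambda^2$ (for instance when $\Lambda<\lambda^2$, or, after passing to a power of the cocycle, when its top growth rate is $<2\log\lambda$). In general one has to work harder: either sharpen the telescoped bound using that $\|A^k(x)\|$ can decrease by at most a factor $\Lambda$ per step while remaining $\ge c\lambda^k$ throughout the tail — which improves the estimate precisely where $\|A^n(x)\|$ is large — or first decompose $X$ into the closed invariant loci on which $E^s$ is eventually contracted, resp.\ eventually expanded, and rule out the second. (Alternatively, the ``if'' direction can be deduced from spectral theory of linear cocycles over compact bases, where uniform hyperbolicity is equivalent to positivity of the top Lyapunov exponent of every invariant ergodic measure; Oseledets' theorem then turns the growth hypothesis into an immediate contradiction.)
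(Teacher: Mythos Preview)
The paper does not prove this proposition; it is quoted from \cite{Yoccoz_SL2R} with no argument given. So there is nothing in the paper to compare your attempt against, and I will just comment on the attempt itself.

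Your construction of the invariant line fields $E^s_x=\lim_n s(A^n(x))$ and $E^u_x=\lim_n s(A^{-n}(x))$, together with their continuity and $A$-invariance, is correct and standard. You are also right that once $E^s_x\ne E^u_x$ is known for every $x$, the rest is routine: compactness gives a uniform angle, any $v\ne E^s_x$ is exponentially expanded forward (since eventually $d(v,s(A^n(x)))$ is bounded below), and $\det=1$ then forces $E^s$ to be exponentially contracted.

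The problem is that you never establish the transversality. Your attempt to do so contains a genuine error: the claim that the set $\{x:\|A^n(x)|_{E^s_x}\|\le\lambda^{-n/2}\ \text{for all large }n\}$ and its complement are \emph{both} closed is unjustified (the quantifier ``for all large $n$'' is $\exists N\,\forall n\ge N$, which gives only an $F_\sigma$), and even if it held you would need $X$ connected to draw a conclusion --- while the paper's central application is to subshifts of finite type, which are totally disconnected. The identity you write for $\|A^n(x)|_{E^s_x}\|^2$ is correct, but as you yourself note it only yields contraction when $\|A^n(x)\|$ does not grow faster than $\lambda^{2n}$, which is not assumed. The alternatives listed in your last paragraph are gestures rather than arguments, and the final parenthetical (invoking the equivalence of uniform hyperbolicity with nonvanishing of all Lyapunov exponents) is essentially circular: that equivalence is no easier than the proposition itself.

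In short: the skeleton is right, but the one nontrivial step --- ruling out $E^s_x=E^u_x$ --- is missing. Complete proofs can be found in the cited notes of Yoccoz or, for instance, in Viana's \emph{Lectures on Lyapunov Exponents}.
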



As explained in the Introduction,
we consider a general transitive subshift of finite type $\Sigma \subset N^\Z$,
where $N \ge 2$.
Given $A_1, A_2, \ldots, A_N \in \SL(2,\R)$,
we consider the map
$(x_i)_{i \in \Z}  \in \Sigma \mapsto A_{x_0} \in \SL(2,\R)$.
If the associated cocycle is uniformly hyperbolic then we say that
\emph{the $N$-tuple $(A_1, \ldots, A_N)$ is uniformly hyperbolic with respect to the subshift $\Sigma$.}

If $A \in \G$, we also indicate by $A$ the induced map $\P^1 \to \P^1$,
where $\P^1$ is the projective space of $\R^2$.

Next we describe a geometric condition which is equivalent to uniform hyperbolicity of a $N$-tuple.
Let us begin with full shifts:
\begin{thm}\label{t.multicone full}
An $N$-tuple $(A_1, \ldots, A_N)$ is uniformly hyperbolic w.r.t.~the
full shift $\Sigma = N^\Z$ iff
there exists a nonempty open subset $M \subset\P^1$ with $\overline{M} \neq \P^1$ such that\footnote{$X\Subset Y$ means that the closure $\overline{X}$ of $X$ is contained in the interior of $Y$.}
$A_\alpha(M) \Subset M$ for every $\alpha \in \{1,\ldots,N\}$.
We can take $M$ with finitely many connected components,
and those components with disjoint closures.
\end{thm}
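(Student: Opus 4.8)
\emph{The plan is to prove the two implications separately.}

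\emph{Sufficiency} (the easy direction). Suppose such an $M$ exists and put $K:=\overline M$. Since $A_\alpha(K)\subseteq M$ with $M$ open and $K\ne\P^1$, for small $\delta>0$ the compact set $M^{(\delta)}:=\{v:\operatorname{dist}(v,\P^1\setminus M)\ge\delta\}$ still contains every $A_\alpha(K)$ but has only finitely many components (it is $M$ with a $\delta$-collar removed, and only finitely many components of $M$ have length $>2\delta$), and $A_\alpha(M^{(\delta/2)})\subseteq\operatorname{int}M^{(\delta/2)}$. Replacing $M$ by $\operatorname{int}M^{(\delta/2)}$, I may assume $M=M_1\sqcup\cdots\sqcup M_r$ with the $M_i$ arcs having pairwise disjoint closures and $A_\alpha(\overline{M_i})\Subset M_{\tau_\alpha(i)}$ for a well-defined map $\tau_\alpha$. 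Equipping each arc $M_i$ with its Hilbert (cross-ratio) metric $d_i$, the standard contraction property of the Hilbert metric gives $d_{\tau_\alpha i}(A_\alpha p,A_\alpha q)\le\lambda\,d_i(p,q)$ for a uniform $\lambda<1$; composing along a word $x_0\cdots x_{n-1}$ yields $\diam_{\mathrm{Hilb}}\bigl(A^n(x)\overline{M_1}\bigr)\le C\lambda^n$, hence Euclidean diameter $\le C'\lambda^n$ uniformly in $x$. Since a M\"obius map shrinking a fixed arc $J$ to diameter $\eta$ must have norm $\ge c_J\,\eta^{-1/2}$, taking $J\subseteq M_1$ gives $\|A^n(x)\|\ge c''\lambda^{-n/2}$ for all $x$ and $n$, so Proposition~\ref{p.unif hyp} yields uniform hyperbolicity.

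\emph{Necessity.} Now assume $(A_1,\dots,A_N)$ is uniformly hyperbolic, with continuous invariant splitting $x\mapsto E^s(x)\oplus E^u(x)$ and a uniform angle $\theta_0>0$ between $E^s$ and $E^u$. Because $A$ depends only on $x_0$, the direction $E^u(x)=\lim_n A^n(\sigma^{-n}x)\,v$ depends only on the past $(x_i)_{i<0}$, and $E^s(x)$ only on the future $(x_i)_{i\ge0}$. Put $\cK^u:=\{E^u(x):x\in\Sigma\}$ and $\cK^s:=\{E^s(x):x\in\Sigma\}$; these are compact (continuous images of compacta), $\cK^u$ is forward invariant, $A_\alpha(\cK^u)\subseteq\cK^u$ with $\bigcup_\alpha A_\alpha(\cK^u)=\cK^u$, and symmetrically $A_\alpha^{-1}(\cK^s)\subseteq\cK^s$. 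The one place the full shift enters is the claim $\cK^u\cap\cK^s=\varnothing$: if $E^u(x)=E^s(y)$, then, since any past can be concatenated with any future, the point $z$ with $z_i=x_i$ for $i<0$ and $z_i=y_i$ for $i\ge0$ satisfies $E^u(z)=E^u(x)=E^s(y)=E^s(z)$, contradicting transversality. In particular $\cK^u$ is a compact forward-invariant set with $\cK^u\ne\P^1$.

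\emph{Producing the constant multicone.} It remains to fatten $\cK^u$ into the desired $M$. Uniform hyperbolicity makes the projective action of long products strongly contracting along $E^u$: since $\|A^n(x)\|$ grows exponentially and $E^u(x)$ is the most-expanded direction, there are $n_0$ and $\lambda_1<1$ with $|DA_w(\xi)|\le\lambda_1$ for every word $w$ of length $n_0$ and every $\xi\in\cK^u$. Hence for small $\epsilon>0$ one has $A_w\bigl((\cK^u)_{\le\epsilon}\bigr)\subseteq(\cK^u)_{<\epsilon}$ whenever $|w|=n_0$, while $\overline{(\cK^u)_{\le\epsilon}}\cap\cK^s=\varnothing$. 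Grouping any product into blocks of length $n_0$ shows that the forward orbit of $(\cK^u)_{<\epsilon}$ stabilizes after $n_0$ steps, so
\[ M\;:=\;\bigcup_{0\le j<n_0}\ \bigcup_{|w|=j}\ A_w\bigl((\cK^u)_{<\epsilon}\bigr) \]
is a finite union of open arcs with $\cK^u\subseteq M$, $\overline M\cap\cK^s=\varnothing$ (so $\overline M\ne\P^1$) and $A_\alpha(M)\subseteq M$ for all $\alpha$. Upgrading this to the strict inclusion $A_\alpha(M)\Subset M$, and arranging that the finitely many components have pairwise disjoint closures, is done by replacing $(\cK^u)_{<\epsilon}$ by a suitable slightly-nested family of neighborhoods adapted to the $n_0$-step contraction (single matrices may be projectively expanding, so a round $\epsilon$-ball need not itself be invariant), together with a generic shrinking as in the sufficiency part. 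I expect precisely this last bookkeeping --- promoting forward-invariance of the compact set $\cK^u$ to strict invariance of an open neighborhood --- to be the main technical obstacle; the genuinely new point compared with the $x$-dependent cone criterion is the separation $\cK^u\cap\cK^s=\varnothing$, which is exactly the gluing property of the full shift.
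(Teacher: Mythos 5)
Your sufficiency argument coincides with the paper's (Hilbert‐metric contraction, then the norm estimate and Proposition~\ref{p.unif hyp}); the only slip is starting the contraction estimate from $\overline{M_1}$, which has infinite Hilbert diameter --- start instead from a fixed compact sub‑arc $J\Subset M_1$, as you in fact do one line later.

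For necessity you take a genuinely different route from the paper's. The paper derives Theorem~\ref{t.multicone full} from Theorem~\ref{t.multicone sub}, whose ``only if'' part goes through the families of cores $U_\alpha,S_\alpha$ and Lemma~\ref{l.core} (for full shifts, the cores $U,S$ of \S\ref{sss.full core} and Lemma~\ref{l.core full}); there the Hilbert metric on $V_\alpha=\P^1\setminus A_\alpha S_\alpha$ is used, which is tailored so that each single $A_\beta$ is non‑expanding, making one‑step neighborhoods work. The payoff of that detour is that Lemma~\ref{l.core} is stated purely in terms of properties (i)--(iv) of the sets $U_\alpha,S_\alpha$ plus the ``no $\pm\id$'' condition, \emph{without} assuming hyperbolicity a priori --- generality that is reused later (proof of Theorem~\ref{t.general boundary}, Proposition~\ref{p.limit cores}). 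You instead work directly with $K^u$, $K^s$ and the Euclidean metric, replacing the Hilbert‑metric device by a correct $n_0$‑step uniform bound $|DA_w|\le\lambda_1<1$ on $K^u$ for every $|w|=n_0$. You rightly identify where the full shift is used: once for $K^u\cap K^s=\varnothing$, and once to produce, for each $\xi=e^u(x)\in K^u$ and each word $w$, a point $y$ with the past of $x$ and a future beginning with $w$, so that $A_w=A^{n_0}(y)$ and $\xi=e^u(y)$; for a general subshift this second use fails and the core/Hilbert argument is really needed.

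The step you flag as the ``main technical obstacle'' is fillable exactly as you sketch, so there is no genuine gap. Choose $\epsilon$ small with $(K^u)_{\le\epsilon}\cap K^s=\varnothing$ and $A_w\big((K^u)_{\le\epsilon}\big)\subset (K^u)_{<\lambda_1\epsilon}$ for all $|w|=n_0$; pick $\lambda_1\epsilon<\epsilon_1<\cdots<\epsilon_{n_0}=\epsilon$ and set
$$
M\;=\;\bigcup_{j=0}^{n_0-1}\ \bigcup_{|w|=j}A_w\big((K^u)_{<\epsilon_{j+1}}\big).
$$
For $|w|=j<n_0-1$, $A_\alpha\,\overline{A_w\big((K^u)_{<\epsilon_{j+1}}\big)}\subset A_{\alpha w}\big((K^u)_{\le\epsilon_{j+1}}\big)\subset A_{\alpha w}\big((K^u)_{<\epsilon_{j+2}}\big)\subset M$; for $j=n_0-1$ the word $\alpha w$ has length $n_0$, so $A_\alpha\,\overline{A_w\big((K^u)_{<\epsilon}\big)}\subset (K^u)_{<\lambda_1\epsilon}\subset(K^u)_{<\epsilon_1}\subset M$. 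Hence $A_\alpha M\Subset M$, $M$ is a finite union of arcs with $K^u\subset M$, and $\overline M\cap K^s=\varnothing$ since $A_w^{-1}(K^s)\subset K^s$. The disjoint‑closures requirement then follows from the same shrinking $M\mapsto\operatorname{int}M^{(\delta/2)}$ you use in the sufficiency part, because $M^{(\delta/2)}$ meets each component of $M$ in a closed sub‑arc strictly interior to it.
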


A set $M$ satisfying all the conditions in the theorem is
called a \emph{multicone} for $(A_\alpha)$.

\medskip

Now let $\Sigma$ be any subshift of finite type.
If $\alpha$ and $\beta$ are symbols in the alphabet $\{1,\ldots,N\}$,
we write $\alpha \to \beta$ to indicate that
the symbol $\alpha$ can be followed by the symbol $\beta$.
The generalization of Theorem~\ref{t.multicone full} is:

\begin{thm}\label{t.multicone sub}
An $N$-tuple $(A_1, \ldots, A_N)$ is uniformly hyperbolic w.r.t.~$\Sigma$ iff
there are non-empty open sets $M_\alpha \subset \P^1$,
one for each symbol $\alpha$,
with $\overline{M_\alpha} \neq \P^1$, and such that
$$
\alpha \to \beta \quad \text{implies} \quad A_\beta (M_\alpha) \Subset M_\beta.
$$
We can take each $M_\alpha$ with finitely many connected components,
and those components with disjoint closures.
\end{thm}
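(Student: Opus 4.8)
The plan is to establish the two implications separately.

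\emph{Sufficiency.} Suppose such $M_\alpha$ exist. We may first assume that each $M_\alpha$ has finitely many components: for each $\beta$ the set $C_\beta:=\bigcup_{\gamma\to\beta}A_\beta(\overline{M_\gamma})$ is compact and contained in $M_\beta$, so we may replace $M_\beta$ by a finite union $M'_\beta$ of open sub-arcs of $M_\beta$ that cover $C_\beta$ with $\overline{M'_\beta}\subset M_\beta$; all hypotheses persist. By Proposition~\ref{p.unif hyp} it then suffices to find $c>0$ and $\lambda>1$ with $\|A^n(x)\|\ge c\lambda^n$ for all $x\in\Sigma$, $n\ge 0$. Equip each component of each $M_\alpha$ --- a proper open sub-arc $J\subsetneq\P^1$ --- with its Hilbert metric $d_J$. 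A projective transformation carrying the closure of one proper sub-arc \emph{compactly} inside another is a strict $d$-contraction, and since only finitely many matrices and finitely many arcs occur the factor is bounded by a uniform $\lambda_0<1$; moreover on those finitely many arcs small $d$-diameter forces small spherical diameter, uniformly. For $x\in\Sigma$ the word $x_{-1}\to x_0\to\dots\to x_{n-1}$ is admissible, so $A^n(x)$ carries $M_{x_{-1}}$ into $M_{x_{n-1}}$, contracting the $d$-diameter of the image of each component by $\lambda_0^n$; fixing $v,v'$ in one component of $M_{x_{-1}}$ at a definite spherical angle and bounded $d$-distance, the images $A^n(x)v$, $A^n(x)v'$ lie in a single component of $M_{x_{n-1}}$ at spherical distance $\le C\lambda_0^n$. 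Since $\det A^n(x)=1$ one has $\|A^n(x)v\|\,\|A^n(x)v'\|\,\sin\angle(A^n(x)v,A^n(x)v')=\|v\|\,\|v'\|\,\sin\angle(v,v')$, forcing $\|A^n(x)\|\ge c\,\lambda_0^{-n/2}$; take $\lambda=\lambda_0^{-1/2}$. (Equivalently, one repeats \emph{verbatim} the proof of the full-shift case, Theorem~\ref{t.multicone full}.)

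\emph{Necessity, and the key transitivity argument.} Now assume $(A_1,\dots,A_N)$ is uniformly hyperbolic. Then there are continuous invariant line fields $E^u,E^s\colon\Sigma\to\P^1$ with $\angle(E^u(x),E^s(x))\ge\delta_0>0$ for all $x$; being obtained from the past products $A^n(\sigma^{-n}x)$, resp.\ from the future products $A^{-n}(\sigma^n x)$, applied to a generic direction, $E^u(x)$ depends only on the coordinates $(x_i)_{i\le-1}$ and $E^s(x)$ only on $(x_i)_{i\ge0}$, and $A_{x_0}E^u(x)=E^u(\sigma x)$, similarly for $E^s$. Put $U_\alpha:=\{E^u(x):x_{-1}=\alpha\}$, compact as the image of the clopen cylinder $[x_{-1}=\alpha]$, and note $A_\beta(U_\alpha)\subseteq U_\beta$ whenever $\alpha\to\beta$: given $E^u(x)\in U_\alpha$, alter the future of $x$ so that it begins with $\beta$ (admissible since $\alpha\to\beta$, and leaving $E^u$ unchanged), whence $A_\beta E^u(x)=E^u(\sigma x)\in U_\beta$. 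The multicone $M_\alpha$ will be a neighbourhood of $U_\alpha$, and the first serious point is that $\overline{U_\alpha}\ne\P^1$. Indeed, if it were, pick $z$ with $z_{-1}=\alpha$; then $E^s(z)\in\P^1=\overline{U_\alpha}$, so $E^s(z)=E^u(x)$ for some $x$ with $x_{-1}=\alpha$; let $y$ be the point with the past of $x$ and the future of $z$ --- admissible because $x_{-1}=\alpha=z_{-1}$. Then $E^u(y)=E^u(x)$ and $E^s(y)=E^s(z)$, so $E^u(y)=E^s(y)$, contradicting transversality. The same gluing run along orbits shows, more strongly, that $U_\alpha$ stays a uniform distance $\ge\epsilon_0>0$ from $\{E^s(x):x_{-1}=\alpha\}$.

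\emph{Construction of the multicone --- the main obstacle.} It remains to produce $M_\alpha\supseteq U_\alpha$ with finitely many components having disjoint closures, with $\overline{M_\alpha}\ne\P^1$, and with $A_\beta(\overline{M_\alpha})\Subset M_\beta$ for $\alpha\to\beta$; this I expect to be the hardest part. A round spherical neighbourhood of $U_\alpha$ does not work in general: $A_\beta$ may expand near $U_\alpha$ (a single step of the cocycle need not expand $E^u$), and although the single-step factors $\|A_\beta|_{E^u}\|^{-2}$ have product $<1$ around every genuine periodic orbit --- by uniform hyperbolicity --- they may fail to around a purely combinatorial cycle of the graph, so the radii of round neighbourhoods cannot in general be matched across all transitions. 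One must instead shape the neighbourhood of $U_\alpha$ to the dynamics: from the standard construction of adapted invariant cone fields for hyperbolic $\SL(2,\R)$-cocycles over subshifts one gets a continuous forward-invariant field of finite unions of arcs that depends only on finitely many of the coordinates $x_i$ with $i\le-1$ and keeps uniform distance from the stable directions, and $M_\alpha$ is its coarsening over the cylinder $[x_{-1}=\alpha]$. Then $M_\alpha$ is a finite union of arcs (hence finitely many components; disjoint closures after a harmless shrinking), $\overline{M_\alpha}\ne\P^1$ by the uniform separation from the stable directions established above, and $A_\beta(\overline{M_\alpha})\Subset M_\beta$ for $\alpha\to\beta$ because the field is forward-invariant and $A_\beta(U_\alpha)\subseteq U_\beta$ exactly.
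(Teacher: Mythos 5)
Your ``if'' direction is essentially the paper's own argument: you reduce to finitely many components, run the Hilbert-metric contraction, and convert decay of projective diameter into exponential growth of $\|A^n(x)\|$ via $\det A^n(x)=1$, then invoke Proposition~\ref{p.unif hyp}. That is sound and follows the same route as \S\ref{ss.if} (the paper compares Hilbert and Euclidean metrics rather than using the $\sin\angle$ identity, but this is cosmetic).

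The ``only if'' direction has a genuine gap, precisely at the point you yourself flag as ``the main obstacle.'' You set up $U_\alpha := \{e^u(x): x_{-1}=\alpha\}$ (the paper's $K^u_\alpha$), prove $\overline{U_\alpha}\neq\P^1$ by the gluing argument, and note the forward invariance $A_\beta(U_\alpha)\subset U_\beta$ --- all correct. But you then resolve the obstacle by citing ``the standard construction of adapted invariant cone fields for hyperbolic $\SL(2,\R)$-cocycles over subshifts'' that is claimed to yield a forward-invariant field of \emph{finite} unions of arcs \emph{depending only on finitely many past coordinates}. No such off-the-shelf construction exists. The standard adapted-metric cone-field construction yields a continuous (single-arc) cone field $C(x)$ depending on the full sequence $x$, and it is exactly the passage to a finite-component, finitely-determined cone family which constitutes the content of the theorem. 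Moreover the natural candidate --- a fixed-radius round neighbourhood of $e^u(x)$ --- fails, as you observe, because a single cocycle step may expand near $e^u(x)$; and a simple coarsening $\bigcup_{x_{-1}=\alpha} C(x)$ of a \emph{future-dependent} $C$ need not be forward-invariant. Your proposal, as written, asserts the conclusion of the hard step rather than proving it.

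The paper closes this gap in two moves you skip. First, it passes from the Cantor-like sets $K^u_\alpha$, $K^s_\alpha$ to the \emph{cores} $U_\alpha$, $S_\alpha$ --- compact sets with \emph{finitely many} components, obtained by filling in the complementary gaps of $K^u_\alpha$ not meeting $A_\alpha K^s_\alpha$ (and symmetrically for $S_\alpha$). Second, Lemma~\ref{l.core} performs a quantitative, finite iteration: using the Hilbert metrics on $V_\alpha = \P^1\setminus A_\alpha S_\alpha$, the rank $n_0 = \sum_\alpha k(\alpha)$, and the hypothesis that no admissible periodic word of length $\le n_0$ is $\pm\id$, one shows that the $n_0$-step forward image of an $\epsilon'$-neighbourhood of $U_\alpha$ falls into a strictly smaller $\epsilon''$-neighbourhood; taking $M_\alpha = \bigcup_{n=0}^{n_0-1} U^n_\alpha(\epsilon_{n+1})$ with interpolating radii $\epsilon'' = \epsilon_0 < \cdots < \epsilon_{n_0} = \epsilon'$ then yields the compact containment $A_\beta M_\alpha \Subset M_\beta$. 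Without something of this kind --- a pigeonhole over the rank and a careful choice of nested neighbourhoods --- the construction of the multicone family does not go through.
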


A family of sets $(M_\alpha)$ satisfying all the conditions in the theorem
is called a \emph{family of multicones} for the $N$-tuple $(A_\alpha)$.

For any subshift of finite type $\Sigma \subset N^\Z$,
we can define the \emph{dual subshift} $\Sigma^* \subset N^\Z$ as follows:
if $\alpha \to \beta$ are the allowed transitions for $\Sigma$,
then the allowed transitions for $\Sigma^*$ are $\beta \stackrel{*}{\to} \alpha$.
If $(A_\alpha)$ is a uniformly hyperbolic $N$-tuple w.r.t.~$\Sigma$, with
a family of multicones $(M_\alpha)$, then
the $N$-tuple $(A_\alpha^{-1})$ is uniformly hyperbolic  w.r.t.~$\Sigma^*$,
with family of multicones $(M_\alpha') = (\P^1 \setminus A_\alpha^{-1}(\overline{M_\alpha}))$.

\medskip

Let us see that Theorem~\ref{t.multicone full} is a corollary of
Theorem~\ref{t.multicone sub}:
If $(A_\alpha)$ is uniformly hyperbolic, and $M_\alpha$'s are given by Theorem~\ref{t.multicone sub},
let $M = \bigcup_\alpha M_\alpha$.
Since $A_\alpha \overline{M} \subset \overline{M_\alpha} \neq \P^1$,
we have $\overline{M} \neq \P^1$.
Conversely, given a multicone $M$ we simply take $M_\alpha = M$ for all $\alpha$.

\subsection{Examples}\label{ss.examples}

Let $\Sigma = N^\Z$ be the full shift on $N$ symbols.
If the matrices $A_1$, \ldots, $A_N$ have a common strictly invariant interval,
then by Theorem~\ref{t.multicone full} $(A_1,\ldots,A_N)$ is uniformly hyperbolic.
Consider the set of such $N$-tuples; its connected components
are the \emph{principal components} of the hyperbolic locus $\cH$.
By Proposition~3 from \cite{Yoccoz_SL2R},
such a component must contain some $N$-tuple of the form
$(\pm A_*, \ldots, \pm A_*)$, where $\tr A_* >2$.
Hence there are $2^N$ principal components.

\medskip

Let $\Sigma = 2^\Z$ be the full shift on $2$ symbols.
For any $m \ge 2$, let us show that there is a uniformly hyperbolic pair $(A,B)$
which has a multicone $M$ with $m$ components, but no multicone with $m-1$ components.
Take any hyperbolic matrix $A$.
Choose $u$, $s \in \P^1$ such that
$$
s_A < u \le A^{m-2} u < s < A^{m-1} u < u_A < s_A
$$
(for some cyclical order on the circle $P^1$).
Take a hyperbolic matrix $B$ with $u_B = u$, $s_B = s$.
If the spectral radius of $B$ is large enough,
it is easy to see that $(A,B)$ has a multicone $M$ with $m$ components
containing respectively the points
$u_B$, $A(u_B)$, \ldots, $A^{m-2}(u_B)$, $u_A$.
Figure~\ref{f.multicone 1/4} illustrates the case $m=4$.
\begin{figure}[!hbt]
\begin{center}
\psfrag{1}{{\tiny $BA^3$}}
\psfrag{2}{{\tiny $BA^2$}}
\psfrag{3}[r][r]{{\tiny $ABA^2$}}
\psfrag{4}[r][r]{{\tiny $ABA$}}
\psfrag{5}[r][r]{{\tiny $A^2 BA$}}
\psfrag{6}{{\tiny $A^2 B$}}
\psfrag{7}{{\tiny $A^3 B$}}
\psfrag{A}[r][r]{{\tiny $A$}}
\psfrag{B}[l][l]{{\tiny $B$}}
\includegraphics[height=6.2cm]{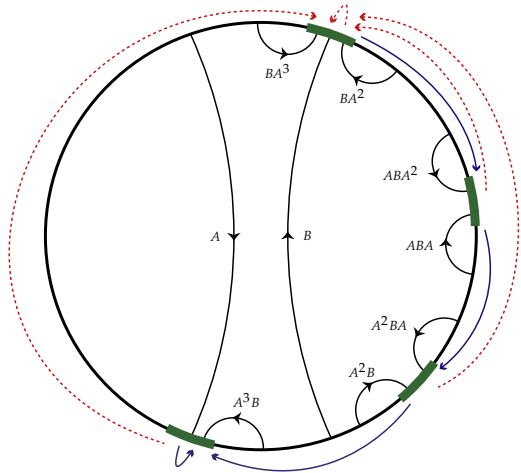}
\caption{{\small Example of a uniformly hyperbolic pair $(A,B)$ and a multicone.
Outer arrows indicate the action of $A$ and $B$ in the components of the multicone.
Inner arrows indicate stable and unstable directions of $A$, $B$, and some of their products.}}
\label{f.multicone 1/4}
\end{center}
\end{figure}

The examples just described do not exhaust the possibilities for the full $2$-shift.
See Figure~\ref{f.multicone 2/5} for a more complicate example.
We postpone the description of this and all other possible examples
for $\Sigma =2^\Z$ to Section~\ref{s.full 2}.
\begin{figure}[!hbt]
\begin{center}
\psfrag{A}[l][l]{{\tiny $A$}}
\psfrag{B}[r][r]{{\tiny $B$}}
\psfrag{0}[c][c]{{\tiny $BABAA$}}
\psfrag{1}[r][r]{{\tiny $BA$}}
\psfrag{2}[r][r]{{\tiny $ABABA$}}
\psfrag{3}[r][r]{{\tiny $AB$}}
\psfrag{4}[l][l]{{\tiny $AABAB$}}
\psfrag{5}[l][l]{{\tiny $AAB$}}
\psfrag{6}[l][l]{{\tiny $ABAAB$}}
\psfrag{7}[l][l]{{\tiny $ABA$}}
\psfrag{8}[l][l]{{\tiny $BAABA$}}
\psfrag{9}[l][l]{{\tiny $BAA$}}
\includegraphics[height=6.2cm]{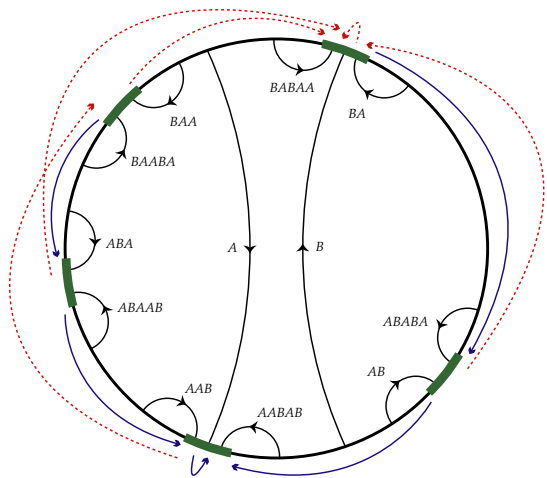}
\caption{{\small Another example of a uniformly hyperbolic pair $(A,B)$.}}
\label{f.multicone 2/5}
\end{center}
\end{figure}

Some examples of uniformly hyperbolic $3$-tuples
are indicated in Figure~\ref{f.multicone ABC}.
\begin{figure}[!hbt]
\begin{center}
\psfrag{1}[r][r]{{\tiny $ACB$}}
\psfrag{2}[r][r]{{\tiny $ABC$}}
\psfrag{3}[l][l]{{\tiny $BAC$}}
\psfrag{4}[l][l]{{\tiny $BCA$}}
\psfrag{5}[l][l]{{\tiny $CBA$}}
\psfrag{6}[l][l]{{\tiny $CAB$}}
\psfrag{A}{{\tiny $A$}}
\psfrag{B}{{\tiny $B$}}
\psfrag{C}{{\tiny $C$}}
\includegraphics[width=4.5cm]{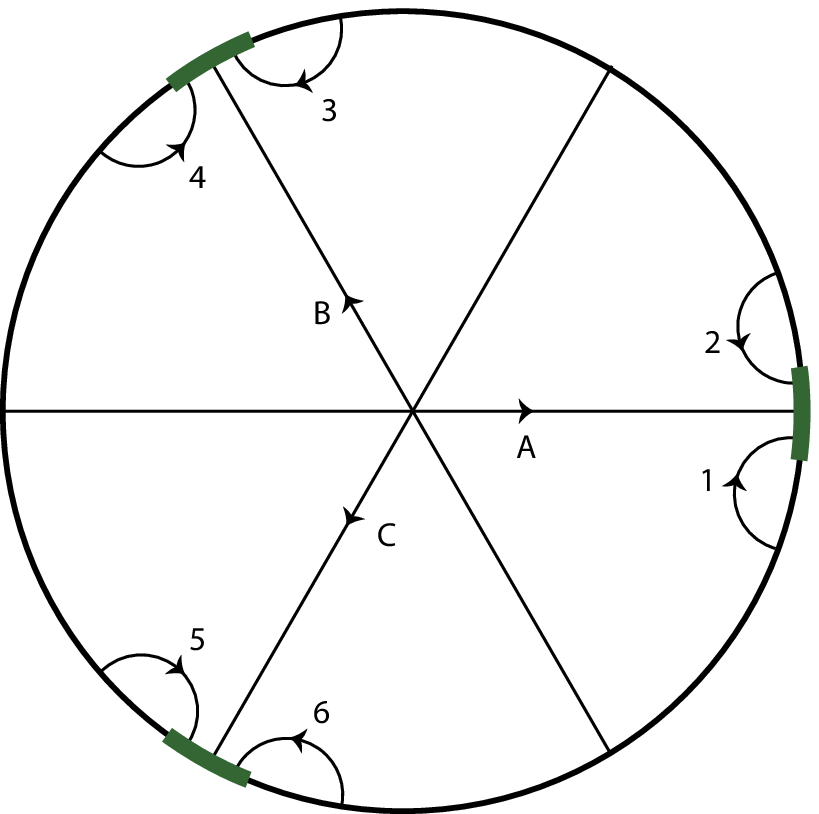} \hspace{2cm}
\psfrag{1}[r][r]{{\tiny $ACB$}}
\psfrag{2}[l][l]{{\tiny $BAC$}}
\psfrag{3}[l][l]{{\tiny $CBA$}}
\psfrag{A}[r][r]{{\tiny $A$}}
\psfrag{B}[c][c]{{\tiny $B$}}
\psfrag{C}[l][l]{{\tiny $C$}}
\includegraphics[width=4.5cm]{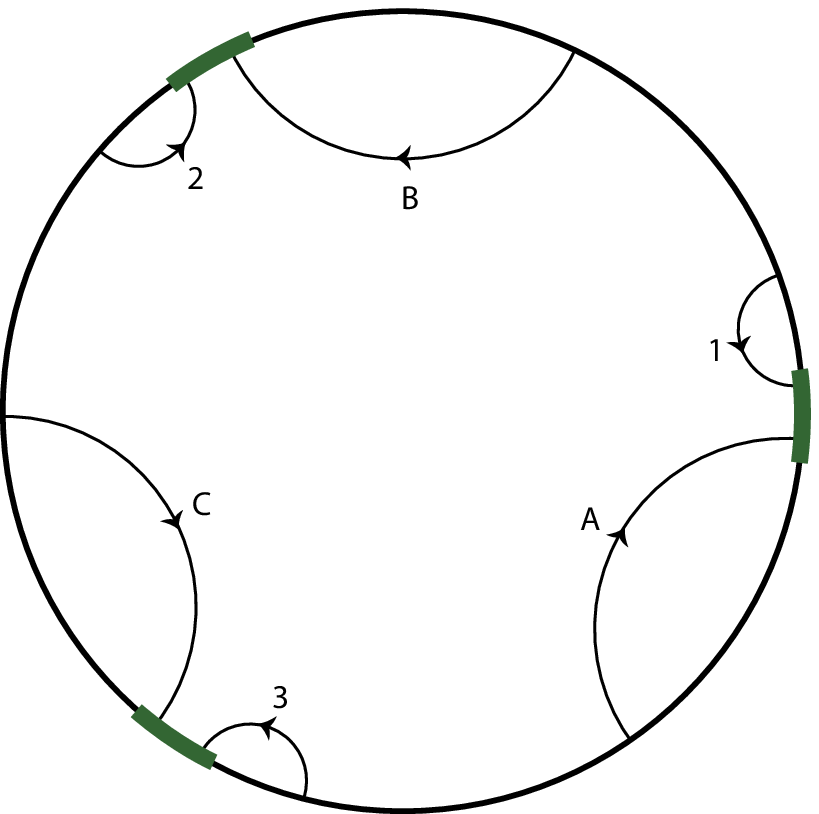}
\caption{{\small Two examples of uniformly hyperbolic $3$-tuples $(A,B,C)$.}}
\label{f.multicone ABC}
\end{center}
\end{figure}

An example illustrating the situation of Theorem~\ref{t.multicone sub},
is indicated in Figure~\ref{f.multicone family}.
(For another example, see \S\ref{ss.ping pong}, specially Fig.~\ref{f.pingpong}.)
\begin{figure}[!hbt]
\begin{center}
\psfrag{A}[r][r]{{\tiny $A_1$}}
\psfrag{B}[c][c]{{\tiny $A_2$}}
\psfrag{C}[l][l]{{\tiny $A_3$}}
\psfrag{1}[l][l]{{\tiny $M_1$}}
\psfrag{2}[r][r]{{\tiny $M_2$}}
\psfrag{3}[c][c]{{\tiny $M_3$}}
\includegraphics[width=4.5cm]{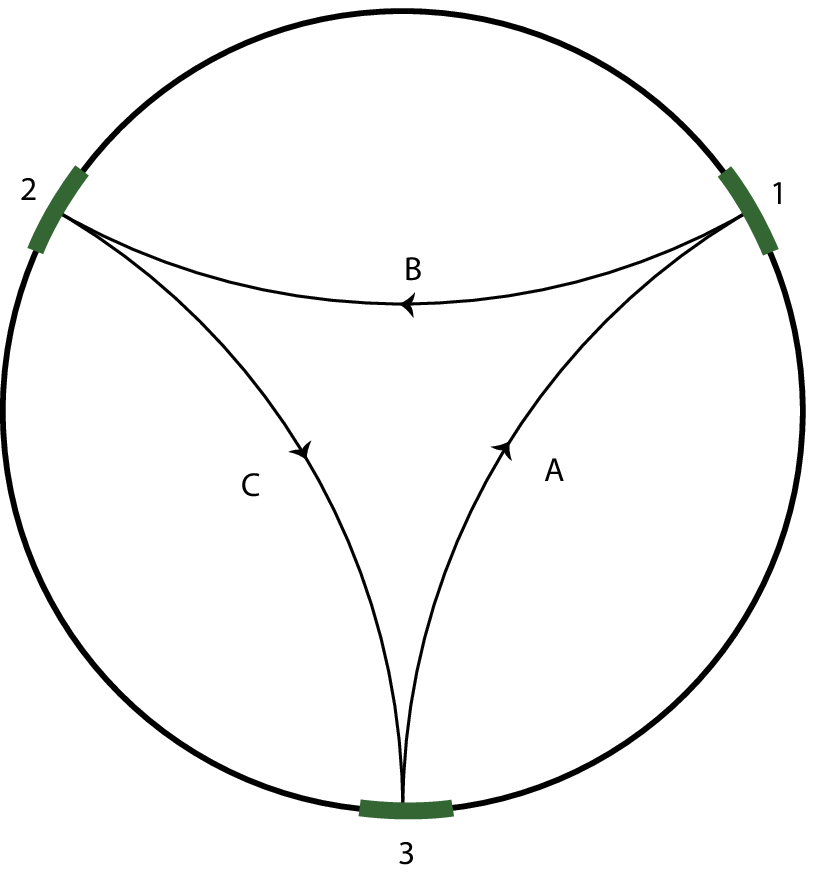}
\caption{{\small An example of a $3$-tuple $(A_1, A_2, A_3)$
that is uniformly hyperbolic with respect to the subshift on the symbols $1$, $2$, $3$
whose only forbidden transitions are $1 \to 2$, $2\to 3$, and $3 \to 1$.
The intervals $M_1$, $M_2$, $M_3$ form a family of multicones.}}
\label{f.multicone family}
\end{center}
\end{figure}


\subsection{Proof of the ``If'' Part of Theorem~\ref{t.multicone sub}}\label{ss.if}

Let us first establish some notation to be used from now on:

Given an ordered basis $\cB = \{v_1, v_2\}$ of $\R^2$,
we define a bijection $P_\cB: \P^1 \to \R \cup \{\infty\}$
by $P_\cB^{-1}(t) = v_1 + t v_2$, $P_\cB^{-1}(\infty) = v_2$.
The map $P_\cB$ is called a \emph{projective chart.}

If $a$, $b$, $c$, $d$ are four distinct points in the extended real line $\R \cup \{\infty\}$
then we define their \emph{cross-ratio}
\begin{equation}\label{e.def cross ratio}
[a,b,c,d] = \frac{c-a}{b-a} \cdot \frac{d-b}{d-c} \in \R  \, .
\end{equation}

If $x$, $y$, $z$, $w$ are distinct points in the circle $\P^1$, we take any
projective chart $P: \P^1 \to \R \cup \{\infty\}$ and
define the cross-ratio $[x,y,z,w] = [P(x),P(y),P(z),P(w)]$.
The definition is good because \eqref{e.def cross ratio}
is invariant under M\"obius transformations.
Of course, for any $A \in \G$ we have $[x,y,z,w] = [A(x),A(y),A(z),A(w)]$.

A set $I \subset \P^1$ is called an \emph{open interval}
if it is non-empty, open, connected, and its complement contains more than one point.
A set $I \subset \P^1$ is called a \emph{closed interval}
if either it consists of one point or is the complement of an open interval.


An open interval $I$ can be endowed with the
\emph{Hilbert metric} $d_I$, defined as follows:
If $a$, $b$ are the endpoints of $I$ then
$$
d_I (x,y) = \big|\log  [a,x,y,b] \big| \quad \text{for all distinct } x, \  y \in I.
$$

Recall the following properties of the Hilbert metric:
If $A \in \G$ satisfies $A(I)=J$ then $A$ takes $d_I$ to $d_J$.
If $J \subsetneqq I$ are open intervals then
the metric of $J$ is greater than the metric of $I$.
If, in addition, $J \Subset I$ then the metric of $J$ is greater than the metric of $I$
by a factor at least $\lambda(I,J)>1$.


\begin{proof}[Proof of the ``if'' part of Theorem~\ref{t.multicone sub}]
For each symbol $\alpha$,
let $d_\alpha$ be the Riemannian metric on $M_\alpha$ which coincides with the
Hilbert metric in each of its components.
Let $K_\alpha$ be the closure of the union of the sets
$A_\alpha M_\gamma$, where $\gamma \to \alpha$.
We can assume that $K_\alpha$ intersects each connected component of $M_\alpha$,
because otherwise we can take a smaller $M_\alpha$.
Let $L_\alpha \Subset M_\alpha$ be an open set
containing $K_\alpha$ and with the same number of connected components as $M_\alpha$.
Then each component $M_{\alpha,i}$ of $M_\alpha$ contains
a unique component $L_{\alpha,i}$ of $L_\alpha$.
Let $\lambda = \min_{\alpha,i} \lambda(M_{\alpha,i}, L_{\alpha,i})$.

Take an admissible sequence of symbols $\alpha_0 \to \alpha_1 \to \cdots \to \alpha_n$,
and let $A = A_{\alpha_n} \cdots A_{\alpha_1}$.
If $u$, $v$ belong to the same component of $M_{\alpha_0}$
then
$$
d_{\alpha_n} (A u, A v) \le \lambda^{-n} d_{\alpha_0} (u,v).
$$
The metrics $d_\alpha | L_\alpha$ are comparable
to the Euclidean metric $d$ on $\P^1$.
So if $u$, $v$ belong to the same component of $L_{\alpha_0}$ we get
$d (A u, A v) \le C \lambda^{-n} d(u,v)$,
where $C>0$ is some constant.
This in turn implies that $\|A\| \ge C^{-1/2} \lambda^{n/2}$.
By Proposition~\ref{p.unif hyp}, we are done.
\end{proof}

\subsection{Proof of the ``Only If'' Part of Theorem~\ref{t.multicone sub}}\label{ss.onlyif}

Assume the cocycle associated to $(A_1, \ldots, A_N)$ is uniformly hyperbolic.
This means that there are continuous functions
$e^s$, $e^u: \Sigma \to \P^1$ and constants $C>0$, $\lambda>1$ such that
for all $x\in \Sigma$:
\begin{alignat*}{3}
A(x) e^s(x)   &= e^s(\sigma x); &\qquad
\|A^n (x) v\| &\le C \lambda^{-n} \|v\| &\quad &\text{for all $v \in e^s(x)$ and $n\ge 0$;} \\
A(x) e^u(x) &= e^u(\sigma x);  &\qquad
\|A^{-n} (x) v \| &\le C \lambda^{-n}\|v\|  &\quad &\text{for all $v \in e^u(x)$ and $n\ge 0$.}
\end{alignat*}
Moreover, $e^s(x)$ and $e^u(x)$ are uniquely determined by those properties,
and $e^u(x) \neq e^s(x)$ for every $x \in \Sigma$.
Thus, for $x = (x_i)_{i\in \Z}$,
$e^u(x)$ depends only on $(\ldots, x_{-2}, x_{-1})$,
while $e^s(x)$ depends only on $(x_0, x_1, \ldots)$.
(That is, $e^u$, resp.~$e^s$,
is constant on local unstable, resp.~stable, manifolds.)

If $\alpha$ is a symbol, we define the following two compact sets:
$$
K^u_\alpha = \{e^u(x) ; \; x_{-1} = \alpha \}, \qquad
K^s_\alpha = \{e^s(x) ; \; x_0 = \alpha \}.
$$
Notice that if $\alpha \to \beta$ then
$K^u_\alpha \cap K^s_\beta = \emptyset$.
Also,
$$
K^u_\beta = \bigcup_{\alpha ; \; \alpha \to \beta} A_\beta K^u_\alpha
\quad\text{and}\quad
K^s_\alpha = \bigcup_{\beta ; \; \alpha \to \beta} A_\alpha^{-1} K^s_\beta \, .
$$
So $K^u_\alpha \cap A_\alpha K^s_\alpha = \emptyset$.

Let us now define two families of sets $U_\alpha$ and $S_\alpha$,
called the \emph{unstable} and \emph{stable families of cores of} $(A_1,\ldots,A_N)$
as follows:
\begin{itemize}
\item $U_\alpha$ is the complement of the union of the connected
components of $\P^1 \setminus K^u_\alpha$ that intersect
$A_\alpha K^s_\alpha$;
\item $S_\alpha$ is the complement of the union of the connected
components of $\P^1 \setminus K^s_\alpha$ that intersect
$A_\alpha^{-1} K^u_\alpha$.
\end{itemize}


It is straightforward to check that the families of cores satisfy the following properties:
\begin{enumerate}
\item   $U_\alpha$, $S_\alpha$ are non-empty compact sets with finitely
many connected components;
\item  $U_\alpha \cap A_\alpha S_\alpha = \emptyset$;
\item every connected component of $\P^1 \setminus A_\alpha S_\alpha$,
resp.~$\P^1 \setminus A_\alpha^{-1} U_\alpha$,
contains a unique connected component of $U_\alpha$, resp.~$S_\alpha$.
\item
${\displaystyle U_\beta \supset \bigcup_{\alpha ; \; \alpha \to \beta} A_\beta U_\alpha}$
and
${\displaystyle S_\alpha \supset \bigcup_{\beta ; \; \alpha \to \beta} A_\alpha^{-1} S_\beta}$.
\end{enumerate}
It follows from these conditions that each $U_\alpha$ has the same number $k(\alpha)$
of connected components as $S_\alpha$.
We define the \emph{rank} of the families as the integer $\sum_\alpha k(\alpha)$.

\begin{lemma}\label{l.core}
Let $(A_1, \ldots, A_N) \in \G^N$.
Assume that there exist two families of sets
$U_\alpha$ and $S_\alpha$ (where $\alpha$ runs on the symbols)
satisfying properties (i)-(iv) above, and with rank $n_0$.
Assume also that for every periodic point $x\in \Sigma$ of period $n \le n_0$,
the corresponding matrix product $A^n(x)$ is not $\pm \id$.
Then $(A_1, \ldots, A_N)$ has a family of multicones $(M_\alpha)$.
Moreover, $U_\alpha \subset M_\alpha \Subset \P^1 \setminus A_\alpha S_\alpha$,
and each connected component of $\P^1 \setminus A_\alpha S_\alpha$ contains
a unique connected component of $M_\alpha$.
\end{lemma}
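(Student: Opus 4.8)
The plan is to produce the multicones $M_\alpha$ by "thickening" the cores $U_\alpha$ slightly, so that on the one hand the sets $A_\beta M_\alpha$ (for $\alpha \to \beta$) are still compactly contained in $M_\beta$, and on the other hand the closures $\overline{M_\alpha}$ remain disjoint from $A_\alpha S_\alpha$ — in particular $\overline{M_\alpha} \ne \P^1$. Property (iv) says that $A_\beta U_\alpha \subset U_\beta$ when $\alpha \to \beta$, but this inclusion need not be \emph{compact}, so the naive choice $M_\alpha = \operatorname{int} U_\alpha$ will not work directly; the role of the non-degeneracy hypothesis (no periodic product equal to $\pm\id$ up to period $n_0$) is precisely to rule out the obstruction to making the thickening work.

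First I would set up the combinatorial bookkeeping: label the connected components of $U_\alpha$ as $U_{\alpha,1},\dots,U_{\alpha,k(\alpha)}$ and likewise $S_{\alpha,j}$, and record the induced \emph{correspondence} — for $\alpha\to\beta$, the matrix $A_\beta$ maps each component $U_{\alpha,i}$ into a uniquely determined component of $U_\beta$ by (iv), and dually on the $S$ side; by property (iii) the components of $U_\beta$ and of $S_\beta$ are "interleaved" along $\P^1$ in a way that pairs them up (this is where $k(\alpha)$ agreeing for $U$ and $S$ comes from). The key point is to analyze when $A_\beta \overline{U_{\alpha,i}}$ can touch the boundary of its target component $U_{\beta,i'}$. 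Tracking such a contact forward and backward along admissible words, and using that each component of $\P^1\setminus A_\alpha S_\alpha$ contains exactly one component of $U_\alpha$ (property (iii)) together with $U_\alpha \cap A_\alpha S_\alpha = \emptyset$ (property (ii)), one shows that a persistent boundary contact forces a cycle in the correspondence graph of length $\le n_0$ along which some product $A^n(x)$ fixes a boundary point of a core component; chasing this further (the contact point would have to be simultaneously in the closure of a $U$-core and related to an $S$-core) forces that periodic product to fix \emph{two} distinct points with the cross-ratio structure degenerating, i.e.\ $A^n(x) = \pm \id$, contradicting the hypothesis.

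Once one knows that for every admissible $\alpha\to\beta$ the inclusion $A_\beta \overline{U_\alpha} \subset \operatorname{int} U_\beta$ actually holds (after possibly passing to the closures of interiors of the cores, or equivalently replacing $U_\alpha$ by $\overline{\operatorname{int} U_\alpha}$ — one should check this replacement preserves (i)--(iv), which is routine), I would choose, for each $\alpha$, an open neighbourhood $M_\alpha$ of $U_\alpha$ with exactly $k(\alpha)$ components, each $M_{\alpha,i}\supset U_{\alpha,i}$, small enough that: (a) $\overline{M_\alpha} \subset \P^1\setminus A_\alpha S_\alpha$, so in particular the components have disjoint closures and $\overline{M_\alpha}\ne\P^1$; and (b) $A_\beta \overline{M_\alpha} \Subset M_\beta$ whenever $\alpha\to\beta$. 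Point (b) is possible because there are finitely many admissible pairs, the contact-free inclusions $A_\beta\overline{U_\alpha}\Subset M_\beta$ are open conditions, and each $A_\beta$ maps components to components respecting the correspondence; a standard finite-intersection/compactness argument then lets us pick the $M_\alpha$ uniformly. This gives exactly a family of multicones as in Theorem~\ref{t.multicone sub}, and the "moreover" clause ($U_\alpha\subset M_\alpha\Subset \P^1\setminus A_\alpha S_\alpha$, one $M_\alpha$-component per $(\P^1\setminus A_\alpha S_\alpha)$-component) is built into the construction via properties (ii)--(iii).

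The main obstacle is the middle step: showing that the hypothesis on periodic products rules out boundary tangencies between $A_\beta \overline{U_\alpha}$ and $\partial U_\beta$. This requires a careful finite combinatorial argument about how the unstable/stable cores sit inside $\P^1$ — propagating a hypothetical tangency around an admissible loop of length $\le n_0$ and squeezing it, via the Hilbert-metric contraction and the interleaving of $U$ and $S$ components, into the statement that the loop's matrix product is $\pm\id$. Everything else (the finiteness and openness arguments assembling the $M_\alpha$, and checking the bookkeeping is preserved under $U_\alpha \rightsquigarrow \overline{\operatorname{int}U_\alpha}$) is routine.
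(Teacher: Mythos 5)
Your middle step -- proving $A_\beta\,\overline{U_\alpha}\subset\operatorname{int}U_\beta$ for all $\alpha\to\beta$ by ruling out boundary contact -- is false, and with it the whole ``static thickening'' strategy collapses. Boundary contact is generic and unavoidable, not an obstruction that the no-$\pm\id$ hypothesis removes. Already in the simplest case (a principal component for the full $2$-shift, with both $A_1,A_2$ hyperbolic), the unstable core $U$ is the interval $[u_{A_1},u_{A_2}]$ and $A_1(U)=[u_{A_1},A_1u_{A_2}]$ shares the endpoint $u_{A_1}$ with $U$; more generally, for cores coming from a genuinely hyperbolic $N$-tuple one has $\partial U_\beta\subset\bigcup_{\alpha\to\beta}A_\beta\,\partial U_\alpha$ (this is exactly Lemma~\ref{l.preimage}), so \emph{every} boundary point of $U_\beta$ is realized as a forward image of a boundary point of some $U_\alpha$. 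Your chain ``persistent boundary contact $\Rightarrow$ some periodic product $A^n(x)$ fixes a boundary point $\Rightarrow$ cross-ratio degenerates $\Rightarrow$ $A^n(x)=\pm\id$'' breaks at the last arrow: a hyperbolic matrix also fixes its two eigendirections, and boundary points of the cores are \emph{precisely} expanding eigendirections of hyperbolic periodic products (Lemma~\ref{l.preperiodic}). So the contact you are chasing produces a hyperbolic loop product, never a contradiction, and no uniform $\epsilon$-enlargement of $U_\alpha$ in the euclidean metric can have the compact-inclusion property.

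The hypothesis on $\pm\id$ does something subtler, and this is where the paper's proof diverges from yours. One works on $V_\alpha:=\P^1\setminus A_\alpha S_\alpha$ with the component-wise \emph{Hilbert metric} $d_\alpha$; the key point is that $U_\alpha$ sits at infinite $d_\alpha$-distance from $\partial V_\alpha$, so $\epsilon$-neighbourhoods $U_{\alpha,i}(\epsilon)$ (taken in $d_\alpha$) are automatically compactly contained in $V_{\alpha,i}$. Property (iv) makes each $A_\beta$ $1$-Lipschitz from $(V_\alpha,d_\alpha)$ to $(V_\beta,d_\beta)$, and the no-$\pm\id$ hypothesis enters to show that if a loop product $B=A^n(x)$ (with $1\le n\le n_0$) satisfies $B\,V_{\alpha,i}\subset V_{\alpha,i}$, then $B$ cannot be elliptic (invariant interval), cannot be $\pm\id$ (hypothesis), and cannot be parabolic (its unique fixed point would lie in the compact $U_{\alpha,i}$, forcing half of $V_{\alpha,i}$ to escape); so $B$ is hyperbolic with $u(B)\in U_{\alpha,i}$ and $s(B)\notin V_{\alpha,i}$, hence it \emph{strictly contracts} $d_\alpha$ on $U_{\alpha,i}(\epsilon)$. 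A compactness argument then produces $\epsilon''<\epsilon'$ with $B\cdot U_{\alpha,i}(\epsilon')\subset U_{\alpha,i}(\epsilon'')$ uniformly, and a pigeonhole on the rank $n_0$ shows the dynamically-defined sets $U^n_\alpha(\epsilon)=\bigcup_{x:\,x_{n-1}=\alpha}A^n(x)\cdot U_{x_{-1}}(\epsilon)$ satisfy $U^{n_0}_\alpha(\epsilon')\subset U_\alpha(\epsilon'')$. The multicone is then the \emph{nested dynamical union} $M_\alpha=\bigcup_{n=0}^{n_0-1}U^n_\alpha(\epsilon_{n+1})$ with $\epsilon''=\epsilon_0<\cdots<\epsilon_{n_0}=\epsilon'$, not a simple $\epsilon$-fattening of $U_\alpha$; the strict inclusions $A_\beta M_\alpha\Subset M_\beta$ come from the shift in indices $U^n\mapsto U^{n+1}$ combined with the strict decrease $\epsilon_{n+1}>\epsilon_n$ and the closure $U^{n_0}\subset U^0(\epsilon'')$. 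In short: your plan treats the $\pm\id$ hypothesis as an obstruction-removal device, whereas its actual role is to guarantee Hilbert-metric contraction along every short loop, which feeds a genuinely dynamical construction of the multicone.
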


Clearly, Lemma~\ref{l.core} implies the ``only if'' part of Theorem~\ref{t.multicone sub}.
The reason why we stated Lemma~\ref{l.core} in this generality is that
it gives a criterion for uniform hyperbolicity which will be useful in some other occasions.

\begin{proof}[Proof of Lemma~\ref{l.core}]
Let $V_\alpha = \P^1 \setminus A_\alpha S_\alpha$.
Write each $V_\alpha$ as a disjoint union of open intervals
$V_{\alpha,1} \sqcup \cdots \sqcup V_{\alpha,k(\alpha)}$,
and write $U_\alpha = U_{\alpha,1} \sqcup \cdots \sqcup U_{\alpha,k(\alpha)}$
with $U_{\alpha,i}=U_\alpha \cap V_{\alpha,i}$.

Define a Riemannian metric $d_\alpha$ on $V_\alpha$ by taking on each component
of $V_\alpha$ the corresponding Hilbert metric.
For $\epsilon>0$, let $U_{\alpha,i}(\epsilon)$ denote an $\epsilon$-neighborhood of $U_{\alpha,i}$
with respect to $d_\alpha$.
Also let $U_\alpha (\epsilon) = \bigcup_{i=1}^{k(\alpha)} U_{\alpha,i}(\epsilon)$.
Notice that if $\alpha \to \beta$ then
$A_\beta \cdot V_\alpha \subset V_\beta$ and hence
$A_\beta \cdot U_\alpha(\epsilon) \subset U_\beta(\epsilon)$.

Let $x \in \Sigma$ be such that $x_{-1} = x_{n-1} = \alpha$ for some $n$ with $1 \le n \le n_0$.
Assume that $A^n(x) \cdot V_{\alpha,i} \subset V_{\alpha,i}$ for some $i$
(or, equivalently, $A^n(x) \cdot U_{\alpha,i} \subset U_{\alpha,i}$).
We claim that then $A^n(x) \cdot U_{\alpha,i}(\epsilon) \Subset U_{\alpha,i}(\epsilon)$,
for any $\epsilon>0$.
Indeed, the matrix $B = A^n(x)$ is not $\pm \id$, by assumption,
nor elliptic, because it leaves the interval $V_{\alpha,i}$ invariant.
Therefore $u(B)$ and $s(B)$ are defined.
We have $u(B) \in U_{\alpha,i}$ 
and $s(B) \notin V_{\alpha,i}$, so $s(B) \notin \overline{U_{\alpha,i}(\epsilon)}$.
Therefore $B$ is hyperbolic and its restriction to $U_{\alpha,i}(\epsilon)$ strictly contracts
the metric $d_\alpha$.
This proves the claim.

\smallskip

From now on fix some arbitrary $\epsilon'>0$.
By compactness, there exists a positive $\epsilon'' < \epsilon'$ such that
if $x \in \Sigma$ and $1 \le n \le n_0$ are such that $x_{-1} = x_{n-1} = \alpha$
and $A^n(x) \cdot V_{\alpha,i} \subset V_{\alpha,i}$ for some $\alpha$ and $i$,
then $A^n(x) \cdot U_{\alpha,i}(\epsilon') \subset U_{\alpha,i}(\epsilon'')$.

For $n\ge 0$, let
$$
U^n_\alpha(\epsilon) = \bigcup_{x \in \Sigma;\; x_{n-1}=\alpha} A^n(x) \cdot U_{x_{-1}}(\epsilon).
$$
Notice that $U^k_\alpha(\delta) \subset U^n_\alpha(\epsilon)$ if
$\delta \le \epsilon$ and $k \ge n$,
and also that $A_\beta U^n_\alpha(\epsilon) \subset U^{n+1}_\beta(\epsilon)$
if $\alpha \to \beta$.

We claim that $U^{n_0}_\alpha(\epsilon') \subset U_\alpha(\epsilon'')$ for any $\alpha$.
Indeed, take $x\in \Sigma$ with $x_{{n_0}-1}=\alpha$ and $v \in U_{x_{-1}}(\epsilon')$.
By the definition of the rank $n_0$, 
there exist
$0 \le k < \ell \le {n_0}$ such that
$x_{k-1} = x_{\ell-1}$ and moreover
$A^k(x) \cdot v$ and $A^\ell(x) \cdot v$ belong to the same
connected component of $U_{x_{k-1}}(\epsilon')$,
say $U_{x_{k-1}, i}(\epsilon')$.
Then
$$
A^\ell(x) \cdot v \in
A^{\ell-k}(\sigma^k x) \cdot U_{x_{k-1}, i}(\epsilon') \subset U_{x_{\ell-1}, i}(\epsilon''),
$$
and so $A^{n_0}(x) \cdot v \in U_\alpha(\epsilon'')$,
proving the claim.

At last, take a sequence
$\epsilon'' = \epsilon_0 < \epsilon_1 < \cdots < \epsilon_{n_0} = \epsilon'$
and let
$$
M_\alpha = \bigcup_{n=0}^{{n_0}-1} U^n_\alpha(\epsilon_{n+1}),
$$
for each $\alpha$.
If $\alpha \to \beta$ then
$$
A_\beta M_\alpha \subset
\bigcup_{n=0}^{{n_0}-1} U^{n+1}_\beta(\epsilon_{n+1}) \subset
\bigcup_{n=0}^{{n_0}-1} U^n_\beta(\epsilon_n) \Subset M_\beta \, .
$$
So the family of sets $M_\alpha$ has the required properties.
\end{proof}

\subsection{The Case of Full Shifts}

Here we will give some additional information about multicones
in the specific case of the full shift $\Sigma = N^\Z$, which interests us most.
In that case, a characterization of uniform hyperbolicity becomes simpler,
involving a single multicone (cf.\ Theorem~\ref{t.multicone full}),
instead of a family of multicones (cf.\ Theorem~\ref{t.multicone sub}).

\subsubsection{Multicones}

Given a uniformly hyperbolic $N$-tuple $(A_1,\ldots,A_N)$,
let $e^u$, $e^s : N^\Z \to \P^1$ be the same maps as in \S\ref{ss.onlyif},
and let $K^u$, $K^s \subset \P^1$ be their respective images.
Notice that these sets are disjoint, $K^u = \bigcup_\alpha A_\alpha(K^u)$,
and $K^s = \bigcup_\alpha A_\alpha^{-1}(K^s)$.

These sets relate with multicones as follows:
If $M$ is any multicone for $(A_1,\ldots, A_N)$ then
$$
K^u = \bigcap_{n=0}^\infty \bigcup_{i_1, \ldots, i_n} A_{i_n} \cdots A_{i_1}(M) \, , \qquad
K^s = \bigcap_{n=0}^\infty \bigcup_{i_1, \ldots, i_n}
(A_{i_n} \cdots A_{i_1})^{-1} \big(\P^1 \setminus \overline {M} \big) \, .
$$
The proof is left to the reader.

Another fact that is worth to mention is:

\begin{prop}\label{p.combin contraction}
Let $M$ be a multicone for a uniformly hyperbolic $N$-tuple $(A_1,\ldots,A_N)$.
Then there exists $k$ such that every product of $A_i$'s of length $\ge k$
sends $M$ into a single connected component of $M$.
\end{prop}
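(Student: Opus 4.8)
The statement says: for a multicone $M$ with connected components $M_1,\dots,M_p$, there is a uniform $k$ such that any word $w=A_{i_k}\cdots A_{i_1}$ of length $\ge k$ satisfies $w(M)\subset M_j$ for a single $j$. The natural approach is to set up a combinatorial ``transition'' structure on the components of $M$ induced by the matrices. For each symbol $\alpha$ and each component $M_i$, the image $A_\alpha(M_i)$ is a connected subset of $M$, hence lies in a single component $M_{\tau(\alpha,i)}$; this is forced by $A_\alpha(M)\Subset M$ together with the fact that the closures of the components of $M$ are pairwise disjoint (so $M$ has no connected subset meeting two components). Thus each $A_\alpha$ induces a well-defined map $\tau_\alpha$ on the finite index set $\{1,\dots,p\}$, and a word $w$ of length $n$ induces the composition $\tau_w=\tau_{i_n}\circ\cdots\circ\tau_{i_1}$. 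The claim ``$w(M)$ lies in a single component'' is then exactly the statement that $\tau_w$ is a \emph{constant} map on $\{1,\dots,p\}$.

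**Key steps.** First I would record the elementary fact above that $A_\alpha$ induces $\tau_\alpha:\{1,\dots,p\}\to\{1,\dots,p\}$, and that $w(M_i)\subset M_{\tau_w(i)}$ for every $i$, so $w(M)\subset\bigcup_{i}M_{\tau_w(i)}$; hence it suffices to show $\#\tau_w(\{1,\dots,p\})=1$ for all sufficiently long $w$. Second, and this is the crux, I would show that the image size $\#\tau_w(\{1,\dots,p\})$ is eventually $1$: the sizes $n\mapsto \min_{|w|=n}\#\tau_w(\{1,\dots,p\})$ are non-increasing (composing with more maps cannot enlarge an image), so the real content is ruling out that the eventual minimum value $r$ is $\ge 2$. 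Here I would use the dynamics: suppose for contradiction that for every word $w$ the image $\tau_w(\{1,\dots,p\})$ has at least two elements. Since the set of possible images is finite, one can find (by a pigeonhole/König-type argument on an infinite admissible one-sided sequence, or by iterating) a word $w_0$ such that $\tau_{w_0}$ already attains the minimal image size $r\ge 2$ and such that concatenating $w_0$ with itself, or with a suitable return word, maps the image set $S=\tau_{w_0}(\{1,\dots,p\})$ \emph{bijectively} onto itself. Restricting attention to $S$, some power of that word acts as the identity permutation of $S$; call the corresponding matrix product $B$. Then $B$ fixes (setwise) each of the $\ge 2$ components $M_i$, $i\in S$ — i.e.\ $B(M_i)\Subset M_i$ for two distinct components. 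But $B(M_i)\Subset M_i$ forces $B$ to be hyperbolic with its unstable fixed point in $M_i$; having two disjoint such invariant intervals would force $B$ to have two distinct attracting fixed points on $\P^1$, which is impossible for an element of $\SL(2,\R)$. This contradiction gives $r=1$.

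**The main obstacle.** The delicate point is the combinatorial extraction: getting from ``every $\tau_w$ has image of size $\ge 2$'' to ``some matrix product $B$ fixes two components $M_i\Subset M_i$''. One must be careful that $\tau_w$ restricted to its eventual image $S$ is not merely a self-map of $S$ but genuinely a bijection of $S$ for an appropriate $w$, and then pass to a power to get the identity permutation — this is a standard finiteness argument (the transformation monoid generated by the $\tau_\alpha$ is finite, so it contains idempotents, and an idempotent of minimal rank restricted to its image is the identity), but it needs to be phrased cleanly. Once $B(M_i)\Subset M_i$ for two distinct $i$ is in hand, the contradiction via the impossibility of two disjoint strictly invariant intervals for a single $\SL(2,\R)$ matrix is immediate. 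Finally, since the minimal image size equals $1$, there is some word $w_1$ with $\tau_{w_1}$ constant; taking $k=|w_1|$ and observing that \emph{any} word of length $\ge k$ can be written with $w_1$ as a suffix — no: rather, that composing any further maps after a constant map keeps it constant, and prepending arbitrary letters before $w_1$ still yields a constant $\tau$ because $\tau_{w\,w_1}=\tau_{w_1}\circ\tau_w$ is constant — one concludes every word of length $\ge k$ induces a constant $\tau$, i.e.\ sends $M$ into a single component. Here one should double-check the order of composition (the paper writes $A=A_{\alpha_n}\cdots A_{\alpha_1}$, so the last letter applied is $A_{\alpha_n}$ and $\tau_w=\tau_{\alpha_n}\circ\cdots\circ\tau_{\alpha_1}$); with this convention a constant final factor $\tau_{\alpha_n}$ would already suffice, so in fact one wants $w_1$ to be a \emph{prefix} and argue $\tau_{w\,w_1}=\tau_w\circ\tau_{w_1}$ is constant since $\tau_{w_1}$ is — this bookkeeping is the only thing that needs care.
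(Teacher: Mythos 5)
Your approach is genuinely different from the paper's: the paper's proof is a short analytic one (by Proposition~\ref{p.unif hyp} every product $B$ of length $\ge k$ has $\|B\|$ large, while $u_B \in K^u\subset M$ and $s_B \in K^s\subset \P^1\setminus\overline M$ are uniformly separated, so $B$ maps $M$ into a small connected neighbourhood of $u_B$), whereas you reduce the problem to the finite transformation monoid generated by the induced maps $\tau_\alpha$ on components. The core of your reduction is sound: $A_\alpha(M_i)\Subset M_{\tau_\alpha(i)}$ because components have pairwise disjoint closures, and a single matrix $B\in\SL(2,\R)$ cannot satisfy $B(M_i)\Subset M_i$ for two disjoint intervals, since that would produce two attracting fixed points on $\P^1$ (and $B=\pm\id$ is automatically excluded since $B(M_i)=M_i$ is not $\Subset M_i$).

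There is, however, a genuine gap in the final step. Your contradiction hypothesis, ``every word $w$ has $\#\Im\tau_w\ge 2$,'' yields only that \emph{some} word $w_1$ induces a constant $\tau_{w_1}$, while the proposition requires that \emph{all} sufficiently long words do. The bridge you offer --- that prepending or appending letters to $w_1$ keeps the composition constant --- does not help, because an arbitrary word of length $\ge|w_1|$ need not contain $w_1$ as a factor at all. The abstract monoid statement you would be invoking is in fact false (on $\{1,2\}$ with generators the identity and the constant map to~$1$, some words are constant yet arbitrarily long words are not); it fails here only because the matrix structure forbids the identity on two components, so that structure has to be invoked at the right place. The clean fix is to contradict instead the hypothesis ``for every $n$ there is a word of length $n$ with $\#\Im\tau_w\ge 2$'': since the rank $\#\Im\tau_w$ is non-increasing under extension of $w$, the set of words of rank $\ge 2$ forms a tree closed under prefixes, and K\"onig's lemma then gives an infinite branch $\omega$ along which your pigeonhole-and-idempotent argument produces the forbidden product $B$. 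The tree is therefore finite, and any $k$ exceeding the length of its longest branch works, giving the required uniformity.
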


\begin{proof}
Fix a multicone $M$ for $(A_1,\ldots,A_N)$.
We have $K^u \subset M$ and $K^s \subset \P^1 \setminus \overline{M}$.
In particular, there is $\eps>0$ such that the $2\eps$-neighborhood of $K^u$ (resp.\ $K^s$)
is contained in $M$ (resp.\ $\P^1 \setminus \overline{M}$).
There is $c = c(\eps) >1$ such that if $B \in \G$ is hyperbolic, the distance between $u_B$ and $s_B$ is
at least $4 \eps$, and $\|B\|>c$ then $B$ sends the complement of the $\eps$-neighborhood of $s_B$ into
the $\eps$-neighborhood of $u_B$.
Let $k$ be such that every product of $A_i$'s of length $\ge k$ has norm at least $c$.
Then we are done.
\end{proof}

\subsubsection{Cores}\label{sss.full core}

As already mentioned, Theorem~\ref{t.multicone full} is a corollary
of Theorem~\ref{t.multicone sub}.
Nevertheless, it is worthwhile to see how the proof in \S\ref{ss.onlyif}
could be simplified.

\smallskip

Given the hyperbolic $N$-tuple $(A_1, \ldots, A_N)$, let $K^u$, $K^s \subset \P^1$ be
as above.
Define other sets $U$ and $S$ as follows:
\begin{itemize}
\item $U$ is the complement of the union of the connected components of $\P^1 \setminus K^u$
that intersect $K^s$;
\item $S$ is the complement of the union of the connected components of $\P^1 \setminus K^s$
that intersect $K^u$.
\end{itemize}
The set $U$, resp.~$S$, is called the \emph{unstable}, resp.~\emph{stable},  \emph{core
of $(A_1,\ldots,A_N)$.}
The following properties are easily checked:
\begin{enumerate}
\item $U$, $S$ are non-empty compact sets with finitely many components;
\item $U$ and $S$ are disjoint, and moreover each connected component of
$\P^1 \setminus S$, resp.\ $\P^1 \setminus U$, contains a unique connected component
of $U$, resp.~$S$.
\item $A_i(U) \subset U$ and $A_i^{-1}(S) \subset S$ for every symbol $i$.
\end{enumerate}
It follows from these conditions that
the sets $U$ and $S$ have the same number of connected components;
call this number the \emph{rank} of the sets.

\begin{rem}
The relation between the cores $U$, $S$ and the families of cores $U_\alpha$, $S_\alpha$ considered before
is simple: $\P^1\setminus U$ is the union of the connected components of
$\P^1 \setminus \bigcup U_\alpha$ that meet $\bigcup S_\alpha$, and analogously for $S$.
In particular, $U$ contains $\bigcup U_\alpha$ and that
$\partial U$ is contained in $\bigcup \partial U_\alpha$.
\end{rem}

The following is a criterium for uniform hyperbolicity
(specific for the the full shift):

\begin{lemma}\label{l.core full}
Let $(A_1, \ldots, A_N) \in \G^N$.
Assume that there exists sets $U$, $S \subset \P^1$ satisfying properties (i)-(iii) above.
Assume also that for every string of $A_i$'s of length less of equal to the rank of the sets,
the product is different from $\pm \id$.
Then $(A_1, \ldots, A_N)$ has a multicone $M$.
Moreover, $U \subset M \Subset \P^1 \setminus S$,
each connected component of $\P^1 \setminus S$ contains a unique connected component of~$M$.
\end{lemma}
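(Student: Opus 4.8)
Lemma~\ref{l.core full} is the full-shift specialization of Lemma~\ref{l.core}, so the obvious strategy is to deduce it by constructing a family of cores $(U_\alpha,S_\alpha)$ from the single pair $(U,S)$ and invoking Lemma~\ref{l.core}. I would set $U_\alpha = U$ and $S_\alpha = S$ for every symbol $\alpha$; that is, use the diagonal family. One then checks properties (i)--(iv) of \S\ref{ss.onlyif}: (i) is immediate since $U$, $S$ are non-empty compact with finitely many components; (iv) follows from property~(iii) of $(U,S)$, because $\bigcup_{\alpha\to\beta} A_\beta U_\alpha = A_\beta U \subset U = U_\beta$, and dually for $S$. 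The subtle points are (ii) and (iii). Property (ii) asks $U_\alpha \cap A_\alpha S_\alpha = \emptyset$, i.e.\ $U \cap A_\alpha S = \emptyset$; since $A_\alpha^{-1}(S)\subset S$ and $U\cap S=\emptyset$ this gives $A_\alpha^{-1}(U)\cap S=\emptyset$, hence $U \cap A_\alpha S = \emptyset$ after applying $A_\alpha$. Property (iii) asks that each component of $\P^1\setminus A_\alpha S$ contains a unique component of $U$ (and dually). This should follow from property~(ii) of $(U,S)$ together with the inclusion $A_\alpha^{-1}(S)\subset S$, which forces $A_\alpha S \subset S$ is false in general—rather one argues that $A_\alpha$ maps the nested structure ``$\P^1\setminus S \supset$ components containing components of $U$'' forward; I'd verify this by a direct component-counting argument using that $A_\alpha$ is a homeomorphism of $\P^1$ sending $S$-complement components to $(A_\alpha S)$-complement components bijectively.

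\medskip

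The other hypothesis of Lemma~\ref{l.core} is that for every periodic point $x$ of period $n$ at most the rank $n_0 = \sum_\alpha k(\alpha)$, the product $A^n(x)$ is not $\pm\id$. With the diagonal family, $k(\alpha)$ equals the rank $r$ of $(U,S)$ for every $\alpha$, so $n_0 = Nr$. But the hypothesis of Lemma~\ref{l.core full} only assumes products of length $\le r$ are not $\pm\id$, which is weaker. So the naive reduction does not quite work, and one must either (a) redo the rank bookkeeping specifically for the full shift, or (b) observe that in the full-shift setting the relevant ``return'' argument inside the proof of Lemma~\ref{l.core} only ever needs returns of length $\le r$. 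Option (b) is cleaner: for the full shift every finite string is admissible, so the pigeonhole step ``there exist $0\le k<\ell\le n_0$ with $x_{k-1}=x_{\ell-1}$ and $A^k(x)v, A^\ell(x)v$ in the same component of the common core neighborhood'' becomes ``among $v, A_{i_1}v, A_{i_2}A_{i_1}v, \ldots$ the orbit must, after at most $r$ steps, revisit a component of $U(\epsilon')$'' because there is a \emph{single} core $U$ with $r$ components and all $A_i$ map $U(\epsilon)$ into $U(\epsilon)$. So I would re-run the proof of Lemma~\ref{l.core} with $n_0$ replaced by $r$, using the single core $U$ (and its $\epsilon$-neighborhoods $U(\epsilon)$) throughout, rather than a family.

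\medskip

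Concretely, the proof I would write: let $V = \P^1\setminus S$ with components $V_1,\ldots,V_r$, put $U_i = U\cap V_i$, equip $V$ with the Hilbert metric on each component, and for $\epsilon>0$ let $U_i(\epsilon)$, $U(\epsilon) = \bigcup_i U_i(\epsilon)$ be the $\epsilon$-neighborhoods. By property (iii) of $(U,S)$, $A_i(V)\subset V$ and hence $A_i(U(\epsilon))\subset U(\epsilon)$ for every symbol $i$. For any string $w$ of $A_i$'s with $A^w(V_i)\subset V_i$, the matrix $B = A^w$ is neither elliptic (it preserves the interval $V_i$) nor $\pm\id$ (by hypothesis, if $|w|\le r$), so $u_B\in U_i$, $s_B\notin V_i$, and $B$ strictly contracts the Hilbert metric on $U_i(\epsilon)$; by compactness, fixing $\epsilon'$ there is $\epsilon''<\epsilon'$ with $A^w(U_i(\epsilon'))\subset U_i(\epsilon'')$ for all such short returning strings. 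Set $U^n(\epsilon) = \bigcup_{|w|=n} A^w(U(\epsilon))$; the pigeonhole/return argument (now with bound $r$, since $U$ has $r$ components and the forward orbit of a point under any string stays in $U(\epsilon')$) gives $U^r(\epsilon')\subset U(\epsilon'')$, and then $M = \bigcup_{n=0}^{r-1} U^n(\epsilon_{n+1})$ for an increasing sequence $\epsilon''=\epsilon_0<\cdots<\epsilon_r=\epsilon'$ satisfies $A_i M \Subset M$ for every $i$. The stated inclusions $U\subset M\Subset \P^1\setminus S$ and the component-uniqueness are read off from the construction exactly as in Lemma~\ref{l.core}.

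\medskip

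\textbf{Main obstacle.} The genuine work is the bookkeeping discrepancy between the rank bound $r$ available here and the bound $Nr$ that a black-box application of Lemma~\ref{l.core} would demand; resolving it requires recognizing that the full-shift structure lets the return argument run with the single core of $r$ components rather than the disjoint union of $N$ copies. Everything else (verifying (i)--(iv) for the diagonal family, the Hilbert-metric contraction, the final neighborhood construction) is routine and parallels \S\ref{ss.onlyif} closely.
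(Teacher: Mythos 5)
The proposal is correct and takes essentially the approach the paper intends: the paper leaves this lemma's proof to the reader as a ``simplification'' of the proof of Lemma~\ref{l.core}, and your re-running of that argument with a single core $U$ of $r$ components (and pigeonhole bound $r$ instead of $n_0=\sum_\alpha k(\alpha)$) is exactly that simplification. Your preliminary observation is also on point: black-boxing Lemma~\ref{l.core} with the diagonal family $U_\alpha = U$, $S_\alpha = S$ would demand the no-$\pm\id$ hypothesis up to length $Nr$ rather than $r$, which is precisely why one must rerun the proof rather than cite the lemma verbatim.
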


The proof of Lemma~\ref{l.core full} is merely a simplification
of the proof of Lemma~\ref{l.core}, and will be left to the reader.
Of course, using Lemma~\ref{l.core full} one can give a direct proof
of the ``only if'' part of Theorem~\ref{t.multicone full}.

\subsubsection{Tightness} \label{sss.tightness}


A multicone $M$ for the $N$-tuple $(A_1, \ldots A_N)$ will be called \emph{tight}
if the following two conditions hold:
\begin{itemize}
\item the set $\bigcup_i A_i(M)$ intersects every connected component of $M$;
\item the set $\bigcup_i A_i^{-1}\big(\P^1 \setminus \overline{M} \big)$
intersects every connected component of \mbox{$\P^1 \setminus \overline{M}$}.
\end{itemize}
(Notice no condition implies the other.)

Tightness has a simple reformulation in terms of the cores:

\begin{prop}\label{p.tight}
A multicone $M$ is tight iff
every connected component of $M$ contains a unique connected component of $U$ and
every connected component of $\P^1 \setminus \overline{M}$ contains a unique connected component of~$S$.
\end{prop}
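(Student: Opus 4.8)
The plan is to exploit the characterization of the cores $U$, $S$ as the ``shrunk'' and ``grown'' versions of the multicone $M$, namely that $\P^1 \setminus U$ is the union of those connected components of $\P^1 \setminus \bigcup U_\alpha$ meeting $\bigcup S_\alpha$ (and its analogue for $S$), together with the fact recorded just before Proposition~\ref{p.combin contraction} that
\[
K^u = \bigcap_{n\ge 0} \bigcup_{i_1,\ldots,i_n} A_{i_n}\cdots A_{i_1}(M), \qquad
K^s = \bigcap_{n\ge 0} \bigcup_{i_1,\ldots,i_n} (A_{i_n}\cdots A_{i_1})^{-1}(\P^1 \setminus \overline M).
\]
First I would observe that, since $A_i(M) \Subset M$, the sets $M^{(n)} := \bigcup_{i_1,\ldots,i_n} A_{i_n}\cdots A_{i_1}(M)$ form a decreasing sequence of open sets whose intersection is $K^u$; by compactness of $\P^1$ the number of connected components of $M^{(n)}$ is eventually constant, and a connected-component count shows that each component of $M$ contains the same number of components of $M^{(n)}$ for large $n$ precisely when the contraction is ``combinatorially onto'' in the appropriate sense. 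Concretely, a component $M_j$ of $M$ contains a unique component of $U$ if and only if $K^u \cap M_j$ is ``connected in $M_j$'', i.e.\ lies in a single component of $\bigcup_\alpha A_\alpha M$ restricted to $M_j$.

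The core of the argument is then the following equivalence, to be proved in both directions. Recall the first tightness condition: $\bigcup_i A_i(M)$ meets every component of $M$. I would argue that this holds iff $U$ has exactly one component inside each component of $M$. For the forward implication: if some component $M_j$ of $M$ received no $A_i(M)$, then iterating we see $M_j \cap M^{(n)}$ already stabilizes to $\emptyset$ for $n\ge 1$, forcing $K^u \cap M_j = \emptyset$, so $M_j$ would contain \emph{zero} components of $U$ (recall $U \supset K^u$ and, by property (ii) for the cores, each component of $\P^1\setminus S \supset M$ contains a unique component of $U$ — hence exactly one, a contradiction). More carefully: property (ii) of the cores already guarantees \emph{at most} (in fact exactly) one component of $U$ per component of $\P^1 \setminus S$, hence at most one per component of $M$ since $M \Subset \P^1\setminus S$; the content is that it is nonzero, i.e.\ that $M_j$ actually meets $U$, equivalently meets $K^u$, equivalently is hit by some product $A_{i_n}\cdots A_{i_1}(M)$, and running the admissible-word bookkeeping shows this is equivalent to $M_j$ being hit \emph{in one step} by some $A_i(M)$ (if it is hit after $n$ steps it is hit after one step, pushing the word forward by the multicone invariance; conversely one step is a special case). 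For the converse implication one reverses this: if every component of $M$ is met by $\bigcup_i A_i(M)$, then by invariance every component of $M$ contains a point of $M^{(n)}$ for all $n$, hence (by the finite decreasing intersection) a point of $K^u \subset U$, giving exactly one $U$-component per $M$-component. The second tightness condition is handled identically after passing to the dual: apply the first half to the inverses $A_i^{-1}$, to the multicone $\P^1 \setminus \overline M$, whose stable core is $S$ and unstable core is $U$ (this is exactly the duality $\Sigma \leftrightarrow \Sigma^*$, $M_\alpha \leftrightarrow \P^1 \setminus A_\alpha^{-1}\overline{M_\alpha}$ recorded after Theorem~\ref{t.multicone sub}, specialized to the full shift).

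The main obstacle I anticipate is the careful component-counting: one must be sure that a connected component of the open set $\bigcup_i A_i(M)$ does not ``straddle'' two components of $M$ (it cannot, since $A_i(M) \Subset M$ and each $A_i(M)$ is, up to finitely many pieces, a disjoint union of intervals each landing inside a single component of $M$ — this is where Proposition~\ref{p.combin contraction} and the disjoint-closure hypothesis on the components of $M$ are used), and dually that a component of $\P^1 \setminus \overline{M}$ is not straddled. Once that is pinned down, the equivalence ``$M_j$ meets $\bigcup_i A_i(M)$'' $\iff$ ``$M_j$ meets $K^u$'' $\iff$ ``$M_j$ contains a (necessarily unique) component of $U$'' is routine, and assembling the two conditions with their duals completes the proof.
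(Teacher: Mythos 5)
Your argument reaches the right conclusion and is built on the same description $K^u=\bigcap_n M^{(n)}$ recorded just before Proposition~\ref{p.combin contraction}, but for the harder implication (tightness implies the core condition) you take a genuinely different route from the paper. You show inductively that each component $M_j$ of $M$ meets $M^{(n)}$ for every $n$ and then extract a point of $K^u\cap M_j$ by compactness. The paper instead uses the first tightness condition to build a chain of components $M_0\supset A_{i_1}(M_1)$, $M_1\supset A_{i_2}(M_2),\dots$; finiteness forces a cycle $M_\ell=M_k$, the unstable direction of the cyclic product $A_{i_{\ell+1}}\cdots A_{i_k}$ lies in $M_\ell\cap K^u$, and it is pushed forward by $A_{i_1}\cdots A_{i_\ell}$ into $M_0$. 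The paper's version is shorter, entirely finitary, and produces an explicit point of $K^u$. Yours works as well, but the compactness step needs to be spelled out: what carries it is $\overline{M^{(n+1)}}\subset M^{(n)}$ (coming from $A_i(M)\Subset M$), so that $\overline{M^{(n)}}\cap\overline{M_j}$ is a nested sequence of nonempty compacts whose intersection is $K^u\cap\overline{M_j}=K^u\cap M_j$ (using $K^u\subset M$ and the disjointness of the $\overline{M_j}$); the phrase ``by the finite decreasing intersection'' does not by itself say this, since a nested sequence of nonempty \emph{open} sets can have empty intersection.

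Three smaller points. First, you assert $M\Subset\P^1\setminus S$ for an arbitrary multicone; this is not automatic, since a non-tight multicone can have a component sitting inside a gap of $K^s$ that lies entirely in $S$. The consequence you draw (``at most one $U$-component per $M_j$'') is still correct, but needs a different one-line justification. Second, the step from ``each $M_j$ meets $K^u$'' to ``each $M_j$ contains a unique component of $U$'' is asserted but not argued (the paper compresses this too, via ``it is sufficient to show''). It is worth realizing that this reduction needs both halves of the conclusion at once: it is the fact that each component of $\P^1\setminus\overline{M}$ meets $K^s$, and is therefore disjoint from $U$, that prevents a component of $U$ from straddling two components of $M$. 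So the $U$-side and the $S$-side cannot be handled independently and then glued by duality, as your final sentence suggests; you must first get ``meets $K^u$'' and ``meets $K^s$'' for all components of $M$ and $\P^1\setminus\overline{M}$ respectively, and only then upgrade both simultaneously. Third, a small slip in the duality: $\P^1\setminus\overline{M}$ is a multicone for $(A_i^{-1})$ whose \emph{unstable} core is $S$ and whose \emph{stable} core is $U$, not the other way around.
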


\begin{proof}
Fixed a uniformly hyperbolic  $N$-tuple, let $K^u$, $K^s$, $U$, $S$ be as before.
Let $M$ be a multicone, and let $M^* = \P^1 \setminus \overline M$.

First, let us prove the ``if'' part:
Assume every connected component of $M$ (resp.\ $M^*$) intersects $U$ (resp.\ $S$).
Since $K^u \subset U$, each component of $M$ intersects $K^u$.
Now, each point in $K^u$ is the image of another point in $K^u$ (and hence in $M$) by some $A_i$.
So each component of $M$ intersects some $A_i(M)$.
With a symmetric argument for $M^*$ and $S$ we conclude that $M$ is tight.

\smallskip

Now let us prove the ``only if'' part of the proposition.
Assume that the multicone $M$ is tight.
To conclude, it is sufficient to show that every connected component of $M$ intersects $K^u$,
and that every connected component of $M^*$ intersects $K^s$.
In fact, by symmetry, we only need to prove the first  claim.

Fix a connected component of $M$, say, $M_0$.
By the first condition in the definition of tightness,
there exists a connected component $M_1$ of $M$ such that
$A_{i_1} (M_1) \subset M_0$ for some $i_1$.
Continuing by induction, define components $M_n$ and indices $i_n$ for all $n \ge 1$ so that
$A_{i_{n+1}} (M_{n+1}) \subset M_n$.
The number of connected components is finite,
so let $k \ge 1$ be the least index such that $M_k = M_\ell$ for some $\ell < k$.
The interval $M_\ell$ is forward-invariant by $A_{i_{\ell+1}} \cdots A_{i_{k-1}} A_{i_k}$,
so it contains the unstable direction of that product.
So $M_\ell$ intersects $K^u$.
The interval $M_0$ contains $A_{i_1} A_{i_2} \cdots A_{i_\ell}(M_\ell)$,
hence it intersects $K^u$ as well.
This concludes the proof.
\end{proof}

\begin{rem}
It follows from Proposition~\ref{p.tight} that a
multicone for a uniformly hyperbolic $N$-tuple $(A_1, \ldots, A_N)$
is tight iff there is no multicone
with a smaller number of connected components.
\end{rem}

\section{The Full $2$-Shift Case}\label{s.full 2}

\subsection{Statements}\label{ss.full 2 statements}

Before going into other general results, we study the simplest case:
the full shift on two symbols.
So in this section we let $\Sigma=2^\Z$ and
let $\cH \subset \G^2$ denote the associated hyperbolicity locus.




By definition, a connected component of $\cH$ is called principal
if every pair in it has a multicone consisting of a single interval.
Recall from \S\ref{ss.examples} that there are four such components.
Let $H_0$ indicate their union.

The next simplest case is when a tight multicone consists on two intervals.
So let $H_\id \subset \G^2$ denote the (open) set of
pairs $(A,B)$ that do not belong to a principal component,
and have a multicone $M$ which is a union of two intervals.

(See Figure~\ref{f.pingpong} for an example of $(A,B)\in H_\id$; $M = I_1 \cup I_2$ is a multicone.)

In fact (see Proposition~\ref{p.free}), we have
$$
H_\id = \{(A,B) \in \G^2; \; |\tr A|>2, |\tr B|>2, |\tr AB|>2, \tr A \tr B \tr AB < 0 \},
$$
and moreover, $H_\id$ has eight connected components.
Let us call these as the \emph{free} components of~$\cH$.

\medskip

Define mappings $F_+, F_- : \G^2 \to \G^2$ by
$$
F_+ (A,B) = (A,AB) \quad \text{and} \quad
F_- (A,B) = (BA,B) \, .
$$
These are diffeomorphisms of $\G^2$.
Let $\cM$ be the monoid\footnote{semigroup with identity} generated by $F_+$ and $F_-$.

\begin{thm}[Connected components of $\cH$] \label{t.components}
Every connected component of $\cH$ is one of the following:
\begin{itemize}
\item either a principal component;
\item or $F^{-1} (H)$ for some free component $H \subset H_\id \subset \cH$ and some $F \in \cM$.
\end{itemize}
Moreover, such components are distinct.
\end{thm}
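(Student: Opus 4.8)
The plan is to show that the maps $F_\pm$ act as a "renormalization'' on the multicone combinatorics, reducing the number of components of a multicone, and that this reduction terminates precisely at the principal and free components. First I would establish the key dynamical fact about $F_+$ and $F_-$: if $(A,B)$ is uniformly hyperbolic with tight multicone $M$, then examining whether the unstable direction of $A$ and the core $U$ lie in a common component controls which of $F_+^{-1}$, $F_-^{-1}$ decreases complexity. Concretely, I expect that $(A,AB)$ (resp.\ $(BA,B)$) is uniformly hyperbolic whenever $(A,B)$ is, and that a tight multicone for $(A,B)$ pushes forward to a tight multicone for $F_\pm(A,B)$ with the \emph{same or fewer} components; the reader should check that $F_\pm$ maps $\cH$ into $\cH$, so $F^{-1}(H)\subset\cH$ for every free $H$ and every $F\in\cM$, giving the "$\Leftarrow$'' inclusion in the dichotomy. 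The nontrivial direction is that every non-principal component is $F^{-1}(H)$ for suitable $F$, $H$.

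For that direction, given $(A,B)\in\cH\setminus H_0$, take a tight multicone $M$ with $m\ge 2$ components (such a tight multicone exists by Proposition~\ref{p.tight} and the remark following it, and $m\ge 2$ since $(A,B)\notin H_0$). If $m=2$ one should show $(A,B)\in H_\id$ using the trace characterization quoted as Proposition~\ref{p.free}: $A$, $B$, $AB$ are all hyperbolic (each leaves one of the two intervals, or an iterate does) with the sign condition $\tr A\,\tr B\,\tr AB<0$ forced by the cyclic arrangement of $u_A,u_B,s_A,s_B$ on $\P^1$ dictated by a two-interval tight multicone. If $m\ge 3$, I would run the tightness argument from the proof of Proposition~\ref{p.tight}: each component of $M$ contains a unique component of $U$, each component of $M^*=\P^1\setminus\overline M$ a unique component of $S$. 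Looking at the action of $A$ and $B$ on the $m$ components, a counting/combinatorial argument (the components of $M$ together with those of $M^*$ give a finite cyclically ordered set permuted monotonically by the two generators, one of which must be "non-surjective'' in an appropriate sense) shows that one of $F_+^{-1}$ or $F_-^{-1}$ can be applied so that $F_\pm^{-1}(A,B)$ admits a tight multicone with strictly fewer components; then induct on $m$. The bookkeeping here — deciding which of $F_+,F_-$ to undo, and verifying that the pushed/pulled multicone is genuinely tight with one fewer component — is the main obstacle, and is where the combinatorics of monotone correspondences (Section~\ref{s.abstract combinatorics}) does the real work.

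It remains to prove the components are \emph{distinct}. The principal components are distinguished from the rest by admitting a connected multicone (they lie in $H_0$, which is disjoint from $\cH\setminus H_0$ by definition), and the four principal components are mutually distinct because they are separated by the signs of $\tr A,\tr B$, which are locally constant and nonzero on $H_0$ (each principal component contains a unique $(\pm A_*,\pm B_*)$-type model, cf.\ \S\ref{ss.examples}). For the non-principal ones: since each $F\in\cM$ is a diffeomorphism of $\G^2$ and $\cM$ is a \emph{free} monoid on $F_+,F_-$, it suffices to show (a) distinct free components have disjoint closures — immediate from the explicit trace description of $H_\id$, the eight components being cut out by the eight sign patterns of $(\tr A,\tr B,\tr AB,\tr BA)$ compatible with $\tr A\,\tr B\,\tr AB<0$ — and (b) if $F^{-1}(H)=(F')^{-1}(H')$ for $F,F'\in\cM$ and free $H,H'$, then $F=F'$ and $H=H'$. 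For (b), reduced words $F=F_{\varepsilon_1}\cdots F_{\varepsilon_k}$ and $F'=F_{\delta_1}\cdots F_{\delta_\ell}$ with a common point in the intersection would, after applying the shorter of the two, force the longer remaining word to map a free component back into a free component, contradicting that $F_\pm$ \emph{strictly increases} the number of components of the tight multicone on $H_\id$ (the complexity count from the existence direction, read in reverse). Hence $k=\ell$, the words agree letter by letter by the same strict-monotonicity argument applied one symbol at a time, and then injectivity of $F$ gives $H=H'$. I would expect step (b), together with pinning down the "strictly increases complexity'' claim for $F_\pm$ restricted to $H_\id$, to be the delicate point in the uniqueness half, again resting on the monotone-correspondence formalism.
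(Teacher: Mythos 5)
Your high-level architecture (show $F^{-1}(H_\id)\subset\cH$, then show every non-principal hyperbolic pair lands there after renormalization, then prove distinctness) matches the paper's, but there are two real problems in the way you fill it in.

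The first is a genuine gap in what you call the easy direction. You propose to deduce $F^{-1}(H)\subset\cH$ from ``$F_\pm$ maps $\cH$ into $\cH$.'' That implication does not hold. The statement $F_\pm(\cH)\subset\cH$ says: if $(A,B)$ is hyperbolic, so is $(A,AB)$ (true, and trivial, since any product of $A,AB$ is a product of $A,B$). But $F^{-1}(H)\subset\cH$ says the opposite: if $(A,AB)\in H$, then $(A,B)\in\cH$ --- that is, uniform growth of products in the \emph{submonoid} generated by $A$ and $AB$ forces uniform growth of all products in $A,B$. This is exactly where the paper needs its group-hyperbolicity machinery (Lemma~\ref{l.free is gr hyp} and \S\ref{ss.length}): free pairs are shown by a ping-pong argument to be uniformly hyperbolic over the four-symbol subshift allowing $A^{\pm1},B^{\pm1}$, and then the fact that $f_\pm$ are \emph{automorphisms} of $\F_2$ (not just monoid endomorphisms) together with the length bound $|f_\pm^{-1}(\omega)|\ge\frac12|\omega|$ lets one transfer exponential growth from $F(A,B)$ back to $(A,B)$. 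The monoid does not have this inverse-length control, so the purely forward statement you quote cannot close the loop.

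The second issue is directional: you write that applying $F_\pm^{-1}$ should decrease the number of components of a tight multicone, but it is $F_\pm$ that does. If $(A,B)\in H_F$ with $F=F_{\eps_k}\circ\cdots\circ F_{\eps_1}$, then $F_{\eps_1}(A,B)\in H_{F_{\eps_k}\cdots F_{\eps_2}}$, strictly closer to $H_\id$; the inverses move away from $H_\id$ and increase complexity. Beyond the sign, the step ``one of $F_+$, $F_-$ strictly reduces the number of components, and the procedure terminates'' is precisely the heart of the theorem, and your sketch defers it to an unspecified ``counting/combinatorial argument.'' The paper does something quite different here: it defines twisted and free pairs via traces, proves the exact trichotomy of Lemma~\ref{l.iterate} (after applying $F_+$ or $F_-$, one gets twisted, or free, or an elliptic product), and then proves termination via the discrete convexity of $t_k=\tr A_k+\tr B_k$ together with the Fricke-type invariant $j=x^2+y^2+z^2-xyz>4$ (Lemma~\ref{l.stop}), which also gives the uniqueness of $F$ and hence the disjointness of the sets $\overline{H_F}$. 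Your distinctness argument, based on strict monotonicity of multicone rank and freeness of the monoid, is in spirit close to what \S\ref{ss.full2 combin} supports a posteriori, but without establishing the monotonicity and termination it does not stand alone. In short: the decomposition you want is the right one, but the two genuinely hard inputs --- group-hyperbolicity of free pairs, and the trace-based convergence/uniqueness of the renormalization --- are missing, and the argument offered for the first is a non sequitur.
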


\begin{thm}[Boundary of $\cH$]\label{t.full2 boundary}
A compact subset of $\G^2$ intersects only finitely many components of $\cH$.

The boundary of $\cH$ is the disjoint union of the boundaries of its components.

Moreover, if $(A,B) \in \partial \cH$ then (at least) one of the following holds:
\begin{enumerate}
\item  There is a product of $A$'s and $B$'s which is parabolic;
\item  or $u_A= s_B$ or $u_B = s_A$.
\end{enumerate}
The second possibility can only occur if $(A,B)$ belongs to
the boundary of a principal component.
\end{thm}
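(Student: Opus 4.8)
The plan is to first address the structural statements (finiteness on compacta, and the boundary being the disjoint union of boundaries of components), which should follow from Theorem~\ref{t.components} together with the tools already developed. For the finiteness claim, I would use the description of the components as $F^{-1}(H)$ with $F\in\cM$ ranging over words in $F_\pm$, together with a length/complexity control: by Proposition~\ref{p.combin contraction} (or rather the tightness machinery of \S\ref{sss.tightness}), a pair in $F^{-1}(H)$ has a tight multicone whose number of components grows with the word length of $F$, while a bound on $\|A\|,\|B\|,\|AB\|$ (a compact set, using the compactness criterion from the Annex) forces a bound on how many components a tight multicone can have — essentially because each component of $M$ must contain a distinct unstable direction of some bounded-length product, and there are only finitely many of those up to the relevant resolution. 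Hence a compact set meets only finitely many $F^{-1}(H)$. Given finiteness, the components of $\cH$ are locally finitely many open sets, so $\partial\cH=\bigcup_i\partial(\text{component}_i)$; to get that this union is \emph{disjoint} I would show that two distinct components cannot have a common boundary point, which reduces (via $F_\pm$ being diffeomorphisms and the monoid structure) to the statement that a principal component and a free component have disjoint closures, and that distinct free components do — a finite check using the explicit trace description of $H_\id$ and the sign conditions $\tr A\,\tr B\,\tr AB<0$.

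The heart of the theorem is the dichotomy (i)/(ii) for $(A,B)\in\partial\cH$. Here I would argue by taking $(A,B)$ on the boundary of some component $\cC$ of $\cH$ and a sequence $(A_n,B_n)\in\cC$ converging to it, each with a tight multicone $M_n$. By the finiteness just proved, all $M_n$ have the same number of components (this is the combinatorial invariant of $\cC$), and passing to a subsequence the $M_n$ converge in the Hausdorff sense to a limit configuration $M$ consisting of finitely many closed intervals with $A(\overline{M})\subset\overline M$, $B(\overline{M})\subset\overline M$, but now with $M$ \emph{not} a multicone — the strict inclusions $\Subset$ must degenerate. The dichotomy then comes from analyzing \emph{how} they degenerate: either some closed interval $I$ of the limit is carried by a product $W=W(A,B)$ with $W(I)\subseteq I$ but with $W$ no longer strictly contracting $I$, which forces $W$ parabolic (an endpoint of $I$ becomes a non-hyperbolic fixed point), giving case (i); or two formerly disjoint components of the limit have touched, i.e. $\overline M$ has an endpoint-collision, and tracking the stable/unstable cores $U,S$ of Lemma~\ref{l.core full} through the limit one finds that a component of $\P^1\setminus\overline M$ has shrunk so that $u$-data and $s$-data collide — concretely $u_A=s_B$ or $u_B=s_A$ (for the two-symbol full shift the only "atomic" heteroclinic collisions are of this form, all longer-range collisions being pushed into case (i) via the monoid action $F_\pm$ and the general boundary Theorem~\ref{t.general boundary}).

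The main obstacle is the last point: cleanly separating the two ways the limit multicone can degenerate, and in particular showing that the "endpoint-collision" case for a \emph{non}-principal component always actually reduces to case (i) rather than producing a genuine new heteroclinic connection $u_A=s_B$. The natural route is to use the monoid action: a non-principal component is $F^{-1}(H)$, and $F$ (a composition of $F_\pm$) is a diffeomorphism, so $\partial\cC=F^{-1}(\partial H)$; thus it suffices to understand $\partial H$ for a free component $H\subset H_\id$ and check that the pullback by $F_\pm$ of an $\{u_A=s_B\}$-type degeneration becomes a parabolicity-type degeneration (because $F_+(A,B)=(A,AB)$ mixes the matrices, turning "$u$ of one factor $=$ $s$ of another" into "some word is parabolic"). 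For a free component $H\subset H_\id$ itself, one reads off $\partial H$ from the trace description: the boundary is where $|\tr A|=2$, or $|\tr B|=2$, or $|\tr AB|=2$, or $\tr A\,\tr B\,\tr AB=0$ with all traces $\ne\pm2$ — the first three are parabolicity (case (i)), and the last, combined with the sign/multicone analysis, is exactly where $u$ and $s$ directions of two of the three matrices $A,B,AB$ align; one then checks this forces $u_A=s_B$ or $u_B=s_A$ (possibly after noting the $AB$-alignment is impossible on $\partial H$ for a free, hence non-principal, component, so it only genuinely occurs on $\partial H_0$). Carrying out this bookkeeping — the induction on word length in $\cM$ showing the collision is always resolved into parabolicity except at the principal level — is the technical crux; everything else is compactness plus the explicit description already in hand.
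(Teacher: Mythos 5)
Your overall architecture (reduce to $\partial H_0$ and $\partial H_{\id}$, pull back by the diffeomorphisms $F_\pm$, use finiteness on compacta to pass from component-boundaries to $\partial\cH$) matches the paper's, and your final reading of $\partial H_{\id}$ via the trace description is exactly right. But there is a genuine gap in the first step, the finiteness-on-compacta claim. You propose to bound the number of components of a tight multicone in terms of a bound on $\|A\|,\|B\|,\|AB\|$, arguing that ``each component must contain a distinct unstable direction of some bounded-length product.'' This is circular: for $(A,B)\in H_F$ with $j(F)=p/q$, the tight multicone has $q$ components, and the unstable directions sitting in them belong to products of length comparable to $q$ --- precisely the quantity you are trying to bound. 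Compactness of the parameter set alone gives no minimum length for the multicone components and hence no bound on their number. The paper's Lemma~\ref{l.stop} is doing real work here: the trace-based descent shows that the word length of $F\in\cM$ with $(A,B)\in \overline{H_F}$ is at most $\tfrac14(|\tr A|+|\tr B|)-1$, and \emph{that} is what makes the compactness argument go through. You cannot replace it by a soft combinatorial-multicone count.

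Two smaller points. For disjointness of component boundaries you reduce to ``principal vs.~free'' and ``free vs.~free''; this is not enough, since you also need $\overline{H_{F_1}}\cap\overline{H_{F_2}}=\emptyset$ for distinct $F_1,F_2\in\cM$, and this is where the uniqueness statement in Lemma~\ref{l.stop} (a twisted almost-hyperbolic pair has a \emph{unique} $F$ sending it into the free region, coming from the dichotomy of Lemma~\ref{l.iterate}) is essential. Finally, your limit-multicone degeneration analysis for the dichotomy (i)/(ii) is considerably heavier than what is needed and not fully rigorous as stated (e.g.\ ``$W(I)\subseteq I$ non-strictly forces $W$ parabolic'' needs an argument ruling out the case $W$ hyperbolic with an attracting endpoint on $\partial I$). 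Since Theorem~\ref{t.components} and Proposition~\ref{p.disjoint} already give $\cH=H_0\sqcup\bigsqcup_F H_F$ with the closures disjoint, the dichotomy follows directly from the explicit trace description of $\partial H_{\id}$ (one of $|\tr A|,|\tr B|,|\tr AB|$ becomes $2$, forcing a parabolic product) together with the quoted description of $\partial H_0$; the general boundary theorem and limit cores are not needed for $\Sigma=2^{\Z}$.
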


Let $\cE \subset \G^2$ be the set of pairs $(A,B)$ such that
there exists a product of $A$'s and $B$'s which is elliptic.
Of course, $\cE$ is an open set, disjoint from $\cH$.
In fact, $\cE$ is the complement of $\overline{\cH}$,
as a consequence of the following result:

\begin{thm}[Relation between $\cH$ and $\cE$]\label{t.H and E}
$\partial \cH = \partial \cE = (\cH \sqcup \cE)^c$.
\end{thm}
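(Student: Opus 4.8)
The plan is to establish the three-way equality $\partial\cH = \partial\cE = (\cH \sqcup \cE)^c$ by proving a chain of inclusions. Since $\cH$ and $\cE$ are disjoint open sets, we automatically have $\partial\cH \subset (\cH\cup\cE)^c$ and $\partial\cE \subset (\cH\cup\cE)^c$, and conversely $(\cH\sqcup\cE)^c$ is the complement of an open set, so it is closed and contains both boundaries. Thus the whole statement reduces to showing $(\cH\sqcup\cE)^c \subset \partial\cH \cap \partial\cE$; equivalently, every pair $(A,B)$ that is neither uniformly hyperbolic nor has an elliptic product can be approximated both by pairs in $\cH$ and by pairs in $\cE$.

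For the inclusion $(\cH\sqcup\cE)^c \subset \partial\cE$: given $(A,B)\notin\cH\cup\cE$, I would use Theorem~\ref{t.full2 boundary} together with the structure results. If $(A,B)\in\partial\cH$ we are in case (i) or (ii) of that theorem; in case (i) there is a product $W$ of $A$'s and $B$'s which is parabolic with $W\neq\pm\id$ (this last point follows from the boundary analysis, at least for non-principal components, and can be handled directly for principal components since there a product equals $\pm\id$ only deep in the interior picture — this is exactly the kind of subtlety I expect to be the main obstacle), and then a small perturbation turning $W$ slightly elliptic lands in $\cE$, so $(A,B)\in\partial\cE$. In case (ii), say $u_A = s_B$, one perturbs so that the heteroclinic tangency is crossed and produces an elliptic product $AB^n$ or similar for large $n$; again $(A,B)\in\overline\cE$. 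If instead $(A,B)\notin\overline\cH$ at all, then $(A,B)$ lies in the interior of $(\cH)^c$; here I would invoke Avila's result (quoted in the Introduction) that $\overline\cE = \cH^c$ in the general subshift-of-finite-type setting, which immediately gives $(A,B)\in\overline\cE$, and combined with $(A,B)\notin\cE$ (open) yields $(A,B)\in\partial\cE$. Either way $(\cH\sqcup\cE)^c\subset\partial\cE$, hence also $\cE^c = \overline{\cH}$.

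For the inclusion $(\cH\sqcup\cE)^c \subset \partial\cH$: I must show every $(A,B)\notin\cH\cup\cE$ is a limit of hyperbolic pairs. The key tool is the monoid action. Given $(A,B)$ on the complement, consider the pair modulo the $F_\pm$-dynamics: by Theorem~\ref{t.components} the components of $\cH$ are organized by $\cM$ acting on the free components. The point is that $F_+, F_-$ are diffeomorphisms of $\G^2$, so $F\in\cM$ maps $\overline{\cH}^c$-points to $\overline\cH^c$-points and boundary to boundary; one reduces to showing that $(A,B)$ lying ``between'' hyperbolicity components — having no elliptic product but failing the multicone criterion of Theorem~\ref{t.multicone full} — can be pushed into $\cH$ by an arbitrarily small perturbation. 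Concretely, the failure of the cone criterion manifests as a heteroclinic equality or a parabolic product (the boundary dichotomy), and in the parabolic case a small perturbation making the trace of that product slightly larger than $2$ in absolute value, while keeping all ``shorter'' products hyperbolic, restores a multicone; this requires a continuity/openness argument for the cores $U$, $S$ of \S\ref{sss.full core} and an application of Lemma~\ref{l.core full}. I expect the hard part to be this last perturbative step: ensuring that fattening the critical product does not simultaneously destroy hyperbolicity of some other product or create an elliptic one. This is where the finiteness statement in Theorem~\ref{t.full2 boundary} (a compact neighborhood meets finitely many components of $\cH$) is essential, since it bounds the lengths of the relevant products and makes the perturbation controllable. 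Once both inclusions are in hand, the three sets coincide and the theorem follows.
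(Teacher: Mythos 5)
Your reduction to the two inclusions $(\cH\sqcup\cE)^c\subset\partial\cE$ and $(\cH\sqcup\cE)^c\subset\partial\cH$ is correct, and for the first one your case~(b) already does all the work: since $(A,B)\notin\cH$ and $\cH^c=\overline\cE$ (Avila's result, or Lemma~2 of~\cite{Yoccoz_SL2R}, which is exactly what the paper cites), and $(A,B)\notin\cE$, you get $(A,B)\in\partial\cE$ directly. Your case~(a) is superfluous. This half matches the paper's proof.

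The second inclusion, $(\cH\sqcup\cE)^c\subset\partial\cH$, i.e.\ $\cE^c\subset\overline\cH$, is where there is a genuine gap. You invoke the ``boundary dichotomy'' of Theorem~\ref{t.full2 boundary} to say the obstruction is a parabolic product or a heteroclinic tangency and then perturb it away; but that theorem describes points already known to be on $\partial\cH$, while here your hypothesis is only $(A,B)\notin\cH\cup\cE$, so the argument is circular. You also assert that $F_\pm$ preserve $\overline\cH^c$ and map boundary to boundary, but $F_\pm$ only send $\cH$ \emph{into} $\cH$ (a product of $A$ and $AB$ is a fortiori a product of $A$ and $B$), not onto, so they do not preserve the boundary in the way you need. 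The actual content of this direction in the paper is Proposition~\ref{p.disjoint}, which decomposes $\G^2=\cE\sqcup\overline{H_0}\sqcup\bigsqcup_F\overline{H_F}$; its heart is Lemma~\ref{l.stop}, a global descent argument on the monoid orbit of a twisted almost-hyperbolic pair, driven by the Markov-like trace recursion, the non-increase of $\max(\tr A_k,\tr B_k)$, the convexity of $t_k=\tr A_k+\tr B_k$, and the conserved quantity $J=j\circ I$. That lemma shows that every pair with no elliptic products is either straight (hence in $\overline{H_0}$) or is mapped by a unique $F\in\cM$ to a free pair (hence lies in $\overline{H_F}$), so it is in $\overline\cH$. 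No local perturbation argument of the kind you sketch is known to replace this; in fact your worry that ``fattening the critical product does not simultaneously destroy hyperbolicity'' is pointing at exactly the difficulty that the descent argument is designed to bypass.
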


We are also able to give a precise description of the multicones
for all components of $\cH$, see \S\ref{ss.full2 combin}.

The results above answer all questions of \cite{Yoccoz_SL2R} for the full $2$-shift.
(Namely, the answers are 1: yes, 1': no, 2: no, 3, 3', 4: yes.)
The solution of Problem~1 can also be given using the description of \S\ref{ss.full2 combin}.

The proofs of Theorems~\ref{t.components}, \ref{t.full2 boundary}, and \ref{t.H and E}
occupy the following subsections.

\subsection{Plan of Proof} 

First, let us prove the assertions already made about $H_\id$:

\begin{prop}\label{p.free}
We have
\begin{equation}\label{e.free}
H_\id = \{(A,B) \in \G^2; \; |\tr A|>2, |\tr B|>2, |\tr AB|>2, \tr A \tr B \tr AB < 0 \}.
\end{equation}
The set $H_\id$ has eight connected components,
and these components have disjoint boundaries.

The subset of $H_\id$ given by
\begin{equation}\label{e.two free set}
\{(A,B); \; \tr A>2, \tr B>2, \tr AB <-2\}
\end{equation}
has two connected components,
which are conjugated by an orientation-reversing automorphism of $\P^1$.
Fixed a cyclical order on $\P^1$, we have in one of the two components that
\begin{equation} \label{e.positive free}
u_B < u_{BA} < s_{BA} < s_A < u_A < u_{AB} < s_{AB} < s_B < u_B.
\end{equation}
\end{prop}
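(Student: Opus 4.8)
The plan is to prove the three claims of Proposition~\ref{p.free} in sequence, working from the geometric picture of a two-interval multicone.

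\medskip

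\emph{Step 1: The inclusion $\subseteq$ in \eqref{e.free}.}
Suppose $(A,B) \in H_\id$, so it has a tight multicone $M = I_1 \sqcup I_2$ with two components. By Proposition~\ref{p.tight} each $I_j$ contains a unique component of the unstable core $U$, and in particular (since $K^u \subset U$) each $I_j$ meets $K^u = A(K^u) \cup B(K^u)$. I would first argue that there must be a product word $W$ in $A,B$ using \emph{both} letters that is forward-invariant on one of the $I_j$: otherwise the combinatorics of $M$ reduce to a single interval (contradicting that $(A,B)$ is not in a principal component). Concretely, following the argument in the proof of Proposition~\ref{p.tight}, build a sequence of components $M_n$ and letters $i_n$ with $A_{i_{n+1}}(M_{n+1}) \subset M_n$; since there are only two components, either a pure power of $A$ or of $B$ stabilizes a component — which forces $|\tr A|>2$ resp.\ $|\tr B|>2$ anyway — and a mixed word $W$ must also appear, forcing, after analysis, that $A$, $B$, and $AB$ are each hyperbolic and that the cyclic arrangement of $u_A, s_A, u_B, s_B$ on $\P^1$ is the "linked" one. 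The sign condition $\tr A\,\tr B\,\tr AB<0$ is then extracted from this cyclic arrangement: once the four points $u_A,s_A,u_B,s_B$ interleave around the circle in the prescribed pattern, a direct $\SL(2,\R)$ computation (choosing a basis diagonalizing $A$) pins down $\operatorname{sign}(\tr AB)$ relative to $\operatorname{sign}(\tr A\tr B)$.

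\medskip

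\emph{Step 2: The inclusion $\supseteq$.}
Conversely, assume $(A,B)$ lies in the right-hand set of \eqref{e.free}. After a change of signs of $A$ and of $B$ (which does not affect membership in $H_\id$, and permutes the components among themselves), we may assume $\tr A>2$, $\tr B>2$, hence $\tr AB<-2$, so we are in the set \eqref{e.two free set}. The condition $\tr A\tr B\tr AB<0$ together with hyperbolicity of all three forces the four directions $u_A,s_A,u_B,s_B$ to interleave: if they did not, $AB$ would be conjugate into a product preserving an interval and would have positive trace, a contradiction. So we get precisely the cyclic order \eqref{e.positive free} (or its mirror image). Now I exhibit a two-interval multicone explicitly: take small intervals $I_1 \ni u_A$ and $I_2 \ni u_B$, chosen so that $\overline{I_1},\overline{I_2}$ are disjoint and each avoids $s_A$ and $s_B$; since $A$ is strongly contracting toward $u_A$ away from $s_A$ and similarly for $B$, one checks $A(I_1\cup I_2)\Subset I_1$ and $B(I_1\cup I_2)\Subset I_2$ provided the spectral radii are large, and in general one enlarges $I_1,I_2$ along forward orbits as in the proof of the "if" part of Theorem~\ref{t.multicone sub} / Lemma~\ref{l.core full}. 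Membership in a principal component is excluded because a single interval cannot be strictly invariant under both $A$ and $B$ when their fixed-point configuration is linked (a single invariant interval would make $AB$ hyperbolic with $\tr AB>2$).

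\medskip

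\emph{Step 3: Eight components with disjoint boundaries, and the structure of \eqref{e.two free set}.}
The four sign choices $(\pm A,\pm B)$ act freely on $H_\id$ and each orbit has size $4$... but $-\id$ acts trivially on $\P^1$, so $(A,B)$ and $(-A,-B)$ share multicones; the effective group of sign changes is of order $4$ acting on a set that I will show has two components, giving $8$ in total. Thus it suffices to show \eqref{e.two free set} has exactly two connected components. The set \eqref{e.two free set} is open; I show it has two components by showing that the cyclic order of $(u_A,s_A,u_B,s_B)$ is a locally constant, hence globally constant on components, invariant taking exactly the two values \eqref{e.positive free} and its reverse, and that each value is realized by a connected set. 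For connectedness of each: given two pairs with the same cyclic order, interpolate first the eigendirections of $A$ and eigenvalue of $A$, then those of $B$, keeping all traces in the correct ranges and the four points always distinct and correctly ordered — this is a path inside \eqref{e.two free set}. The orientation-reversing automorphism of $\P^1$ conjugating the two components is induced by any $g \in \mathit{GL}(2,\R)$ with $\det g<0$; conjugation by $g$ preserves traces and reverses the cyclic order. Disjointness of the boundaries of the eight components: two distinct components correspond to distinct (constant) sign-vectors or distinct cyclic orders; on a boundary point of \eqref{e.free} at least one of $|\tr A|,|\tr B|,|\tr AB|$ equals $2$ or $\tr A\tr B\tr AB=0$, and I check that a point in the \emph{closure} of \eqref{e.free} still has a well-defined sign-vector wherever two closures could meet — a configuration in $\partial H$ cannot simultaneously be a limit of two different cyclic orders because crossing between the two orders requires two of the four points $u_A,s_A,u_B,s_B$ to collide, i.e.\ $A$, $B$, or $AB$ to become parabolic with a \emph{coincidence} of directions, which is a strictly smaller stratum; a short case analysis rules out the overlap.

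\medskip

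\emph{Main obstacle.}
The delicate point is Step 1: extracting the precise sign condition $\tr A\,\tr B\,\tr AB<0$ (and the exact cyclic order) from the mere existence of a two-interval tight multicone, i.e.\ showing the combinatorics of the multicone \emph{force} the linked configuration and nothing else. Ruling out spurious two-interval multicones that are not "genuinely" of this free type — and confirming that such configurations are exactly the ones excluded from principal components — requires a careful finite combinatorial analysis of which words stabilize which components, of the kind carried out in \S\ref{ss.if}; everything downstream (Steps 2 and 3) is then either an explicit construction or a connectedness-by-interpolation argument.
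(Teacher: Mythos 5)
Your overall route coincides with the paper's: reduce by sign changes to the single sign pattern of~\eqref{e.two free set}, detect the two components there by the cyclic-order invariant of $(u_A,s_A,u_B,s_B)$, and multiply by the four sign patterns compatible with $\tr A\,\tr B\,\tr AB<0$ to get eight. Where you diverge is that the paper outsources both the $\supseteq$ inclusion and the two-components/cyclic-order statement to Proposition~5 of \cite{Yoccoz_SL2R}, whereas you attempt to construct the two-interval multicone and prove connectedness from scratch. That is a legitimate ambition but your Step~2 sketch has a real gap: when the spectral radii are not large, ``enlarge $I_1,I_2$ along forward orbits'' is not an application of Lemma~\ref{l.core full}, which requires you to already exhibit a pair of compact sets $U$, $S$ satisfying the invariance properties (i)--(iii); you have not constructed such a pair, and the stabilization of your enlargement process is exactly what needs to be proved. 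Likewise, the linking of the four directions $u_A,s_A,u_B,s_B$ does not by itself give the full eight-point ordering~\eqref{e.positive free} (you also need to locate $u_{AB},s_{AB},u_{BA},s_{BA}$), and that too is part of what the cited proposition furnishes.

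Two more local comments. In the $\subseteq$ direction the paper has a cleaner observation than your ``direct $\SL(2,\R)$ computation'': after normalizing to $\tr A,\tr B>2$, the interval $I$ of the multicone containing $u_A$ also contains $u_{AB}$, and the eigenvalue of $AB$ at this fixed point is negative, hence $\tr AB<-2$ immediately; your version of this step is the right idea but left unspecified. For disjoint boundaries of the two components of~\eqref{e.two free set}, it is not enough that ``two of the four points collide'': the paper's point is that a common boundary forces \emph{all four} directions $u_A,s_A,u_B,s_B$ to coincide, whence $A$ and $B$ are both parabolic with the same invariant line, so $\tr AB=2$, contradicting $\tr AB\le -2$ on the closure. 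Your ``short case analysis rules out the overlap'' should be replaced by this precise argument; as written it would not obviously exclude degenerations where, say, only $A$ becomes parabolic. Finally, the remark that $-\id$ acts trivially on $\P^1$ is a red herring in the component count: what matters is that the four sign changes permute the four admissible sign-patterns $(\pm,\pm,\mp)$ transitively and each sign-pattern set is a copy of~\eqref{e.two free set}, yielding $4\times 2 = 8$.
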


The component of the set in \eqref{e.two free set} where \eqref{e.positive free} holds
is called the \emph{positive free component}.
(Of course this definition depends on the choice of an orientation in $\P^1$.)

\begin{proof}
If $(A,B) \in H_\id$ then modulo sign changes
(which do not affect being in either side of~\eqref{e.free})
we can assume that $\tr A$, $\tr B >2$.
The fact that $(A,B)$ does not belong to a principal component
implies that
$u_B < s_A < u_A < s_B < u_B$ for some cyclical order on $\P^1$.
Let $M$ be the multicone for the pair $(A,B)$;
write it as union of two intervals $M = I \cup J$.
Then one of the intervals, say $I$, must contain $u_A$ and the other, $u_B$.
So $u_{AB}$ is contained in $I$, and, as it is easy to see,
the associated eigenvalue of $AB$ is negative.
This shows that $\tr AB <-2$ so $(A,B)$ belongs to
the right-hand side of~\eqref{e.free}.

On the other hand, Proposition~5 in~\cite{Yoccoz_SL2R} and its proof
show that the set in~\eqref{e.two free set} has two connected components
with the stated properties.
The proof also shows that pairs $(A,B)$ in that set have a multicone
consisting in two intervals.
Of course, if $u_B < s_A < u_A < s_B < u_B$ for some cyclical order on $\P^1$
then $(A,B)$ cannot be in a principal component of $\cH$.
So the set in~\eqref{e.two free set} is contained in $H_\id$.
We conclude that the set in the right-hand side of~\eqref{e.free} is also
contained in $H_\id$ and has eight connected components.

To prove that the connected components of $H_\id$ have disjoint boundaries,
it suffices to see that the two components of the set~\eqref{e.two free set} have disjoint boundaries.
So assume $(A,B)$ is a boundary point of both components.
Then $u_B = s_A = u_A = s_B$.
So $\tr A = \tr B = 2$, and this implies $\tr AB = 2$, a contradiction.
\end{proof}


Given $F \in \cM$, let us denote $H_F = F^{-1} (H_\id)$.
Our plan to prove the main results is as follows.
In~\S\ref{ss.ping pong}--\ref{ss.length} we will show:
\begin{prop}\label{p.is hyperbolic}
For any $F \in \cM$, $H_F \subset \cH$.
\end{prop}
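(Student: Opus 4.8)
The plan is to prove the statement by induction on the word length of $F\in\cM$ in the generators $F_+,F_-$. The base case $F=\id$ is already established: $H_{\id}\subset\cH$ by Proposition~\ref{p.free} (indeed $H_{\id}$ was defined as a subset of $\cH$). For the inductive step it suffices, by the multiplicativity of the monoid, to show that if $H\subset\cH$ for some open set $H$ then $F_\pm^{-1}(H)\subset\cH$ as well; by symmetry between $F_+$ and $F_-$ (conjugating by the swap $(A,B)\mapsto(B,A)$, which exchanges $F_+$ and $F_-$ and preserves $\cH$) it is enough to treat $F_+$. So the heart of the matter is the implication
$$
(A,AB)\in\cH \quad\Longrightarrow\quad (A,B)\in\cH.
$$

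The natural way to see this is through the multicone criterion of Theorem~\ref{t.multicone full}. Suppose $(A,AB)$ is uniformly hyperbolic and let $M$ be a multicone for it, so $A(M)\Subset M$ and $AB(M)\Subset M$. The first condition already gives what we need for the letter $A$. For the letter $B$, the idea is to enlarge $M$ slightly along the $A$-orbit: set
$$
M' = M \cup A^{-1}(M).
$$
Then $A(M') = A(M)\cup M \Subset M \cup A^{-1}(M)$ once one checks $A(M)\Subset A^{-1}(M)$ is not needed — rather one checks $A(M)\subset M\subset M'$ and $M\Subset M'$, giving $A(M')\Subset M'$ after a small neighborhood adjustment. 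For $B$: since $AB(M)\Subset M$ we get $B(M)\Subset A^{-1}(M)\subset M'$, and more generally $B(M')= B(M)\cup BA^{-1}(M)$; the term $B(M)\Subset A^{-1}(M)\subset M'$ is fine, and for $BA^{-1}(M)$ one uses $A^{-1}\cdot A(M) = M$ together with the hyperbolicity to control iterates. This requires a little care — one may need to pass to $M' = \bigcup_{j=0}^{k}A^{-j}(M)$ for suitable $k$, or equivalently work with the cores $K^u,K^s$ from \S\ref{sss.full core} and verify the core properties (i)--(iii) of Lemma~\ref{l.core full} directly, which is cleaner because those are closed invariance conditions rather than strict ones. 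Concretely: if $K^u,K^s$ are the unstable/stable sets of $(A,AB)$, one shows that $K^u$ together with $A^{-1}(K^u)$ (and the corresponding stable object) furnish sets $U,S$ satisfying (i)--(iii) for the pair $(A,B)$, and then invokes Lemma~\ref{l.core full}, after checking the nondegeneracy hypothesis that no short word in $A,B$ equals $\pm\id$.

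That last nondegeneracy check is where I expect the main obstacle to lie. Lemma~\ref{l.core full} requires that every product of $A$'s and $B$'s of length at most the rank of $U,S$ be different from $\pm\id$, and a priori a pair $(A,B)$ with $(A,AB)\in\cH$ could have, say, $B=\pm\id$ or $BA^{-1}=\pm\id$ or some short relation. One must rule this out: if $B=\pm\id$ then $(A,AB)=(A,\pm A)$, and checking the multicone condition $A(M)\Subset M$, $\pm A(M)\Subset M$ forces $M$ to be a multicone simultaneously for $A$ and $-A$, which (thinking of the induced maps on $\P^1$, where $A$ and $-A$ act identically) is fine — so actually $(A,\pm\id)$ is \emph{not} obviously excluded, and one needs instead to argue that in this degenerate situation $(A,B)=(A,\pm\id)$ still lies in $\cH$ directly (it does: $A(M)\Subset M$ and $\pm\id(M)=M$, but $M\not\Subset M$, so in fact $(A,\pm\id)\notin\cH$ — hence one must show $(A,AB)\in\cH$ forces $B\neq\pm\id$, which follows because otherwise $AB=\pm A$ and then $M$ would have to be strictly invariant under both $A$ and $A^{-1}$, impossible). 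Similar short-word degeneracies are handled by the same kind of elementary projective-dynamics argument. Once the nondegeneracy is secured, Lemma~\ref{l.core full} closes the step and the induction runs. An alternative, perhaps more robust, route that sidesteps some of these case distinctions is to work with the growth criterion of Proposition~\ref{p.unif hyp} directly: express any word $w$ in $A,B$ in terms of $A$ and $AB$ and show the total length does not blow up in a way that destroys exponential growth — but this requires a word-length comparison lemma (the ``$F_\pm$ are length-controlled'' statement alluded to in \S\ref{ss.length}), so I would keep the multicone/core approach as the primary line and fall back on the length estimate only if a degeneracy proves stubborn.
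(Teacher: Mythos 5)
Your inductive step rests on the implication
\[
(A,AB)\in\cH \quad\Longrightarrow\quad (A,B)\in\cH ,
\]
and this implication is false. Take $A$ hyperbolic and $B$ an elliptic rotation by a small angle. Then $AB$ is a small perturbation of $A$, so it is hyperbolic with nearby stable/unstable directions, and $(A,AB)$ admits a single invariant cone, i.e.\ it lies in a principal component of $\cH$. Yet $(A,B)\notin\cH$ because $B$ itself is elliptic. (The simplest instance of this failure is $B=\pm\id$: then $(A,AB)=(A,\pm A)\in\cH$ whenever $A$ is hyperbolic, while $(A,\pm\id)\notin\cH$; your claimed argument that a multicone for $(A,\pm A)$ ``would have to be strictly invariant under both $A$ and $A^{-1}$'' is incorrect — $A$ and $-A$ act identically on $\P^1$, so the multicone condition reduces to the single constraint $A(M)\Subset M$, which is easily satisfied.) Because the implication is false, neither the multicone-enlargement construction $M'=M\cup A^{-1}(M)$ nor the core-based variant via Lemma~\ref{l.core full} can be made to work: the obstruction is not a stubborn degeneracy but the absence of any multicone for $(A,B)$.

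The cure is precisely the stronger invariant that the paper propagates: group-hyperbolicity. The paper first shows (Lemma~\ref{l.free is gr hyp}, by a ping-pong multicone over the four-letter subshift that forbids adjacent inverses) that every pair in $H_\id$ is group-hyperbolic, i.e.\ words in $A^{\pm1},B^{\pm1}$ grow exponentially. Unlike plain hyperbolicity, this property \emph{is} preserved by $F_\pm^{-1}$, because $f_\pm^{\pm1}$ change free-group word length by at most a factor of two. Your ``fallback'' sketch points in this direction, but as written it stays in the positive monoid and therefore cannot close: a positive word in $A,B$ (say $B^2$) rewrites as $A^{-1}(AB)A^{-1}(AB)$, which is not a positive word in $A,AB$, so Proposition~\ref{p.unif hyp} for $(A,AB)$ over the full shift gives no lower bound on $\|B^2\|$. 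One must genuinely work in the free group and invoke group-hyperbolicity of the free component, which is exactly what the paper does; with that in hand the argument is a two-line length comparison, and no induction is needed.
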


Then in~\S\ref{ss.twisted pairs}--\ref{ss.dynamics} we will prove:
\begin{prop} \label{p.disjoint}
$\G^2$ is the disjoint union of $\cE$, $\overline{H_0}$,
and $\bigsqcup_{F \in \cM} \overline{H_F}$.
Moreover, a compact set in $\G^2$ intersects only finitely many
of the sets $\overline{H_F}$.
\end{prop}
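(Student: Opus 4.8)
The plan is to combine Propositions~\ref{p.is hyperbolic} and~\ref{p.free} with the dynamics of the monoid $\cM$ acting on parameter space. First I would establish that the sets $\overline{H_0}$ and the $\overline{H_F}$, $F \in \cM$, are pairwise disjoint; then I would show their union together with $\cE$ exhausts $\G^2$; finally I would prove the local finiteness statement. For the \emph{disjointness}, the key observation is that the maps $F_\pm$ and their effect on multicones is essentially a renormalization: if $(A,B)$ has a tight multicone with $m \ge 2$ components, then applying the appropriate branch $F_+$ or $F_-$ (determined by the combinatorial pattern of how $A$ and $B$ act on the components — see the discussion of twisted pairs in \S\ref{ss.twisted pairs}) strictly decreases the number of components, while $F_\id$ takes us to $H_\id$ exactly when $m=2$. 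So the word $F \in \cM$ with $(A,B) \in H_F$ is uniquely reconstructible from the multicone combinatorics of $(A,B)$, which forces $H_F \cap H_{F'} = \emptyset$ for $F \neq F'$, and $H_F \cap H_0 = \emptyset$ since principal components have a one-component multicone. To upgrade this to disjointness of \emph{closures} one uses Theorem~\ref{t.full2 boundary}(ii): on $\partial \cH$ either some product is parabolic, or $u_A = s_B$ / $u_B = s_A$ and we are on the boundary of a principal component; in either case the closure of a non-principal $\overline{H_F}$ cannot meet $\overline{H_0}$ (the principal boundary has parabolic products of a special form), and two distinct $\overline{H_F}, \overline{H_{F'}}$ cannot meet because the parabolic product persists to the boundary and its combinatorial type is an invariant of $F$ — this is where Proposition~\ref{p.free}'s assertion that the eight free components have disjoint boundaries gets pulled back along the diffeomorphisms in $\cM$.

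For the \emph{exhaustion}, I would argue as follows. Given $(A,B) \notin \cE$, no product of $A$'s and $B$'s is elliptic. If some product is parabolic or $\pm\id$, one checks directly (this is the degenerate-boundary analysis) that $(A,B) \in \overline{\cH} = \overline{H_0} \cup \bigcup_F \overline{H_F}$; indeed these are precisely the limits captured by Theorem~\ref{t.full2 boundary}(i). Otherwise every product is hyperbolic, and then I claim $(A,B) \in \cH$: this should follow from Lemma~\ref{l.core full} applied to the cores $U$, $S$ built from the (putative) closures of the images, together with the hypothesis that no short product equals $\pm\id$ (automatic here since every product is hyperbolic); alternatively, and more robustly, one shows that $(A,B)$ is approximated by hyperbolic parameters and then uses openness of $\cH$ plus the fact that $\G^2 \setminus \cE$ has empty interior on the complement of $\overline{\cH}$. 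Then Theorem~\ref{t.components} (which is logically available once Propositions~\ref{p.is hyperbolic} and~\ref{p.disjoint} are in place — so one must be careful about circularity and instead invoke only Proposition~\ref{p.is hyperbolic} and the direct classification of components via renormalization here) identifies the component of $\cH$ containing $(A,B)$ as either principal or some $H_F$.

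For \emph{local finiteness}: fix a compact $K \subset \G^2$. By the compactness criterion of Annex~\ref{ss.compactness criterium}, $\tr A$, $\tr B$, $\tr AB$ are bounded on $K$ modulo conjugacy. If $(A,B) \in \overline{H_F}$ with $F$ a word of length $\ell$ in $F_\pm$, then unwinding $F = F_\pm \circ \cdots \circ F_\pm$ expresses $(A,B)$'s renormalization $F(A,B) \in \overline{H_\id}$, and on $\overline{H_\id}$ one has $|\tr| \le $ some explicit bound while $|\tr AB| \ge 2$; conversely each application of $F_\pm^{-1}$ replaces one matrix by a \emph{product}, and I would show that the trace of the new pair grows — more precisely that $\max(|\tr A_F|, |\tr B_F|, |\tr A_F B_F|)$, where $(A_F,B_F) = F^{-1}(\text{base point})$, grows at least geometrically (or at least tends to infinity) with $\ell(F)$, uniformly for base points in $\overline{H_\id}$ restricted to bounded-trace data. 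Since the traces are bounded on $K$, only finitely many words $F$ are possible. \textbf{The main obstacle} I anticipate is exactly this last quantitative step: controlling how traces blow up under the inverse renormalization $F^{-1}$ uniformly over the (noncompact, eight-component) locus $\overline{H_\id}$ — one needs a lower bound on $\|A_F B_F\|$-type quantities that does not degenerate as the base parameters approach the boundary of $H_\id$, where some trace hits $\pm 2$. Handling that boundary degeneration (e.g. by a separate estimate near parabolic points, or by using the Hilbert-metric contraction rate $\lambda$ from \S\ref{ss.if} as the quantity that is controlled instead of traces directly) is the delicate part; everything else is bookkeeping with the renormalization combinatorics.
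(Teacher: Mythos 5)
Your plan correctly identifies all three sub-goals and correctly flags the uniform trace-growth estimate as the crux, but there are genuine gaps in the execution, and the argument as sketched would not close.

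First, invoking Theorem~\ref{t.full2 boundary}(ii) to get disjointness of the \emph{closures} is circular: that theorem is proved in \S\ref{ss.conclusion} by \emph{using} Proposition~\ref{p.disjoint}. The paper's route avoids this entirely. Second, your exhaustion step contains a false implication. If $(A,B)\in\cE^c$ and ``every product is hyperbolic,'' you claim $(A,B)\in\cH$. This is not true: a heteroclinic connection gives a point of $\partial H_0$ with no elliptic and no parabolic products, yet not in $\cH$. The fallback you offer (approximation via $\overline{\cE}=\cH^c$) also does not deliver the needed inclusion $\cE^c\subset\overline{\cH}$; $\overline{\cE}=\cH^c$ is the complementary inclusion (it is Theorem~\ref{t.H and E}, proved \emph{after} and from \ref{p.disjoint}), and on its own it allows a nonempty set $(\cE\cup\overline{\cH})^c$. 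Third, and most importantly, your local-finiteness sketch leaves exactly the quantitative step open that the paper resolves with a specific mechanism you never name: the trace dynamics of Proposition~\ref{p.invariant} together with the Fricke invariant $J=j\circ I$. The whole proposition in the paper is essentially Lemma~\ref{l.stop}: starting from a non-elliptic, non-straight pair, one iterates Lemma~\ref{l.iterate}, and the key discovery is that $t_k=\tr A_k+\tr B_k$ is \emph{convex} in $k$ (because $\Delta_k=t_{k+1}-2t_k+t_{k-1}\ge(m_k-2)M_k\ge 0$, computed from \eqref{e.laplacian}), while $J>4$ (a consequence of twistedness in a canonical basis) rules out the degeneration $M_k\to 2$. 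Convexity plus boundedness force the process to terminate at a free pair after $N\le\frac14 t_0-1$ steps, and this single inequality gives at once: existence of $F$ with $F(A,B)$ free (exhaustion), uniqueness of $F$ (via uniqueness in Lemma~\ref{l.iterate}, hence disjointness of the $\overline{H_F}$), and the explicit length bound needed for local finiteness. Your heuristic about multicone component counts is a workable intuition for \emph{interior} disjointness, but without the trace-convexity argument (or an equivalent quantitative substitute) the closure statements and the finiteness claim do not follow. The appeal to Theorem~\ref{t.compactness} is unnecessary for this step: traces are continuous, hence already bounded on any compact $K\subset\G^2$, and that together with the $N\le\frac14 t_0-1$ bound finishes local finiteness directly.
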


Putting things together,
we will prove Theorems~\ref{t.components}, \ref{t.full2 boundary}, and \ref{t.H and E}
in~\S\ref{ss.conclusion}.

In~\S\ref{ss.full2 combin} we will give an alternative proof of
Proposition~\ref{p.is hyperbolic}, by describing explicitly the multicones.

\subsection{Group-Hyperbolic Pairs}\label{ss.ping pong}

Let $(A,B) \in \G^2$ be given.
Let $\Sigma \subset 4^\Z$
be the (transitive) subshift of finite type
where the only forbidden transitions are
$1 \to 3$, $3 \to 1$, $2 \to 4$, and $4 \to 2$.
Take the
$4$-tuple $(A_1,A_2,A_3,A_4) = (A,B,A^{-1},B^{-1})$,
and consider the usual cocycle map over the subshift.
If this cocycle is uniformly hyperbolic, then we will
say the pair $(A,B)$ is \emph{group-hyperbolic}.

\begin{lemma}\label{l.free is gr hyp}
If $(A,B)$ belongs to a free component then $(A,B)$ is group-hyperbolic.
\end{lemma}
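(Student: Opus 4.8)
The plan is to exhibit an explicit family of multicones for the $4$-tuple $(A,B,A^{-1},B^{-1})$ over the subshift $\Sigma \subset 4^\Z$ and then invoke the ``if'' part of Theorem~\ref{t.multicone sub}. By Proposition~\ref{p.free}, up to sign changes and an orientation-reversing automorphism we may assume $(A,B)$ lies in the positive free component, so that with some cyclic order on $\P^1$ we have
$$
u_B < u_{BA} < s_{BA} < s_A < u_A < u_{AB} < s_{AB} < s_B < u_B,
$$
with $\tr A, \tr B > 2$ and $\tr AB < -2$. The key observation is that the matrices $A$, $B$ generate a classical ping-pong (Schottky) configuration: the four points $u_A, s_A, u_B, s_B$ (together with the four products $u_{AB}, s_{AB}, u_{BA}, s_{BA}$) separate $\P^1$ into arcs, and $A$ contracts everything off a neighborhood of $s_A$ towards $u_A$, while $A^{-1}$ contracts everything off a neighborhood of $u_A$ towards $s_A$, and similarly for $B^{\pm 1}$.

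First I would choose four pairwise-disjoint closed intervals (or slightly-open neighborhoods) $N_A \ni u_A$, $N_{A^{-1}} \ni s_A$, $N_B \ni u_B$, $N_{B^{-1}} \ni s_B$, small enough that each avoids the other three fixed points and all four of the ``mixed'' fixed points $u_{AB}, s_{AB}, u_{BA}, s_{BA}$, but chosen with enough room that $A$ maps the complement of $N_{A^{-1}}$ inside $N_A$, $A^{-1}$ maps the complement of $N_A$ inside $N_{A^{-1}}$, and symmetrically for $B$. (This requires the spectral radii of $A$, $B$ to be large enough relative to the gaps; one can always enlarge the intervals along the invariant directions to absorb this, or equivalently observe that the cyclic-order conditions above already force enough contraction for $A$ and $B$ individually once one passes to the cores — but the cleanest route is simply to note that on the positive free component the Hilbert-metric contraction estimates of \S\ref{ss.if} apply directly to $A$ on $\P^1 \setminus \overline{N_{A^{-1}}}$ and to $B$ on $\P^1 \setminus \overline{N_{B^{-1}}}$.) Now set
$$
M_1 = N_A, \qquad M_2 = N_B, \qquad M_3 = N_{A^{-1}}, \qquad M_4 = N_{B^{-1}},
$$
matching the indexing $(A_1,A_2,A_3,A_4) = (A,B,A^{-1},B^{-1})$. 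The forbidden transitions are exactly $1\leftrightarrow 3$ and $2\leftrightarrow 4$, i.e.\ one never applies a matrix immediately after its own inverse, which is precisely the condition under which ping-pong works.

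Then I would verify the multicone condition $\alpha \to \beta \Rightarrow A_\beta(M_\alpha) \Subset M_\beta$ case by case. For instance $1 \to 1$ (apply $A$ after $A$): $A(M_1) = A(N_A) \Subset N_A = M_1$ since $N_A$ avoids $s_A$. For $2 \to 1$ (apply $A$ after $B$): $A(M_2) = A(N_B) \Subset N_A$ because $N_B$ is disjoint from $N_{A^{-1}}$, hence lands inside $N_A$. For $4 \to 1$ (apply $A$ after $B^{-1}$): $A(N_{B^{-1}}) \Subset N_A$ likewise. The only excluded case $3 \to 1$ would be $A(N_{A^{-1}})$, which is exactly the piece we cannot control — and it is forbidden. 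All other entries follow by the same bookkeeping, using the three relations ``$A$ (resp.\ $A^{-1}$, $B$, $B^{-1}$) maps the complement of its repelling neighborhood into its attracting neighborhood.'' Since each $M_\alpha$ is a single nonempty open interval with $\overline{M_\alpha} \neq \P^1$, Theorem~\ref{t.multicone sub} gives uniform hyperbolicity of the $4$-tuple over $\Sigma$, i.e.\ $(A,B)$ is group-hyperbolic.

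The main obstacle is the quantitative point hidden in the choice of the $N$'s: the mere cyclic-order condition \eqref{e.positive free} does not by itself guarantee that $A$ contracts \emph{all} of $\P^1 \setminus \overline{N_{A^{-1}}}$ into $N_A$ for a \emph{fixed} small $N_A$ — one needs the neighborhoods sized compatibly with the actual eigenvalue ratios of $A$ and $B$. I would handle this by enlarging $N_{A^{-1}}$ (resp.\ $N_{B^{-1}}$) to be a large interval around $s_A$ (resp.\ $s_B$) whose complement is a small interval around $u_A$ (resp.\ $u_B$) on which $A$ (resp.\ $B$) is a strong contraction, checking that the eight fixed points still lie in the claimed pattern of intervals; the disjointness required is only between the four neighborhoods actually used, and \eqref{e.positive free} is exactly what makes such a disjoint choice possible. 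Once the intervals are fixed, everything else is the routine ping-pong verification above, and the comparison with the Hilbert metric in \S\ref{ss.if} supplies the ``$\Subset$'' automatically.
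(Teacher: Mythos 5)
Your proof plan adopts the same overall structure as the paper's: exhibit a family of multicones for the $4$-tuple $(A,B,A^{-1},B^{-1})$ over the subshift $\Sigma\subset 4^\Z$ (with $1\leftrightarrow3$, $2\leftrightarrow4$ forbidden) and invoke Theorem~\ref{t.multicone sub}.  But the way you try to produce the four intervals, and the way you try to verify the multicone inclusions, has a genuine gap.

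The condition you actually need is only $A(N_A\cup N_B\cup N_{B^{-1}})\Subset N_A$ (and the three analogous ones). You instead argue for the much stronger statement $A(\P^1\setminus N_{A^{-1}})\Subset N_A$, justifying $A(N_{B^{-1}})\Subset N_A$ and $A(N_B)\Subset N_A$ by ``disjoint from $N_{A^{-1}}$, hence lands inside $N_A$.'' This stronger statement does not follow from the cyclic-order relation~\eqref{e.positive free} alone — it is a quantitative statement that requires the intervals to be sized in a particular way relative to the dynamics of $A$ and $B$.  You notice this and offer a fix (``enlarge $N_{A^{-1}}$ to be a large interval around $s_A$ whose complement is a small interval around $u_A$''), but this fix is inconsistent with the pairwise disjointness you also require: the complement of $N_{A^{-1}}$ must contain $N_A$, $N_B$, and $N_{B^{-1}}$, hence must contain $u_B$ and $s_B$, so it cannot be a ``small interval around $u_A$.''  Likewise, the remark about Hilbert-metric contraction only gives contraction \emph{within} the interval $\P^1\setminus\overline{N_{A^{-1}}}$; it does not tell you \emph{where} $A$ sends a subinterval, so it cannot by itself produce $A(N_{B^{-1}})\Subset N_A$.

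The key idea the paper uses, and your plan is missing, is to start not from neighborhoods of the four fixed points but from the two-component multicone $I_1\sqcup I_2$ for $(A,B)$ (which exists because $(A,B)\in H_\id$) together with the two-component multicone $I_3\sqcup I_4$ for $(A^{-1},B^{-1})$ (which also lies in a free component, since traces are unchanged by inversion), chosen pairwise disjoint.  The ``easy'' inclusions $A(I_1),A(I_2)\Subset I_1$, etc., are then given by the multicone property, and the single nontrivial ``cross'' inclusion $A(I_4)\Subset I_1$ is forced by a purely topological argument: $A^{-1}(I_1)$ is a single interval, it contains both $I_1$ and $I_2$, and it cannot contain $s_A\in I_3$; since $I_4$ lies on the arc between $I_1$ and $I_2$ that avoids $I_3$, that whole arc — and in particular $I_4$ — is in $A^{-1}(I_1)$.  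This replaces the quantitative claim you are trying to make with a clean combinatorial/topological one.  Without this step (or something equivalent), the inclusion $A(N_{B^{-1}})\Subset N_A$ is unjustified, and your argument does not close.
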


\begin{proof}
Without loss, we assume that $(A,B)$ belongs to the positive free component
(so \eqref{e.positive free} holds). 
Take four disjoint (open) intervals $I_1$, $I_2$, $I_3$, $I_4$ such that
$I_1 \cup I_2$ is a multicone for $(A,B)$ (over the full $2$-shift),
$I_3 \cup I_4$ is a multicone for $(A^{-1},B^{-1})$ (over the full $2$-shift), and
$$
I_1 \supset [u_A, u_{AB}], \
I_4 \supset [s_{AB}, s_B], \
I_2 \supset [u_B, u_{BA}], \
I_3 \supset [s_{BA}, s_A].
$$

\begin{figure}[hbt]
\begin{center}
\psfrag{A}{{\tiny $A$}} \psfrag{B}{{\tiny $B$}} \psfrag{AB}{{\tiny $AB$}}
\psfrag{BA}{{\tiny $BA$}}
\psfrag{I1}{{\tiny $I_2$}}\psfrag{I2}{{\tiny $I_3$}}\psfrag{I3}{{\tiny $I_1$}}\psfrag{I4}{{\tiny $I_4$}}
\includegraphics[width=4.5cm]{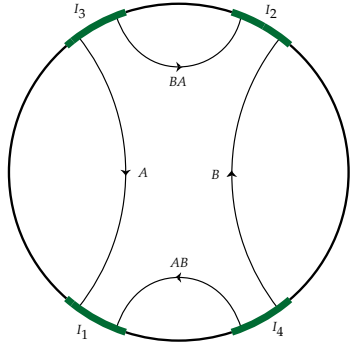}
\caption{{\small Group-hyperbolicity of the free component.}} \label{f.pingpong}
\end{center}
\end{figure}

Since $A(I_1)$, $A(I_2) \Subset I_1$, we see that $A(I_4) \Subset I_1$ as well.
In the same manner, we have:
\begin{alignat*}{2}
A     (I_1 \cup I_4 \cup I_2) &\Subset I_1,  &\qquad
B     (I_2 \cup I_3 \cup I_1) &\Subset I_2, \\
A^{-1}(I_3 \cup I_2 \cup I_4) &\Subset I_3,  &\qquad
B^{-1}(I_4 \cup I_1 \cup I_3) &\Subset I_4.
\end{alignat*}
So Theorem~\ref{t.multicone sub} applies,
and our cocycle over the subshift $\Sigma \subset 4^\Z$ is uniformly hyperbolic.
That is, $(A,B)$ is group-hyperbolic.
\end{proof}

\subsection{Length Comparison}\label{ss.length}

Let $\F_2$ be the free group in two generators $a$, $b$.
Let $|\mathord{\cdot}|$ be the usual length function on $\F_2$,
relative to the generators $a$, $b$.
Let $f_+$, $f_-$ be the homomorphisms of $\F_2$ such that
$f_+(a) = a$, $f_+(b) = ab$, $f_-(a) = ba$, $f_-(b) = b$.
Notice $|f_{\pm}(\omega)| \le 2 |\omega|$ for all $\omega \in \F_2$.
Since $f_+$ and $f_-$ are in fact automorphisms, it follows that
$|f_{\pm}^{-1}(\omega)| \ge \frac{1}{2} |\omega|$ for all $\omega \in \F_2$.

Given $(A,B) \in \G^2$, there is a unique homomorphism
$\langle \mathord{\cdot}, (A,B) \rangle : \F_2 \to \G$ such that
$\langle a, (A,B) \rangle = A$ and $\langle b, (A,B) \rangle = B$.
In fact, this gives a bijection between $\G^2$ and the
set of homomorphisms $\F_2 \to \G$.

If $f: \F_2 \to \F_2$ is a homomorphism then there is a unique map
$f^*: \G^2 \to \G^2$ such that
$\langle f(\omega), (A,B) \rangle = \langle \omega, f^*(A,B) \rangle$.
The functorial properties $\id^* = \id$ and $(g \circ f)^* = f^* \circ g^*$ hold.
Also notice that $f_+^* = F_+$ and $f_-^* = F_-${\,}.

\begin{proof}[Proof of Proposition~\ref{p.is hyperbolic}]
Let $(A, B) \in H_F$,
where $F = F_{\epsilon_k} \circ \cdots \circ F_{\epsilon_1}$, $\epsilon_i \in\{+,-\}$.
Let $(A_0, B_0) = F(A,B) \in H_\id$.
By Lemma~\ref{l.free is gr hyp}, $(A_0, B_0)$ is group-hyperbolic.
This means that there exist $c$, $\tau>0$ such that
for every $\omega \in \F_2$,
$$
\| \langle \omega, (A_0,B_0) \rangle \| \ge c \exp( \tau |\omega|).
$$
Let $f = f_{\epsilon_1} \circ \cdots \circ f_{\epsilon_k}$, so $f^* = F$.
For any $\omega \in \F_2$, we have
$$
\| \langle \omega, (A, B) \rangle \|
= \| \langle f^{-1}(\omega) , (A_0, B_0) \rangle \|
\ge c \exp \left( \tau |f^{-1}(\omega)| \right)
\ge c \exp \left( 2^{-k} \tau |\omega| \right).
$$
This proves that $(A,B)$ is group-hyperbolic and,
in particular, $(A,B)$ is a uniformly hyperbolic pair w.r.t.~the full $2$-shift.
\end{proof}

\subsection{Twisted Pairs}\label{ss.twisted pairs}

Let us say that $(A,B) \in \G^2$ is \emph{straight} if $(A,B) \in \overline{H_0}$, that is,
$(A,B)$ belongs to the closure of a principal component.

Notice that if $(A,B)$ is straight then so are $F_+(A,B)$ and $F_-(A,B)$.

It is easy to see that if there is an open interval which is
forward-invariant for both $A$ and $B$ then $(A,B)$ is straight.
The converse is not true:
for example, if $A\neq \pm \id$ is parabolic then $(A, A^{-1})$ is straight,
but there is no invariant open interval.

Let us say that a pair $(A,B)$ is \emph{twisted}
if $A$ and $B$ are not elliptic
and $(A,B)$ is not straight.

Let $A$ be non-elliptic, and $A \neq \pm \id$, so $u_A$, $s_A \in \P^1$ are defined.
Assume that an orientation is fixed in $\P^1$.
Given $p\in \P^1$, we shall write
\mbox{$p < u_A \lesssim s_A < p$}
to indicate that $p < Ap < u_A \le s_A < p$.
This means that there exist $\tilde A$ arbitrarily close (possibly equal) to $A$ such that
$p < u_{\tilde A} < s_{\tilde A} < p$.
In the case $A$ is parabolic we can define $u_A \lesssim s_A$ without mentioning a point $p$.

\begin{lemma}\label{l.twist a}
Let $A$, $B \in \G$ be non-elliptic.
Then $(A,B)$ is twisted iff
$A$, $B\neq\pm\id$ and for some cyclical order on $\P^1$ we have
\begin{equation}\label{e.twist order}
u_A < s_B \lesssim u_B < s_A \lesssim u_A
\end{equation}
\end{lemma}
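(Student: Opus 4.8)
The plan is to use the standing hypothesis to reformulate the statement and then settle it by a cyclic-order analysis on $\P^1$, the governing notion being the \emph{linking} of two pairs of points.

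Since $A$ and $B$ are non-elliptic, ``$(A,B)$ twisted'' means exactly ``$(A,B)$ not straight'', i.e.\ $(A,B)\notin\overline{H_0}$; so the claim is equivalent to: $(A,B)\in\overline{H_0}$ iff $A=\pm\id$, or $B=\pm\id$, or \eqref{e.twist order} fails. First I would dispose of the cases $A=\pm\id$ or $B=\pm\id$: if $A=\pm\id$, every open interval is forward-invariant under $A$, so one only needs a common invariant interval for $B$ alone, which exists if $B$ is hyperbolic and is obtained by approximating $B$ by hyperbolic matrices if $B$ is parabolic; in both cases $(A,B)\in\overline{H_0}$, while the right-hand side holds for free since \eqref{e.twist order} presupposes $A,B\neq\pm\id$. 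Since $H_0$ is invariant under the sign changes $(A,B)\mapsto(\pm A,\pm B)$, which alter neither the directions $u_A,s_A,u_B,s_B$ nor whether \eqref{e.twist order} holds, I may further assume $\tr A\ge 2$ and $\tr B\ge 2$; then a hyperbolic factor, say $A$ with $\tr A>2$, acts as a positive dilation in a chart sending $u_A$ to $0$ and $s_A$ to $\infty$, so every open interval containing $u_A$ whose closure misses $s_A$ is strictly $A$-invariant (and likewise for a hyperbolic $B$).

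The heart of the argument is a dictionary between straightness and the cyclic position of $u_A,s_A,u_B,s_B$. \emph{(I) If $(A,B)$ is straight, there is a closed interval $J\subsetneq\P^1$ with $u_A,u_B\in J$ and $s_A,s_B\notin\operatorname{int}J$.} Take $(A_n,B_n)\to(A,B)$ in principal components, each (Theorem~\ref{t.multicone full}) with a common strictly invariant open interval $I_n$, so $u_{A_n},u_{B_n}\in I_n$ and $s_{A_n},s_{B_n}\in\P^1\setminus\overline{I_n}$; extract a subsequence along which $\overline{I_n}$ converges, in the Hausdorff metric, to a compact connected set $J$. One checks $J\neq\P^1$: if $\P^1\setminus\overline{I_n}$ collapsed to a point then $s_A=s_B$, in which case \eqref{e.twist order} fails anyway (it forces $s_B<u_B<s_A$), so nothing remains to prove -- and symmetrically if $u_A=u_B$. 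So $J$ is a proper closed interval, and passing to the limit ($u_{A_n}\in I_n$, $s_{A_n}\in\P^1\setminus\overline{I_n}$, and $\overline{\P^1\setminus I_n}\to\P^1\setminus\operatorname{int}J$) yields the stated positions. \emph{(II) If $A$ and $B$ are hyperbolic of trace $>2$ and $\{u_A,u_B\}$, $\{s_A,s_B\}$ do not link (with the four points distinct), then $(A,B)$ lies in a principal component}: choose an open interval $I$ with $u_A,u_B\in I$ and $\overline I\cap\{s_A,s_B\}=\emptyset$; by the remark above $A(I)\Subset I$ and $B(I)\Subset I$, so Theorem~\ref{t.multicone full} applies.

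Finally, ``$J$ as in (I) exists'' is the negation of \eqref{e.twist order}: that condition says precisely that the cyclic order of the four points is $u_A,s_B,u_B,s_A$, i.e.\ that $\{u_A,u_B\}$ links $\{s_A,s_B\}$; and if $J$ exists, the connected set $\P^1\setminus J$ lies in one component $\gamma_1$ of $\P^1\setminus\{u_A,u_B\}$, whence $\gamma_2\subseteq\operatorname{int}J$ contains no $s$-point, contradicting that \eqref{e.twist order} puts one $s$-point strictly inside each $\gamma_i$. Conversely, if \eqref{e.twist order} fails and the four points are distinct, the pairs do not link and (II) applies. The main obstacle is the bookkeeping in the degenerate configurations -- coincidences among $u_A,s_A,u_B,s_B$, and above all $A$ or $B$ parabolic -- which is exactly what the symbol ``$\lesssim$'' in \eqref{e.twist order} is designed to track: if, say, $A$ is parabolic, the limiting $J$ of (I) has the direction $u_A=s_A$ as an \emph{endpoint}, and the side on which $J$ extends from it is forced by the direction in which $A$ rotates $\P^1$ -- precisely the content of ``$s_A\lesssim u_A$'' -- so that the linking contradiction still runs ($s_B$ is again forced into $\operatorname{int}J$); in the reverse direction one perturbs the parabolic factor to a trace-$>2$ hyperbolic matrix in the unique direction still compatible with the failing relation, reducing to the case just treated.
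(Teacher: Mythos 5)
Your approach is genuinely different from the paper's and quite a bit more conceptual. The paper's proof is a bare case-by-case inspection: after disposing of $A=\pm\id$ or $B=\pm\id$, it lists the eleven possible cyclic configurations of $u_A,s_A,u_B,s_B$ (both hyperbolic, one parabolic, both parabolic, further split by coincidences among the fixed directions and by rotation sense), identifies which have a strictly invariant open interval, singles out the one configuration (case $3.1$, both parabolic with $u_A=u_B$ and opposite rotation senses) that is straight \emph{without} having an invariant interval, and observes that the remaining three are exactly those satisfying \eqref{e.twist order}. Your proof replaces this enumeration with two structural facts: (I) straightness yields, by a Hausdorff limit of the strictly invariant intervals $I_n$ of nearby principal pairs, a closed interval $J$ with $u_A,u_B\in J$ and $s_A,s_B\notin\operatorname{int}J$; and (II) in the hyperbolic trace-$>2$ non-degenerate situation, non-linking of $\{u_A,u_B\}$ and $\{s_A,s_B\}$ lets you build a common strictly invariant interval directly. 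The linking dichotomy then does the work that the paper's case inspection does. When all four directions are distinct and both matrices are hyperbolic, your argument is complete and clean, and it is arguably more illuminating than the paper's list.

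The gap is in the degenerate configurations, which you explicitly flag as ``the main obstacle'' but do not carry out. These are precisely the cases where the paper's inspection earns its keep. Concretely: (a) when some of $u_A,s_A,u_B,s_B$ coincide but are not all equal (e.g.\ $u_A=s_B$, the paper's case $1.1$), your (II) does not apply since it requires four distinct points, and you do not say how to conclude straightness; (b) when $A$ or $B$ is parabolic, the relation $s_A\lesssim u_A$ in \eqref{e.twist order} is not a cyclic-order statement about four points but an assertion about the rotation sense of the parabolic near its fixed direction, and your sentence ``the side on which $J$ extends \ldots is forced by the direction in which $A$ rotates $\P^1$ \ldots so that the linking contradiction still runs'' is a programme, not a proof — one has to actually check, separately for each of the sub-cases the paper labels $2.1$--$2.3$ and $3.1$--$3.4$, that the sign encoded by $\lesssim$ does or does not force $s_B$ into $\operatorname{int}J$; (c) the paper's case $3.1$ (both parabolic, $u_A=u_B$, opposite rotation senses) is straight but has \emph{no} invariant open interval, so the $J$ produced by your (I) degenerates to a point — your parenthetical remark that $u_A=u_B$ makes \eqref{e.twist order} fail handles the forward implication, but you do not address the converse direction (failure of \eqref{e.twist order} plus coincidence of directions $\Rightarrow$ straight) at all; and (d) the reverse direction of your parabolic treatment (``one perturbs the parabolic factor to a trace-$>2$ hyperbolic matrix in the unique direction still compatible with the failing relation'') presupposes that the failing relation singles out a perturbation direction, which is exactly what needs to be established. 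In short, you have a correct and attractive proof of the generic case, plus a plausible but unexecuted plan for the boundary cases; since the $\lesssim$ bookkeeping in the parabolic configurations is the only nontrivial content of the lemma beyond the generic picture, this counts as a real gap rather than a matter of polish.
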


\begin{proof}
If $A$ or $B$ equals $\pm\id$, then $(A,B)$ is easily seen to be straight.
So we can assume $A$, $B \neq \pm \id$.

The rest of the proof is merely a case-by-case inspection.
The following list exhausts all possible (mutually exclusive) cases,
modulo inverting the cyclical order on $\P^1$,
or interchanging $A$ and $B$,
or replacing $(A,B)$ by $(A^{-1}, B^{-1})$:
\begin{enumerate}
\item[1.] $A$ and $B$ are hyperbolic:
    \begin{enumerate}
    \item[1.1.] $u_A = u_B$ or $u_A = s_B$
    \item[1.2.] $u_A < u_B < s_B < s_A < u_A$
    \item[1.3.] $u_A < u_B < s_A < s_B < u_A$
    \item[1.4.] $u_A < s_B < u_B < s_A < u_A$ 
    \end{enumerate}
\item[2.]  $A$ hyperbolic and $B$ parabolic:
    \begin{enumerate}
    \item[2.1.] $u_A = u_B$
    \item[2.2.] $u_A < u_B \lesssim s_B < s_A < u_A$
    \item[2.3.] $u_A < s_B \lesssim u_B < s_A < u_A$ 
    \end{enumerate}
\item[3.] $A$ and $B$ parabolic:
    \begin{enumerate}
    \item[3.1.] $u_A = u_B$ with $u_A \lesssim s_A$ and $s_B \lesssim u_B$
    \item[3.2.] $u_A = u_B$ with $u_A \lesssim s_A$ and $u_B \lesssim s_B$
    \item[3.3.] $u_A \neq u_B$ with $u_A \lesssim s_A$ and $s_B \lesssim u_B$
    \item[3.4.] $u_A \neq u_B$ with $u_A \lesssim s_A$ and $u_B \lesssim s_B$
    \end{enumerate}
\end{enumerate}

The cases 1.1, 1.2, 1.3, 2.1, 2.2, 3.2, and 3.3 are those where
there is an invariant open interval, and hence are straight.
In the case 3.1, there is no invariant open interval, but it is straight nevertheless.
The remaining cases, 1.4, 2.3, and 3.4 are precisely
those where condition~\eqref{e.twist order} holds;
and none of them can be straight.
\end{proof}

\begin{lemma}\label{l.twist b}
Let $(A,B)$ satisfy $\tr A,\tr B \ge 2$.
Then $(A,B)$ is twisted iff
there exists a basis (called canonical basis for $(A,B)$)
where $A$, $B$ are written as
\begin{equation}\label{e.canonical}
A=\begin{pmatrix} \mu&\alpha\\ 0&\mu^{-1} \end{pmatrix}, \quad
B=\begin{pmatrix} \nu^{-1}&0\\ \beta&\nu \end{pmatrix},
\end{equation}
with $\mu \geq 1$, $\nu \geq 1$ and $\alpha \beta<0$.
Moreover, $\gamma \equiv \alpha \beta$ only depends on $(A,B)$ and not on
the choice of the canonical basis.
\end{lemma}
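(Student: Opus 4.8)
The plan is to reduce to Lemma~\ref{l.twist a} by producing, for a twisted pair with $\tr A,\tr B\ge 2$, a basis in which the invariant directions sit as required. First note that since $\tr A,\tr B\ge 2$ and the pair is twisted, Lemma~\ref{l.twist a} forbids $A=\pm\id$; together with $\tr A\ge 2$ this forces $\tr A=2$ exactly when $A$ is parabolic, and in all cases $u_A$ has a \emph{positive} eigenvalue (eigenvalue $\mu\ge 1$), and likewise $B$ has eigenvalue $\nu\ge 1$ along $u_B$. I would first treat the hyperbolic case. Choose a basis $\{v_1,v_2\}$ with $v_1$ spanning $u_A$ and $v_2$ spanning $s_A$, scaled so that $Av_1=\mu v_1$, $Av_2=\mu^{-1}v_2$; this puts $A$ in the stated upper-triangular form, but with the off-diagonal entry $\alpha=0$. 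To get the cross term I instead take $v_1\in u_A$, and $v_2$ \emph{not} on $s_A$ but chosen so that $B$ becomes lower-triangular: since $u_B\ne u_A$ (again by \eqref{e.twist order}) we may pick $v_2\in u_B$. In the basis $\{v_1,v_2\}$ with $v_1\in u_A$, $v_2\in u_B$, the matrix $A$ fixes the line $\R v_1$ with eigenvalue $\mu$, hence is upper-triangular $\left(\begin{smallmatrix}\mu&\alpha\\0&\mu^{-1}\end{smallmatrix}\right)$, and $B$ fixes $\R v_2$ with eigenvalue $\nu$, hence is lower-triangular $\left(\begin{smallmatrix}\nu^{-1}&0\\\beta&\nu\end{smallmatrix}\right)$; rescaling $v_1,v_2$ independently is the remaining freedom, and it scales $\alpha\beta$ by $1$ (the two rescalings cancel), so $\gamma=\alpha\beta$ is an invariant of the pair.

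Next I would pin down the sign $\alpha\beta<0$ and show it is equivalent to twistedness. In the basis above, $s_A$ is the line spanned by $\left(\begin{smallmatrix}\alpha\\ \mu^{-1}-\mu\end{smallmatrix}\right)$ (the other eigenvector of $A$) and $s_B$ is spanned by $\left(\begin{smallmatrix}\nu^{-1}-\nu\\ \beta\end{smallmatrix}\right)$; writing everything in the projective chart $t\mapsto v_1+tv_2$, one has $u_A=0$, $u_B=\infty$, $s_A=(\mu^{-1}-\mu)/\alpha$ (a finite nonzero value, using $\mu>1$ in the hyperbolic case) and $s_B=\beta/(\nu^{-1}-\nu)$. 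Condition \eqref{e.twist order}, which reads $u_A<s_B\lesssim u_B<s_A\lesssim u_A$ cyclically, then becomes a statement about the signs of $s_A$ and $s_B$ in this chart: one checks it holds precisely when $s_A<0$ and $s_B<0$, i.e. when $\alpha>0,\beta<0$ or $\alpha<0,\beta>0$ — in either case $\alpha\beta<0$. Conversely if $\alpha\beta<0$ the same computation gives \eqref{e.twist order}, and since $\tr A,\tr B\ge 2$ Lemma~\ref{l.twist a} identifies this with twistedness. The parabolic cases ($\mu=1$ and/or $\nu=1$) are handled by the same normal-form argument, with the caveat that when $\mu=1$ the matrix $A$ is already $\left(\begin{smallmatrix}1&\alpha\\0&1\end{smallmatrix}\right)$ with $\alpha\ne 0$ (else $A=\id$), and $s_A=u_A=0$ in the chart; condition \eqref{e.twist order} degenerates correspondingly (the ``$\lesssim$'' becoming equality) and still translates to $\alpha\beta<0$.

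Finally I would record the invariance of $\gamma$ cleanly: any two canonical bases differ by a change of basis preserving both the flag $\R v_1$ (eigenline of $A$ with eigenvalue $\mu$) and the line $\R v_2$ (eigenline of $B$ with eigenvalue $\nu$), hence is diagonal $\mathrm{diag}(s,t)$; conjugating \eqref{e.canonical} by it sends $(\alpha,\beta)\mapsto(\alpha t/s,\beta s/t)$, so $\alpha\beta$ is unchanged. The main obstacle I anticipate is purely bookkeeping: keeping the cyclic-order conventions of \eqref{e.twist order} consistent with a fixed choice of orientation and chart through all of the hyperbolic/parabolic subcases, so that the chain of inequalities really does come out equivalent to the single sign condition $\alpha\beta<0$; the algebra itself (eigenvectors of $2\times 2$ triangular matrices, behaviour under diagonal conjugation) is routine.
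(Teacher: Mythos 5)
Your approach is essentially the paper's: normalize $u_A\mapsto(1,0)$, $u_B\mapsto(0,1)$, compute the other eigendirections, and read off that \eqref{e.twist order} is equivalent to $\alpha\beta<0$; the invariance of $\gamma$ is established the same way in spirit (the paper notes $\alpha\beta$ is a function of $\tr A,\tr B,\tr AB$, while you observe directly that the residual change-of-basis freedom is diagonal, which amounts to the same thing and is equally valid).

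One sign slip to fix: in the chart where $u_A=0$ and $u_B=\infty$, the cyclic chain $u_A<s_B\lesssim u_B<s_A\lesssim u_A$ forces $s_A$ and $s_B$ to lie on \emph{opposite} arcs, i.e.\ $s_A s_B<0$, \emph{not} ``$s_A<0$ and $s_B<0$'' as you wrote. With your formulas $s_A=(\mu^{-1}-\mu)/\alpha$ and $s_B=\beta/(\nu^{-1}-\nu)$ and $\mu,\nu>1$, the condition $s_A<0,\,s_B<0$ would give $\alpha>0,\,\beta>0$, i.e.\ $\alpha\beta>0$, contradicting the very conclusion you then state. The correct reading ($s_A<0,\,s_B>0$ for one orientation, or $s_A>0,\,s_B<0$ for the other) does give $\alpha>0,\beta<0$ or $\alpha<0,\beta>0$, which is the $\alpha\beta<0$ you want; so your final inference is right and only the intermediate ``one checks'' sentence needs correcting.
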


\begin{proof}
Let $(A,B)$ be such that $\tr A$, $\tr B > 2$.
Introduce coordinates so that $u_A = \R(1,0)$ and $u_B = \R(0,1)$.
Then $A$ and $B$ are in the form~\eqref{e.canonical}, with
$\mu$, $\nu > 1$.
Write the other eigendirections as $s_A = \R (x,1)$ and $s_B = \R (1,y)$.
We have
$$
x = \frac{-\alpha}{\mu-\mu^{-1}}, \qquad
y = \frac{-\beta}{\nu-\nu^{-1}} \, .
$$
Then~\eqref{e.twist order} holds iff $xy<0$, that is, iff $\alpha \beta <0$.

We leave the cases where $A$ or $B$ is parabolic as
exercises to the reader.

For the last remark, notice that $\alpha \beta$ is a function
of $\tr A$, $\tr B$, and $\tr AB$.
\end{proof}

Let us say that $(A,B)$ is \emph{free} if
$$
\text{$|\tr A|$, $|\tr B|$, $|\tr AB| \geq 2$,
and $\tr A \, \tr B \, \tr AB < 0$.}
$$

\begin{lemma}\label{l.free}
Every free pair is twisted.
A pair $(A,B)$ is free iff it belongs to $\overline H_{\id}$.
\end{lemma}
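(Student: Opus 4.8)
\textbf{Plan of proof for Lemma~\ref{l.free}.}
The plan is to prove the three assertions in sequence, reducing everything to the normal form in Lemma~\ref{l.twist b} and the sign computations already carried out there.

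First I would handle \emph{every free pair is twisted}. Suppose $(A,B)$ is free. After a sign change on each of $A$, $B$ (which changes neither the traces up to sign, nor the property of being free, nor the property of being twisted, nor the invariant directions $u_A,s_A,u_B,s_B$) we may assume $\tr A,\tr B \ge 2$. The freeness condition then forces $\tr AB \le -2$; in particular $AB$ is hyperbolic or parabolic, and neither $A$ nor $B$ is $\pm\id$ (if say $B=\id$ then $\tr AB=\tr A\ge 2$, contradiction; if $B=-\id$ then $\tr B=-2<2$). Put $A,B$ in the form~\eqref{e.canonical} with $u_A=\R(1,0)$, $u_B=\R(0,1)$, $\mu,\nu\ge 1$. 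As computed in the proof of Lemma~\ref{l.twist b}, the quantity $\gamma=\alpha\beta$ is a function of $\tr A,\tr B,\tr AB$; an explicit computation of $\tr AB$ in terms of $\mu,\nu,\alpha,\beta$ (namely $\tr AB=\mu\nu^{-1}+\mu^{-1}\nu+\alpha\beta$) shows that $\tr AB\le -2<\mu\nu^{-1}+\mu^{-1}\nu$ (the last inequality because $\mu\nu^{-1}+\mu^{-1}\nu\ge 2$), hence $\alpha\beta<0$. By Lemma~\ref{l.twist b} (and its parabolic cases) this means $(A,B)$ is twisted.

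Next, for the equivalence \emph{$(A,B)$ is free iff $(A,B)\in\overline{H_\id}$}: by Proposition~\ref{p.free} the open set $H_\id$ is exactly $\{|\tr A|>2,\ |\tr B|>2,\ |\tr AB|>2,\ \tr A\,\tr B\,\tr AB<0\}$, which is a union of eight cells cut out by the (nonvanishing) polynomial inequalities above. Taking closures, $\overline{H_\id}$ is contained in the set where $|\tr A|\ge 2$, $|\tr B|\ge 2$, $|\tr AB|\ge 2$, and $\tr A\,\tr B\,\tr AB\le 0$; one checks the sign product is in fact $\le 0$ but I must rule out the degenerate equality $\tr A\,\tr B\,\tr AB=0$, which is impossible once $|\tr A|,|\tr B|,|\tr AB|\ge 2$. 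So $\overline{H_\id}\subset\{(A,B)\ \text{free}\}$. Conversely, given a free pair, reduce by sign changes to $\tr A,\tr B\ge 2$, $\tr AB\le -2$; in the canonical coordinates~\eqref{e.canonical} one can perturb $\mu\to\mu+\eps$, $\nu\to\nu+\eps$ (keeping $\alpha\beta<0$ fixed, say by leaving $\alpha,\beta$ alone) to obtain pairs with $\tr A>2$, $\tr B>2$, and, by continuity of $\tr AB=\mu\nu^{-1}+\mu^{-1}\nu+\alpha\beta$ together with the strict inequality $\mu\nu^{-1}+\mu^{-1}\nu\le \mu\nu^{-1}+\mu^{-1}\nu<|\alpha\beta|$ coming from $\tr AB\le-2$, also $\tr AB<-2$ for small $\eps>0$; these perturbed pairs lie in the set~\eqref{e.two free set}$\subset H_\id$, so $(A,B)\in\overline{H_\id}$. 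Undoing the sign changes covers all eight components.

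The main obstacle I anticipate is the bookkeeping in the perturbation step: I must check that from a boundary (parabolic) free pair I can genuinely move \emph{into} $H_\id$ rather than merely stay on its boundary or leave the free set entirely, i.e.\ that the three trace inequalities can be made simultaneously strict with the correct sign product. The cleanest way to organize this is to do all the work in the canonical form~\eqref{e.canonical}, where $\tr A=\mu+\mu^{-1}$, $\tr B=\nu+\nu^{-1}$, $\tr AB=\mu\nu^{-1}+\mu^{-1}\nu+\alpha\beta$ depend transparently on $(\mu,\nu,\alpha,\beta)$, and to note that the open condition $\alpha\beta<0$ is stable while $\mu,\nu\ge 1$ can be pushed to $\mu,\nu>1$; the one-variable inequality $\mu\nu^{-1}+\mu^{-1}\nu<-\alpha\beta$ then propagates under a small increase of $\mu,\nu$ by continuity. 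The parabolic subcases ($\mu=1$ or $\nu=1$) are exactly the ones requiring this perturbation, and they are also the cases deferred to the reader in Lemma~\ref{l.twist b}, so a little care is needed to treat them consistently here.
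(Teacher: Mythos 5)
Your overall strategy parallels the paper's (which also cites the canonical form for the perturbation step), but there are two gaps. First, writing $(A,B)$ in the canonical form~\eqref{e.canonical} presupposes $u_A \neq u_B$: otherwise there is no basis carrying $u_A$ to $\R(1,0)$ and $u_B$ to $\R(0,1)$. You never establish this, and at the point where you invoke the canonical form it has not yet been proved. It does in fact follow from freeness—if $u_A$ were a fixed direction of $B$, then $AB$ would have $u_A$ as an eigendirection with eigenvalue the product of two positive numbers, forcing $\tr AB \ge 2$ and contradicting $\tr AB \le -2$—but you must say so (and also treat the case $u_A = s_B$, and the parabolic cases, the same way). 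The paper avoids this entirely for the first assertion: it shows directly that a \emph{straight} pair has $\tr A, \tr B, \tr AB \ge 2$ after sign changes (a consequence of the structure of $\overline{H_0}$ already established in \cite{Yoccoz_SL2R}), hence is not free; together with non-ellipticity of $A$, $B$, this gives ``free $\Rightarrow$ twisted'' without touching the canonical form.

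Second, and more seriously, the specific perturbation $\mu \mapsto \mu+\eps$, $\nu \mapsto \nu+\eps$ with $\alpha,\beta$ held fixed does \emph{not} always land in $H_\id$. The function $\eps \mapsto (\mu+\eps)(\nu+\eps)^{-1} + (\mu+\eps)^{-1}(\nu+\eps)$ is only weakly decreasing: its derivative at $\eps=0$ is $-(\mu-\nu)^2(\mu+\nu)/(\mu\nu)^2$, which vanishes when $\mu = \nu$. So if $\mu=\nu$ and $\tr AB = -2$ exactly (e.g.\ $\mu=\nu=1$, $\alpha=2$, $\beta=-2$), your perturbed pair still has $\tr AB = -2$ and is therefore not in $H_\id$. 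You anticipated a ``bookkeeping'' worry about exactly this degenerate case, but the claim that the strict inequality ``propagates under a small increase of $\mu,\nu$ by continuity'' is what fails. The fix is easy: perturb $\alpha$ as well, say $\alpha \mapsto \alpha(1+\delta)$ with small $\delta>0$, which strictly decreases $\alpha\beta$ and hence $\tr AB$, putting the pair into the set~\eqref{e.two free set}. The direction $\overline{H_\id} \subset \{\text{free}\}$ in your argument is fine as written.
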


\begin{proof}
If $(A,B)$ is straight then, replacing $A$ by $-A$ or $B$ by $-B$ if necessary,
we have $\tr A$, $\tr B$, $\tr AB \ge 2$, so $(A,B)$ cannot be free.

If $(A,B)$ is free, say with $\tr A$, $\tr B \ge 2$, and $\tr AB \le -2$,
then using a canonical basis we see that there exist $(\tilde A, \tilde B)$
arbitrarily close to $(A,B)$ such that
$\tr \tilde A$, $\tr \tilde B > 2$, and $\tr \tilde A \tilde B < -2$.
\end{proof}

\begin{lemma}\label{l.iterate}
Let $(A,B)$ be twisted.
Then exactly one of the following holds:
\begin{enumerate}
\item $(A,AB)$ is twisted.
\item $(BA,B)$ is twisted.
\item $(A,B)$ is free.
\item $AB$ is elliptic.
\end{enumerate}
\end{lemma}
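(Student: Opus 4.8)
The plan is to reduce to the normal form of Lemma~\ref{l.twist b} and then decide each alternative from explicit formulas. First I would note that each of (i)--(iv), and the property ``twisted'', is unchanged under $A\mapsto-A$ or $B\mapsto-B$: being non-elliptic and lying in $\overline{H_0}$ are sign-insensitive, the sign of $\tr A\,\tr B\,\tr AB$ is preserved by each single sign change, and so are the ellipticities of $AB$ and $BA$. So I may assume $\tr A,\tr B\ge2$, and then Lemma~\ref{l.twist b} provides a canonical basis in which
\[
A=\begin{pmatrix}\mu&\alpha\\0&\mu^{-1}\end{pmatrix},\qquad
B=\begin{pmatrix}\nu^{-1}&0\\\beta&\nu\end{pmatrix},
\]
with $\mu,\nu\ge1$, $\gamma:=\alpha\beta<0$, and, after rescaling the first basis vector, $\alpha>0>\beta$. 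Multiplying out, $\tr AB=\tr BA=\mu\nu^{-1}+\mu^{-1}\nu+\gamma=:s$, and $AB,BA\ne\pm\id$ because their off-diagonal entries do not vanish. Since $\tr A,\tr B>0$, \eqref{e.free} shows that $(A,B)$ is free exactly when $s\le-2$, and that $AB$ --- equivalently $BA$, as $\tr BA=\tr AB$ --- is elliptic exactly when $|s|<2$; these two are mutually exclusive and each forbids (i) and (ii). Hence there remain only two cases: $s\le-2$, where (iii) holds and (iv) fails and I must show (i),(ii) both fail; and $s\ge2$, where (iii),(iv) fail and I must show that exactly one of (i),(ii) holds.

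For the heart of the matter I would work in the chart $t\leftrightarrow\R(1,t)$, $\infty\leftrightarrow\R(0,1)$, in which $u_A=0$, $s_A=(\mu^{-1}-\mu)/\alpha\le0$, $u_B=\infty$ and $s_B=\nu\beta/(1-\nu^2)\ge0$, realizing the configuration \eqref{e.twist order}. Solving the fixed-point equation of the Möbius map $AB$ gives $u_{AB},s_{AB}=(\lambda^{\pm1}-c)/(\alpha\nu)$ with $c=\mu\nu^{-1}+\gamma$, where $\lambda$ is the eigenvalue of $AB$ of modulus $\ge1$ (so $\lambda\ge1$ if $s\ge2$, $\lambda\le-1$ if $s\le-2$). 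Three elementary consequences: $u_{AB}s_{AB}=-\beta/(\alpha\mu\nu)>0$, so $u_{AB}$ and $s_{AB}$ lie on the same side of $u_A=0$; $u_{AB}+s_{AB}$ has the sign of $2\nu\mu^{-1}-s$; and $s_A<s_{AB}$, because that inequality reduces to $(\mu^2-1)(\nu^2-1)>\gamma\mu\nu$, which holds since the left side is $\ge0$ and the right side is $<0$. By Lemma~\ref{l.twist a} (using $A,AB\ne\pm\id$), $(A,AB)$ is twisted iff the cyclic order of $u_A,s_A,u_{AB},s_{AB}$ is $u_A,s_{AB},u_{AB},s_A$. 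The pair $(BA,B)$ is then covered by the involution $(A,B)\mapsto(B,A)$, under which ``twisted'' (symmetric by \eqref{e.twist order}), ``free'' and ``$AB$ elliptic'' are preserved and (i),(ii) are interchanged, and which simply swaps $\mu\leftrightarrow\nu$ and keeps $\gamma,s$.

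Putting it together: if $s\le-2$, then $2\nu\mu^{-1}-s\ge2\nu\mu^{-1}+2>0$, so $u_{AB},s_{AB}>0$, while $\lambda\le-1$ forces $u_{AB}\le s_{AB}$; hence the cyclic order of $u_A,s_A,u_{AB},s_{AB}$ is $u_A,u_{AB},s_{AB},s_A$, not the twisted pattern, so $(A,AB)$ is straight and, by the involution, so is $(BA,B)$. If $s\ge2$, then $\mu\nu^{-1}+\mu^{-1}\nu=s-\gamma>2$, so $\mu\ne\nu$, and $\lambda\ge1$ gives $u_{AB}\ge s_{AB}$; when $\mu<\nu$ one has $2\nu\mu^{-1}-s\ge2\nu\mu^{-1}-(\mu\nu^{-1}+\mu^{-1}\nu)=\nu\mu^{-1}-\mu\nu^{-1}>0$, so $u_{AB},s_{AB}>0$ and the order is $u_A,s_{AB},u_{AB},s_A$, i.e.\ $(A,AB)$ is twisted; when $\mu>\nu$ one has $2\nu\mu^{-1}-s\le2\nu\mu^{-1}-2<0$, so $u_{AB},s_{AB}<0$, and since $s_A<s_{AB}\le u_{AB}<0$ the order is $u_A,s_A,s_{AB},u_{AB}$, which is not twisted. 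By the involution $(BA,B)$ is twisted iff $\mu>\nu$, so exactly one of (i),(ii) holds. The degenerate cases --- $\mu=1$ or $\nu=1$ ($A$ or $B$ parabolic), or $s=\pm2$ ($AB,BA$ parabolic, $u_{AB}=s_{AB}$) --- are handled with the same formulas, the orientation data in the $\lesssim$-relations being read off the sign of $\alpha$ and the rotation sense of the parabolic in question, or simply by the case list in the proof of Lemma~\ref{l.twist a}.

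The main point requiring care is the bookkeeping in the middle step: one must locate $u_{AB},s_{AB}$ (and $u_{BA},s_{BA}$) not just relative to $u_A$ but also relative to $s_A$, since a fixed point lying in the short arc $(s_A,u_A)$ can, after reversing the orientation of the circle, still realize the pattern \eqref{e.twist order} --- this is exactly what the inequality $s_A<s_{AB}$ rules out. Checking this, the sign identity for $u_{AB}+s_{AB}$, and the parabolic\,/\,$|\tr|=2$ boundary cases are where the work is, though each individual verification is routine.
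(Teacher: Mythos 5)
Your proposal is correct and takes a genuinely different route from the paper. The paper's proof is qualitative: after normalizing $\tr A,\tr B\ge 2$ and fixing a canonical basis, it handles $\tr A=2$ as a separate case, and in the main case it locates the eigendirections of $AB$ by ruling out intervals (``$AB$ cannot fix a point in $(u_B,s_A)$''; ``a fixed point in $(u_A,s_B)$ would have negative eigenvalue''), reducing to two sub-cases according to whether $u_{AB}\in(s_B,u_B)$ or $u_{AB}\in(s_A,u_A)$, and then analyzes $(A,AB)$ and $(BA,B)$ directly. Your proof instead computes the fixed points of $AB$ explicitly in a fixed chart, extracts three algebraic facts --- $u_{AB}s_{AB}>0$, $\mathrm{sign}(u_{AB}+s_{AB})=\mathrm{sign}(2\nu\mu^{-1}-s)$, and $s_A<s_{AB}$ --- and reads the cyclic order off these signs. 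You also use the involution $(A,B)\mapsto(B,A)$ (which swaps $\mu\leftrightarrow\nu$ and alternatives (i)$\leftrightarrow$(ii)) to deduce the $(BA,B)$ statement from the $(A,AB)$ one, a symmetry the paper does not exploit; this is a nice economy and neatly shows the dichotomy $\mu<\nu$ vs.\ $\mu>\nu$ selects which of (i),(ii) holds, with $\mu=\nu$ excluded because it forces $s<2$. I checked the sign identities and a numerical example and they are correct. The one soft spot is the treatment of the parabolic/boundary cases ($\mu=1$, $\nu=1$, or $|s|=2$): the paper devotes a paragraph to $\tr A=2$, and your claim that the same formulas handle these with the $\lesssim$ orientation data read off signs is plausible but not actually carried out, so that part of your argument is still a sketch; everything else is complete. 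One small misattribution: you cite~\eqref{e.free} for the criterion ``free $\Leftrightarrow s\le-2$'', but~\eqref{e.free} has strict inequalities; the right reference is the definition of ``free'' preceding Lemma~\ref{l.free}, which uses $\ge 2$.
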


\begin{proof}
If (iv) holds then clearly (i), (ii), and (iii) do not hold.
It follows from Proposition~\ref{p.free} and Lemma~\ref{l.free} that
if (iii) holds then (i) and (ii) do not hold.
Thus we only have to prove that if $(A,B)$ is twisted and not free
and if $AB$ is not elliptic then either (i) or (ii) holds.

We can assume that $\tr A$, $\tr B \ge 2$.
Then $\tr AB \ge 2$.
By taking a canonical basis for $(A,B)$, we may assume that
the expressions~\eqref{e.canonical} hold,
where we may choose $\alpha>0$ and $\beta<0$.
Notice that with that basis $u_A$ corresponds to $(1,0)$ and $u_B$ to $(0,1)$.
Let us orient $\P^1$ so that $(1,0)<(1,y)<(0,1)$ if $y>0$.
For this cyclical order, \eqref{e.twist order} holds.
It is easy to see that $AB \neq \pm \id$.

Assume that $\tr A=2$.
Then $\tr B>2$, otherwise we would have $\tr AB < 2$. 
First, let us locate the fixed points of the projective action of $AB$.
It is easy to see that there is no fixed point in $[u_B,u_A]$ .
If there were a fixed point of $AB$ in $[u_A, s_B]$
then the associated eigenvalue would be negative, contradicting $\tr AB \ge 2$.
So $u_{AB}$, $s_{AB} \in (s_B,u_B)$.
It easily follows that
$$
u_A < s_{AB} \lesssim u_{AB} < s_A \lesssim u_A,
$$
and so, by Lemma~\ref{l.twist a}, $(A,AB)$ is twisted.
We have $u_{BA} = B u_{AB}$, $s_{BA} = B s_{AB} \in (s_B,u_B)$.
Notice that $(u_{BA},u_B)$ is an invariant interval for $(BA,B)$
so that $(BA,B)$ is straight.
This shows that the lemma holds if $\tr A=2$.
The same argument gives the case $\tr B=2$.

We assume from now on that $\tr A$, $\tr B>2$.
In this case we have
$u_A<s_B<u_B<s_A<u_A$.

Let us locate the eigendirections of $AB$.
None can belong to $\{u_A,u_B,s_A,s_B\}$.
It is immediate that $AB$ cannot have a fixed point in the interval $(u_B, s_A)$.
Neither can $AB$ have a fixed point in $(u_A, s_B)$,
because otherwise the associated eigenvalue would be negative, contrary to the assumptions.
So each eigendirection of $AB$ must be in one of the intervals
$(s_A, u_A)$ and $(s_B, u_B)$.

Consider the case that $u_{AB}$ belongs to $(s_B, u_B)$.
Observe that $BA$ sends $s_B$ into the interval $(u_B, s_B)$.
It follows that $s_{AB}$ also belongs to $(s_B, u_B)$, and also
$$
u_A < s_{AB} \lesssim u_{AB} < s_A < u_A.
$$
So $(A,AB)$ is twisted, by Lemma~\ref{l.twist a}.
The points $u_{BA} = B u_{AB}$ and $s_{BA} = B s_{AB}$ also belong to $(s_B, u_B)$.
The interval $(u_{BA}, u_B)$ is invariant for $BA$ and $B$,
so $(BA,B)$ is straight.

In the case that $u_{AB}$ belongs to $(s_A, u_A)$,
then $u_{BA} = A^{-1} u_{AB}$ also belongs to the same interval.
It follows as in the last case (interchanging the roles of $A$ and $B$) that
$(BA,B)$ is twisted and $(A,AB)$ is straight.
\end{proof}

\subsection{Dynamics of the Monoid}\label{ss.dynamics}

Let $I(A,B)=(\tr A,\tr B,\tr AB)$ and let
\begin{align*}
\phi_+(x,y,z) &=(x,z,xz-y),\\
\phi_-(x,y,z) &=(z,y,yz-x),\\
j(x,y,z)   &= x^2+y^2+z^2-xyz,
\end{align*}

\begin{prop}\label{p.invariant}
We have $I \circ F_\pm = \phi_\pm \circ I$ and $j \circ \phi_\pm=j$.
\end{prop}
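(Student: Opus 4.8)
The plan is a direct computation, resting only on the elementary trace identities in $\SL(2,\R)$. The key one is the consequence of Cayley--Hamilton: for $M\in\SL(2,\R)$ we have $M^2=(\tr M)\,M-\id$, so that for any $N\in\SL(2,\R)$,
\begin{equation*}
\tr(M^2N)=(\tr M)(\tr MN)-\tr N ,
\end{equation*}
and together with the cyclicity $\tr(XY)=\tr(YX)$ this is all that is needed.

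First I would check $I\circ F_+=\phi_+\circ I$. Since $F_+(A,B)=(A,AB)$,
\begin{equation*}
I(F_+(A,B))=\bigl(\tr A,\ \tr(AB),\ \tr(A\cdot AB)\bigr),
\end{equation*}
and $\tr(A\cdot AB)=\tr(A^2B)=(\tr A)(\tr AB)-\tr B$ by the identity above; the right-hand side is precisely $\phi_+\bigl(\tr A,\tr B,\tr AB\bigr)$. For $F_-(A,B)=(BA,B)$ one uses $\tr(BA)=\tr(AB)$ and $\tr(BA\cdot B)=\tr(AB^2)=(\tr B)(\tr AB)-\tr A$, which gives
\begin{equation*}
I(F_-(A,B))=\bigl(\tr(AB),\ \tr B,\ \tr(AB^2)\bigr)=\phi_-\bigl(\tr A,\tr B,\tr AB\bigr).
\end{equation*}

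It remains to see $j\circ\phi_\pm=j$. For $\phi_+$ this is the one-line expansion
\begin{equation*}
j(x,z,xz-y)=x^2+z^2+(xz-y)^2-xz(xz-y)=x^2+y^2+z^2-xyz ,
\end{equation*}
the terms $x^2z^2$ and $xyz$ cancelling. For $\phi_-$ one may repeat the same expansion, or observe that $j$ is symmetric under all permutations of its three variables and that $\phi_-=\tau\circ\phi_+\circ\tau$ with $\tau(x,y,z)=(y,x,z)$; since $j\circ\tau=j$, invariance under $\phi_-$ follows from invariance under $\phi_+$. There is no genuine obstacle in this argument: the statement is a routine verification, and the only care required is keeping the bookkeeping of the definitions of $F_\pm$, $\phi_\pm$, $I$ straight (in particular the use of $\tr(BA)=\tr(AB)$). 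We note in passing that the content of the proposition is that $\cM$ acts on $\G^2$ compatibly, via $I$, with the polynomial maps $\phi_\pm$ on trace space, and that $j$, which equals $\tr[A,B]+2$ when evaluated at $I(A,B)$, is the classical invariant of this action.
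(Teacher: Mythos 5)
Your proof is correct and takes essentially the same approach as the paper, which simply cites the identity $\tr A^2B = \tr A\,\tr AB - \tr B$ for the first assertion and notes the second is straightforward; you have merely filled in the routine computations (and your symmetry observation for $\phi_-$ and the Fricke identity remark are accurate).
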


\begin{proof}
The first assertion follows from the identity
$\tr A^2 B = \tr A \; \tr AB - \tr B$.
The second one is straightforward.
\end{proof}

Let $J = j \circ I$.

Let $(A,B)$ be twisted with $\tr A \geq 2$ and $\tr B \geq 2$,
so that in a canonical basis
$$
A=\begin{pmatrix} \mu&\alpha\\ 0&\mu^{-1} \end{pmatrix}, \quad
B=\begin{pmatrix} \nu^{-1}&0\\ \beta&\nu \end{pmatrix},
$$
with $\mu \geq 1$, $\nu \geq 1$ and $\gamma \equiv \alpha\beta<0$.
Then $\tr AB = \mu \nu^{-1}+\mu^{-1} \nu+\gamma$.
Thus
\begin{equation}\label{e.nonincreasing}
\tr AB \leq \max (\tr A,\tr B)+\gamma < \max (\tr A,\tr B).
\end{equation}
Moreover, we have
$$
J(A,B) = 4+\gamma^2-\gamma(\mu-\mu^{-1})(\nu-\nu^{-1})>4.
$$

Let us say that $(A,B)$ is \emph{almost hyperbolic} if $F(A,B)$ is a pair of
non-elliptic matrices for every $F \in \cM$.  The following is the key fact
we need about the action of $F$:

\begin{lemma}\label{l.stop}
Let $(A,B)$ be almost hyperbolic and twisted.
Then there exists a unique $F \in \cM$ such that the pair $F(A,B)$ is free.
Moreover, the length of $F$ in terms of the generators $F_+$, $F_-$ is
$\le \frac{1}{4} \left(|\tr A| + |\tr B|\right) - 1$.
\end{lemma}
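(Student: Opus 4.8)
\emph{Overview of the plan.} The idea is to realise $F$ as the output of a deterministic ``reduction algorithm'' on twisted pairs, prove that the algorithm terminates, and control its length by a trace monotonicity property. Up to replacing $A,B$ by $\pm A,\pm B$ we may assume $\tr A,\tr B\ge 2$: this changes neither the hypotheses, the conclusion, nor $|\tr A|+|\tr B|$, since almost hyperbolicity, twistedness, freeness, and the $\cM$-orbit structure all depend only on $|\tr(\cdot)|$ and on eigendirections. Observe that $G(A,B)$ is again almost hyperbolic for every $G\in\cM$ (because $\cM\circ G\subseteq\cM$); in particular no product arising along the $\cM$-orbit of $(A,B)$ is elliptic. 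Hence, whenever a pair $(A',B')$ in this orbit is twisted but not free, cases (iii) and (iv) of Lemma~\ref{l.iterate} are excluded, so exactly one of $F_+(A',B')=(A',A'B')$, $F_-(A',B')=(B'A',B')$ is twisted; I call that generator the \emph{reduction step}. By the analysis inside the proof of Lemma~\ref{l.iterate}, the pair produced by a reduction step again has both traces $\ge 2$ (its product trace being $\ge 2$). Iterating from $(A,B)$ yields a sequence $P_0=(A,B),P_1,P_2,\dots$ of twisted pairs with all traces $\ge 2$, well defined as long as no $P_i$ is free.

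\emph{Existence and uniqueness.} Suppose $G=F_{\epsilon_m}\circ\cdots\circ F_{\epsilon_1}\in\cM$ has $G(A,B)$ free, and set $Q_i=F_{\epsilon_i}\circ\cdots\circ F_{\epsilon_1}(A,B)$. Each $Q_i$ is almost hyperbolic, hence a pair of non-elliptic matrices, hence either straight or twisted. A straight pair stays straight under $F_\pm$, while $Q_m$ is free, hence twisted by Lemma~\ref{l.free} and in particular not straight; so no $Q_i$ is straight, i.e.\ every $Q_i$ is twisted. If some $Q_i$ with $i<m$ were free, Lemma~\ref{l.iterate} (case (iii)) applied to $Q_i$ would make $Q_{i+1}$ neither (i) nor (ii), hence not twisted — impossible. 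So each $Q_i$ with $i<m$ is twisted non-free, and then Lemma~\ref{l.iterate} determines $\epsilon_{i+1}$ uniquely; thus $(Q_i)$ is exactly the reduction sequence $(P_i)$ and $Q_m$ is its first free term. This simultaneously proves that the reduction sequence reaches a free pair (existence of $F$) and that $F$ is unique.

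\emph{The length bound.} Since both traces stay $\ge 2$, every $P_i=(A',B')$ has $\tr A'+\tr B'\ge 4$; so it suffices to show that each reduction step decreases $\tr A'+\tr B'$ by at least $4$, for then after $k$ steps $4\le \tr A+\tr B-4k$, i.e.\ $k\le\frac14(\tr A+\tr B)-1=\frac14(|\tr A|+|\tr B|)-1$ (this also reproves termination). Fix a twisted non-free $P_i=(A',B')$; by the $A'\leftrightarrow B'$ symmetry swapping $F_+$ with $F_-$, assume $\tr A'\ge\tr B'$, so the reduction step is $F_-$ and the decrease equals $\tr A'-\tr A'B'$. In a canonical basis for $(A',B')$, with $\mu\ge\nu\ge 1$ and $\gamma=\alpha\beta<0$, inequality~\eqref{e.nonincreasing} gives $\tr A'B'\le\tr A'+\gamma=\tr A'-|\gamma|$, so the decrease is at least $|\gamma|$; and since one also checks $\tr A'B'<\tr B'$ from the canonical form, the decrease exceeds $\tr A'-\tr B'$ as well. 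The remaining and essential point is that the decrease is in fact $\ge 4$, and here almost hyperbolicity is genuinely used: if $\tr A'-\tr A'B'<4$ then $|\gamma|<4$ and $\tr A'$ is close to $\tr B'$, and iterating $\phi_\pm$ on the trace triple $(\tr A',\tr B',\tr A'B')$ — whose $j$-value is fixed by Proposition~\ref{p.invariant} — produces, within a bounded number of further steps, a product of $A'$'s and $B'$'s whose trace lies in $(-2,2)$, contradicting almost hyperbolicity.

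\emph{Main obstacle.} Everything is soft except this last estimate. The bound $\tr A'-\tr A'B'\ge|\gamma|$ from~\eqref{e.nonincreasing} does not suffice, because nothing a priori bounds $|\gamma|$ from below on a single pair; the point is precisely that a reduction step with small trace gap forces, a controlled number of reductions later, the trace triple onto the ``elliptic'' region $(-2,2)$, which almost hyperbolicity forbids. Turning this heuristic into a clean finite case analysis (splitting according to whether $A'$ or $B'$ is parabolic, and tracking how $|\gamma|$ and the trace gap evolve under $\phi_\pm$) is the technical heart of the proof; once it is in place, the $\ge 4$ decrease gives both termination and the stated length bound.
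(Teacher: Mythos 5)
Your existence/uniqueness argument is essentially the paper's: Lemma~\ref{l.iterate} plus almost hyperbolicity forces a unique reduction step whenever the pair is twisted and not free, and any $F$ with $F(A,B)$ free must trace out exactly that deterministic sequence. That part is fine.

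The length bound, however, has a genuine gap, and you acknowledge as much. You want to show that each reduction step decreases $t_k=\tr A_k+\tr B_k$ by at least $4$, and you propose to prove it \emph{locally}: a step with small decrease should force, within a bounded number of further reductions, an elliptic trace triple. This is not established, and it is not how the paper proceeds. The paper's argument is \emph{global and backward}: it first shows (via Proposition~\ref{p.invariant} and the identity~\eqref{e.laplacian}) that the second difference $\Delta_k=t_{k+1}-2t_k+t_{k-1}$ is always $\ge 0$, i.e.\ $k\mapsto t_k$ is convex; then, at the step $N$ where the pair first becomes free, one has $\tr A_N,\tr B_N\ge 2$ and $\tr A_NB_N\le -2$, hence $t_{N+1}-t_N\le -4$; convexity (non-decreasing first differences) then propagates this bound \emph{backward} to give $t_k-t_{k-1}\le -4$ for all $1\le k\le N$, which yields $t_0\ge 4N+4$. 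Termination itself (the existence of such an $N$) is proved separately: assuming the reduction sequence is infinite, boundedness and convexity force $\Delta_k\to 0$ and $\lim m_k=2$, and a case analysis on $\lim M_k$ — one branch using $\gamma_k<0$, the other the invariant $J(A_k,B_k)=J(A,B)>4$ — gives a contradiction. None of this reduces to a per-step local estimate.

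Two further concrete issues with your sketch. First, the intermediate claim ``$\tr A'B'<\tr B'$ from the canonical form'' is false in general: with $\mu$ large and $\nu$ near $1$ (e.g.\ $\mu=3$, $\nu=1$, $\gamma=-0.1$) one gets $\tr A'B'=\mu\nu^{-1}+\mu^{-1}\nu+\gamma>\tr B'$. Inequality~\eqref{e.nonincreasing} only controls $\tr A'B'$ by $\max(\tr A',\tr B')+\gamma$, not by the \emph{smaller} of the two traces. Second, the reduction step ($F_+$ versus $F_-$) is determined by the position of $u_{A'B'}$ relative to the eigendirections of $A'$, $B'$ (see the proof of Lemma~\ref{l.iterate}), not by the ordering of $\tr A'$ and $\tr B'$; your normalization ``assume $\tr A'\ge\tr B'$, so the step is $F_-$'' conflates two independent symmetries. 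I recommend adopting the paper's convexity argument: establish $\Delta_k\ge (m_k-2)M_k\ge 0$ from the explicit formula~\eqref{e.laplacian}, derive termination from boundedness of $t_k$ together with $J>4$, and obtain the length bound by propagating the final step's decrease backward.
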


\begin{proof}
We may assume that $\tr A \ge 2$ and $\tr B \ge 2$.
Let $(A_0, B_0) = (A,B)$.
Assume that it was defined an almost hyperbolic and twisted
pair $(A_k,B_k)$, for some $k>0$.
Then, by Lemma~\ref{l.iterate}, there are 3 possibilities:
\begin{equation} \label{e.options}
\text{either $F_+(A_k,B_k)$ is twisted, or $F_-(A_k,B_k)$ is twisted,
or $(A_k, B_k)$ is free.}
\end{equation}
In the first, resp.~second, alternative we set
$\eps_k =+$, resp.~$\eps_k = -$, and
$(A_{k+1}, B_{k+1})= F_{\eps_k}(A_k,B_k)$.

\medskip

We claim that the third alternative in~\eqref{e.options}
holds for some $k>0$.
If not, we have an (infinite) sequence
of twisted pairs $(A_k, B_k)$.
Then $\tr A_k \ge 2$ and $\tr B_k \ge 2$ for all $k \geq 0$.
In a canonical basis we have
$$
A_k=\begin{pmatrix} \mu_k&\alpha_k\\ 0&\mu^{-1}_k \end{pmatrix}, \quad
B_k=\begin{pmatrix} \nu^{-1}_k&0 \\ \beta_k&\nu_k \end{pmatrix}.
$$
Define sequences
$$
M_k = \max( \tr A_k,\tr B_k), \quad
m_k = \min( \tr A_k,\tr B_k), \quad \text{and} \quad
t_k=\tr A_k+\tr B_k
$$
Since $(A_k,B_k)$ is twisted, $\gamma_k=\alpha_k \beta_k<0$.
So, by~\eqref{e.nonincreasing}, $\{M_k\}$ is non-increasing.

Let also
$$
\Delta_k = t_{k+1} -2t_k + t_{k-1}, \quad k>0.
$$
Using Proposition~\ref{p.invariant}, one easily checks that
\begin{equation}\label{e.laplacian}
\Delta_k =
\begin{cases}
(\tr A_k - 2) \tr B_k           &\text{if $(\epsilon_k, \epsilon_{k+1}) = (+,+)$,}\\
(\tr B_k - 2) \tr A_k           &\text{if $(\epsilon_k, \epsilon_{k+1}) = (-,-)$,}\\
\tr A_k \tr B_k-\tr A_k-\tr B_k &\text{if $(\epsilon_k, \epsilon_{k+1}) = (-,+)$ or $(+,-)$.}
\end{cases}
\end{equation}
In particular, $\Delta_k \ge (m_k - 2) M_k \ge 0$,
so the function $k \mapsto t_k$ is convex.
Since $4 \le t_k \le 2 M_0$, we conclude that
$\{t_k\}$ is non-increasing and $\Delta_k \to 0$
(indeed $\sum \Delta_k<\infty$).
It follows that $\lim m_k = 2$.
The proof now splits in two cases:

First case: $\lim M_k > 2$.
Assume $\lim \tr A_k =2$ and $\lim \tr B_k>2$ (the other possibility being analogous).
We get from~\eqref{e.laplacian} that $\epsilon_k=+$ for all $k$ big enough.
Thus $A_{k+1}=A_k$ for all big $k$ and $\tr A_k=2$ for big $k$.
So $\Delta_k=0$ for big $k$.
Since $\{t_k\}$ is bounded we have, for all big $k$, that
$t_{k+1}=t_k$ and hence $\tr B_{k+1}=\tr B_k$.
But $\tr B_{k+1}=\tr B_k+\gamma_k<\tr B_k$ for big $k$, contradiction.

Second case: $\lim M_k = 2$.
Then $\tr A_k$, $\tr B_k$, $\tr A_k B_k \to 2$, so
$J(A_k,B_k) \to 4$.
This contradicts $J(A_k,B_k)=J(A,B)>4$.

\medskip

We conclude that the third alternative in~\eqref{e.options} holds
for some $k=N$, say.
That is, if $F = F_{\eps_{N-1}} \circ \cdots F_{\eps_0}$
then $F(A,B)$ is free.
Such $F \in \cM$ is unique.
Indeed, if $0\le j <N$ and $\delta \neq \eps_j$
then $F_{\delta}\circ F_{\eps_{j - 1}} \cdots F_{\eps_0} (A,B)$
is straight.
(This follows from uniqueness in Lemma~\ref{l.iterate}.)
And $F_+ (F(A,B))$ and $F_-(F(A,B))$ are also straight.

\medskip

To complete the proof, we have to bound $N$.
Since $\tr A_N$, $\tr B_N \ge 2$, and $\tr A_N B_N \le -2$,
we have $t_{N+1} - t_N \le -4$.
For $1 \le k \le N$  we have $\Delta_k \ge 0$ and so $t_k - t_{k-1} \le -4$.
Thus $t_0 \ge 4N + t_N \ge 4N+4$, so
$N \le \frac{1}{4}t_0 - 1$, as claimed.
\end{proof}

Now we can give the:
\begin{proof}[Proof of Proposition~\ref{p.disjoint}]
First, $\overline{H_0} \cap \overline{H_\id} = \emptyset$,
and since $F\left(\overline{H_0}\right) \subset \overline{H_0}$,
we have $\overline{H_0} \cap \overline{H_F} = \emptyset$ for any $F\in \cM$.
By Proposition~\ref{p.is hyperbolic}, we have
$$
\overline{H_0} \sqcup \bigcup_{F \in \cM} \overline{H_F}
\ \subset \  \overline{\cH} \  \subset \  \cE^c.
$$
On the other hand, let $(A,B) \in \cE^c$.
If the pair $(A,B)$ is straight, then it belongs to $\overline{H_0}$.
If it is not, then it is twisted and almost hyperbolic.
So Lemma~\ref{l.stop} gives that there exists $F \in \cM$ such that
$(A,B) \in \overline{H_F}$.
Moreover, $F$ is unique.
This shows that the sets $\overline{H_F}$ are disjoint,
so the first assertion in the proposition is proved.
The second one follows from the length estimate in Lemma~\ref{l.stop}.
\end{proof}

\subsection{Conclusion of the Proofs}\label{ss.conclusion}

\begin{proof}[Proof of Theorems \ref{t.components}, \ref{t.full2 boundary},
and \ref{t.H and E}]
First let us see that
\begin{equation}\label{e.H}
\cH = H_0 \sqcup \bigsqcup_{F \in \cM} H_F \, .
\end{equation}
The $\supset$ inclusion follows from Proposition~\ref{p.is hyperbolic}.
To show the other inclusion, it suffices, by Proposition~\ref{p.disjoint},
to show that $\partial H_0$, $\partial H_F \subset \cH^c$ for all $F\in\cM$.

The boundary of $H_0$ is described by Proposition~4 in \cite{Yoccoz_SL2R}:
if $(A,B)$ belongs to it then
either $A$ is parabolic or $B$ is parabolic or $u_A= s_B$ or $u_B = s_A$.
In any case, $(A,B) \in \cH^c$.

By definition of $H_\id$, if $(A,B)$ belongs to its boundary then
at least one of $A$, $B$, or $AB$ is parabolic.
It follows that if $(A,B) \in \partial H_F$ then there is
a product of $A$'s and $B$'s which is parabolic.
In particular,  $(A,B) \in \cH^c$.

We have proved equality~\eqref{e.H} and hence Theorem~\ref{t.components}.

Notice that the four principal components have disjoint boundaries,
and so do the eight free components (this follows easily from
Proposition~\ref{p.free}.)
So, by Proposition~\ref{p.disjoint},
the boundaries of the components of $\cH$ are disjoint,
and a compact set in $\G^2$ intersects only
a finite number of components.
It follows that the union of those boundaries gives all of $\partial \cH$.
This completes the proof of Theorem~\ref{t.full2 boundary}.

We have also shown that $\G^2 = \cE \sqcup \overline{\cH}$.
To complete the proof of Theorem~\ref{t.H and E},
it suffices to show that $\cH^c \subset \overline{\cE}$.
That is an immediate consequence of Lemma~2 from \cite{Yoccoz_SL2R}.
\end{proof}

\begin{rem}
Our proof of Theorem~\ref{t.components} also gave an algorithm to decide
whether a pair $(A,B) \in \G^2$ is uniformly hyperbolic or not (w.r.t.~the full $2$-shift).
Namely:
first, check if both $A$ and $B$ are hyperbolic;
second, compute eigendirections of $A$, $B$ to see if the pair belongs to a principal component;
third, repeat the first step for all pairs
$F_{\epsilon_k} \circ \cdots \circ F_{\epsilon_1}(A,B)$, with
$k \le \frac{1}{2} \max\{|\tr A|, |\tr B|\} - 1$.
(By the way, this third step can be done without actually computing matrix products,
if we use Proposition~\ref{p.invariant} instead.)
The algorithm ends in ``finite time'';
moreover, given an upper bound for the size of the matrices,
an upper bound for the ``running time'' of the algorithm can be given explicitly.
An example of \S\ref{ss.bifurcation} (see Proposition~\ref{p.bifurcation})
shows that the situation for the full $3$-shift is much more complicated.
\end{rem}

%

\subsection{Description of the Multicones}\label{ss.full2 combin}

Here we will give another proof of Proposition~\ref{p.is hyperbolic},
and also obtain an explicit description of the multicones for the twisted hyperbolic components.

\subsubsection{}
Let $\cM^*$ be the monoid on the generators $F_+$, $F_-$ operating on words in $A$, $B$
by the substitutions
\begin{alignat*}{2}
F_+: \quad A &\mapsto A,  &\quad B &\mapsto AB \\
F_-: \quad A &\mapsto BA, &\quad B &\mapsto B.
\end{alignat*}
(The monoid $\cM^*$ is opposite to the previously introduced $\cM$.)
We identify $\cM^*$ with $\Q \cap (0,1)$ via the canonical bijection $j$:
for $F\in \cM^*$, $j(F) = \nicefrac{p}{q}$ if $F(AB)$ has length $q$
and contains $p$ times the letter $B$.
We have $j(\id_{\cM^*}) = \nicefrac{1}{2}$.

\subsubsection{}
For $F\in\cM^*$, with $j(F) = \nicefrac{p}{q}$, denote by $O(\nicefrac{p}{q})$ the set
of words of length $q$ deduced from $F(AB)$ by cyclic permutation.
This set can also be described in the following way:
consider the map $R_{p/q} : [0,1) \to [0,1)$,
$x \mapsto x + \nicefrac{p}{q} \bmod{1}$;
set $\theta(x) = A$ if $x\in [0,1-\nicefrac{p}{q})$ and
$\theta(x) = B$ if $x\in [1-\nicefrac{p}{q},1)$;
set $\Theta(x) = (\theta(R_{\nicefrac{p}{q}}^i (x)))_{0\le i<q}$;
the image of $\Theta$ is $O(\nicefrac{p}{q})$.

In $O(\nicefrac{p}{q})$, the first word by lexicographical order is $\Theta(0)$,
the second one is $\Theta(\nicefrac{1}{q})$ and so on until
the last word $\Theta(1-\nicefrac{1}{q})$.

\subsubsection{}
Let $F \in \cM^*$, with $j(F) = \nicefrac{p}{q}$;
let $[\nicefrac{p_0}{q_0},\nicefrac{p_1}{q_1}]$ be the Farey interval with center $\nicefrac{p}{q}$.
Recall that
\begin{equation} \label{e.farey}
p_0 + p_1 = p, \quad
q_0 + q_1 = q, \quad
p_1 q_0 - p_0 q_1 = 1.
\end{equation}
Then $O(\nicefrac{p_0}{q_0})$ is the set of words deduced from $F(A)$ by cyclic permutation,
and $O(\nicefrac{p_1}{q_1})$ is similarly the set of words deduced from $F(B)$ by cyclic permutation.
Here, we extend the definition of $O(\nicefrac{p}{q})$ setting
$O(\nicefrac{0}{1})=\{A\}$ and $O(\nicefrac{1}{1})=\{B\}$.

It follows from \eqref{e.farey} that 
$R_{p/q}^{q_1}(0) = R_{p/q}^{-q_0}(0) =  1-\nicefrac{1}{q}$.
Set
\begin{align*}
O_1(\nicefrac{p}{q})
&= \{\Theta(R^i_{p/q}(0)); \; 0< i < q_1\}, \\
O_0(\nicefrac{p}{q})
&= \{\Theta(R^{-i}_{p/q}(0); \; 0 < i < q_0\};
\end{align*}
we have thus defined a partition of $O(\nicefrac{p}{q}) \setminus \{\Theta(0), \Theta(1-\nicefrac{1}{q})\}$.

\subsubsection{}
Let $F$, $\nicefrac{p}{q}$, $\nicefrac{p_0}{q_0}$, $\nicefrac{p_1}{q_1}$ be as above.
We define a cyclical order on
$O(\nicefrac{p}{q}) \sqcup O(\nicefrac{p_0}{q_0}) \sqcup O(\nicefrac{p_1}{q_1})$.

For this cyclical order, the two sets
$O(\nicefrac{p}{q})$ and $O(\nicefrac{p_0}{q_0}) \sqcup O(\nicefrac{p_1}{q_1})$,
both of cardinality $q$, alternate.
The two intervals bounded by
$\Theta(0)$ and $\Theta(1-\nicefrac{1}{q})$
are
$O(\nicefrac{p_1}{q_1}) \sqcup O_1(\nicefrac{p}{q})$
and
$O(\nicefrac{p_0}{q_0}) \sqcup O_0(\nicefrac{p}{q})$;
morevover the element that succeds $\Theta(0)$ is in the former interval.
The order induced on $O(\nicefrac{p_1}{q_1})$ or $O_1(\nicefrac{p}{q})$
is the lexicographical order, while
the order induced on $O(\nicefrac{p_0}{q_0})$ or $O_o(\nicefrac{p}{q})$
in the antilexicographical order.
See Figure~\ref{f.order} with $\nicefrac{p}{q} = \nicefrac{2}{5}$.

\begin{figure}[hbt]
\begin{center}
\psfrag{0}{{\tiny $BABAA$}}
\psfrag{1}{{\tiny $BA$}}
\psfrag{2}{{\tiny $ABABA$}}
\psfrag{3}{{\tiny $AB$}}
\psfrag{4}{{\tiny $AABAB$}}
\psfrag{5}{{\tiny $AAB$}}
\psfrag{6}{{\tiny $ABAAB$}}
\psfrag{7}{{\tiny $ABA$}}
\psfrag{8}{{\tiny $BAABA$}}
\psfrag{9}{{\tiny $BAA$}}
\psfrag{A}{{\tiny $A$}}
\psfrag{B}{{\tiny $B$}}
\includegraphics[width=4.5cm]{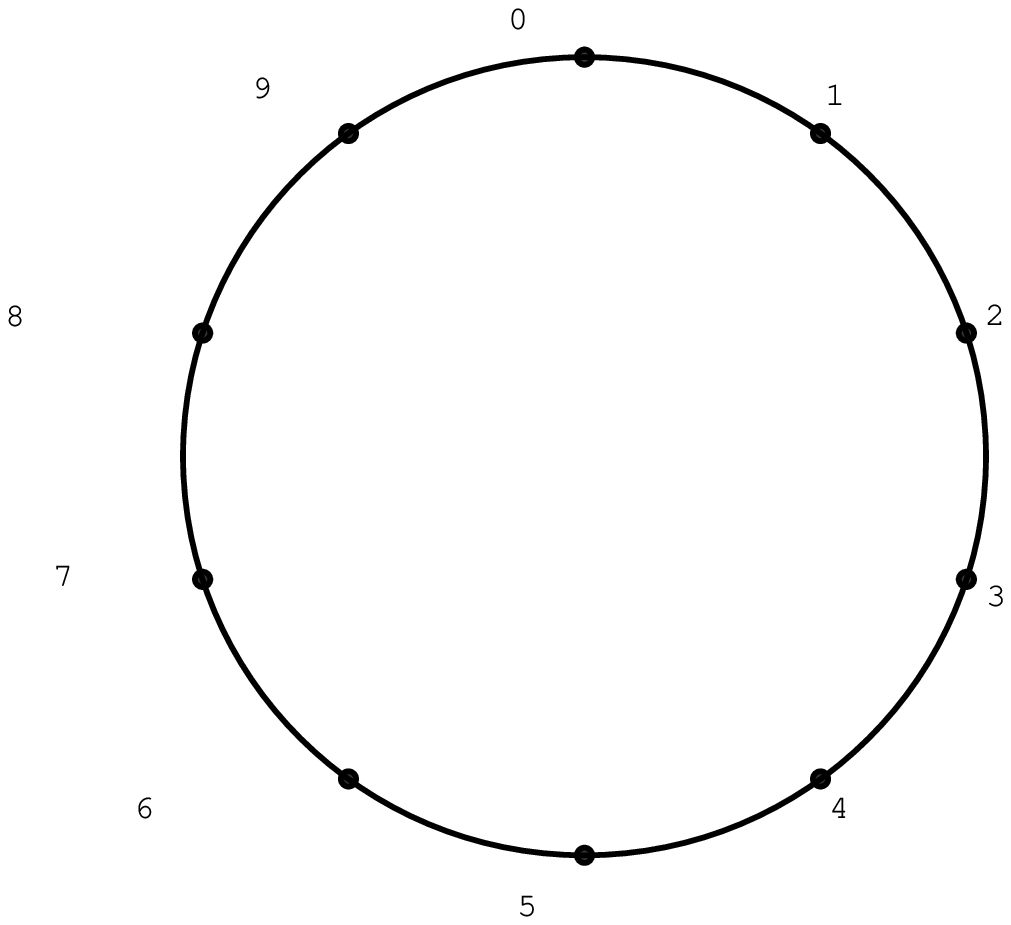}
\caption{{\small Order on $O(\nicefrac{2}{5}) \sqcup O(\nicefrac{1}{3}) \sqcup O(\nicefrac{1}{2})$.}}
\label{f.order}
\end{center}
\end{figure}

Let us give a more explicit description of this cyclical order:
\begin{lemma}
Let $\omega$ be an element in $O(\nicefrac{p}{q})$,
and denote by $\omega^-$, $\omega^+$ the elements
(in $O(\nicefrac{p_0}{q_0}) \sqcup O(\nicefrac{p_1}{q_1})$)
which are immediately before and after $\omega$ for the cyclical order.
Denote by $\Theta_0$, $\Theta_1$ the maps defined as $\Theta$ with respect to
$\nicefrac{p_0}{q_0}$, $\nicefrac{p_1}{q_1}$.
Then the following holds:
\begin{itemize}
\item If $\omega = \Theta(R^i_{p/q}(0))$ with $0\le i < q_1$ then
$\omega^+ = \Theta_1(R^i_{p_1/q_1}(0))$;

\item if $\omega = \Theta(R^i_{p/q}(1-\nicefrac{1}{q}))$ with $0\le i < q_0$ then
$\omega^+ = \Theta_0(R^i_{p_0/q_0}(1-\nicefrac{1}{q_0}))$;

\item if $\omega = \Theta(R^{-i}_{p/q}(0))$ with $0\le i < q_0$ then
$\omega^- = \Theta_0(R^{-i}_{p_0/q_0}(0))$;

\item if $\omega = \Theta(R^{-i}_{p/q}(1-\nicefrac{1}{q}))$ with $0\le i < q_1$ then
$\omega^- = \Theta_1(R^{-i}_{p_1/q_1}(1-\nicefrac{1}{q_1}))$.
\end{itemize}
\end{lemma}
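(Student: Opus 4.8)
The statement is a purely combinatorial identity about the cyclical order constructed in the preceding paragraphs, so the plan is to reduce everything to the description via the rotation $R_{p/q}$ and the coding maps $\Theta$, $\Theta_0$, $\Theta_1$. First I would recall that the cyclical order on $O(\nicefrac pq) \sqcup O(\nicefrac{p_0}{q_0}) \sqcup O(\nicefrac{p_1}{q_1})$ is defined so that $O(\nicefrac pq)$ and $O(\nicefrac{p_0}{q_0}) \sqcup O(\nicefrac{p_1}{q_1})$ alternate, with the two arcs cut off by $\Theta(0)$ and $\Theta(1-\nicefrac1q)$ being $O(\nicefrac{p_1}{q_1}) \sqcup O_1(\nicefrac pq)$ and $O(\nicefrac{p_0}{q_0}) \sqcup O_0(\nicefrac pq)$, the successor of $\Theta(0)$ lying in the former, and with each arc carrying the lexicographic (resp.\ antilexicographic) order. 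So the content of the lemma is just to track, for a given $\omega \in O(\nicefrac pq)$, which element of $O(\nicefrac{p_0}{q_0}) \sqcup O(\nicefrac{p_1}{q_1})$ sits between $\omega$ and the next element of $O(\nicefrac pq)$ on either side.

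The key computational input is the three-distance / Farey structure: from \eqref{e.farey} one has $R_{p/q}^{q_1}(0) = R_{p/q}^{-q_0}(0) = 1-\nicefrac1q$, and the points $\{R_{p/q}^i(0)\}_{0 \le i < q}$ are exactly the points $\{k/q\}$. Likewise $\{R_{p_1/q_1}^i(0)\}_{0 \le i < q_1}$ are the points $\{k/q_1\}$ and $\{R_{p_0/q_0}^{-i}(0)\}_{0 \le i < q_0}$ the points $\{k/q_0\}$. I would set up an explicit order-preserving identification between the abstract cyclical order on the three $O$-sets and a genuine cyclic arrangement of $q + q_0 + q_1 = 2q$ points on the circle: put the $q$ points of $O(\nicefrac pq)$ at the ``midpoints'' between consecutive $k/q$, and interleave the $q_1$ points of $O(\nicefrac{p_1}{q_1})$ and the $q_0$ points of $O(\nicefrac{p_0}{q_0})$ at the points $k/q$ themselves, splitting those $q$ slots according to whether $R_{p/q}^i(0)$ lies in the ``$\Theta(0)$-side arc'' $\{R_{p/q}^i(0): 0 \le i < q_1\}$ or the ``$\Theta(1-\nicefrac1q)$-side arc'' $\{R_{p/q}^{-i}(0): 0 \le i < q_0\}$. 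Checking that the lexicographic order on $O(\nicefrac{p_1}{q_1})$ corresponds to the order of $\{R_{p_1/q_1}^i(0)\}$ and antilexicographic on $O(\nicefrac{p_0}{q_0})$ to $\{R_{p_0/q_0}^{-i}(0)\}$ is a direct unwinding of the definition of $\Theta$ as the itinerary coding of a rotation (itineraries of rotations are ordered the same as the points, for the half-open partition chosen). Once this dictionary is in place, each of the four bullets is the statement that the neighbour of the midpoint ``after $R_{p/q}^i(0)$'' is the lattice point $R_{p_1/q_1}^i(0)$ (first bullet), and symmetrically for the other three, which is immediate from the interleaving.

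Concretely I would organize the proof as: (1) record the Farey relations \eqref{e.farey} and deduce $R_{p/q}^{q_1}(0) = R_{p/q}^{-q_0}(0) = 1-\nicefrac1q$ and the analogous facts for $q_0, q_1$; (2) prove the order-coding lemma — for a rotation by an irrational-like rational with the given half-open partition, $x \mapsto \Theta(x)$ is a cyclic-order isomorphism onto its image with the lexicographic order — this handles the ``order induced is lexicographic / antilexicographic'' assertions uniformly; (3) build the explicit cyclic model of $2q$ points described above and verify it satisfies the defining properties of the cyclical order (alternation, the two arcs, successor of $\Theta(0)$); (4) read off the four bullets from the model. The main obstacle, and the step needing genuine care rather than bookkeeping, is step (3): one must check that the ad hoc interleaving model actually is \emph{the} order characterized in the previous subsection, in particular getting the orientation right so that ``the element succeeding $\Theta(0)$ is in $O(\nicefrac{p_1}{q_1}) \sqcup O_1(\nicefrac pq)$'' comes out correctly, since a sign error there flips all four formulas. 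Everything else is a routine, if slightly tedious, translation between rotation dynamics and lexicographic order on Sturmian/Christoffel words, and can be left to the reader or dispatched with a short remark citing the standard properties of such codings.
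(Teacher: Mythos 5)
Your plan reduces, as the paper does, to the rotation dynamics and the coding maps, so the overall strategy is aligned with the paper's. But you have left the decisive computation unsaid: the Farey relations in \eqref{e.farey} imply $\nicefrac{p_1}{q_1} - \nicefrac{p}{q} = \nicefrac{1}{q_1 q}$, and consequently, for $0 \le i,j < q_1$, the order of $R^i_{p/q}(0)$ and $R^j_{p/q}(0)$ in $[0,1)$ is the same as the order of $R^i_{p_1/q_1}(0)$ and $R^j_{p_1/q_1}(0)$ (the perturbation $i/(q_1 q) < 1/q$ is too small to change the sorted order of points spaced $1/q$ apart). That single fact is what ``synchronizes'' your interleaving model and makes the first bullet come out as $\omega^+ = \Theta_1(R^i_{p_1/q_1}(0))$; the other three bullets follow from the analogous Farey differences for $p_0/q_0$ and from the reversed orientation on the $\Theta(1-\nicefrac1q)$-side arc.

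You correctly flag step (3) — verifying the explicit $2q$-point model is the cyclical order defined in the text — as the place where care is needed, and you warn about the sign convention, but you stop there. In the paper this step \emph{is} the proof: there is no separate model, just the order-equivalence above, followed by ``therefore the first assertion holds.'' Without writing down $\nicefrac{p_1}{q_1} - \nicefrac{p}{q} = \nicefrac{1}{q_1 q}$ and drawing the order-preservation consequence, the claim that the four bullets are ``immediate from the interleaving'' is not yet justified; the interleaving could a priori pair $\Theta(R^i_{p/q}(0))$ with $\Theta_1(R^{\pi(i)}_{p_1/q_1}(0))$ for some permutation $\pi$, and it is precisely the Farey computation that forces $\pi=\mathrm{id}$. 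If you add that computation, your more elaborate model-building route goes through, but it adds scaffolding over what is, in the paper, a two-line argument.
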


\begin{proof}
From~\eqref{e.farey} we obtain $\nicefrac{p_1}{q_1} - \nicefrac{p}{q} = \nicefrac{1}{q_1 q}$.
It follows that given $i$, $j$ with $0 \le i, j < q_1$,
the point $R^i_{p/q}(0)$ is before $R^j_{p/q}(0)$ (for the usual order in $[0,1)$)
if and only if the point $R^i_{p_1/q_1}(0)$ is before $R^j_{p_1/q_1}(0)$.
Therefore the first assertion of the lemma holds.
The others are proven similarly.
\end{proof}

Define some special words
$$
\omega_A   = \Theta (\nicefrac{p}{q})       \, , \quad
\omega_B   = \Theta (\nicefrac{(p-1)}{q})   \, , \quad
{}_B\omega = \Theta (1-\nicefrac{p}{q})     \, , \quad
{}_A\omega = \Theta (1-\nicefrac{(p+1)}{q})     \, .
$$
From the description of the cyclical order, we see that
the words respectively
starting with $A$,
starting with $B$,
ending with $A$,
ending with $B$
form the intervals
$$
{}^A O = [{}_B \omega^+ , {}_A \omega]  \, , \quad
{}^B O = [{}_A \omega^+ , {}_B \omega]  \, , \quad
O^A = [\omega_A, \omega_B^-]\, , \quad
O^B = [\omega_B, \omega_A^-]\, .
$$
Observe that for $0< \nicefrac{p}{q} < \nicefrac{1}{2}$, the
union of $O^A$ and ${}^A O$ is the full set
$O(\nicefrac{p}{q}) \sqcup O(\nicefrac{p_0}{q_0}) \sqcup O(\nicefrac{p_1}{q_1})$,
and these intervals intersect at both ends.

\subsubsection{}
We assume now that $\nicefrac{p}{q} \neq \nicefrac{1}{2}$.
If $\nicefrac{p}{q} < \nicefrac{1}{2}$ (resp.\ $\nicefrac{p}{q} > \nicefrac{1}{2}$) then
we can write $F = F_+ F'$ (resp.\ $F_- F'$), with $F' \in \cM^*$,
$j(F') = \nicefrac{p}{(q-p)}$ (resp.\ $j(F')= \nicefrac{(2p-q)}{p}$\,).

Assume for instance that $\nicefrac{p}{q} < \nicefrac{1}{2}$.
Write $\nicefrac{p'}{q'} = \nicefrac{p}{(q-p)}$,
and let $[\nicefrac{p'_0}{q'_0}, \nicefrac{p'_1}{q'_1}]$
be the Farey interval which has $\nicefrac{p'}{q'}$ as center;
we have
$$
\frac{p'_0}{q'_0} = \frac{p_0}{q_0-p_0} \, , \qquad
\frac{p'_1}{q'_1} = \frac{p_1}{q_1-p_1} \, .
$$

\begin{lemma}\label{l.1}
The image of $O(\nicefrac{p'}{q'}) \sqcup O(\nicefrac{p'_0}{q'_0}) \sqcup O(\nicefrac{p'_1}{q'_1})$
under $F_+$ is exactly the interval ${}^A O$;
moreover $F_+$ preserves the cyclical orders.
\end{lemma}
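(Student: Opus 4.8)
Here is how I would attack Lemma~\ref{l.1}. I would prove the two assertions separately: first that \(F_+\) carries \(O(\nicefrac{p'}{q'}) \sqcup O(\nicefrac{p'_0}{q'_0}) \sqcup O(\nicefrac{p'_1}{q'_1})\) bijectively onto the arc \({}^A O\), and then that it does so respecting the cyclical orders.

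\emph{The bijection.} Since \(F\) is a homomorphism of the free monoid on \(\{A,B\}\) we have \(F(AB) = F(A)\,F(B)\), and \(F = F_+ F'\) gives \(F(AB) = F_+(F'(AB))\), \(F(A) = F_+(F'(A))\), \(F(B) = F_+(F'(B))\); recall also \(j(F') = \nicefrac{p}{(q-p)}\), with Farey neighbours \(\nicefrac{p_0}{(q_0-p_0)}\) and \(\nicefrac{p_1}{(q_1-p_1)}\), so that \(O(\nicefrac{p'}{q'})\), \(O(\nicefrac{p'_0}{q'_0})\), \(O(\nicefrac{p'_1}{q'_1})\) are the sets of cyclic permutations of \(F'(AB)\), \(F'(A)\), \(F'(B)\). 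The first point is that \(F_+\) carries cyclic permutations to cyclic permutations: if \(u = u_1 u_2\) and \(v = u_2 u_1\) then \(F_+(v) = F_+(u_2)F_+(u_1)\) is the cyclic permutation of \(F_+(u) = F_+(u_1)F_+(u_2)\) by \(|F_+(u_1)|\) letters; hence \(F_+\) maps \(O(\nicefrac{p'}{q'})\), \(O(\nicefrac{p'_0}{q'_0})\), \(O(\nicefrac{p'_1}{q'_1})\) into \(O(\nicefrac{p}{q})\), \(O(\nicefrac{p_0}{q_0})\), \(O(\nicefrac{p_1}{q_1})\) respectively, and hence into \({}^A O\) since every \(F_+\)-image begins with \(A\) (both \(F_+(A)=A\) and \(F_+(B)=AB\) do). Moreover \(F_+\) is injective on words because \(\{A,AB\}\) is a suffix code (decoding from the right, a trailing \(A\) is a block \(A\) and a trailing \(B\) a block \(AB\)). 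The number of cyclic permutations of \(F(AB)\) (resp.\ \(F(A)\), \(F(B)\)) beginning with \(A\) is its number of \(A\)'s, namely \(q-p\) (resp.\ \(q_0-p_0\), \(q_1-p_1\)); summing and using the Farey relations \(q_0+q_1=q\), \(p_0+p_1=p\) gives \(\#\,{}^A O = 2(q-p)\), which is also \(\#\big(O(\nicefrac{p'}{q'}) \sqcup O(\nicefrac{p'_0}{q'_0}) \sqcup O(\nicefrac{p'_1}{q'_1})\big)\) since \(q'_i = q_i-p_i\). An injection between finite sets of equal size whose image lies in the target is a bijection, so \(F_+\) is a bijection onto \({}^A O\). (The degenerate cases \(\nicefrac{p_0}{q_0} = \nicefrac{0}{1}\) or \(\nicefrac{p_1}{q_1} = \nicefrac{1}{2}\), where one of \(O(\nicefrac{p'_0}{q'_0})\), \(O(\nicefrac{p'_1}{q'_1})\) reduces to a single letter, are checked directly using the conventions \(O(\nicefrac{0}{1})=\{A\}\), \(O(\nicefrac{1}{1})=\{B\}\).)

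\emph{The cyclical order.} Here I would use the description of the order from the preceding paragraphs in terms of the rotations \(R_{p/q}\) and the coding maps \(\Theta\), \(\Theta_0\), \(\Theta_1\), via a renormalisation identity. Because \(\nicefrac{p}{q}<\nicefrac{1}{2}\), the first return map of \(R_{p/q}\) to the interval \([0,1-\nicefrac{p}{q})\) (the ``\(A\)'' interval of \(\theta\)) has return time \(1\) or \(2\), the value \(2\) occurring exactly on the points whose \(R_{p/q}\)-image lies in \([1-\nicefrac{p}{q},1)\); and after the increasing affine rescaling \(\iota\colon [0,1)\to[0,1-\nicefrac{p}{q})\) this first return map is exactly \(R_{p'/q'}\). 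Reading off coding blocks then yields \(\Theta\circ\iota = F_+\circ\Theta'\) --- a return of time \(1\) contributes the block \(A=F_+(A)\) and one of time \(2\) the block \(AB=F_+(B)\), where \(\Theta'\) is the coding map of \(\nicefrac{p'}{q'}\) --- and the same renormalisation, applied to the Farey neighbours (which renormalise to the Farey neighbours as recalled above), gives \(\Theta_0\circ\iota_0 = F_+\circ\Theta'_0\), \(\Theta_1\circ\iota_1 = F_+\circ\Theta'_1\) for analogous increasing affine maps. In particular \(F_+(O(\nicefrac{p'}{q'})) = \Theta([0,1-\nicefrac{p}{q}))\) is exactly the set of elements of \(O(\nicefrac{p}{q})\) whose word begins with \(A\), and similarly for the neighbours, re-deriving the first step. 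Since the cyclical order of the preceding construction is read off from the positions in \(\R/\Z\) of the relevant orbit points, and \(\iota\), \(\iota_0\), \(\iota_1\) are increasing, they identify the \(2q'\) points defining the source order with a monotone --- hence cyclic-order-preserving --- image inside the \(2q\) points of the target, lying in the sub-arc \({}^A O\); unwinding the explicit description (the alternation of \(O(\nicefrac{p}{q})\) with \(O(\nicefrac{p_0}{q_0})\sqcup O(\nicefrac{p_1}{q_1})\) restricted to \({}^A O\), the positions of \(\Theta(0)\) and \(\Theta(1-\nicefrac{1}{q})\), the lexicographic/antilexicographic conventions on the various sub-arcs, and the lemma above giving \(\omega^{\pm}\)) then gives that \(F_+\) preserves the cyclical orders.

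\emph{Main obstacle.} The first step is routine combinatorics of codes and cyclic words. The real work is the second: matching the renormalisation conjugacy with the alternation and with the lexicographic-versus-antilexicographic conventions on each sub-arc --- i.e.\ carefully unwinding the definition of the cyclical order together with the identity \(p_1 q_0 - p_0 q_1 = 1\) --- and separately treating the degenerate Farey neighbours \(\nicefrac{p_0}{q_0}=\nicefrac{0}{1}\) and \(\nicefrac{p_1}{q_1}=\nicefrac{1}{2}\). A cleaner alternative would be to argue directly that \(F_+\) preserves lexicographic order on words of equal length and transport this through the \(\Theta\)'s, but the cross-length comparisons between \(O(\nicefrac{p'}{q'})\), \(O(\nicefrac{p'_0}{q'_0})\), \(O(\nicefrac{p'_1}{q'_1})\) still require some form of the renormalisation picture.
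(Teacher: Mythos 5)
Your proposal is correct and, in its essential mechanism, identical to the paper's: the paper also proves the lemma by observing that the first-return map of $R_{p/q}$ to $[0,1-\nicefrac{p}{q})$, conjugated by the homothety of ratio $\nicefrac{(q-p)}{q}$, is $R_{p'/q'}$, which simultaneously identifies $F_+(O(\nicefrac{p'}{q'}))$ with the words of $O(\nicefrac{p}{q})$ beginning with $A$ and shows the cyclical order is preserved (plus the remark that $F_+(O_\epsilon(\nicefrac{p'}{q'}))$ is the intersection of $F_+(O(\nicefrac{p'}{q'}))$ with $O_\epsilon(\nicefrac{p}{q})$). Your preliminary suffix-code/cardinality argument is sound but, as you yourself note, redundant once the renormalisation identity is in hand.
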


\begin{proof}
Consider the map induced by $R_{p/q}$ on $[0, 1-\nicefrac{p}{q})$;
it is equal to
\begin{alignat*}{2}
x &\mapsto x + \nicefrac{p}{q}      &\quad &\text{if } 0 \le x < 1 - \nicefrac{2p}{q} \, ,\\
x &\mapsto x + \nicefrac{2p}{q} - 1 &\quad &\text{if } 1 - \nicefrac{2p}{q}  \le x < 1 - \nicefrac{p}{q} \, .
\end{alignat*}
Conjugating by the homothety of ratio $\nicefrac{(q-p)}{q}$,
we obtain $R_{p'/q'}$ on $[0,1)$.
This shows that the image of $O(\nicefrac{p'}{q'})$ under $F_+$
is the interval of $O(\nicefrac{p}{q})$ formed by the words
$\Theta(R_{p/q}^i(0))$ such that $R_{p/q}^i(0) \in [0,1-\nicefrac{p}{q})$,
i.e.~the words that start with $A$.
The other conclusions of the lemma are proved similarly.
One should observe that for $\epsilon=0,1$,
$F_+(O_\epsilon(\nicefrac{p'}{q'}))$ is the intersection of
$F_+(O(\nicefrac{p'}{q'}))$ with $O_\epsilon(\nicefrac{p}{q})$.
\end{proof}

\subsubsection{} 
For $F \in \cM^*$, denote by $H_F^+$ the set of
$(A,B) \in \G^2$ such that $(F(A), F(B))$ belongs to the positive free component 
(which is described by  Proposition~\ref{p.free}).

\begin{prop}\label{p.us rules}
Let $(A,B) \in H_F^+$.
For any $\omega \in O(\nicefrac{p}{q}) \sqcup O(\nicefrac{p_0}{q_0}) \sqcup O(\nicefrac{p_1}{q_1})$,
the corresponding matrix is hyperbolic.
Moreover, the stable directions $s(\omega)$ and unstable directions $u(\omega)$
are all distinct and
are positioned according to the following rules:
\begin{itemize}
\item for any $\omega \in O(\nicefrac{p}{q})$, $s(\omega)$ is immediately after $u(\omega)$;
\item for any $\omega \in O(\nicefrac{p_0}{q_0}) \sqcup O(\nicefrac{p_1}{q_1})$,
$s(\omega)$ is immediately before $u(\omega)$;
\item the restriction of the cyclical order to the $u(\omega)$ is the
cyclical order considered above.
\end{itemize}
(It follows from these three rules that the same is true for the
restriction to the $s(\omega)$.)
\end{prop}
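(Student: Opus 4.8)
The plan is to argue by induction on the length $\ell(F)$ of $F$ as a word in the generators $F_+$, $F_-$ of $\cM^*$, equivalently on the depth of $\nicefrac pq$ in the Stern--Brocot tree. For the base case $F=\id_{\cM^*}$ one has $\nicefrac pq=\nicefrac12$, so the three sets are $O(\nicefrac12)=\{AB,BA\}$, $O(\nicefrac01)=\{A\}$, $O(\nicefrac11)=\{B\}$, and by definition $H_F^+=H_\id^+$ is the positive free component. Here the three asserted rules are just a rereading of the chain~\eqref{e.positive free}: it gives the cyclic order $u_A,u_{AB},u_B,u_{BA}$ of the four unstable directions (which is exactly the cyclic order defined on $O(\nicefrac12)\sqcup O(\nicefrac01)\sqcup O(\nicefrac11)$), puts $s_{AB},s_{BA}$ immediately after $u_{AB},u_{BA}$, and puts $s_A,s_B$ immediately before $u_A,u_B$; matching it against the combinatorial order is direct.

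\textbf{Inductive step, half of it.} Assume $\ell(F)\ge1$; by the symmetry exchanging $A$ and $B$ (which preserves the positive free component, since relabelling $A\leftrightarrow B$ in~\eqref{e.positive free} gives the same cyclic chain, and which interchanges $F_+\leftrightarrow F_-$, $\nicefrac pq\leftrightarrow1-\nicefrac pq$, and the two Farey parents) we may assume $\nicefrac pq<\nicefrac12$ and write $F=F_+F'$ with $j(F')=\nicefrac p{q-p}$, so $\ell(F')=\ell(F)-1$. Two elementary facts drive the step: (a) $(A,B)\in H_F^+$ if and only if $(A,AB)\in H^+_{F'}$; and (b) for every word $w$, the matrix of $F_+(w)$ at $(A,B)$ equals the matrix of $w$ at $(A,AB)$ (the functoriality $f_+^*=F_+$ of \S\ref{ss.length}), hence $w$ and $F_+(w)$ have the same stable and unstable directions. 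Applying the inductive hypothesis to the pair $(A,AB)$ and to $F'$ gives the full configuration of the directions of the words in $O(\nicefrac{p'}{q'})\sqcup O(\nicefrac{p'_0}{q'_0})\sqcup O(\nicefrac{p'_1}{q'_1})$. Transporting this configuration through Lemma~\ref{l.1} --- $F_+$ is an order-preserving bijection onto the arc ${}^AO$ of words starting with $A$, carrying $O(\nicefrac{p'}{q'})$ and each $O(\nicefrac{p'_\epsilon}{q'_\epsilon})$ onto the ``starts with $A$'' parts of $O(\nicefrac pq)$ and of $O(\nicefrac{p_\epsilon}{q_\epsilon})$ --- and invoking (b) yields the three rules of the proposition for every word in ${}^AO$.

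\textbf{The remaining arc: the main obstacle.} It then remains to prove the rules for the complementary arc ${}^BO$ of words starting with $B$; for $\nicefrac pq<\nicefrac12$ these are precisely the words ending with $A$ (no factor $BB$ occurs), so they lie in $O^A$ but \emph{not} in $F_+\big(O(\nicefrac{p'}{q'})\sqcup O(\nicefrac{p'_0}{q'_0})\sqcup O(\nicefrac{p'_1}{q'_1})\big)$, and the inductive hypothesis does not reach them directly. I expect this to be the bulk of the work. The natural device is conjugation by the cyclic shift $\rho$: a word $\eta=B\eta'\in{}^BO$ has matrix $B_0\,\mathrm{val}(\eta'B)\,B_0^{-1}$, where $B_0$ is the matrix of $B$ at $(A,B)$ and $\eta'B=\rho\eta$ lies in $O^B\subseteq{}^AO$, already controlled; hence $u(\eta)=B_0\,u(\rho\eta)$ and $s(\eta)=B_0\,s(\rho\eta)$, and since $B_0$ is hyperbolic with fixed points $u_B,s_B$ present in the configuration it acts as an order-preserving homeomorphism of $\P^1$ fixing those two points. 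One must then check that the images of the controlled points interleave with the directions already placed in ${}^AO$ exactly as the combinatorial cyclic order prescribes (which on $O(\nicefrac pq)$ is $\rho$-invariant), and in particular that the partial orders produced by the two arcs agree at the junction words $\Theta(0)$ and $\Theta(1-\nicefrac1q)$; alternatively one may cover ${}^BO$ by applying the $A\leftrightarrow B$ symmetry and the already-proven ``${}^AO$ half'' at level $1-\nicefrac pq$, but since that symmetry does not lower $\ell(F)$ the induction must be organised with care to avoid circularity. Once the cyclic order of all the $u(\omega)$ is known and each $s(\omega)$ sits next to $u(\omega)$ as claimed, the distinctness of all the directions and the fact that the $s(\omega)$ are in the same cyclic order follow formally, because rules (1)--(3) together pin down the cyclic order of the whole set $\{u(\omega)\}\cup\{s(\omega)\}$ (the $O(\nicefrac pq)$-words alternating with the $O(\nicefrac{p_\epsilon}{q_\epsilon})$-words).
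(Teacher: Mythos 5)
Your strategy is the paper's: induction on the length of $F$ in $\cM^*$, with the inductive hypothesis applied to $(A,AB)\in H_{F'}^+$ and transported to the arc ${}^A O$ via Lemma~\ref{l.1} together with $f_+^*=F_+$. The divergence, and the gap, lies in how you cover the rest of the cyclic set. You extend to the \emph{complementary} arc ${}^B O$: a word $\eta=B\eta'$ there has matrix conjugate by $B$ to that of $\rho\eta=\eta'B\in O^B\subset{}^A O$, and since $B$ acts on $\P^1$ as an orientation-preserving homeomorphism, the cyclic order of the directions attached to ${}^B O$ agrees with the abstract order \emph{within} that arc. But ${}^A O$ and ${}^B O$ partition the cyclic set, so you only know the order of the directions restricted to each arc separately; that does not determine the cyclic order on the union --- the two families of directions could in principle interleave. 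You notice the problem (``the partial orders produced by the two arcs agree at the junction'') but agreement at the two boundary words $\Theta(0)$, $\Theta(1-\nicefrac1q)$ does not by itself exclude interleaving, and the alternative route via the $A\leftrightarrow B$ symmetry you already flag as potentially circular. That unresolved gluing is a genuine gap.

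The paper closes it by a different choice of second arc. Instead of ${}^B O$, use $O^A$ (words ending in $A$) together with the cyclic shift $a:O^A\to{}^A O$ moving the final $A$ to the front; one has $A\,u(\omega)=u(a\omega)$ and $A\,s(\omega)=s(a\omega)$ for $\omega\in O^A$, and since $A$ is orientation-preserving the directions over $O^A$ inherit the correct partial order from the already-controlled ${}^A O$. The decisive point is that for $0<\nicefrac pq<\nicefrac12$ the intervals ${}^A O$ and $O^A$ cover the whole cyclic set \emph{and overlap at both ends}; with two overlapping intervals each carrying the correct restricted order, there is only one cyclic order on the union compatible with both, so nothing remains to glue. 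Replacing your arc ${}^B O$ by $O^A$ --- that is, replacing $\rho$ and conjugation by $B$ with $a$ and conjugation by $A$ --- repairs the argument with no other change to your outline.
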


\begin{proof}
The first assertion is clear.
If $j(F) = \nicefrac{1}{2}$, the cyclical order is the one described above.
Assume $j(F) = \nicefrac{p}{q} \neq \nicefrac{1}{2}$,
for instance $\nicefrac{p}{q} < \nicefrac{1}{2}$.
We write $F= F_+ F'$, $\nicefrac{p'}{q'} = \nicefrac{p}{(q-p)}$ as above.
Let $A' = A$, $B' = AB$.
We prove the proposition by induction, thus we may assume that the conclusions are satisfied
for $(A',B') \in H^+_{F'}${\,}.
This means that
the points $\{u(\omega), s(\omega) ; \; \omega \in {}^A O\}$ are all distinct and
the restriction of the cyclical order to this set
is in accordance with the proposition.
Let $a: O^A \to {}^A O$ be the bijection which takes the final letter $A$ into
first position;
this map corresponds to $A$ in the sense that
$$
A u(\omega) = u(a\omega), \quad
A s(\omega) = s(a\omega), \quad \omega\in O^A
$$
and therefore the restriction of the cyclical order to the set
$\{ {u(\omega), s(\omega);} \; {\omega \in O^A}\}$ is also in accordance with the proposition.
As ${}^A O$, $O^A$ are intervals which cover
$O(\nicefrac{p}{q}) \sqcup O(\nicefrac{p_0}{q_0}) \sqcup O(\nicefrac{p_1}{q_1})$
and have non-empty intersection at both ends,
the points
$\{u(\omega), s(\omega) ; \; \omega \in O(\nicefrac{p}{q}) \sqcup O(\nicefrac{p_0}{q_0}) \sqcup O(\nicefrac{p_1}{q_1})\}$ are all distinct and
there is only one cyclical order with the given restrictions,
which is the one described in the proposition.
\end{proof}

\subsubsection{}\label{sss.full2 combin end}
Now we give the other proof of Proposition~\ref{p.is hyperbolic}.
It is sufficient to show that any $(A,B) \in H_F^+$ is uniformly hyperbolic.
We will apply Lemma~\ref{l.core full}
and therefore we will define sets $U$ and $S$ satisfying the required conditions.

For $\omega \in O(\nicefrac{p}{q})$, we define intervals
$I^u_\omega = [u(\omega^-), u(\omega)]$,
$I^s_\omega = [s(\omega), s(\omega^+)]$.
Let
$U = \bigcup_{\omega \in O(\nicefrac{p}{q})} I^u_\omega$,
$S = \bigcup_{\omega \in O(\nicefrac{p}{q})} I^s_\omega$.
Then $U$, $S$ are disjoint compact subsets with finitely many components which alternate.
To apply Lemma~\ref{l.core full}, we need to check that
$AU \cup BU \subset U$,
$A^{-1}S \cup B^{-1}S \subset S$.
Indeed, we have:
\begin{itemize}
\item $A(I^u_\omega) = I^u_{a\omega}$                          for $\omega_A < \omega < \omega_B$;
\item $A(I^u_\omega) \subset I^u_{\Theta(0)}$                  for $\omega_B \le \omega \le \omega_A$;
\item $B(I^u_\omega) = I^u_{b\omega}$                          for $\omega_B < \omega < \omega_A$;
\item $B(I^u_\omega) \subset I^u_{\Theta(1- \nicefrac{p}{q})}$ for $\omega_A \le \omega \le \omega_B$.
\end{itemize}
(The map $b: O^B \to {}^B O$ is defined analogously as $a$,
by switching a letter $B$ from the last to the first place.)
This proves that $AU$ and $BU$ are disjoint and contained in $U$;
it also follows that no non-trivial product of $A$, $B$
is equal to $\pm \id$.
Similar formulas hold for $A^{-1}$, $B^{-1}$ and the intervals $I^s_\omega$.
Thus we can apply Lemma~\ref{l.core full} and conclude that
$(A,B)$ is uniformly hyperbolic.
The sets $U$ and $S$ are of course the unstable and stable cores,
and the formulas above give the action of $A$, $B$ on the components of the associated multicone.
Both $U$ and $S$ have $q$ components,
and the set $O(\nicefrac{p}{q}) \sqcup O(\nicefrac{p_0}{q_0}) \sqcup O(\nicefrac{p_1}{q_1})$
is in canonical correspondence with the connected coimponents of the complement of $U \sqcup S$:
see Figure~\ref{f.description}.

\begin{figure}[hbt]
\begin{center}
\psfrag{s0}[l][l]{{\tiny $s(BABAA)$}}
\psfrag{u0}[l][l]{{\tiny $u(BABAA)$}}
\psfrag{u1}[l][l]{{\tiny $u(BA)$}}
\psfrag{s1}[l][l]{{\tiny $s(BA)$}}
\psfrag{s2}[l][l]{{\tiny $s(ABABA)$}}
\psfrag{u2}[l][l]{{\tiny $u(ABABA)$}}
\psfrag{u3}[l][l]{{\tiny $u(AB)$}}
\psfrag{s3}[l][l]{{\tiny $s(AB)$}}
\psfrag{s4}[l][l]{{\tiny $s(AABAB)$}}
\psfrag{u4}[l][l]{{\tiny $u(AABAB)$}}
\psfrag{u5}[r][r]{{\tiny $u(AAB)$}}
\psfrag{s5}[r][r]{{\tiny $s(AAB)$}}
\psfrag{s6}[r][r]{{\tiny $s(ABAAB)$}}
\psfrag{u6}[r][r]{{\tiny $u(ABAAB)$}}
\psfrag{u7}[r][r]{{\tiny $u(ABA)$}}
\psfrag{s7}[r][r]{{\tiny $s(ABA)$}}
\psfrag{s8}[r][r]{{\tiny $s(BAABA)$}}
\psfrag{u8}[r][r]{{\tiny $u(BAABA)$}}
\psfrag{u9}[r][r]{{\tiny $u(BAA)$}}
\psfrag{s9}[r][r]{{\tiny $s(BAA)$}}
\includegraphics[width=5cm]{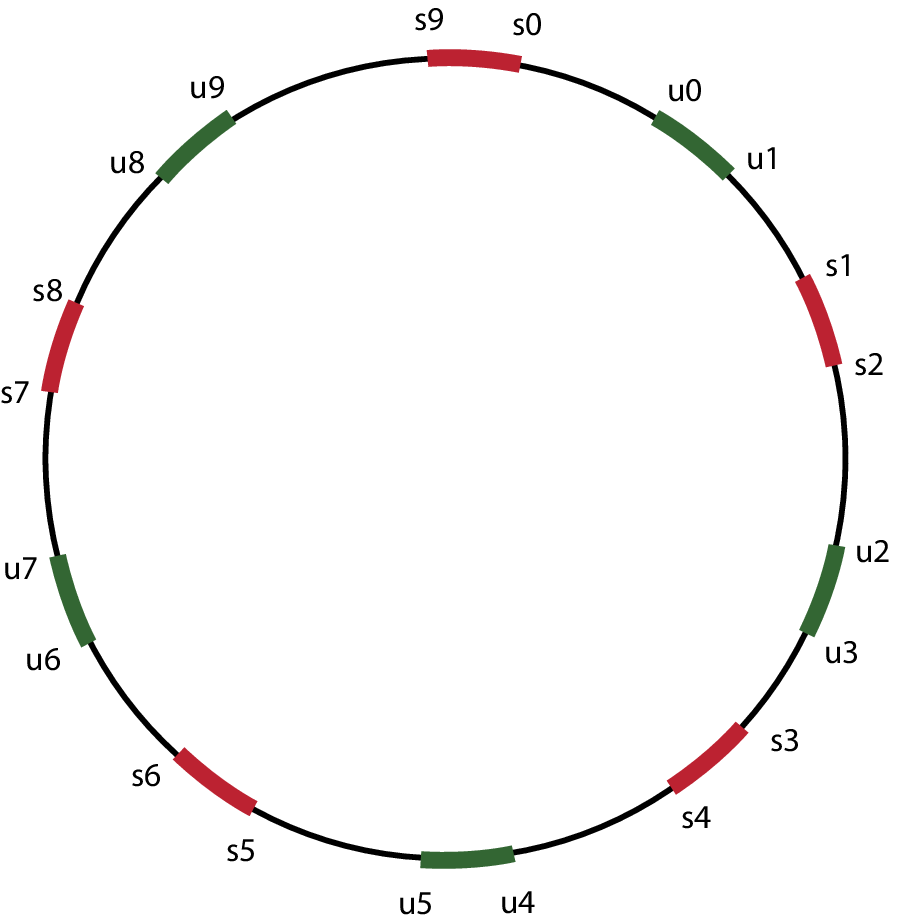}
\caption{{\small The intervals $I^u_\omega$, $I_s^\omega$ for $\nicefrac{p}{q} = \nicefrac{2}{5}$.}}
\label{f.description}
\end{center}
\end{figure}

\section{Boundaries of the Components}\label{s.boundaries}

\subsection{A General Theorem on Boundary Points}

Again, fix any subshift of finite type $\Sigma \subset N^\Z$,
and let $\cH \subset \G^N$ be the associated hyperbolicity locus.

Given $x = (x_i)_{i \in\Z} \in \Sigma$, we denote
$$
W^u_\mathrm{loc}(x) = \{(z_i) \in \Sigma; \; z_i = x_i \text{ for } i < 0\} , \quad
W^s_\mathrm{loc}(x) = \{(z_i) \in \Sigma; \; z_i = x_i \text{ for } i \ge 0\} .
$$

The next result describes the boundary points of connected components of $\cH$.

\begin{thm} \label{t.general boundary}
Let $(A_1, \ldots, A_N)$ belong to the boundary of a connected component
$H$ of $\cH$.
Then one of the following possibilities holds:
\begin{enumerate}
\item
There exists a periodic point $x\in \Sigma$ of period $k$ such that
$A^k(x) = \pm \id$.

\item (\textit{``parabolic periodic''})
There exists a periodic point $x\in \Sigma$ of period $k$ such that
$A^k(x) \neq \pm \id$ is parabolic;

\item (\textit{``heteroclinic connection''})
There exist periodic points $x$ and $y \in \Sigma$,
of respective periods $k$ and $\ell$,
such that the matrices $A^k(x)$ and $A^\ell(y)$ are hyperbolic
and there exist an integer $n\ge 0$
and a point $z \in W^u_\mathrm{loc} (x) \cap \sigma^{-n} W^s_\mathrm{loc} (y)$
such that
\begin{equation} \label{e.hetero connection}
A^n (z) \cdot u( A^k(x) ) = s ( A^\ell(y) ) \, .
\end{equation}
\end{enumerate}

Furthermore, for each component $H$, one can give uniform bounds
to the numbers $k$, $\ell$, $n$ that may appear in the alternatives above.
\end{thm}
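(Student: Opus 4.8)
The plan is to approach $(A_1,\dots,A_N)\in\partial H$ from inside $H$ and take Hausdorff limits of the associated \emph{cores}, using Lemma~\ref{l.core} as the decisive tool. Fix a sequence $(A^{(m)}_1,\dots,A^{(m)}_N)\in H$ converging to $(A_1,\dots,A_N)$. Each such tuple is uniformly hyperbolic, so it carries the splitting maps $e^u$, $e^s$ and hence, as in \S\ref{ss.onlyif}, compact sets $K^{u,(m)}_\alpha$, $K^{s,(m)}_\alpha$ and families of cores $U^{(m)}_\alpha$, $S^{(m)}_\alpha$ of some rank $n_0^{(m)}=\sum_\alpha k^{(m)}(\alpha)$. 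The first step is to bound this rank; I would in fact show it is \emph{locally constant} on $\cH$, hence constant equal to some $n_0=n_0(H)$ on the connected component $H$. The point is that the cores are built canonically from $e^u$, $e^s$, which vary continuously over $\cH$ (the hyperbolic splitting of a uniformly hyperbolic cocycle varies continuously with the cocycle), while $K^u_\alpha$ and $A_\alpha K^s_\alpha$ stay disjoint compact sets, so the combinatorial data — in particular how many gaps of $K^u_\alpha$ meet $A_\alpha K^s_\alpha$ — cannot jump. This $n_0$ is what will bound $k$, $\ell$, $n$.

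Passing to a subsequence, assume $U^{(m)}_\alpha\to\mathbf U_\alpha$, $S^{(m)}_\alpha\to\mathbf S_\alpha$, $K^{u,(m)}_\alpha\to\mathbf K^u_\alpha$, $K^{s,(m)}_\alpha\to\mathbf K^s_\alpha$ in the Hausdorff metric. The limits are non-empty compacts with at most $n_0$ components in total; they satisfy $\mathbf U_\alpha\supseteq\mathbf K^u_\alpha$, $\mathbf S_\alpha\supseteq\mathbf K^s_\alpha$, the relations $\mathbf K^u_\beta=\bigcup_{\alpha\to\beta}A_\beta\mathbf K^u_\alpha$ and $\mathbf K^s_\alpha=\bigcup_{\alpha\to\beta}A^{-1}_\alpha\mathbf K^s_\beta$, and property~(iv) — all of these being closed conditions. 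What may fail in the limit are properties (ii) and (iii). If neither fails, then $\mathbf U_\alpha$, $\mathbf S_\alpha$ satisfy all of (i)--(iv), with some rank $n_0'\le n_0$; since $\cH$ is open and $(A_1,\dots,A_N)\notin\cH$ (components of the open set $\cH$ being open), Lemma~\ref{l.core} cannot deliver a family of multicones, so its remaining hypothesis must fail: there is a periodic point $x$ of period $k\le n_0'\le n_0$ with $A^k(x)=\pm\id$. This is alternative~(i).

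In the remaining case some of (ii)/(iii) degenerates. In either situation one obtains a point $p$ lying simultaneously in $\mathbf K^u_\alpha$ and in $A_\alpha\mathbf K^s_\alpha$ for some symbol $\alpha$: a failure of (ii) yields such $p$ as a common limit of boundary points of $U^{(m)}_\alpha$ (which lie in $K^{u,(m)}_\alpha$) and of $A^{(m)}_\alpha S^{(m)}_\alpha$ (which lie in $A^{(m)}_\alpha K^{s,(m)}_\alpha$), while a merging of two components of $U^{(m)}_\alpha$ forces, by following the gap that shrinks to a point between them, the limit of that gap's endpoints to lie in $\mathbf K^u_\alpha\cap A_\alpha\mathbf K^s_\alpha$. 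Since $A_\alpha\mathbf K^s_\alpha=\bigcup_{\alpha\to\beta}\mathbf K^s_\beta$, we get a transition $\alpha\to\beta$ with $p\in\mathbf K^u_\alpha\cap\mathbf K^s_\beta$. I would then unwind $p$: using $\mathbf K^u_\alpha=\bigcup_{\gamma\to\alpha}A_\alpha\mathbf K^u_\gamma$ pull $p$ back along admissible itineraries, and using $\mathbf K^s_\beta=\bigcup_{\beta\to\gamma}A^{-1}_\beta\mathbf K^s_\gamma$ push it forward; recording at each step which component of the relevant limit core contains the image gives a walk in a \emph{finite} graph with at most $n_0$ vertices, and a pigeonhole argument (with a judicious choice of the itineraries) makes the backward tail eventually periodic — producing a periodic point $x$ of period $k$ — and the forward tail eventually periodic — producing a periodic point $y$ of period $\ell$ — joined by a bridge of length $n$, with $k$, $\ell$, $n$ all bounded in terms of $n_0$ and $N$. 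The return matrices $A^k(x)$, $A^\ell(y)$ are limits of returns of uniformly hyperbolic cocycles over periodic orbits, so they are not elliptic; if one of them is $\pm\id$ we fall into alternative~(i), if one is parabolic and $\neq\pm\id$ into alternative~(ii), and if both are hyperbolic then the way $p$ relates their invariant directions is precisely a heteroclinic connection $A^n(z)\,u(A^k(x))=s(A^\ell(y))$ with $z\in W^u_\mathrm{loc}(x)\cap\sigma^{-n}W^s_\mathrm{loc}(y)$, i.e.\ alternative~(iii).

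The main obstacle is this last step — manufacturing genuine periodic data satisfying exactly \eqref{e.hetero connection}, with \emph{uniform} bounds, out of the limiting coincidence $p\in\mathbf K^u_\alpha\cap\mathbf K^s_\beta$. Two points are delicate. First, one must guarantee that the collision is transported by the dynamics among only finitely many configurations, so that the pigeonhole applies and the bounds depend only on $n_0$: this is where the constancy of the core combinatorics of size $n_0$ is used essentially, and one must rule out the degeneration ``escaping'' deep into the Cantor sets $\mathbf K^u$, $\mathbf K^s$ by choosing $p$ and the unwinding itineraries so that the tails become periodic within a bounded number of steps. Second, one must separate cleanly the parabolic/identity outcome from the transverse heteroclinic one, according to whether the limiting invariant directions along the two periodic words coincide. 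The preliminary claim that the rank is locally constant on $\cH$ is routine but also needs care, since a priori the number of components of $U_\alpha$ is only semicontinuous in the parameter.
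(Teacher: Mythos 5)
Your plan has the right overall shape — approach $(A_1,\dots,A_N)\in\partial H$ from inside $H$, pass to a Hausdorff limit of cores, detect a degeneration of properties (ii)/(iii), and read off a collision — and it correctly identifies alternative (i) as the failure mode of Lemma~\ref{l.core}'s hypothesis when (ii)/(iii) survive in the limit. But the decisive step, converting a limiting collision $p\in\mathbf K^u_\alpha\cap\mathbf K^s_\beta$ into a heteroclinic connection with \emph{bounded} $k,\ell,n$, is precisely the step you label as "the main obstacle," and you do not close it. That is a genuine gap, not a technicality: as you observe, $\mathbf K^u_\alpha$ and $\mathbf K^s_\beta$ are Cantor sets, and an arbitrary point of such a set has no reason to be pre-periodic with preperiod bounded by the rank — "unwinding" it by pulling back along admissible symbols never has to repeat, so the pigeonhole you invoke has no finite set to act on.

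The paper closes exactly this gap with a structural fact about the \emph{boundary} points of the cores of a uniformly hyperbolic tuple, which your sketch does not state or prove: every $v\in\partial U_\alpha$ is of the form $A^m(z)\,u(A^k(x))$ with $x$ periodic of period $k$, $z\in W^u_{\mathrm{loc}}(x)$, $z_{m-1}=\alpha$, and $k,m$ bounded by the rank $n_0$ (Lemma~\ref{l.preperiodic}, proved from Lemma~\ref{l.preimage} by a pigeonhole on the at most $n_0$ boundary points of the cores of each \emph{fixed} hyperbolic tuple — the pigeonhole works there because $\partial U_\alpha$ is finite, unlike $\mathbf K^u_\alpha$). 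With this in hand the paper does not touch the Cantor sets at all: it defines the finite sets $U^*_\alpha$, $S^*_\alpha$ of all such bounded-preperiod orbits of stable/unstable directions, checks they carry the same invariance (iv) as the cores, and splits into two cases. If $U^*_\alpha\cap S^*_\beta\neq\emptyset$ for some $\alpha\to\beta$, the defining equations directly produce the parabolic or heteroclinic alternative with the uniform bounds built in. If not, the cores of nearby hyperbolic tuples — which by Lemma~\ref{l.preperiodic} are \emph{determined} by the finite sets $U^{*,(m)}_\alpha$, $S^{*,(m)}_\alpha$ — converge to limit cores satisfying (i)--(iv), and Lemma~\ref{l.core} contradicts $(A_1,\dots,A_N)\notin\cH$ unless some short periodic product is $\pm\id$, giving alternative (i). In short: you should perform the unwinding \emph{before} passing to the limit, at each finite $m$, where the finiteness of $\partial U^{(m)}_\alpha$ makes the pigeonhole legitimate and produces a uniform bound on preperiod; then pass to the limit in the resulting finite sets. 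Attempting the unwinding on the Hausdorff-limit Cantor sets, as your sketch does, leaves the heart of the theorem unproved.

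A smaller remark: you flag the local constancy of the rank on $H$ as needing care because the number of components of $U_\alpha$ is only semicontinuous a priori; the paper handles this by noting the cores are built from $e^u$, $e^s$, which vary continuously on $\cH$ and keep $K^u_\alpha$ and $A_\alpha K^s_\alpha$ uniformly separated, so the gap-combinatorics cannot jump. Your instinct to worry here is reasonable but this is a minor point compared to the preperiod bound.
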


In alternative (iii), there exists a point $z = (z_i)_{i\in \Z}$
such that $z_{-k-1} = z_{-1}$, $z_{n+\ell} = z_n$, and
$$
A_{z_{n-1}} \cdots A_{z_0} \cdot u( A_{z_{-1}} \cdots A_{z_{-k}} ) =
s( A_{z_{n+\ell-1}} \cdots A_{z_n} ) \, .
$$
That is what we call a \emph{heteroclinic connection}
(provided $A_{z_{-1}} \cdots A_{z_{-k}}$ and $A_{z_{n+\ell-1}} \cdots A_{z_n}$ are hyperbolic).

\begin{rem}
In alternative~(iii), the periodic points $x$ and $y$ cannot belong to the
same periodic orbit.
\end{rem}

\begin{proof}
Assume the contrary, so $k = \ell$ and $x = \sigma^j(y)$ for some $j$ with $0 \le j < k$.
Then
$$
s(A^k(y)) = A^n(z) \cdot u(A^k(x))
          = A^n(z) \cdot A^j(y) \cdot u(A^k(y))
          = A^{n+j}(\sigma^{-j} z) \cdot u(A^k(y)).
$$
So, writing $A = A^k(y)$ and $B = A^{n+j}(\sigma^{-j} z)$,
we have that $A$ is hyperbolic and $B \cdot u(A) = s(A)$.
A direct calculation shows that $\lim_{m \to +\infty} \tr A^m B =0$.
Therefore there is $m>0$ such that $A^m B = A^{km+n+j}(\sigma^{-j} z)$ is elliptic.
Since $z_{km+n} = z_{-j}$,
this contradicts the assumption that the $N$-tuple belongs to the boundary of~$\cH$.
\end{proof}

\begin{rem}
If $\Sigma$ is the full-shift, and $H$ is a principal component,
then by Proposition~4 in~\cite{Yoccoz_SL2R} one can take $n=0$, $k = \ell=1$ in
alternative~(iii) of Theorem~\ref{t.general boundary}.
\end{rem}

\begin{rem}\label{r.no id}
We will see later (Proposition~\ref{p.no id}) that in the case of full shifts,
alternative~(i) in Theorem~\ref{t.general boundary} is only possible
if $H$ is a principal component.
\end{rem}

Theorem~\ref{t.general boundary} has the following interesting consequence:

\begin{corol}\label{c.semialgebraic}
Every connected component of $\cH$ is a semialgebraic set.
\end{corol}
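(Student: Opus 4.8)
The plan is to deduce Corollary~\ref{c.semialgebraic} from Theorem~\ref{t.general boundary} together with the uniform bounds it provides on the integers $k$, $\ell$, $n$. The key observation is that the complement of a connected component $H$ of $\cH$, locally near $\partial H$, is cut out by finitely many polynomial equalities and inequalities in the entries of $(A_1,\dots,A_N)$.

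First I would fix a component $H$ and let $K$ be the uniform bound on $k$, $\ell$, $n$ coming from the last sentence of Theorem~\ref{t.general boundary}. For each admissible periodic word of period $k \le K$, the condition ``$A^k(x) = \pm\id$'' is the vanishing of finitely many polynomials (the off-diagonal entries vanish and the diagonal entries equal $\pm 1$), and ``$A^k(x)$ is parabolic'' is the polynomial equation $(\tr A^k(x))^2 = 4$. Likewise, for each choice of admissible periodic words of periods $k,\ell \le K$ and each admissible connecting word of length $n \le K$ joining $W^u_{\mathrm{loc}}(x)$ to $\sigma^{-n}W^s_{\mathrm{loc}}(y)$ — of which there are only finitely many — the heteroclinic connection equation \eqref{e.hetero connection} is a polynomial equation: the unstable direction $u(A^k(x))$ of a hyperbolic matrix and the stable direction $s(A^\ell(y))$ are algebraic functions of the entries (eigenvectors of a matrix whose eigenvalues are roots of a quadratic with discriminant sign fixed by the hyperbolicity inequality $(\tr)^2 > 4$), so the collinearity of $A^n(z)u(A^k(x))$ with $s(A^\ell(y))$ is a polynomial condition on the entries. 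Thus $\partial H$ is contained in a finite union $Z$ of semialgebraic sets, each defined by one of these polynomial equations possibly together with the open conditions $(\tr)^2 > 4$ needed to make the eigendirections well-defined; hence $\partial H$ is contained in a semialgebraic set $Z$ of dimension strictly less than $\dim \G^N = 3N$.

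Next I would argue that $H$ itself is semialgebraic. Since $H$ is open, $H = \G^N \setminus (\overline{\G^N \setminus H})$, and $\G^N \setminus \overline{H}$ is a union of connected components of the open semialgebraic set $\G^N \setminus Z$ (the point being that $H$ and its complement are separated by $Z \supset \partial H$, so no connected component of $\G^N \setminus Z$ straddles $\partial H$). An open semialgebraic set has finitely many connected components, each of which is again semialgebraic (this is a standard fact about semialgebraic sets, cf.\ Bochnak--Coste--Roy); therefore $H$, being the union of those components of $\G^N \setminus Z$ that lie in $\overline H$, is a finite union of semialgebraic sets, hence semialgebraic.

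The main obstacle I expect is the bookkeeping needed to be sure that the eigendirection maps $A \mapsto u(A)$, $A \mapsto s(A)$ are genuinely given by \emph{polynomial} (not merely semialgebraic) data in a way compatible with $\P^1$, and that equation \eqref{e.hetero connection}, read as an equality of points in $\P^1$, translates into a polynomial identity after clearing the single square root in the eigenvalue formula — one has to check the two choices of sign of the square root do not spoil semialgebraicity, which they do not since each choice yields a semialgebraic branch on the hyperbolic locus $(\tr A)^2 > 4$. A secondary point to handle carefully is the alternative $A^k(x) = \pm\id$, which forces $(A_1,\dots,A_N)$ into a positive-codimension algebraic subset directly, so it contributes to $Z$ with no eigendirection issues at all. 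Once these are in place, the finiteness of the relevant index set (guaranteed by the uniform bounds) makes $Z$ a finite union of semialgebraic pieces, and the corollary follows from the general structure theory of semialgebraic sets.
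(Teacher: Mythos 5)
Your proposal is correct and follows the same overall architecture as the paper's proof: bound $k,\ell,n$ by a constant $K$ depending on $H$, show that each alternative of Theorem~\ref{t.general boundary} with parameters $\le K$ cuts out a semialgebraic subset of $\G^N$, and conclude that $H$ is a connected component of the complement of a closed semialgebraic set, hence itself semialgebraic by the structure theory in Bochnak--Coste--Roy. The one place the paper is technically cleaner is exactly the point you flag as the main obstacle: instead of reasoning about branches of the square root in $A\mapsto u(A)$, $A\mapsto s(A)$ on the hyperbolic locus, the paper introduces auxiliary variables $\lambda,\mu\in\R$ and $w_1,w_2\in\R^2$, rewrites the heteroclinic condition \eqref{e.hetero connection} as the polynomial system $A^k(x)w_1=\lambda w_1$, $A^\ell(y)w_2=\mu w_2$, $A^n(z)w_1=w_2$ with the inequalities $\lambda^2>1$, $-1<\mu<1$, $w_1\neq(0,0)$, and invokes the Tarski--Seidenberg theorem to conclude that the projection onto $\G^N$ is semialgebraic; this dispenses with the branch bookkeeping entirely. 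A minor wrinkle in your last paragraph: the claim that $\G^N\setminus\overline{H}$ is a union of components of $\G^N\setminus Z$ is not needed and is in fact false in general, since $Z$ may well contain points outside $\overline{H}$; what your parenthetical straddling argument actually proves, and what suffices, is that $H$, being connected and contained in $\G^N\setminus Z$ with $\partial H\subset Z$, is a single connected component of $\G^N\setminus Z$ --- which is precisely how the paper concludes.
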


Notice $\cH$ itself is not semialgebraic, because it has
infinitely many connected components (see Theorem~2.4.5 from \cite{BochnakCosteRoy}).

\begin{proof}[Proof of the corollary]
Of course, $\G^N$ itself is a (semi) algebraic subset of $\R^{4N}$.

Let $H$ be a connected component of $\cH$.
Let $K$ be the upper bound on the numbers $k$, $\ell$, $n$ that appear in
Theorem~\ref{t.general boundary}.
Let $S_1$, $S_2$, and $S_3$ be the subsets of $\G^N$
formed by the $N$-tuples
that satisfy respectively alternatives (i), (ii), and (iii) of the theorem,
with $k$, $\ell$, $n$ not greater than $K$.

The set $S_1 \cup S_2$ is obviously semialgebraic;
let us see that $S_3$ also is.
Introduce variables $\lambda$, $\mu \in \R$, $w_1$, $w_2 \in \R^2$,
and rewrite~\eqref{e.hetero connection} as
$$
\left \{
\begin{array}{ll}
A^k(x) \cdot w_1    = \lambda w_1 & \quad \lambda^2>1 \\
A^\ell(y) \cdot w_2 = \mu w_2     & \quad -1<\mu<1 \\
A^n(z) \cdot w_1 = w_2            & \quad w_1 \neq (0,0)
\end{array}
\right.
$$
Such relations define a semialgebraic set on $\G^N \times \R^6$,
which is sent by the obvious projection onto $S_3$.
Therefore $S_3$ is semialgebraic, by the Tarski-Seidenberg principle
(see \cite{BochnakCosteRoy}, Theorem~2.2.1).

The set $S = S_1 \cup S_2 \cup S_3$ is closed, disjoint from $H$, and contains the boundary of $H$.
Thus $H$ is a connected component of the semialgebraic set
$\G^N \setminus S$,
and hence is semialgebraic, by Theorem~2.4.5 from \cite{BochnakCosteRoy}.
\end{proof}

\medskip

To prove Theorem~\ref{t.general boundary}, we first establish two lemmas.
In both of them we assume that $(A_1, \ldots, A_N)$ belongs to the hyperbolic locus,
and let $U_\alpha$, $S_\alpha$ be its unstable and stable families of cores
(see \S\ref{ss.onlyif}).

\begin{lemma}\label{l.preimage}
Let $\beta$ be a symbol, and $v\in \partial U_\beta$.
Then there exist a symbol $\alpha$ such that $\alpha \to \beta$
and $A_\beta^{-1} (v) \in \partial U_\alpha$.
\end{lemma}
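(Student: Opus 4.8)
Recall the construction of the cores from \S\ref{ss.onlyif}: $U_\beta$ is the complement of the union of the connected components of $\P^1\setminus K^u_\beta$ that meet $A_\beta K^s_\beta$. The plan is to analyze a boundary point $v\in\partial U_\beta$ according to whether or not $v$ lies in the compact invariant set $K^u_\beta=\{e^u(x);\ x_{-1}=\beta\}$. First I would observe that $\partial U_\beta\subset K^u_\beta$: the set $U_\beta$ is obtained from $K^u_\beta$ by adjoining some of the (open) gap components of $\P^1\setminus K^u_\beta$, so every boundary point of $U_\beta$ is a boundary point of $K^u_\beta$, hence lies in $K^u_\beta$ itself. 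Therefore $v=e^u(x)$ for some $x\in\Sigma$ with $x_{-1}=\beta$.

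Now use the identity $K^u_\beta=\bigcup_{\alpha;\ \alpha\to\beta}A_\beta K^u_\alpha$, equivalently the fact that $e^u$ is constant on local unstable manifolds together with $e^u(\sigma x)=A(x)e^u(x)$: writing $\alpha=x_{-2}$ (so $\alpha\to\beta$), we have $v=e^u(x)=A_\beta\cdot e^u(\sigma^{-1}x)$, and $e^u(\sigma^{-1}x)\in K^u_\alpha$ since $(\sigma^{-1}x)_{-1}=x_{-2}=\alpha$. Thus $A_\beta^{-1}(v)=e^u(\sigma^{-1}x)\in K^u_\alpha$. It remains to upgrade membership in $K^u_\alpha$ to membership in $\partial U_\alpha$. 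For this I would argue that $A_\beta$ maps $\P^1\setminus U_\alpha$ \emph{into} $\P^1\setminus U_\beta$: indeed $A_\beta U_\alpha\subset U_\beta$ by property~(iv) of the cores, and $A_\beta$ is a homeomorphism of $\P^1$, so $A_\beta(\P^1\setminus U_\alpha)\supset\P^1\setminus U_\beta$ need not hold, but what does hold is $A_\beta^{-1}(\P^1\setminus U_\beta)\subset\ ?$— here one must be careful. The cleaner route is: a point $w\in K^u_\alpha$ lies in $\partial U_\alpha$ iff $w$ is accumulated by $\P^1\setminus U_\alpha$, i.e. iff arbitrarily close to $w$ there is a gap component $C$ of $\P^1\setminus K^u_\alpha$ meeting $A_\alpha K^s_\alpha$. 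Since $A_\beta$ is a homeomorphism carrying $K^u_\alpha$ onto a subset of $K^u_\beta$ and carrying $A_\alpha K^s_\alpha$ appropriately, one transports such a gap near $w=A_\beta^{-1}(v)$ to a gap near $v$; the hypothesis $v\in\partial U_\beta$ provides such a gap near $v$, and pulling it back by $A_\beta^{-1}$ (which sends components of $\P^1\setminus K^u_\beta$ to components of $\P^1\setminus K^u_\alpha$ when they happen to lie in the image, using $A_\beta K^u_\alpha\subset K^u_\beta$ and the finiteness/structure of the cores) yields a gap of $\P^1\setminus K^u_\alpha$ near $w$ meeting $A_\alpha K^s_\alpha$, so $w\in\partial U_\alpha$.

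The main obstacle is exactly this last transport step: $A_\beta K^u_\alpha$ is only a \emph{subset} of $K^u_\beta$, not all of it, so a gap of $\P^1\setminus K^u_\beta$ adjacent to $v$ need not pull back to a gap of $\P^1\setminus K^u_\alpha$ — it could pull back to something straddling another piece of $K^u_\alpha$. To handle this I would exploit property~(iii) of the cores (each component of $\P^1\setminus A_\alpha S_\alpha$ contains a unique component of $U_\alpha$) together with the defining relation $U_\alpha\subset\P^1\setminus A_\alpha S_\alpha$ and the relation $K^u_\alpha\cap A_\alpha K^s_\alpha=\emptyset$: the point is that $v\in\partial U_\beta$ means $v$ is an endpoint of a component $U_{\beta,i}$ of $U_\beta$ and on the other side lies $A_\beta S_\beta$ (or rather its relevant piece), and pulling back by $A_\beta^{-1}$ and using $A_\beta^{-1}(A_\beta S_\beta)=S_\beta\supset A_\beta^{-1}S_\beta\supset\ldots$ one locates $A_\beta^{-1}(v)$ as an endpoint of a component of $U_\alpha$ abutting $A_\alpha S_\alpha$, i.e. in $\partial U_\alpha$. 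I would phrase the whole argument in terms of the endpoints of the finitely many components of $U_\alpha,U_\beta$ and the adjacency of each such endpoint to the respective set $A_\alpha S_\alpha$, $A_\beta S_\beta$, which makes the bookkeeping transparent and avoids any appeal to connectedness subtleties on $\P^1$.
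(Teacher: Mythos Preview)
Your overall strategy is the paper's strategy: show $\partial U_\beta\subset K^u_\beta$, write $v=e^u(x)$ with $x_{-1}=\beta$, set $\alpha=x_{-2}$, and then verify that $v'=A_\beta^{-1}(v)$ satisfies the boundary characterization for $U_\alpha$. But you then talk yourself out of the clean finish and declare an ``obstacle'' that is not there.

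The characterization you want (and which the paper uses) is: $v\in\partial U_\beta$ iff $v\in K^u_\beta$ and there is an open interval $I$ with $\partial I=\{v,w\}$, $w\in A_\beta K^s_\beta$, and $I\cap K^u_\beta=\emptyset$. Note that $I$ need not be a full connected component of $\P^1\setminus K^u_\beta$; any such interval suffices, because it forces the component of $\P^1\setminus K^u_\beta$ adjacent to $v$ on that side to meet $A_\beta K^s_\beta$. Now simply pull everything back by $A_\beta^{-1}$ and check the same three conditions at level $\alpha$:
\begin{itemize}
\item $v'=e^u(\sigma^{-1}x)\in K^u_\alpha$, as you already noted.
\item $w'=A_\beta^{-1}(w)\in K^s_\beta\subset A_\alpha K^s_\alpha$, the last inclusion coming from $K^s_\alpha=\bigcup_{\gamma;\,\alpha\to\gamma}A_\alpha^{-1}K^s_\gamma$ applied with $\gamma=\beta$.
\item $I'=A_\beta^{-1}(I)$ is disjoint from $K^u_\alpha$: since $A_\beta K^u_\alpha\subset K^u_\beta$ we have $K^u_\alpha\subset A_\beta^{-1}(K^u_\beta)$, and $I'\cap A_\beta^{-1}(K^u_\beta)=A_\beta^{-1}(I\cap K^u_\beta)=\emptyset$.
\end{itemize}
That is the whole proof. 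Your worry that $I'$ might ``straddle another piece of $K^u_\alpha$'' is precisely what the third bullet rules out, and you never needed $I'$ to be a maximal gap of $\P^1\setminus K^u_\alpha$. The detour through ``endpoints of the finitely many components of $U_\alpha$, $U_\beta$ and adjacency to $A_\alpha S_\alpha$, $A_\beta S_\beta$'' is unnecessary (and would also drag in $S_\alpha$, $S_\beta$, which are a step removed from $K^s_\alpha$, $K^s_\beta$ and not what the characterization actually uses).
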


\begin{proof}
Recalling the definition of $U_\beta$, we see that
the condition $v \in \partial U_\beta$ is equivalent to the following:
\begin{quote}
$v\in K^u_\beta$ and
there exist a point $w \in A_\beta K^s_\beta$ and an open interval $I \subset \P^1$
such that $\partial I = \{ v, w\}$ and $I \cap K^u_\beta = \emptyset$.
\end{quote}
Let $v$, $w$, and $I$ be as above.
Take $x = (x_i)_{i \in \Z} \in \Sigma$
such that $x_{-1} = \beta$ and $e^u(x) = v$.
Let $\alpha = x_{-2}$.
Set $v' = A_\beta^{-1} (v)$, $w' = A_\beta^{-1} (w)$,
and $I' = A_\beta^{-1} (I)$.
We have
$v' \in K^u_\alpha$,
$w' \in A_\alpha K^s_\alpha$, 
and $I' \cap K^u_\alpha = \emptyset$. 
We conclude that $v' \in \partial U_\alpha$.
\end{proof}

\begin{lemma}\label{l.preperiodic}
Let $v \in \partial U_\alpha$.
Then there exist a periodic point $x \in \Sigma$ of period $k$,
a point $z \in W^u_\mathrm{loc} (x)$,
and an integer $m \ge 0$ such that $z_{m-1} = \alpha$ and
$$
v = A^m (z) \cdot u( A^k(x) ) \, .
$$

Analogously, if $v' \in \partial S_\alpha$
then there exist a periodic point $y \in \Sigma$ of period $\ell$,
a point $w \in W^s_\mathrm{loc} (x)$,
and an integer $p \ge 0$ such that $w_{-p}=\alpha$ and
$$
v' = A^{-p} (w) \cdot s( A^\ell(y) ) \, .
$$
Moreover, $k$, $m$, $\ell$, and $p$ are less or equal than the rank
of the families of cores.
\end{lemma}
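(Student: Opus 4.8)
The plan is to pull the point $v$ backwards along the cocycle with Lemma~\ref{l.preimage}, wait until a component of some $U_\beta$ repeats, and read a periodic orbit off the returning loop. Applying Lemma~\ref{l.preimage} repeatedly from $v\in\partial U_\alpha$ produces symbols $\alpha_0=\alpha,\alpha_1,\alpha_2,\dots$ with $\alpha_{j+1}\to\alpha_j$ for all $j$, and points $v_j\in\partial U_{\alpha_j}$ with $v_0=v$ and $v_j=A_{\alpha_j}(v_{j+1})$. Since each $U_{\alpha_j}$ is compact with finitely many components, those components are pairwise disjoint closed arcs (or points), so $v_j$ lies on the boundary of exactly one of them, say $U_{\alpha_j,i_j}$; moreover $v_j\in K^u_{\alpha_j}$, as noted in the proof of Lemma~\ref{l.preimage}. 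There are $\sum_\beta k(\beta)=n_0$ (the rank) admissible pairs $(\beta,i)$, so among the indices $j=0,1,\dots,n_0$ two of the pairs agree: $(\alpha_a,i_a)=(\alpha_b,i_b)$ for some $0\le a<b\le n_0$. Set $\gamma=\alpha_a=\alpha_b$, $m=a$, $k=b-a$ (so $m,k\le n_0$), and $B=A_{\alpha_a}A_{\alpha_{a+1}}\cdots A_{\alpha_{b-1}}$, so that $v_a=B(v_b)$.

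The key step is to show $v_a=v_b$ and that this common point is the \emph{unstable} direction of $B$. Iterating property~(iv) of the cores along the admissible transitions $\alpha_b\to\alpha_{b-1}\to\cdots\to\alpha_a$ gives $B(U_\gamma)\subset U_\gamma$; since $B(U_{\gamma,i_a})$ is connected and contains $v_a\in U_{\gamma,i_a}$, in fact $B(U_{\gamma,i_a})\subset U_{\gamma,i_a}$. Now $B$ acts on $\P^1$ as an orientation-preserving homeomorphism carrying the closed arc $U_{\gamma,i_a}$ into itself, so it preserves the order induced on that arc and cannot map one of its endpoints onto the other; hence $v_a=B(v_b)=v_b$ is a fixed point of $B$. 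Let $x$ be the periodic point of period $k$ with $x_{-j}=\alpha_{a+j-1}$ for $1\le j\le k$; one checks that all required transitions are admissible, that $x_{-1}=\gamma$, $x_0=\alpha_{b-1}$, and that $A^k(x)=B$. Since the cocycle is uniformly hyperbolic, $B=A^k(x)$ is hyperbolic, with fixed directions $u(B)=e^u(x)\in K^u_\gamma$ and $s(B)=e^s(x)\in K^s_{x_0}=K^s_{\alpha_{b-1}}$. But $v_a\in K^u_\gamma$, and $\gamma=\alpha_b\to\alpha_{b-1}$ forces $K^u_\gamma\cap K^s_{\alpha_{b-1}}=\emptyset$; therefore $v_a\neq s(B)$, and hence $v_a=u(B)=u(A^k(x))$.

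It then remains to produce $z$. Take any $z\in\Sigma$ with $z_j=\alpha_{m-1-j}$ for $0\le j\le m-1$ and $z_j=x_j$ for $j<0$: such a $z$ exists because the transition $x_{-1}=\gamma\to\alpha_{a-1}=z_0$ is the same as $\alpha_a\to\alpha_{a-1}$, the transitions among $z_0,\dots,z_{m-1}$ are among the $\alpha_{j+1}\to\alpha_j$, and a transitive subshift of finite type always admits forward extensions. Then $z\in W^u_\mathrm{loc}(x)$, $z_{m-1}=\alpha_0=\alpha$, and unwinding $v_j=A_{\alpha_j}(v_{j+1})$ gives
\[
v=v_0=A_{\alpha_0}A_{\alpha_1}\cdots A_{\alpha_{m-1}}(v_m)=A^m(z)\cdot u(A^k(x)),
\]
with $k,m\le n_0$. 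The statement for $v'\in\partial S_\alpha$ follows by applying the part just proved to the dual cocycle $(A_\beta^{-1})$ over the dual subshift $\Sigma^*$, under which local stable sets and stable cores are interchanged with local unstable sets and unstable cores.

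I expect the obstacle to be two-fold. First, keeping the bounds at $n_0=\mathrm{rank}$ rather than at $2n_0$ requires running the pigeonhole over the component-labels $(\beta,i)$ rather than over the boundary points of the $U_\beta$, which in turn forces the orientation argument showing that coinciding labels $(\alpha_a,i_a)=(\alpha_b,i_b)$ automatically give $v_a=v_b$. Second, one must exclude $v_a=s(B)$, and this is exactly where the disjointness $K^u_\gamma\cap K^s_{\alpha_{b-1}}=\emptyset$ (valid because $\gamma\to\alpha_{b-1}$) is essential.
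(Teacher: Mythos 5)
Your proof is correct and follows the paper's strategy exactly: pull $v$ backwards via Lemma~\ref{l.preimage}, pigeonhole, and identify the recurring boundary point as the unstable eigendirection of the return product. The paper's own pigeonhole is stated directly on the pairs $(\alpha_j, v_j)$, which would naively only yield the weaker bound $2n_0$; your refinement --- pigeonholing on the component labels $(\alpha_j, i_j)$, using orientation preservation to force $v_a = v_b$, and explicitly excluding $v_a = s(B)$ via the disjointness $K^u_\gamma \cap K^s_{\alpha_{b-1}} = \emptyset$ --- supplies precisely the details the paper leaves implicit and does obtain the stated bound $n_0$.
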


\begin{proof}
We will prove one half of the lemma.
Take $v \in \partial U_\alpha$.
Set $\alpha_0 = \alpha$ and $v_0 = v$.
Applying repeatedly Lemma~\ref{l.preimage} we
find a sequence
$\alpha_0 \leftarrow \alpha_1 \leftarrow \alpha_2 \leftarrow \cdots$ 
such that
$$
v_{n+1} = A_{\alpha_n}^{-1} \cdots A_{\alpha_0}^{-1} v_0 \in \partial U_{\alpha_{n+1}}
\quad \text{for every $n \ge 0$.}
$$
Let $n_0$ be the rank of the family $U_\alpha$.
By the pigeon-hole principle, there exist
integers $m$ and $k$  such that $0 \le m < m+k \le n_0$ and
$\alpha_m = \alpha_{m+k}$ and $v_m = v_{m+k}$.
Then $v_m$ is fixed by $A_{\alpha_m} \cdots A_{\alpha_{m+k-1}}$,
and so must be the unstable direction of this matrix product.
We also have $v_0 = A_{\alpha_0} \cdots A_{\alpha_{m-1}} \cdot v_m$.
The lemma follows.
\end{proof}

\begin{proof}[Proof of Theorem~\ref{t.general boundary}]
Observe that unstable and stable families of cores vary continuously with the $N$-tuple.
So if we restrict ourselves to $N$-tuples in~$H$,
the rank $n_0$ of the families of cores is constant.

Now take $(A_1, \ldots, A_N)$ in the boundary of $H$.
Assume that there is no periodic point $x \in \Sigma$ of period $n \le n_0$
for which $A^n(x) = \pm \id$.
We will show that then one of the alternatives (ii) or (iii) in the theorem holds.

Consider the following finite subsets of $\P^1$:
\begin{equation}\label{e.finite}
\begin{gathered}
U^*_\alpha = \{ A^m(z) \cdot u(A^k(x)) ; \;
1\le k \le n_0, \ 0 \le m \le n_0, \ x = \sigma^k x, \ z \in W^u_\mathrm{loc}(x), \ z_{m-1}=\alpha \}, \\
S^*_\beta = \{ A^{-p}(w) \cdot s(A^\ell(y)) ; \;
1\le \ell \le n_0, \ 0 \le p \le n_0, \ y = \sigma^\ell y, \ w \in W^s_\mathrm{loc}(y), \  w_{-p}=\beta \}.
\end{gathered}
\end{equation}
Notice that
\begin{equation}\label{e.invariance}
U_\beta^* \subset \bigcup_{\alpha ; \; \alpha \to \beta} A_\beta U_\alpha^*
\quad \text{and} \quad
S_\alpha^* \subset \bigcup_{\beta ; \; \alpha \to \beta} A_\alpha^{-1} S_\beta^* \, .
\end{equation}
(To see this, use for instance that
if $x=\sigma^k x$ then $u(A^k(x)) = A_{x_{-1}} \cdot u(A^k(\sigma^{-1} x))$.)

Assume that $U^*_\alpha \cap S^*_\beta \neq \emptyset$ for some $\alpha$, $\beta$ with $\alpha \to \beta$.
Then, for some $m$, $x$ etc as in \eqref{e.finite}, we have an equality
$A^m(z) \cdot u(A^k(x)) = A^{-p}(w) \cdot s(A^\ell(y))$.
Moreover, we can assume that $w = \sigma^n z$, where $n = m+p$.
Then $A^n(z) \cdot u(A^k(x)) = s(A^\ell(y))$,
with $z \in W^u_\mathrm{loc}(x) \cap \sigma^{-n} W^s_\mathrm{loc}(y)$.
If $A^k(x)$ or $A^\ell(y)$ is parabolic, we are in alternative~(ii) of the theorem.
Otherwise, both $A^k(x)$ and $A^\ell(y)$ are hyperbolic and
alternative~(iii) holds.

In order to complete the proof of the theorem,
we will assume by contradiction that
$U^*_\alpha \cap S^*_\beta = \emptyset$ for every $\alpha$, $\beta$ with $\alpha \to \beta$.
It follows from~\eqref{e.invariance} that
$U^*_\alpha \cap A_\alpha S^*_\alpha = \emptyset$ for every~$\alpha$.

Take a sequence $(A_1(i), \ldots, A_N(i))$ in $H$
converging to $(A_1, \ldots, A_N)$ as $i \to \infty$.
Let $A_\alpha(\infty) = A_\alpha$.

Define sets $U^*_\alpha(i)$, $S^*_\alpha(i)$
in the same way $U^*_\alpha$, $S^*_\alpha$ were defined,
replacing each $A_\beta$ with $A_\beta(i)$.
By continuity of the $u$ and $s$ directions for non-elliptic matrices
far from $\pm \id$,
we have that for every large $i$,
$U^*_\alpha(i)$ and $S^*_\alpha(i)$,
are close to $U^*_\alpha$ and $S^*_\alpha$, respectively.

For $i\in \N \cup\{\infty\}$, define other sets $U_\alpha(i)$, $S_\alpha(i)$
as follows:
$U_\alpha(i)$ is the complement of the union of the connected
components of $\P^1 \setminus U^*_\alpha(i)$ that intersect
$A_\alpha S^*_\alpha(i)$,
and $S_\alpha(i)$ is the complement of the union of the connected
components of $\P^1 \setminus S^*_\alpha(i)$ that intersect
$A_\alpha^{-1} U^*_\alpha(i)$.
If $I$ is large enough then $U_\alpha(i)$ and $S_\alpha(i)$
are respectively close (with respect to the Hausdorff distance)
to $U_\alpha(\infty)$ and $S_\alpha(\infty)$.

By Lemma~\ref{l.preperiodic}, if $i<\infty$ then $U_\alpha(i)$ and $S_\alpha(i)$
are precisely the unstable and stable families of cores of the $N$-tuple $(A_\alpha(i))$.
It follows from continuity that the sets
$U_\alpha = U_\alpha(\infty)$, $S_\alpha = S_\alpha(\infty)$
also satisfy properties (i)-(iv) of \S\ref{ss.onlyif}.
By Lemma~\ref{l.core}, $(A_\alpha)$ has a family of multicones,
that is, $(A_\alpha) \in \cH$.
Contradiction.
\end{proof}

\medskip

\begin{center}
\emph{From this point until the end of Section~\ref{s.abstract combinatorics},
we will be interested only in full shifts.}
\end{center}

\subsection{Non-Principal Components} \label{ss.nonprincipal}


As mentioned in Remark~\ref{r.no id},
we will prove that no $\pm$identity products exist in the boundaries of non-principal components.

Let us begin with a lemma about pairs of matrices.
Recall that a uniformly hyperbolic pair induces maps
$e^u$, $e^s: 2^\Z \to \P^1$ (see \S\ref{ss.onlyif}).

\begin{lemma}\label{l.u interval}
For every $c>0$ there exists $\delta=\delta(c)>0$ with the following properties:
If $(A,B)$ is a uniformly hyperbolic pair with
\begin{equation}\label{e.weak}
\|A\| \le c \quad \text{and} \quad \|B \mp \id \| < \delta
\end{equation}
then $(A,B)$ belongs to a principal component.
Moreover, the images of the maps $e^u$, $e^s$
are (disjoint closed) \emph{intervals} $I_u$, $I_s \subset \P^1$.
\end{lemma}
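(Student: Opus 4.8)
The plan is to argue by a compactness/continuity argument together with the structure theory of the full $2$-shift hyperbolicity locus developed in Section~\ref{s.full 2}. Fix $c>0$ and suppose, for contradiction, that no such $\delta$ works: then there is a sequence $(A_n,B_n)$ of uniformly hyperbolic pairs with $\|A_n\|\le c$, $\|B_n \mp \id\|\to 0$ (say $B_n\to\id$, the other sign being identical), but with $(A_n,B_n)$ not in a principal component. Since all the $(A_n,B_n)$ avoid the principal components, by Theorem~\ref{t.components} each one lies in some $H_{F_n}$ with $F_n\in\cM$, i.e.\ each is \emph{twisted} (by Lemma~\ref{l.free} and the discussion in \S\ref{ss.twisted pairs}, pairs in $\overline{H_\id}$ are free hence twisted, and $F_\pm$ preserve twistedness, so their $\cM$-preimages are twisted). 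In particular $A_n$ is not elliptic and $A_n\neq\pm\id$, so $u_{A_n}$, $s_{A_n}$ are defined and, after passing to a subsequence, $\|A_n\|\le c$ forces $A_n\to A_\infty$ for some $A_\infty$, while $B_n\to\id$.

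The key step is to derive a contradiction from the length bound in Lemma~\ref{l.stop}. Normalizing signs so that $\tr A_n, \tr B_n\ge 2$ (using $B_n\to\id$, so $\tr B_n\to 2$), each $(A_n,B_n)$ is almost hyperbolic and twisted, so Lemma~\ref{l.stop} gives the \emph{unique} $F_n\in\cM$ with $F_n(A_n,B_n)$ free, of length $\le \tfrac14(|\tr A_n|+|\tr B_n|)-1\le \tfrac14(2c+3)-1$, a bound independent of $n$. Hence the $F_n$ range over a \emph{finite} subset of $\cM$; passing to a further subsequence we may assume $F_n\equiv F$ is constant, so all $(A_n,B_n)\in H_F=F^{-1}(H_\id)$. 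Applying the diffeomorphism $F$ (continuous), $F(A_n,B_n)\to F(A_\infty,\id)$ lies in $\overline{H_\id}$. But $F$ is built from $F_+(A,B)=(A,AB)$ and $F_-(A,B)=(BA,B)$, and setting $B=\id$ in either gives $(A,A)$ resp.\ $(A,A)$; more precisely one checks inductively that $F(A,\id)=(A,A)$ for every $F\in\cM$. Thus $F(A_\infty,\id)=(A_\infty,A_\infty)$, which is straight (belongs to $\overline{H_0}$). This contradicts $F(A_\infty,\id)\in\overline{H_\id}$, since $\overline{H_0}\cap\overline{H_\id}=\emptyset$ (established at the start of the proof of Proposition~\ref{p.disjoint}). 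This contradiction proves the first assertion: for $\delta$ small enough, $(A,B)$ lies in a principal component.

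For the ``moreover'' part, once we know $(A,B)$ is in a principal component, $(A,B)$ has a multicone $M$ consisting of a single interval, and one has $K^u\subset M$, $K^s\subset\P^1\setminus\overline M$. Since $K^u=\overline{e^u(2^\Z)}$ and $K^s=\overline{e^s(2^\Z)}$ (the images from \S\ref{ss.onlyif}), I would show $K^u$ and $K^s$ are themselves intervals. The cleanest route: $K^u$ is a compact $A$- and $B$-invariant subset of the interval $M$; write $I_u$ for the smallest closed subinterval of $M$ containing $K^u$. By Proposition~\ref{p.combin contraction} (or directly from the cone contraction), sufficiently long products map $\overline M$ into a tiny neighbourhood of a point of $K^u$, and one deduces that $K^u$ fills $I_u$: any gap of $\P^1\setminus K^u$ lying inside $I_u$ would have to be invariant under, or mapped around by, the $A$- and $B$-action in a way incompatible with the fact that $\P^1\setminus K^s\supset M$ is connected and every component of $\P^1\setminus U$ contains exactly one component of $S$ (the rank is $1$, cf.\ \S\ref{sss.full core}). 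Concretely, since the rank of the cores is $1$ in a principal component, $U$ and $S$ are each single intervals, and $K^u\subset U$, $K^s\subset S$; a further short argument (using $K^u=\bigcup_i A_i K^u$ and that each component of $\P^1\setminus K^u$ either is $\P^1\setminus U$-type or is mapped strictly inside itself, forcing it to be empty) shows $K^u=U$ and $K^s=S$ are intervals. Then $I_u:=\overline{e^u(2^\Z)}=K^u$ and $I_s:=\overline{e^s(2^\Z)}=K^s$ are disjoint closed intervals, as claimed.

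The main obstacle I anticipate is the bookkeeping in the last paragraph: turning ``$(A,B)$ is in a principal component'' into the precise statement that the closures of the images of $e^u,e^s$ are genuine intervals (rather than just Cantor-like invariant sets inside intervals). The cleanest argument uses that the rank of the cores equals $1$ for principal components, so $U$ and $S$ are intervals, and then identifies $K^u$ with $U$ and $K^s$ with $S$ via the invariance relations $K^u=\bigcup_\alpha A_\alpha K^u$, $K^s=\bigcup_\alpha A_\alpha^{-1}K^s$ and a minimality/connectedness argument on the components of the complement.
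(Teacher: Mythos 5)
Your argument for the first assertion is essentially sound and uses a genuinely different route from the paper: you invoke the length bound in Lemma~\ref{l.stop} to trap $F_n$ in a finite set, pass to a constant $F$, and then observe that $F(A_\infty,\pm\id)\in\overline{H_\id}$ is forced to be straight, contradicting $\overline{H_0}\cap\overline{H_\id}=\emptyset$. (The paper instead quotes directly that the boundary of a non-principal component never contains a pair $(A,\pm\id)$ and that each compact set meets only finitely many components.) One small error: it is \emph{not} true that $F(A,\id)=(A,A)$ for all $F\in\cM$; e.g.\ $F_-(A,\id)=(\id\cdot A,\id)=(A,\id)$, and in general $F(A,\pm\id)=(\pm A^k,\pm A^m)$ for some $k,m\ge 0$. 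This does not hurt you, since any pair of commuting non-elliptic matrices of this form is straight, so the contradiction stands.

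The ``moreover'' part, however, has a genuine gap, and it is exactly the issue you flagged as the ``main obstacle.'' Your argument attempts to deduce that $K^u=e^u(2^\Z)$ is an interval from the mere fact that $(A,B)$ lies in a principal component (rank~$1$, $U$ and $S$ intervals, long products contract). This conclusion is false at that level of generality: if $A$ and $B$ both contract the interval $M$ strongly and $A(M)\cap B(M)=\emptyset$, then $K^u$ is the attractor of a strongly contracting two-map IFS and is a Cantor set, even though $(A,B)$ is in a principal component and $U$ is still a single interval (the convex hull of $K^u$). The claim that a gap of $K^u$ inside $U$ ``would have to be mapped strictly inside itself'' is simply not true --- in the Cantor picture, gaps are mapped onto other gaps. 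The hypothesis~\eqref{e.weak} is essential here, and your proof never uses it at this stage. The paper's actual argument chooses $\delta$ small enough so that~\eqref{e.weak} forces the derivative estimate
$$
\inf_{x\in\P^1}\bigl|(A^{\pm1})'(x)\bigr| + \inf_{x\in\P^1}\bigl|(B^{\pm1})'(x)\bigr| > 1 ,
$$
which in turn gives $|A(I_u)|+|B(I_u)|>|I_u|$ for the interval $I_u$ with endpoints $u_A,u_B$, hence $A(I_u)\cup B(I_u)=I_u$. This covering property is what lets one build, for every $z_0\in I_u$, a backward itinerary $x\in 2^\Z$ with $e^u(x)=z_0$, giving $e^u(2^\Z)=I_u$; similarly for $I_s$. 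Your proof of the second half should be replaced by this argument (or some other argument that actually exploits the closeness of $B$ to $\pm\id$).
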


\begin{proof}
Our study of the $N=2$ case shows that the boundary of a non-principal component
cannot contain a pair of the form $(A, \pm \id)$.
If follows that there exists $\delta=\delta(c)$ such that
every hyperbolic pair $(A,B)$ satisfying~\eqref{e.weak}
belongs to a principal component.

Let us also assume that $\delta(c)$ is small enough so that~\eqref{e.weak} implies
$$
\inf_{x\in\P^1} \left|(A^{\pm 1})'(x)\right| +
\inf_{x\in\P^1} \left|(B^{\pm 1})'(x)\right| > 1.
$$

Now, given a hyperbolic pair $(A,B)$ satisfying~\eqref{e.weak}, let
$I_u$ and $I_s$ be disjoint closed intervals
such that $\partial I_u = \{u_A, u_B\}$ and $\partial I_s = \{s_A, s_B\}$.
By the choice of $\delta>0$, we have
$|A(I_u)| + |B(I_u)| > |I_u|$ (where $|\mathord{\cdot}|$ denotes interval length).
Therefore
$$
I_u = A(I_u) \cup B(I_u).
$$

Let us write $A_1 = A$, $A_2 = B$.
Given $z_0 \in I_u$, there exists $x_{-1} \in \{1,2\}$ and $z_1 \in I_u$ such that
$A_{x_{-1}} (z_1) = z_0$.
Inductively, we find $x_{-n} \in \{1,2\}$ and $z_n \in I_u$ such that
$A_{x_{-n}} (z_n) = z_{n-1}$.
We form a sequence $x=(x_i)_{i\in\Z} \in 2^\Z$, choosing arbitrarily $x_i$ for $i \ge 0$.
Then it is easy to see that $z_0 = e^u(x)$.
This shows that $e^u(2^\Z)=I_u$.
The proof that $e^s(2^\Z)=I_s$ is analogous.
\end{proof}

Let $\cH_\mathrm{NP} \subset \G^N$ be the union of the non-principal components.

\begin{prop}\label{p.no id}
If an $N$-tuple is in $\overline{\cH_\mathrm{NP}}$
then no product of the matrices in the $N$-tuple equals~$\pm \id$.

Furthermore, for every compact subset $K$ of $\G^N$, there exists
a neighborhood $V$ of $\{\pm \id\}$ such that if
an $N$-tuple belongs to $K \cap \overline{\cH_\mathrm{NP}}$
then no product of the matrices in the $N$-tuple belongs to $V$.
\end{prop}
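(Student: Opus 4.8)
The plan is to reduce everything to the two-letter case already settled (Lemma~\ref{l.u interval} and the results of Section~\ref{s.full 2}), using the following observation: if a nontrivial product of the $A_\alpha$'s equals $\pm\id$, then picking any letter $\gamma$ that occurs in that product and looking at a suitable ``renormalized'' pair of matrices built from the product and from powers of $A_\gamma$, we obtain a uniformly hyperbolic pair over the full $2$-shift that violates Lemma~\ref{l.u interval}. More precisely, suppose $(A_1,\dots,A_N)\in\overline{\cH_{\mathrm{NP}}}$ and that some product $W = A_{i_n}\cdots A_{i_1} = \pm\id$ with $n\ge 1$. Since the $N$-tuple lies in the closure of a hyperbolic locus, each $A_\alpha$ appearing in $W$ is non-elliptic with $|\tr A_\alpha|$ bounded below away from $2$ is \emph{not} automatic; so the first thing I would establish is that along a sequence $(A_\alpha^{(j)})\to(A_\alpha)$ in $\cH_{\mathrm{NP}}$, the corresponding products $W^{(j)}$ converge to $\pm\id$, and one can extract from the cocycle picture a genuine contradiction with uniform hyperbolicity of the approximating tuples.

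Concretely, I would argue as follows. Fix a letter $\gamma$ occurring in the word $i_1\cdots i_n$. For the approximating hyperbolic $N$-tuple $(A_\alpha^{(j)})$, consider the pair $(A^{(j)}, B^{(j)})$ where $A^{(j)} = A_\gamma^{(j)}$ and $B^{(j)} = W^{(j)}\cdot A_\gamma^{(j)}$ (or a similar cyclic rearrangement so that $B^{(j)}$ is a conjugate of a product of the $A_\alpha^{(j)}$'s times a power of $A_\gamma^{(j)}$). Any word in $A^{(j)}, B^{(j)}$ is a word in the $A_\alpha^{(j)}$'s, so by Proposition~\ref{p.unif hyp} (exponential growth of products of a uniformly hyperbolic $N$-tuple) the pair $(A^{(j)},B^{(j)})$ is uniformly hyperbolic over the full $2$-shift. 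Now I want to say it belongs to a \emph{non-principal} component of the $2$-shift hyperbolicity locus; this is where the hypothesis $(A_\alpha)\in\overline{\cH_{\mathrm{NP}}}$ (as opposed to $\overline{\cH}$) is used, together with the combinatorial description of multicones: a multicone for $(A_\alpha^{(j)})$ that has $\ge 2$ components restricts to a multicone for $(A^{(j)},B^{(j)})$ with $\ge 2$ components, so $(A^{(j)},B^{(j)})$ is not in a principal component of $\cH\subset\G^2$. But $\|B^{(j)} - (\pm A_\gamma)\| = \|W^{(j)} A_\gamma^{(j)} - (\pm A_\gamma^{(j)})\| + o(1) = \|W^{(j)} \mp \id\|\cdot\|A_\gamma^{(j)}\| + o(1) \to 0$, while $\|A^{(j)}\|$ stays bounded along a subsequence (pass to a subsequence where $A_\gamma^{(j)}\to A_\gamma$). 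Apply Lemma~\ref{l.u interval} with $c = \sup_j\|A^{(j)}\|$ and $\delta = \delta(c)$: for $j$ large, $(A^{(j)}, B^{(j)})$ must lie in a principal component of $\cH\subset\G^2$ --- contradicting the previous paragraph. This contradiction proves the first assertion.

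For the quantitative/uniform statement, I would run exactly the same argument but keep track of compactness. Fix a compact $K\subset\G^N$. Suppose the conclusion fails: there are $N$-tuples $(A_\alpha^{(j)})\in K\cap\overline{\cH_{\mathrm{NP}}}$ and products $W^{(j)}$ of their matrices with $W^{(j)}\to \pm\id$ (we may assume convergence to a fixed sign after passing to a subsequence). By compactness of $K$, pass to a further subsequence so that $(A_\alpha^{(j)})\to(A_\alpha)\in K\cap\overline{\cH_{\mathrm{NP}}}$. The lengths of the words $W^{(j)}$ need \emph{not} be bounded a priori --- this is the main technical point --- but all we used about $W^{(j)}$ above is that it is a product of the matrices of a uniformly hyperbolic $N$-tuple (hence grows exponentially), that $W^{(j)}\to\pm\id$, and that some fixed letter $\gamma$ occurs in it (choose $\gamma$ constant along a subsequence, possible since there are finitely many letters). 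With $c := 1+\sup_{K}\max_\alpha\|A_\alpha\|$ and $\delta := \delta(c)$ from Lemma~\ref{l.u interval}, and $V := \{\,C\in\G : \|C\mp\id\| < \delta/c\,\}$, the estimate $\|B^{(j)}-(\pm A_\gamma^{(j)})\| \le \|W^{(j)}\mp\id\|\,\|A_\gamma^{(j)}\| < \delta$ holds for $j$ large, and we get the same contradiction as before. Hence such a $V$ exists, and since the argument only ever used the two constants $c,\delta(c)$ depending on $K$, the neighborhood $V$ is uniform over $K$.

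The step I expect to be the main obstacle is the claim that $(A^{(j)}, B^{(j)})$ does not lie in a principal component of the $2$-shift hyperbolicity locus: one must verify cleanly that hyperbolicity of the full $N$-tuple over the $2$-shift with a disconnected multicone forces a disconnected multicone for the sub-pair $(A_\gamma, W A_\gamma)$, i.e.\ that passing to this sub-pair cannot accidentally create a common invariant interval. For this I would use the core description of \S\ref{sss.full core}: the stable and unstable cores $U,S$ of $(A_\alpha)$ are preserved (in the appropriate containment sense) under $A_\gamma$ and under $W A_\gamma$ since these are words in the $A_\alpha$'s, and if $(A_\alpha)$ has no connected multicone then $U$ (equivalently $S$) has $\ge 2$ components, which then serve as a core for the pair, so by Lemma~\ref{l.core full} and the remark following Proposition~\ref{p.tight} the pair has no connected multicone either. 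A secondary subtlety is making sure that when $W^{(j)}=\pm\id$ \emph{exactly} (the limiting tuple itself), the argument is applied to the approximating tuples and not to the limit, which is why everything is phrased in terms of sequences.
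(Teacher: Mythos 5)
Your approach and the paper's both hinge on Lemma~\ref{l.u interval}, but the logical role is reversed. You try to show the auxiliary pair is \emph{non}-principal and then obtain a contradiction because Lemma~\ref{l.u interval} asserts principality. The paper instead \emph{accepts} the conclusion of Lemma~\ref{l.u interval} — for a pair $(A_i,B)$ with $B$ a near-$\pm\id$ product, the images of $e^u$, $e^s$ are disjoint closed \emph{intervals} $I_u \ni u(A_i),u(B)$ and $I_s$ — notes that $I_u\subset K^u$ of the full $N$-tuple (so $I_u$ avoids every $s(A_\alpha)$), and then derives a contradiction by applying this twice: since $\xi$ is non-principal one finds $i,j,k,\ell$ with $u(A_i)<s(A_j)<u(A_k)<s(A_\ell)<u(A_i)$, and the interval constraint forces $u(B)\in (s(A_\ell),s(A_j))$ from the pair $(A_i,B)$ and $u(B)\in(s(A_j),s(A_\ell))$ from the pair $(A_k,B)$, which is impossible. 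No non-principality of the auxiliary pair is ever asserted, let alone needed.

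The step you flag as ``the main obstacle'' is indeed a genuine gap, and the argument you sketch for it does not close it. You observe that the cores $U$, $S$ of the $N$-tuple satisfy the invariance properties (i)--(iii) of \S\ref{sss.full core} for the pair, invoke Lemma~\ref{l.core full} to get a multicone $M$ for the pair with the same ($\ge 2$) number of components, and then cite the remark after Proposition~\ref{p.tight} to conclude the pair has no connected multicone. But that remark compares a \emph{tight} multicone with all others; nothing in your construction shows $M$ is tight for the pair. Satisfying (i)--(iii) does not make $U$, $S$ the canonical cores of the pair: if the pair $(A_\gamma,W)$ were in fact principal (which is precisely what Lemma~\ref{l.u interval} predicts!), its own cores would be intervals and your $M$ would simply be a non-tight multicone that can be ``filled in.'' So the argument is circular — the tightness you would need is equivalent to the non-principality you are trying to prove. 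In addition there is a smaller slip: Lemma~\ref{l.u interval} requires $\|B\mp\id\|<\delta$, whereas you take $B^{(j)}=W^{(j)}A_\gamma^{(j)}$ and only show $\|B^{(j)}\mp A_\gamma^{(j)}\|\to 0$; you should take $B^{(j)}=W^{(j)}$ directly (as the paper does), after which the hypothesis of Lemma~\ref{l.u interval} is satisfied verbatim.
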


\begin{proof}
Given $c>1$, let $\delta = \delta(2c)$ be given by Lemma~\ref{l.u interval}.
For a compact set of the form
$K(c) = \{(A_1,\ldots,A_N) \in \G^N ; \; \|A_i\| \le c\}$,
we will take $V$ as the open neighborhood of $\{\pm \id\}$
of size $\delta$.

Fix an $N$-tuple $\xi_0 \in K(c) \cap \overline{\cH_\mathrm{NP}}$.
By contradiction, assume that there exists a product of the matrices in $\xi_0$ which is
$\delta$-close to $\pm \id$.

Take $\xi = (A_1, \ldots, A_N)\in \cH_\mathrm{NP}$ close to $\xi_0$.
If $\xi$ is close enough to $\xi_0$,
there exists a product of the $A_i$'s, say $B$, which is $\delta$-close to $\pm \id$.

Fix some cyclical order on $\P^1$.
Since $\xi$ is not in a principal component, there exist
$i,j,k,\ell \in \{1,\ldots,N\}$ such that
$$
u(A_i) < s(A_j) < u(A_k) < s(A_\ell) < u(A_i).
$$
Lemma~\ref{l.u interval} applied to the pair $(A_i,B)$
implies that there is an interval containing $u(A_i)$ and $u(B)$, and disjoint from
$\{s(A_j), s(A_\ell)\}$; in particular $u(B)$ must belong to the interval
$(s(A_\ell), s(A_j))$.
A symmetric argument gives $u(B) \in (s(A_j), s(A_\ell))$.
We reached a contradiction.
\end{proof}

Next, let us prove that connected components of cores associated to a $N$-tuple
in a non-principal component are non-degenerate intervals:

\begin{lemma}\label{l.non degenerate}
Fix a non-principal component $H \subset \G^N$, and let $K \subset \G^N$
be a compact set.
Then there exists $\delta>0$ such that
for any $\xi \in H \cap K$, each interval composing the unstable or stable cores
of $\xi$ has length at least $\delta$.
\end{lemma}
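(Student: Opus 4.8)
The plan is to argue by contradiction, using a compactness argument together with Proposition~\ref{p.no id} to control what can happen in the limit. Suppose the conclusion fails for some non-principal component $H$ and compact set $K$. Then there is a sequence $\xi_i = (A_1(i), \ldots, A_N(i)) \in H \cap K$ and, say, connected components $U_i$ of the unstable core of $\xi_i$ with $|U_i| \to 0$. Passing to a subsequence, we may assume $\xi_i \to \xi_\infty \in \overline{H} \cap K$, and that the shrinking intervals $U_i$ converge (in the Hausdorff metric) to a single point $v \in \P^1$. The point $v$ is then a limit of boundary points of the cores; I would first argue, using Lemma~\ref{l.preperiodic}, that each endpoint of $U_i$ has the form $A^{m}(z) \cdot u(A^{k}(x))$ for a periodic $x$ of period $k \le n_0$ and $0 \le m \le n_0$, where $n_0$ is the (locally constant) rank of the families of cores on $H$. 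Since $k$, $m$ range over a finite set, after a further subsequence the combinatorial data $(x, z, m, k)$ is the same for all $i$, and so the two endpoints of $U_i$ are given by $A_i^{m}(z)\cdot u(A_i^{k}(x))$ and $A_i^{m'}(z')\cdot u(A_i^{k'}(x'))$ for fixed words.

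The heart of the matter is then the following: the two endpoints collapse to a common point $v$ in the limit, so in the limit $N$-tuple $\xi_\infty$ we get an equality
$$
A_\infty^{m}(z)\cdot u(A_\infty^{k}(x)) = A_\infty^{m'}(z')\cdot u(A_\infty^{k'}(x')),
$$
provided all the matrices $A_\infty^{k}(x)$, $A_\infty^{k'}(x')$ involved remain hyperbolic (so that the directions $u(\cdot)$ vary continuously and the limit identity makes sense). I would handle the possibility that some periodic product degenerates to $\pm\id$ separately: by Proposition~\ref{p.no id}, an element of $\overline{\cH_\mathrm{NP}}$ in the compact set $K$ has \emph{no} product equal to $\pm\id$, and in fact (the ``furthermore'' part) no product in a fixed neighborhood $V$ of $\{\pm\id\}$; since the periodic words here have bounded length, the matrices $A_i^{k}(x)$ stay uniformly away from $\pm\id$, hence are uniformly hyperbolic, and their unstable directions converge. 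So the displayed identity does hold in $\xi_\infty$. But two distinct boundary points of the (finitely many) cores of the hyperbolic $N$-tuples $\xi_i$ cannot merge while $\xi_i$ stays hyperbolic, unless $\xi_\infty$ itself is on the boundary $\partial H \subset \cH^c$. So $\xi_\infty \in \partial H$, and the collapsed configuration is exactly an algebraic coincidence among core endpoints.

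To derive the contradiction I would show this coincidence forces $\xi_\infty$ to lie on the boundary of a \emph{principal} component rather than a non-principal one. Indeed, when two distinct components $U_{\alpha,i}$, $U_{\alpha',i'}$ of the unstable cores collide, the cyclic arrangement of the cores degenerates; but the cores of a non-principal $N$-tuple have rank $\ge 2$ with a genuinely ``twisted'' cyclic order (as in the analysis of \S\ref{ss.nonprincipal}, e.g.\ the configuration $u(A_i) < s(A_j) < u(A_k) < s(A_\ell) < u(A_i)$), and such a twisted configuration is an open condition that survives small perturbation — so for $i$ large $\xi_i$ would have to lie in the closure of a principal component, contradicting $\xi_i \in H$ non-principal. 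Alternatively, and perhaps more cleanly, I would invoke Lemma~\ref{l.u interval}: if $U_i$ shrinks to a point, then in the limit the unstable core $U_\infty$ has strictly fewer components than the cores of $\xi_i$, and one can check (using that the rank is locally constant on each component of $\cH$, and that the limit of hyperbolic $N$-tuples along $H$ whose cores lose a component must exit to a principal component) that $\xi_\infty$ borders a principal component; but then points of $H$ near $\xi_\infty$ would themselves be principal, a contradiction. The main obstacle is making precise the claim that losing a core component forces the limit to be principal: this is where I expect to need the $N=2$ structure theory (via Lemma~\ref{l.u interval} and Proposition~\ref{p.no id}) applied to well-chosen pairs $(A_i(\cdot), A_j(\cdot))$ extracted from the $N$-tuple, exactly as in the proof of Proposition~\ref{p.no id}, to rule out a twisted (non-principal) limiting configuration with a collapsed core interval.
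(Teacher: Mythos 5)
Your approach is genuinely different from the paper's, and the gap you flag at the end is a real one, not just a detail to be filled in. Your plan is to pass to a limit $\xi_\infty\in\overline H\cap K$ where one core component has shrunk to a point, and then argue that this collapsed configuration forces $\xi_\infty$ onto the boundary of a \emph{principal} component. But collapsing a \emph{single} component of $U$ does not reduce the core configuration to a principal one: if, say, $U=U_1\sqcup U_2$ and $S=S_1\sqcup S_2$ alternate cyclically and only $U_1$ shrinks to a point $v$, the limit still has $U_2$, $S_1$, $S_2$ present with a twisted cyclic arrangement around $v$ — nothing there says "principal." Neither of the tools you propose for closing the gap fits: Lemma~\ref{l.u interval} requires one matrix to be near $\pm\id$, which is precisely what Proposition~\ref{p.no id} rules out on $\overline{\cH_\mathrm{NP}}\cap K$; and the "twisted cyclic order is open" observation only forbids the limit from being in the \emph{interior} of a principal component, which is not what is at stake since $\xi_\infty\in\partial H$ anyway.

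What you are missing is the step that makes the argument close: a single small core component forces the \emph{entire} unstable core to be small. The paper gets this from Proposition~\ref{p.combin contraction}: there is a $k$ (locally constant along $H$, hence uniform on $H\cap K$) such that every product of length $\ge k$ of the $A_i$'s maps $U$ into a single component of $U$; by tightness (Proposition~\ref{p.tight}) one such product $B$ maps $U$ into the small component $I$, and since $\|B\|$ is bounded on $H\cap K$, $U\subset B^{-1}(I)$ is itself small. Then the shortest closed interval containing $U$ is forward-invariant under each $A_i$ (again using the bound on $\|A_i\|$ to rule out the complementary arc), so $\xi$ lies in a principal component, contradicting $\xi\in H$. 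This is shorter than the limit argument and avoids the combinatorial case analysis of degenerating endpoints entirely; I would redo the proof along these lines rather than trying to repair the limit argument, whose final step does not go through as you state it.
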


\begin{proof}
Assume that there exists $\xi \in H \cap K$ whose unstable core $U$
has a connected component $I$ which is very small.
Recalling Proposition(s)~\ref{p.combin contraction} (and \ref{p.tight}),
there exists a product $B$ of matrices in $\xi$
such that $B(U) \subset I$.
Moreover, we can give an upper bound for $\|B\|$ depending on $H$ and $K$ only.
If follows that the diameter of $U$ is small.
Consider the shortest closed interval that contains $U$.
That interval is forward-invariant by each matrix in $\xi$.
This implies that $\xi$ is in a principal component, contradiction.
\end{proof}

\subsection{Limit Cores} 


The proof of Theorem~\ref{t.general boundary} gives
some useful information about the families of cores.
We will register that information for later use,
however we will focus on the case of full shifts, where cores are defined differently
(see \S\ref{sss.full core}).

The analogue of Lemmas~\ref{l.preimage} and \ref{l.preperiodic} for full shifts are the following:

\begin{lemma}\label{l.preimage full}
Let $(A_1, \ldots, A_N)$ be uniformly hyperbolic w.r.t.\ the full shift,
and let $U$ be the unstable core.
For any $v\in \partial U$,
then there exists a symbol $i$ such that $A_i^{-1} (v) \in \partial U$.
\end{lemma}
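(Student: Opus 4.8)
## Proof Proposal for Lemma~\ref{l.preimage full}

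The plan is to mimic the proof of Lemma~\ref{l.preimage} in the full-shift setting, using the explicit description of the core $U$ in terms of the sets $K^u$ and $K^s$ from \S\ref{sss.full core}. Recall that for the full shift, $U$ is the complement of the union of the connected components of $\P^1 \setminus K^u$ that intersect $K^s$, where $K^u = \bigcup_i A_i(K^u)$ and $K^s = \bigcup_i A_i^{-1}(K^s)$ are disjoint compact sets. First I would unpack the meaning of $v \in \partial U$. A point $v$ lies on the boundary of $U$ precisely when $v \in K^u$ and $v$ is an endpoint of one of the discarded components of $\P^1 \setminus K^u$, i.e.\ there is an open interval $I$ with $\partial I = \{v, w\}$ for some $w \in K^s$ and $I \cap K^u = \emptyset$. (This is exactly the full-shift analogue of the quoted reformulation in Lemma~\ref{l.preimage}.)

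Next, given such $v$, $w$, $I$, I would use the relation $K^u = \bigcup_i A_i(K^u)$ to find a symbol $i$ with $v \in A_i(K^u)$, i.e.\ $A_i^{-1}(v) \in K^u$. The remaining task is to show that this choice of $i$ also works for the $K^s$-witness, i.e.\ that $A_i^{-1}(w) \in K^s$ and that $A_i^{-1}(I)$ avoids $K^u$. For the first, I would invoke the argument that for the uniformly hyperbolic cocycle, $e^u(x)$ depends only on the past and $A_{x_{-1}} e^u(\sigma^{-1}x) = e^u(x)$, so one may in fact choose the pre-image symbol $i$ via a point $x \in 2^\Z$ (or $N^\Z$) with $e^u(x) = v$ and $x_{-1} = i$; then automatically $A_i^{-1}(w) = A_i^{-1}(w)$ must lie in $K^s$ because $K^s = \bigcup_j A_j^{-1}(K^s)$ — but here I have to be slightly careful, since the decomposition of $K^s$ over $A_j^{-1}$ is over \emph{all} $j$, not the chosen $i$. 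The cleaner route, and the one I would actually carry out, is: set $v' = A_i^{-1}(v)$, $w' = A_i^{-1}(w)$, $I' = A_i^{-1}(I)$. Then $I'$ is an open interval with $\partial I' = \{v', w'\}$; since $A_i$ is a projective bijection and $I \cap K^u = \emptyset$, we get $I' \cap A_i^{-1}(K^u) = \emptyset$, and because $A_i^{-1}(K^u) \supset K^u$ would be false in general — rather $A_i(K^u) \subset K^u$ gives $K^u \subset A_i^{-1}(K^u)$, hence $I' \cap K^u = \emptyset$. For $w'$: we have $w \in K^s$, and since $A_i^{-1}(K^s) \subset K^s$ (from $K^s = \bigcup_j A_j^{-1}(K^s) \supset A_i^{-1}(K^s)$), we conclude $w' \in K^s$. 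Thus $v'$, $w'$, $I'$ witness $v' = A_i^{-1}(v) \in \partial U$.

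The main obstacle, and the place to be most careful, is getting the inclusion directions of the $K^u$, $K^s$ identities exactly right: $A_i(K^u) \subset K^u$ yields $K^u \subset A_i^{-1}(K^u)$ (good for showing $I'$ misses $K^u$), and $A_i^{-1}(K^s) \subset K^s$ (good for placing $w'$ in $K^s$). Both of these follow immediately from the two displayed identities in \S\ref{sss.full core}, since a union contains each of its terms. Once these are pinned down, the rest is the formal translation of the interval witness under the M\"obius map $A_i^{-1}$, which is routine. I would also note that such a symbol $i$ exists because $v \in K^u = \bigcup_i A_i(K^u)$ is non-empty, so at least one term of the union contains $v$; this is what forces the existence of $i$ with $A_i^{-1}(v) \in K^u$.
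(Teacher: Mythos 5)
Your argument is correct and is essentially the same proof as the paper's: choose $i$ so that $A_i^{-1}(v)\in K^u$ (which the paper does by writing $v=e^u(x)$ and taking $i=x_{-1}$, equivalent to your use of $K^u=\bigcup_i A_i(K^u)$), then transport the interval witness $(I,w)$ under $A_i^{-1}$, using $A_i(K^u)\subset K^u$ and $A_i^{-1}(K^s)\subset K^s$ exactly as you do. (One small slip in wording: you write that "$A_i^{-1}(K^u)\supset K^u$ would be false in general" and then assert $K^u\subset A_i^{-1}(K^u)$, which is the same inclusion; the second assertion is the correct one and is what your argument actually uses.)
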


The proof is analogue to that of Lemma~\ref{l.preimage}, but let us give it for the reader's convenience:
\begin{proof}
Let $v \in \partial U$; then $v \in K^u$, so $v= e^u(x)$.
Let $v' = A_i^{-1} (v)$ where $i = x_{-1}$; then $v' = e^u(\sigma^{-1}(x)) \in K^u$.
Since $v \in \partial U$, there is an open interval $I$ disjoint from $K^u$ with endpoints $v$ and $w \in K^s$.
Then the open interval $I'= A_i^{-1}(I)$ is disjoint from $K^u$, has one endpoint $v'$ in $K^u$ and the
other in $K^s$.
This implies that $v' \in \partial U$.
\end{proof}

From the lemma one easily gets:

\begin{lemma}\label{l.preperiodic full}
Let $(A_1, \ldots, A_N)$ be uniformly hyperbolic w.r.t.\ the full shift,
and let $U$ and $S$ be the unstable and stable cores.
Let $v \in \partial U$.
Then
$$
v = A_{i_m} \cdots A_{i_1} \cdot u( A_{j_k} \cdots A_{j_1}) \, .
$$
for some choice of indices. ($m$ can be zero, meaning that $v = u( A_{j_k} \cdots A_{j_1})$.)
Analogously, if $v' \in \partial S$
then
$$
v' = A_{i_1'}^{-1} \cdots A_{i_p'}^{-1} \cdot s( A_{j_\ell'} \cdots A_{j_1'} ) \, .
$$
for some choice of indices. ($p$ can be zero.)
Moreover, $k$, $m$, $\ell$, and $p$ are less or equal
than the rank of~$U$.
\end{lemma}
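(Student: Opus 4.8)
The plan is to iterate Lemma~\ref{l.preimage full} backwards along $\partial U$ and then detect a periodic itinerary by a pigeonhole argument on the connected components of $U$; the statement for $\partial S$ will follow by symmetry.

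Write $r$ for the rank of $U$, so $U = U_1 \sqcup \dots \sqcup U_r$ with each $U_j$ a closed interval of $\P^1$ (possibly a single point) and $\overline U = U \subsetneq \P^1$. First observe that no non-empty product $Q = A_{i_k}\cdots A_{i_1}$ can be elliptic, parabolic, or $\pm\id$: otherwise, for the periodic point $x\in N^\Z$ with itinerary $(i_1,\dots,i_k)$ repeated, $\|A^{nk}(x)\| = \|Q^n\|$ would stay bounded or grow at most linearly in $n$, contradicting the uniform growth $\|A^{nk}(x)\|\ge c\lambda^{nk}$; hence every such $Q$ is hyperbolic. Now fix $v = v_0\in\partial U$. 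Repeated use of Lemma~\ref{l.preimage full} produces symbols $i_1,i_2,\dots$ and points
\[
v_n := A_{i_n}^{-1}\cdots A_{i_1}^{-1}(v_0)\in\partial U,\qquad v_{n-1}=A_{i_n}(v_n),\qquad v_0=A_{i_1}\cdots A_{i_n}(v_n).
\]
Let $U_{c_n}$ be the component of $U$ containing $v_n$. Since $A_{i_n}(U_{c_n})$ is a connected subset of $U$ (property (iii) of the cores) containing $v_{n-1}$, we get $A_{i_n}(U_{c_n})\subset U_{c_{n-1}}$.

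By the pigeonhole principle applied to $c_0,c_1,\dots,c_r$ there are $0\le a<b\le r$ with $c_a=c_b$. Put $P := A_{i_{a+1}}\cdots A_{i_b}$, a product of $k := b-a$ matrices, $1\le k\le r$. Composing the inclusions above gives $P(U_{c_b})\subset U_{c_b}$, so $P$ maps the closed arc $\overline{U_{c_b}}\subsetneq\P^1$ into itself, and $P(v_b)=v_a$ with $v_a,v_b\in\partial U_{c_b}$. The crucial observation is that every matrix in $\SL(2,\R)$ acts on the circle $\P^1$ preserving orientation (because $\SL(2,\R)$ is connected), so $P$ restricts to an orientation-preserving injection of $\overline{U_{c_b}}$ into itself; an elementary check shows such a map cannot interchange the two endpoints of the arc, so $v_a=v_b$ and $P$ fixes $v_b$. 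Since $P$ is hyperbolic, $v_b\in\{u(P),s(P)\}$; but $v_b\in\partial U\subset K^u$, whereas $s(P)$ is the stable direction $e^s(y)$ of the periodic point $y$ with $A^k(y)=P$ and hence lies in $K^s$, which is disjoint from $K^u$. Therefore $v_b=u(P)$, and
\[
v = v_0 = A_{i_1}\cdots A_{i_a}(v_b) = A_{i_1}\cdots A_{i_a}\cdot u\bigl(A_{i_{a+1}}\cdots A_{i_b}\bigr),
\]
which is the desired expression, with $m=a\le r$ outer factors and an inner product of length $k=b-a\le r$ (when $a=0$ there are no outer factors and $v=u(\,\cdot\,)$). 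Finally, the assertion for $v'\in\partial S$ is obtained by applying the already-proved case to the $N$-tuple $(A_1^{-1},\dots,A_N^{-1})$, which is uniformly hyperbolic with respect to the (same) full shift and whose unstable core, stable core, and sets $K^u$, $K^s$ are respectively $S$, $U$, $K^s$, $K^u$ of the original tuple, together with the identity $u(M^{-1})=s(M)$ for hyperbolic $M$.

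I expect the orientation argument in the third paragraph to be the only real subtlety: it is precisely what pins the bounds to $m,k\le r$ rather than to twice the rank. If one is content with a weaker constant, one may skip the component-tracking and instead iterate the backward procedure until a point of the finite set $\partial U$ repeats, then run the same fixed-point/hyperbolicity reasoning with the resulting (longer) product.
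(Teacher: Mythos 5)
Your proof is correct and follows essentially the same route the paper takes for the analogous Lemma~\ref{l.preperiodic}: iterate the one-step backward Lemma~\ref{l.preimage full} and close up by a pigeonhole argument; your explicit orientation-preservation step, showing that once the backward orbit revisits the same component of~$U$ the boundary point itself must repeat, is precisely the detail the paper leaves unstated and is what pins the bounds to the rank rather than twice the rank. The remaining points --- ruling out non-hyperbolic products along periodic orbits, identifying the fixed point as $u(P)$ because $\partial U\subset K^u$ is disjoint from $K^s$, and reducing the $\partial S$ case to the $\partial U$ case by passing to $(A_1^{-1},\dots,A_N^{-1})$ --- are also correct.
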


Using the last lemma, one shows:

\begin{prop}\label{p.limit cores}
Let $H$ be a connected component of the hyperbolic locus relative to the full shift on $N$ symbols.
For each $i \in \N$, let $(A_1(i), \ldots, A_N(i)) \in H$
have unstable core $U(i)$ and stable core $S(i)$.
Suppose that $(A_1(i), \ldots, A_N(i))$ converges
to some $(A_1, \ldots A_N)$ in the boundary of $H$ as $i \to \infty$.
Also assume every product of the $A_j$'s of length less or equal than the rank of the cores
is different from $\pm \id$.
Then the sets $U(i)$ and $S(i)$ converge (with respect to the Hausdorff distance)
as $i \to \infty$, say to sets $U$ and $S$.
Moreover, the intersection $U \cap S$ is finite and non-empty.
\end{prop}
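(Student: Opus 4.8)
The plan is to exploit the explicit description of $\partial U$ and $\partial S$ given by Lemma~\ref{l.preperiodic full}, together with the core-invariance properties (i)--(iii) of \S\ref{sss.full core}, exactly as in the proof of Theorem~\ref{t.general boundary}. First I would pass to a subsequence so that both $U(i)$ and $S(i)$ converge in the Hausdorff metric; this is automatic by compactness of the hyperspace of closed subsets of $\P^1$, and Lemma~\ref{l.non degenerate}-type reasoning (or just continuity of the $u$- and $s$-directions for non-elliptic matrices far from $\pm\id$, which holds here by the hypothesis on products of length $\le$ rank) guarantees that the limits $U$ and $S$ are genuine finite unions of closed intervals with the same number of components as the $U(i)$, $S(i)$ for large $i$. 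Then I would verify that $U$ and $S$ satisfy the same invariance relations $A_j(U)\subset U$, $A_j^{-1}(S)\subset S$ by passing to the limit in the corresponding relations for $U(i)$, $S(i)$.

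Next comes the crux: showing $U\cap S\neq\emptyset$. If $U\cap S=\emptyset$, then because $U$ and $S$ are disjoint closed sets with the "interlacing" property (each component of $\P^1\setminus S$ contains a unique component of $U$, and vice versa) — which again passes to the limit — we would have a pair of sets satisfying hypotheses (i)--(iii) of Lemma~\ref{l.core full}, with the no-$\pm\id$-product hypothesis supplied by assumption. Lemma~\ref{l.core full} would then produce a multicone for $(A_1,\ldots,A_N)$, contradicting the assumption that this $N$-tuple lies on $\partial H\subset\cH^c$. Hence $U\cap S\neq\emptyset$. This is the step I expect to be the main obstacle, chiefly in making the limit-passage rigorous: one must check that the defining operation (``complement of the union of the components of $\P^1\setminus K^u$ meeting $K^s$'') behaves well under Hausdorff limits, which requires knowing that the relevant components do not degenerate — this is where the hypothesis that short products are $\neq\pm\id$ enters, ensuring the periodic points producing $\partial U(i)$, $\partial S(i)$ stay hyperbolic with uniformly separated eigendirections.

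Finally, finiteness of $U\cap S$: each of $U$ and $S$ has only finitely many connected components, and each component of $U$ is a closed interval, similarly for $S$. By the interlacing property inherited in the limit, a component of $U$ and a component of $S$ cannot overlap in an interval of positive length (that would force a component of $\P^1\setminus S$ to contain no component of $U$, or two of them), so any intersection of a $U$-component with an $S$-component is a finite set of points — in fact the intersection consists of common endpoints. Since there are finitely many components on each side, $U\cap S$ is finite. I would also record, for later use, that by Lemma~\ref{l.preperiodic full} the points of $U\cap S$ are exactly points of the form $A_{i_m}\cdots A_{i_1}\cdot u(A_{j_k}\cdots A_{j_1}) = A_{i_1'}^{-1}\cdots A_{i_p'}^{-1}\cdot s(A_{j_\ell'}\cdots A_{j_1'})$, i.e.\ heteroclinic connections, which is how this proposition feeds into the bifurcation analysis. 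The remaining routine point is that the limit $U$, $S$ do not depend on the chosen subsequence — which follows once one knows $U$ must equal the unstable core-type set built from the limiting periodic data, but for the statement as given one may simply incorporate ``after passing to a subsequence'' or note that the construction is continuous wherever defined.
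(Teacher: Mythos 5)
Your proposal is essentially correct and follows the approach the paper intends (the paper gives no explicit proof here, only the remark that it follows from Lemma~\ref{l.preperiodic full}, and the argument is meant to parallel the last part of the proof of Theorem~\ref{t.general boundary}). Passing the disjointness and invariance to the limit and invoking Lemma~\ref{l.core full} to contradict $(A_1,\dots,A_N)\in\partial H\subset\cH^c$ is exactly right, and finiteness follows from the fact that $\partial U\cup\partial S$ is a finite set (each boundary point being a limit of points of the bounded-length form in Lemma~\ref{l.preperiodic full}).

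One place where you should tighten the argument: the Proposition asserts actual convergence, not merely subsequential, and you treat this a bit casually at the end. The clean way (and the route the paper takes in the proof of Theorem~\ref{t.general boundary}) is to define $U$, $S$ directly from the limiting $N$-tuple: form the finite sets $U^*,S^*$ of points $A_{i_m}\cdots A_{i_1}\cdot u(A_{j_k}\cdots A_{j_1})$ and $A^{-1}_{i'_1}\cdots A^{-1}_{i'_p}\cdot s(A_{j'_\ell}\cdots A_{j'_1})$ with all lengths $\le n_0$ (these are well defined by the no-$\pm\id$ hypothesis, as the products are non-elliptic and $\neq\pm\id$), and then take complements of components as in §\ref{sss.full core}. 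By Lemma~\ref{l.preperiodic full} the analogous construction for $(A_j(i))$ reproduces $U(i),S(i)$ exactly, and since $u(\cdot)$, $s(\cdot)$ are continuous away from $\pm\id$ and elliptic matrices, the finite point sets converge, hence $U(i)\to U$, $S(i)\to S$ for the full sequence. Also, a small caveat on your claim that $U$, $S$ keep ``the same number of components'': components can merge or degenerate in the limit whenever $U\cap S\neq\emptyset$; what is true (and is all you need) is that they cannot merge or degenerate \emph{when} $U\cap S=\emptyset$, which is the case you run the Lemma~\ref{l.core full} contradiction in.
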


We call the sets $U$ and $S$ given by the proposition
the \emph{limit cores} of $(A_1, \ldots, A_N)$.

If $H$ is a non-principal component then, by Proposition~\ref{p.no id},
the no $\pm\id$ assumption in Proposition~\ref{p.limit cores} is satisfied;
hence the limit cores are well-defined for each point in the boundary of $H$.
Moreover, we have:

\begin{prop}\label{p.independence}
If an $N$-tuple belongs to the boundaries of two
different non-principal components,
then the respective limit cores are precisely the same.
\end{prop}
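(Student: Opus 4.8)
The plan is to show that the limit cores $U$, $S$ of an $N$-tuple $\xi=(A_1,\dots,A_N)$ lying on the boundary of a non-principal component $H$ are intrinsic to $\xi$, i.e.\ do not depend on $H$. First I would observe, using Lemma~\ref{l.preperiodic full} (which survives passage to the limit by the continuity of the $u$- and $s$-directions for non-elliptic matrices far from $\pm\id$, exactly as in the proof of Theorem~\ref{t.general boundary}), that the endpoints of $U$ are of the form $A_{i_m}\cdots A_{i_1}\cdot u(A_{j_k}\cdots A_{j_1})$ with $k,m$ bounded by the rank $r$, and symmetrically for $S$. The key point is that these expressions depend only on $\xi$ and on the integer $r$, not on which component $H$ the $N$-tuple bounds — \emph{provided} the rank is the same for both components. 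So the first real task is to fix the rank.

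Next I would argue that the rank of the cores is the same along both components. Let $H$ and $H'$ be two non-principal components both containing $\xi$ in their boundary, with rank $r$ and $r'$ respectively; say $r\le r'$. Running the argument of Proposition~\ref{p.limit cores} for $H$ produces a finite candidate set $U^*_\xi$ built from products of length $\le r$ (as in \eqref{e.finite}, adapted to full shifts), whose "filling-in" is $U$; running it for $H'$ produces $U^{*\prime}_\xi$ built from products of length $\le r'$, filling in to $U'$. Since $r\le r'$ we have $U^*_\xi\subset U^{*\prime}_\xi$. I would show both candidate sets actually coincide by exploiting the invariance $U^*_\xi\subset\bigcup_i A_i U^*_\xi$ together with Lemma~\ref{l.non degenerate}: each connected component of $U'$ is a non-degenerate interval of length $\ge\delta$, uniformly on compact sets, for parameters in $H'$; passing to the limit, the components of $U'$ (hence of $U$) stay non-degenerate, and Proposition~\ref{p.combin contraction} (tightness) forces each component of $U'$ to contain exactly one component of the core. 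Counting components: the number of components of $U(i)$ is locally constant on $H'$ and on $H$, hence on a neighbourhood of $\xi$ in each, and by the "unique component in each gap" property (property (ii) of \S\ref{sss.full core}, passing to the limit via Proposition~\ref{p.limit cores}) this number is determined by the combinatorial data of $U^*_\xi$, which sits inside $U^{*\prime}_\xi$; the only way the inclusion $U^*_\xi\subset U^{*\prime}_\xi$ is compatible with both giving rise to cores with the "unique component per gap" property of the same underlying $\xi$ is that $U^*_\xi=U^{*\prime}_\xi$, forcing $r$ and $r'$ to produce the same finite set and hence $U=U'$, $S=S'$.

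Finally, with the candidate sets identified, I would close the argument: $U$ is obtained from the intrinsic finite set $\{A_{i_m}\cdots A_{i_1}\cdot u(A_{j_k}\cdots A_{j_1})\}$ by the purely topological recipe of \S\ref{sss.full core} (remove the components of the complement of $K^u$-data that meet the $K^s$-data), and the $K^s$-data is likewise intrinsic, so $U$ and $S$ depend only on $\xi$. The main obstacle will be the middle step: controlling the rank, i.e.\ ruling out that the two components "see" cores of different complexity at the same boundary point. The essential inputs there are Lemma~\ref{l.non degenerate} (non-degeneracy of core components near a non-principal component, which prevents components from collapsing in the limit) and Proposition~\ref{p.no id} (no $\pm\id$ products, so all the relevant $u$- and $s$-directions vary continuously up to the boundary and Lemma~\ref{l.preperiodic full} applies at the limit); once these are in hand the combinatorial counting is routine, but making it airtight — that the two filling-in procedures, started from possibly different-length product sets, yield the same compact set — is where the care is needed.
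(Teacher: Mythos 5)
Your high-level plan — show the limit cores are intrinsic to $\xi$ by exhibiting them as the "fill-in" of a set of $u$- and $s$-directions that depends only on $\xi$ — is the right idea, and you correctly flag the obstacle: the finite set $U^*_\xi$ you build from Lemma~\ref{l.preperiodic full} depends on a rank bound $r$, and a priori the two components $H$, $H'$ could supply different bounds $r \le r'$. But the step you propose to close this gap is not an argument: asserting that "the only way the inclusion $U^*_\xi\subset U^{*\prime}_\xi$ is compatible with both giving rise to cores with the unique-component-per-gap property is that $U^*_\xi=U^{*\prime}_\xi$" is exactly the thing that needs proving, and the component-counting heuristic does not supply it. Nothing you cite rules out that $U^*_\xi$ is a strict subset of $U^{*\prime}_\xi$ yet the two fill-ins coincide (which would actually be fine), or even that they could coincide for some non-obvious reason tied to $\xi$ — either way, trying to force equality of the rank-bounded candidate sets is attempting to prove more than you need, and the justification given does not go through. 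This is a genuine gap, not a detail.

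The paper sidesteps the rank question entirely. Rather than a rank-bounded candidate set, it takes $K^u_*$ to be \emph{all} points $u_P$ and $Q(u_P)$ over \emph{all} products $P$, $Q$ (with the symmetric $K^s_*$); this set manifestly does not refer to $H$. One then observes $\partial U \subset K^u_* \subset U$ (the left inclusion by Lemma~\ref{l.preperiodic full} as you noted; the right by invariance of the core), and the key extra ingredient is that $K^u_*$ has \emph{no isolated points}: given $x=Q(u_P)$, Lemma~\ref{l.non degenerate} gives at least four points in $\partial U\subset K^u_*$, so one can pick $y\in K^u_*$ distinct from $u_P$, $s_P$, and then $QP^n(y)\in K^u_*\setminus\{x\}$ accumulates at $x$. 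With $\partial U\subset \overline{K^u_*}\subset U$ (and $\overline{K^u_*}$ perfect) the recipe "complement of the union of components of $\P^1\setminus\overline{K^u_*}$ meeting $K^s_*$" recovers exactly $U$, with no rank ever entering. So the two proofs share the endgame (an intrinsic fill-in recipe) and the supporting lemmas (\ref{l.preperiodic full}, \ref{l.non degenerate}, \ref{p.no id}), but the paper's choice of an unbounded candidate set is precisely what makes the argument close, and your version needs that same idea — or a genuine proof that the ranks agree — to be complete.
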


However, we do not know if the boundaries of
two different components can meet.

\begin{proof}[Proof of the proposition]
Fix an $N$-tuple $(A_1,\ldots, A_N)$ in the closure of a non-principal component $H$.
Let $U$ and $S$ be the limit cores with respect to~$H$.

Let $K^u_*$ be the set of all points of the form $u_P$ or $Q(u_P)$,
where $P$ and $Q$ are products of the  $A_i$'s.
(Recall that $u_P$ is defined, by Proposition~\ref{p.no id}.)
Analogously, let $K^s_*$ be the set of all $s_P$ and $Q^{-1}(s_P)$.
Then $K^u_* \subset U$ and $K^s_* \subset S$.
Also, by Lemma~\ref{l.preperiodic full}, $\partial U \subset K^u_*$ and $\partial S \subset K^s_*$.

We claim that no point in $K^u_*$ is isolated.
Indeed, consider a point $x = Q (u_P)$.
By Lemma~\ref{l.non degenerate}, 
$\partial U$, and hence $K^u_*$, contains at least $4$ points.
In particular, we can find $y \in K^u_*$ different from $u_P$ and from $s_P$.
The sequence $Q P^n (y)$ is contained in $K^u_* \setminus \{x\}$ and converges to $x$.
This shows that $x$ is not isolated.
Symmetrically, no point in $K^s_*$ is isolated.

It follows from these facts that
the complement of the union of the connected components of
$\P^1 \setminus \overline{K^u_*}$ (resp.\ $\P^1 \setminus \overline{K^s_*}$)
that intersect $K^s_*$ (resp.\ $K^u_*$)
is precisely $U$ (resp.~$S$).
This procedure describes $U$ and $S$ without referring to $H$,
so the proposition follows.
\end{proof}


\subsection{An Addendum for the Full $2$-Shift}

In the light of the general results about boundaries obtained so far,
let us come back to the case of the full \emph{two}-shift
and give some additional information
complementing Theorem~\ref{t.full2 boundary}:

\begin{prop}\label{p.addendum 2 shift}
Let $H$ be a \emph{non-principal}
connected component of the hyperbolic locus relative to the full shift on two symbols.
Then:
\begin{enumerate}
\item
No $\pm$identity products exist for a pair on the boundary of $H$.

\item
No heteroclinic connection occurs on the boundary of $H$.

\item
There are only three words (other than their cyclic permutations and powers)
that can become parabolic on the boundary of $H$.
\end{enumerate}
\end{prop}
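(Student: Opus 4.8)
The plan is to exploit the monoid structure from Theorem~\ref{t.components}: every non-principal component $H$ equals $F^{-1}(H_0')$ where $H_0'$ is one of the eight free components and $F \in \cM$ is a composition of the diffeomorphisms $F_\pm$. The diffeomorphism $F$ is an automorphism-induced map, i.e.\ $F = f^*$ for an automorphism $f$ of $\F_2$; hence $F$ restricts to a homeomorphism $\partial H \to \partial H_0'$, and words in $A$, $B$ correspond under $f$ to words in the generators. Since being parabolic, elliptic, or $\pm\id$ is invariant under the isomorphism $\langle\cdot,(A,B)\rangle \cong \langle f(\cdot),F^{-1}(A,B)\rangle$, all three claims reduce to the case of a single free component $H_\id$ (equivalently the positive free component). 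So the first step is: \emph{reduce everything to $\partial H_\id$}, using that $F$ is a bijection on words and on boundaries.

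For part (i), on $\partial H_\id$ we have $(A,B) \in \overline{H_\id}$, which by Proposition~\ref{p.free} and Lemma~\ref{l.free} means $(A,B)$ is free: $|\tr A|,|\tr B|,|\tr AB| \ge 2$ with $\tr A\,\tr B\,\tr AB < 0$. After a sign change assume $\tr A,\tr B \ge 2$, $\tr AB \le -2$; a canonical basis (Lemma~\ref{l.twist b}) writes $A,B$ as in~\eqref{e.canonical} with $\mu,\nu \ge 1$, $\gamma = \alpha\beta < 0$. One then checks directly that no nontrivial reduced word in $A^{\pm 1},B^{\pm 1}$ can be $\pm\id$: in the upper-triangular/lower-triangular normal form the relevant matrix entries never cancel — this is essentially the ping-pong estimate already used in Lemma~\ref{l.free is gr hyp} (group-hyperbolicity of the free component), combined with continuity/closure, or alternatively it follows from Proposition~\ref{p.no id} applied with $N=2$. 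Transporting back by $F$ gives (i) for general non-principal $H$; the ``furthermore'' quantitative statement of Proposition~\ref{p.no id} gives the uniform neighbourhood.

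For parts (ii) and (iii), I would again work on $\partial H_\id$, where by Theorem~\ref{t.full2 boundary} the only boundary mechanism is that some product of $A$'s and $B$'s is parabolic, and part~(i) rules out $\pm\id$. For (iii): on $\partial H_\id$ the free condition forces one of $A$, $B$, $AB$ to have trace of absolute value exactly $2$ (the three ``walls'' of the region~\eqref{e.free}), and when, say, $A$ becomes parabolic the other two stay hyperbolic with traces bounded away from $\pm 2$; so the only words that can degenerate are $A$, $B$, $AB$ — three words up to cyclic permutation and powers. Pulling back by $F = f^*$: the three words $a,b,ab$ map under $f$ to three specific reduced words in $\F_2$ (for $F$ a word of length $k$ in $F_\pm$, these are exactly the three words $F(A)$, $F(B)$, $F(AB)$ appearing in the combinatorial description of \S\ref{ss.full2 combin}, with lengths $q_0,q_1,q$), giving the three words for $H$. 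For (ii): a heteroclinic connection on $\partial H_\id$ would, via $\tr A^m B \to 0$ as in the proof of the Remark after Theorem~\ref{t.general boundary}, produce an elliptic product arbitrarily close to $(A,B)$, contradicting $(A,B) \in \overline{H_\id} \subset \overline{\cH} = \cE^c$ (Theorem~\ref{t.H and E}); more directly, on $\partial H_\id$ we already know from Theorem~\ref{t.full2 boundary} that alternative (2) there, $u_A = s_B$ or $u_B = s_A$, occurs only on boundaries of \emph{principal} components, so no heteroclinic connection is possible, and this property transports under $F$ because $F$ conjugates the two components' boundary behaviour.

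The main obstacle I expect is part~(iii): making precise that ``only three words up to cyclic permutation and powers'' survives the pullback by $F$, i.e.\ that $f$ sends the cyclic-word classes of $a$, $b$, $ab$ to exactly three cyclic-word classes and that no \emph{other} product of $A$, $B$ can become parabolic on $\partial H$. For the latter, the clean argument is that if a word $w(A,B)$ is parabolic for $(A,B) \in \partial H$, then $f^{-1}(w)$ evaluated at $F^{-1}(A,B)=(A_0,B_0) \in \partial H_\id$ is parabolic, and by the trace analysis on $\partial H_\id$ (using that $\tr$ of any product is a polynomial in $\tr A_0,\tr B_0,\tr A_0B_0$, and the free region is cut out by the three equations $\tr A_0 = \pm2$, etc.) the only parabolic products there are conjugate to powers of $A_0$, $B_0$, $A_0B_0$; so $w$ is cyclically a power of $f(a)$, $f(b)$, or $f(ab)$. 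I would need Lemma~\ref{l.twist b}'s canonical-basis computation to verify the ``only'' direction carefully, which is the one genuinely computational ingredient; everything else is formal transport along the monoid action.
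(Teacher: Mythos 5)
Your overall plan — reduce everything to $\partial H_\id$ via the monoid action, then transport — is a natural idea, but it runs into real obstacles that the paper avoids by working \emph{directly} with the combinatorial description of cores from \S\ref{ss.full2 combin} for each component $H_F$ (via properties (a)--(g) of the proof, and the limit-core machinery of Propositions~\ref{p.limit cores}--\ref{p.independence}). The key difference: the paper never needs to pull anything back along $F$, because the special words, the cyclic structure of $\partial U$ and $\partial S$, and the neighbor relations are described intrinsically for $H_F$ itself.

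Your argument for part~(ii) is incorrect on both counts. The ``$\tr A^m B \to 0$'' computation in the Remark following Theorem~\ref{t.general boundary} only rules out connections where $x$ and $y$ lie in the \emph{same} periodic orbit; it says nothing about genuine heteroclinic connections between distinct orbits, which are precisely what alternative~(iii) of Theorem~\ref{t.general boundary} allows and what you need to exclude here. The ``more directly'' argument conflates the condition $u_A = s_B$ (the $n=0$, $k=\ell=1$ special case, which is what Theorem~\ref{t.full2 boundary}(ii) addresses) with the general heteroclinic connection $A^n(z)\,u(A^k(x)) = s(A^\ell(y))$: Theorem~\ref{t.full2 boundary} asserts that at a non-principal boundary point some product is parabolic, but its alternatives are not exclusive, so it does \emph{not} say that a heteroclinic connection cannot also occur there. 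The paper's actual proof of~(ii) uses uniqueness in property~(f) to show that any $v_0 = R\,u(Q)$ on $\partial U$ forces the bi-infinite word $RQQQ\ldots$ to equal $PPP\ldots$ for the canonical special word $P$, so that $Q$ is parabolic whenever $v_0 \in S$ — which contradicts the hyperbolicity required in the definition of a connection.

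For part~(iii), you identify the hard point yourself (``only $A$, $B$, $AB$ can degenerate on $\partial H_\id$''), but there is also a transport problem you do not address: the automorphism $f^{-1}$ does not preserve positive words, so knowing which \emph{positive} words are parabolic on $\partial H_\id$ does not directly constrain which positive words are parabolic on $\partial H$. You would need the stronger statement that on $\partial H_\id$ the only parabolic elements of the whole free group generated by $A_0, B_0$ are conjugates of powers of $a$, $b$, $ab$, and then combine it with the fact that two positive words conjugate in $\F_2$ are cyclic permutations of each other. Neither of these is established in your sketch, and the former is exactly the kind of detailed control the paper instead gets from the explicit description of the cores. Part~(i) is fine: Proposition~\ref{p.no id} gives it directly without any transport.
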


\begin{proof}
Let $H$ be a twisted component.
Assertion (i) follows from Proposition~\ref{p.no id}.
For $(A_0,A_1) \in H$,
the cores $U$ and $S$ are described precisely in \S\ref{sss.full2 combin end} --
in particular, we have:
\begin{itemize}
\item[(a)] The sets $\partial U$ and $\partial S$
are respectively formed by unstable and stable directions of
certain ``special'' products of $A_0$'s and $A_1$'s.

\item[(b)] If points $v \in \partial U$ and $w \in \partial S$
are ``neighbors'' (in the sense that there is an open interval
with endpoints $v$ and $w$ that does not meet $U \cup S$)
then they are respectively the unstable and stable directions of the \emph{same}
``special'' product of $A_0$'s and $A_1$'s.

\item[(c)] There are three words in the letters $A_0$ and $A_1$ which are not powers
and that form, together with their cyclic permutations,
the full list of special words that need to be considered in (b) and (c).

\item[(d)] No connected component of $U$ intersects both $A_0(U)$ and $A_1(U)$.
\end{itemize}


It follows from (d) and  Lemma~\ref{l.preimage full} that:
\begin{itemize}
\item[(e)] For every $v \in \partial U$ there exist a unique $i\in \{0,1\}$
such that $A_i^{-1} (v) \in \partial U$.
\end{itemize}
Repeated application of (e) gives:
\begin{itemize}
\item[(f)] For any $v_0 \in \partial U$,
there exists a unique sequence $i_1$, $i_2$, \dots in $\{0,1\}$
such that $v_{j+1} =  A_{i_j}^{-1} (v_j) \in \partial U$.
\end{itemize}
Now it follows from (a) that:
\begin{itemize}
\item[(g)] For any $v_0 \in \partial U$, if $v_j$ is the sequence given by (f)
and $\ell$ is the least positive integer such that $v_\ell \in \{v_0, \ldots, v_{\ell - 1}\}$
then $v_\ell = v_0$.
\end{itemize}

Now let $(A_0,A_1)$ be in the boundary of $H$,
and let $U$ and $S$ be the limit cores given by Proposition~\ref{p.limit cores}
(which are well-defined because $H$ is not principal).
By Lemma~\ref{l.non degenerate}, $U$ and $S$ have
the same number of components as before taking the limit,
and none of these components is a point.
It follows that Properties (d) and (e) above are also true for the limit cores.
Property~(f) follows from (e).
So (g) makes sense for the limit cores, and it is true by continuity.

Any $v_0 \in\partial U$ equals $u(P)$ where $P = A_{i_1} \cdots A_{i_\ell}$
and the indices $i_j$ are as in (f) and (g).
The word $P$ is not a power, and so is one the special words alluded in (a)--(c).
Let $w_0 \in \partial S$ be the neighbor of $v_0$.
(Precisely, we define $w_0$ as $v_0$ if $v_0 \in S$,
otherwise we let $w_0 \in S$ be so that there is
an open interval with endpoints $v_0$ and $w_0$ that does not intersect $U \cup S$.)
We infer from property~(b) that $w_0 = s(P)$.
In particular, $v_0 \in S$ implies that $P$ is parabolic.

Now, suppose $v_0$ is also given by
$R u(Q)$, where $Q$ and $R$ are words in the letters $A_0$ and $A_1$,
with  $R$ allowed to be the empty word (corresponding to $\id$ product).
It follows from uniqueness in~(f) that
the infinite words $RQQQ \dots$ and $PPP\dots$ must coincide.
In particular, $Q$ is (as a word) a power of a cyclic permutation of $P$.
Therefore $Q$ is parabolic (as a matrix)  if and only if so is~$P$.

By contradiction, assume there is a heteroclinic connection $R u(Q) = s(P')$,
for some products $P'$, $Q$, $R$, of $A_0$'s and $A_1$'s.
Then $v_0  = R u(Q)$ belongs to $U\cap S$.
Therefore, as we have seen, $Q$ has to be parabolic.
This is forbidden by definition of heteroclinic connection, so assertion~(ii) of the theorem is proved.
Assertion~(iii) follows similarly.
\end{proof}


\subsection{An Example of Heteroclinic Connection} \label{ss.ex connection}

In this subsection, we introduce what is probably the simplest example
of heteroclinic connection for a principal component.
The base dynamics is full-shift on $3$ symbols.
The component $H$ of the hyperbolicity locus $\cH$
is the one that contains triples $(A,B,C)$ such that
$(A,B) \in H_\id^+$ (the positive free component for the full-shift on two symbols)
and $C = -AB$;
such triples are indeed obviously uniformly hyperbolic.
The associated stable and unstable cores have two components.

\begin{prop}\label{p.hetero ex}
A triple $(A,B,C)$ belongs to $H$ iff the following conditions are satisfied:
\begin{enumerate}
\item $(A,B) \in H_\id^+$;
\item $\tr C > 2$;
\item the stable and unstable directions for $C$ satisfy
$$
s_A < u_C < s_{AB} \, , \quad
u_{AB} < s_C < u_B \, , \quad
s_A < u_C < s_C < u_B \, .
$$
\item $s_A < Cu_B < u_C$.
\end{enumerate}
\end{prop}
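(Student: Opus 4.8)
The plan is to characterize membership in $H$ via the multicone criterion of Theorem~\ref{t.multicone full} (for the full shift on three symbols), applied to a multicone with exactly two components, and then to translate the combinatorial/geometric constraints that such a multicone imposes into the four explicit conditions (i)--(iv).

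First I would set up the geometry. Suppose $(A,B) \in H_\id^+$, so by Proposition~\ref{p.free} the points $u_B, u_{BA}, s_{BA}, s_A, u_A, u_{AB}, s_{AB}, s_B$ sit on $\P^1$ in the cyclic order of \eqref{e.positive free}. The pair $(A,B)$ has a tight multicone $M = I \sqcup J$ with two components, where (up to relabelling) $I \supset [u_A, u_{AB}]$ and $J \supset [u_B, u_{BA}]$, while $\P^1 \setminus \overline M \supset [s_{BA},s_A] \cup [s_{AB},s_B]$ (this is exactly the description used in Lemma~\ref{l.free is gr hyp} and \S\ref{sss.full2 combin end} with $\nicefrac{p}{q}=\nicefrac12$). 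For the triple $(A,B,C)$ to be uniformly hyperbolic, by Theorem~\ref{t.multicone full} I need a two-component open set $M' = I' \sqcup J'$ with $A M', B M', C M' \Subset M'$. Since $A,B$ already almost force $I' \supset [u_A,u_{AB}]$, $J' \supset [u_B,u_{BA}]$ (tightness makes the cores, hence $u_A\in I'$, $u_B\in J'$, and $u_{AB}=A u_B\in I'$ etc., forced), the new content is the constraint coming from $C$: we need $C M' \Subset$ one of the two components. The key step is to show $C M' \Subset I'$, i.e. $u_C \in I'$ and $s_C \notin \overline{M'}$, together with $C$ mapping \emph{both} pieces into $I'$.

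Then I would argue the forward direction: if $(A,B,C)\in H$, let $U\sqcup S$ be the (two-component) unstable/stable cores. Conditions on $C$: since $C$ preserves a component of $M'$, $C$ is not elliptic, and it is not $\pm\id$ (no $\pm\id$ products on a multicone, as in Lemma~\ref{l.core full}); ruling out $\tr C\le -2$ requires checking the eigenvalue sign of the invariant component, giving $\tr C>2$ — this is condition (ii). The location of $u_C$ and $s_C$: $u_C \in U \subset I'$, which lies in the arc $(s_A,s_{AB})$ bounded by stable directions adjacent to that component; $s_C \in S$, in the arc $(u_{AB},u_B)$. Unwinding these arc memberships against the order \eqref{e.positive free} yields the three inequalities of (iii). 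Condition (iv), $s_A < Cu_B < u_C$, is the ``heteroclinic-margin'' condition: $u_B\in J'$, so $Cu_B\in CM'\subset I'$; that $Cu_B$ lands strictly between the left endpoint region ($s_A$) and $u_C$ reflects that $C$ contracts $I'$ toward $u_C$ from that side — it is exactly the inequality that must degenerate to give the heteroclinic connection $C u_B = \text{(boundary of core)}$ on $\partial H$, tying in with \S\ref{ss.ex connection}.

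For the converse (the harder direction, and the main obstacle), assume (i)--(iv) and construct the multicone explicitly. I would take $I'$ to be a small neighborhood of the closed arc from (a point just left of $Cu_B$, or just past $s_A$) through $u_A, u_{AB}, u_C$ up to a point just before $s_C$; and $J'$ a small neighborhood of $[u_B, u_{BA}]$. Then one checks directly, using \eqref{e.positive free} plus (iii)--(iv), that $A(I'\cup J')\Subset I'$, $B(I'\cup J')\Subset J'$ (these follow because $(A,B)\in H_\id^+$ and $I'\cup J'$ is only a slight enlargement of the old multicone, so I must verify the enlargement toward $u_C$ and toward $s_C$ is still swallowed — here (iii) guarantees $u_C,s_C$ are on the correct sides of $s_A,s_{AB},u_{AB},u_B$), and $C(I'\cup J')\Subset I'$: $C$ is hyperbolic with $u_C\in I'$, $s_C\notin \overline{I'\cup J'}$ by (iii), and $CI'\subset I'$, $CJ'\subset I'$ because $Cu_B\in I'$ by (iv) and $C$ contracts toward $u_C$ on the complement of a neighborhood of $s_C$. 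The delicate point is choosing the exact endpoints of $I'$ so that all three inclusions hold \emph{simultaneously} — the right endpoint of $I'$ must be past $u_C$ but before $s_C$ (possible by the middle inequality of (iii)) and past $C$(right endpoint of $J'$)$=Cu_{BA}$, while the left endpoint must be before $Cu_B$ and before $s_C$-image considerations, yet after $s_A$ so that $A$'s image still lands inside. I expect this endpoint bookkeeping — showing the open conditions (iii)--(iv) give exactly enough room — to be the crux; once the multicone is exhibited, Theorem~\ref{t.multicone full} finishes, and that the component obtained is $H$ (rather than some other two-interval component) follows by deforming $C$ to $-AB$ within the region cut out by (i)--(iv), which is connected.
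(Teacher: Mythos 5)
Your plan --- derive (i)--(iv) from the two-interval multicone in one direction, build a multicone from (i)--(iv) in the other, then pin down the component by connectivity --- reaches the right conclusion, but it is harder than the paper's argument in two places, and the forward direction as sketched has a real gap. The paper never proves $H\subset\hat H$ directly. It sets $\hat H$ equal to the locus cut out by (i)--(iv) and observes: $\hat H$ is open; $\partial\hat H$ is disjoint from $\cH$ (each degenerate boundary case --- $(A,B)\in\partial H_\id^+$, $\tr C=2$, an equality in (iii), or $Cu_B=s_A$ --- yields a parabolic product or a point $z\in\Sigma$ with $e^u(z)=e^s(z)$, hence is non-hyperbolic); $\hat H$ is connected, contained in $\cH$, and contains $(A,B,-AB)$. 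Hence $\hat H$ is open and closed in $\cH$, so is a union of components, and by connectedness equals $H$. That topological shortcut makes the forward implication free. In your version it is not free, and as written it is not a proof: ``unwinding arc memberships yields (iii)'' presupposes that you already know where the cores sit relative to the eight $u/s$ directions, which is exactly what has to be established; and your account of (iv) as a ``heteroclinic-margin condition'' describes what (iv) means geometrically near $\partial H$ but does not deduce $s_A<Cu_B<u_C$ from the hypothesis $(A,B,C)\in H$.

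On the construction side you invoke Theorem~\ref{t.multicone full} and therefore must exhibit an \emph{open} two-component $M'$ with strict ($\Subset$) inclusions under all three matrices simultaneously, which is the ``endpoint bookkeeping'' you anticipate being delicate. The paper instead applies Lemma~\ref{l.core full} with the closed cores $U_{AC}=[\min(u_A,Cu_B),\max(u_{AB},u_C)]$, $U_B=[u_B,\max(u_{BA},Bu_C)]$, and the analogous $S_A$, $S_{BC}$; the lemma only requires the non-strict inclusions $A(U)\cup C(U)\subset U_{AC}$, $B(U)\subset U_B$, $B^{-1}(S)\cup C^{-1}(S)\subset S_{BC}$, $A^{-1}(S)\subset S_A$, each of which is an immediate consequence of \eqref{e.positive free} together with (iii) and (iv) --- there is no room to negotiate at the endpoints because the cores are defined with $\min$/$\max$ to absorb them. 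Adopting both devices (the open--closed--connected argument to avoid the forward direction, and Lemma~\ref{l.core full} with explicit closed cores for $\hat H\subset\cH$) would close the gaps in your outline and essentially recover the paper's proof.
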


\begin{figure}[htb]
\psfrag{A}[][c]{{\footnotesize $A$}}
\psfrag{B}[][c]{{\footnotesize $B$}}
\psfrag{C}[][c]{{\footnotesize $C$}}
\psfrag{AB}[][c]{{\footnotesize $AB$}}
\psfrag{BA}[][c]{{\footnotesize $BA$}}
\psfrag{BA}[][c]{{\footnotesize $BA$}}
\psfrag{C-1sA}[][l]{{\footnotesize $C^{-1} s_A$}}
\psfrag{CuB}[][r]{{\footnotesize $C u_B$}}
\begin{center}
\includegraphics[width=7.0cm]{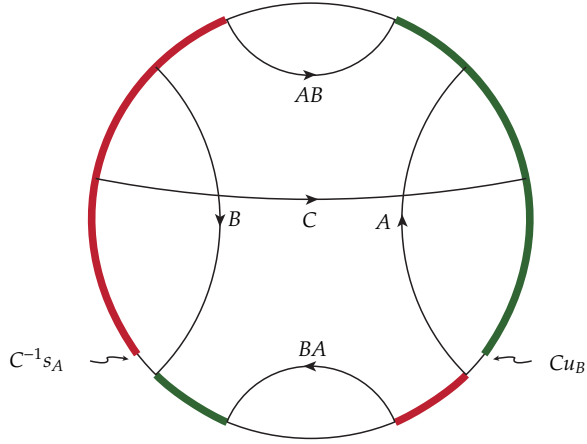}
\caption{{\small A possible situation for $(A,B,C) \in H$ in Proposition~\ref{p.hetero ex}; the cores are indicated.}}
\label{f.heteroclinic}
\end{center}
\end{figure}

\begin{proof}
Let $\hat{H}$ be the set of parameters defined by the $4$ conditions in the proposition.
Clearly, $\hat{H}$ is open in $(\SL(2,\R))^3$.
It is also clear that the boundary of $\hat{H}$ does not intersect the hyperbolicity
locus $\cH$, and that $\hat{H}$ contains any triple
$(A,B, -AB)$ with $(A,B) \in H_\id^+$.
To prove that $\hat{H} = H$, we prove that $\hat H$ is connected and contained
in $\cH$.

To see that $\hat H$ is connected, we fix $(A,B) \in H_\id^+$
and check that the set of $C$ satisfying (ii), (iii), (iv) is connected.
Indeed, the set of positions for $(u_c, s_c)$ in $\P^1 \times \P^1$
determined by (iii) is connected, and for any such position,
condition (iv) is equivalent to some condition $\tr C > k$ ($>2$).
This proves that $\hat H$ is connected.

Let $(A,B,C) \in \hat H$.
Define
\begin{alignat*}{2}
U_{AC} &= [\min(u_A, C u_B), \max(u_{AB}, u_C)], &\quad
S_A &= [\min(s_{BA}, A^{-1} s_c), s_A], \\
U_B &=[u_B, \max(u_{BA}, Bu_C)], &\quad
S_{BC} &= [\min(s_{AB}, s_C), \max(s_B, C^{-1} s_A)], \\
U &= U_{AC} \sqcup U_B, &\quad
S &= S_A \sqcup S_{BC} .
\end{alignat*}
We have then
$$
A(U) \cup C(U) \subset U_{AC}           \, , \quad B(U) \subset U_B \, , \quad
B^{-1}(S) \cup C^{-1}(S) \subset S_{BC} \, , \quad A^{-1}(S) \subset S_A \, .
$$
It follows from Lemma~\ref{l.core full}
that $(A,B,C)$ is uniformly hyperbolic (with cores $U$, $S$).
The proof is now complete.
\end{proof}

We have seen in the proof of the proposition that for fixed
$A$, $B$, $u_C$, $s_C$ satisfying (i), (ii), (iii),
the set $H$ is determined by a condition
$\tr C > k$ for some  $k = k(A,B,u_C,s_C)>2$.
If we take $C=C_0$ we still have a
triple $(A,B,C_0)$ such that  (i), (ii), (iii) are satisfied and
$C_0 u_B = s_A$.
In a neighborhood $V$ of $(A,B,C_0)$ in $(\SL(2,\R))^3$,
the equation $C u_B = s_A$ determines
a smooth hypersurface contained in the boundary of $H$.
This part of the boundary of $H$ corresponds to a heteroclinic connection.

We will investigate in the next two subsections what happens on the side
of the hypersurface not contained in $H$.
We already know from Proposition~6 in \cite{Yoccoz_SL2R}
that the other side $V \setminus \overline{H}$ intersects the
elliptic locus $\cE$
(the (open) set of triples that have an elliptic product.)
In the sequel we will construct two examples displaying different phenomena near boundary points:
\begin{itemize}
\item In one example (Proposition~\ref{p.elliptic side})
we have $V \setminus \overline{H} \subset \cE$.
\item In other example (Proposition~\ref{p.bifurcation}),
any neighborhood $V$ intersects infinitely many
hyperbolic components.
\end{itemize}
For convenience, we will assume that
$s_A < u_C < u_A$ and $s_B < s_c < u_B$
(as in Figure~\ref{f.heteroclinic}).

\subsection{Heteroclinic Connection with Elliptic Products on the Other Side} \label{ss.ex elliptic}

Let $H\subset \SL(2,\R)^3$ be the hyperbolic component introduced in \S\ref{ss.ex connection}.

\begin{prop}\label{p.elliptic side}
There there exist a point $(A_0,B_0,C_0)$ in the boundary of $H$,
and a neighborhood $V\subset \G^3$ of $(A_0,B_0,C_0)$ such that:
\begin{itemize}
\item If $(A,B,C) \in V \cap \partial H$ then $C \cdot u(B) = s (A)$.
\item If $(A,B,C) \in V \setminus \overline{H}$ then $(A,B,C) \in \cE$
(that is, there exists an elliptic product of $A$, $B$, and $C$'s).
\end{itemize}
\end{prop}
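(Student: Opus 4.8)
The plan is to start from the known family of hyperbolic triples $(A,B,-AB)$ with $(A,B)\in H_\id^+$, move to the boundary point $(A_0,B_0,C_0)$ where $C_0 u_{B_0}=s_{A_0}$, and show that near this point the picture is ``rigid'': the only way to escape $\cE$ into a hyperbolic regime is to re-enter $H$ itself. First I would fix a convenient normalization. Using the canonical basis of Lemma~\ref{l.twist b} for $(A_0,B_0)$, write $A_0$ upper-triangular and $B_0$ lower-triangular; choose $C_0$ with $\tr C_0>2$, $s_{A_0}<u_{C_0}<u_{A_0}$, $s_{B_0}<s_{C_0}<u_{B_0}$, and the single degeneracy $C_0 u_{B_0}=s_{A_0}$, all the other inequalities in Proposition~\ref{p.hetero ex}(iii) being strict. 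By the description of the cores in the proof of Proposition~\ref{p.hetero ex}, the unstable core $U$ has two components $U_{AC}$, $U_B$ and the stable core $S$ has two components $S_A$, $S_{BC}$; at the boundary point the endpoint $C_0 u_{B_0}$ of $U_{AC}$ collides with the endpoint $s_{A_0}$ of $S_A$. So the limit cores (Proposition~\ref{p.limit cores}) touch at exactly one pair of points, namely $u(BA)$-side meets $s(A)$-side; the relevant ``heteroclinic'' identity is $C u_B=s_A$, equivalently $CBA$ has $u(BA)$ sent onto $s(A)$.

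Next I would analyze the one-parameter transverse direction. In a small ball $V$ around $(A_0,B_0,C_0)$, all the strict inequalities of Proposition~\ref{p.hetero ex}(iii) persist, and the positions of $u_A,u_B,u_C,s_A,s_B,s_C,u_{AB},s_{AB},u_{BA},s_{BA}$ vary continuously and keep their cyclic order. The only quantity that can change sign is $\varphi(A,B,C):=[\,s_A,\;Cu_B,\;u_C,\;\ast\,]$ measured in a fixed chart — concretely, the signed position of $Cu_B$ relative to $s_A$ inside the arc $(s_{BA},u_C)$. By Proposition~\ref{p.hetero ex}, $\{\varphi>0\}\cap V=H\cap V$ (this is condition (iv)), $\{\varphi=0\}\cap V\subset\partial H$ is the heteroclinic hypersurface, and I must show $\{\varphi<0\}\cap V\subset\cE$. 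On the $\varphi<0$ side, $Cu_B$ has crossed $s_A$, so now $s_A\in(Cu_B,u_C)$; equivalently the matrix $M:=A\,C\,B$ (or some fixed short word representing the broken invariant set) maps the component $U_B$ of the would-be unstable core strictly \emph{across} the would-be stable direction $s_A$. The key elementary fact I would invoke is: if a matrix $P\in\G$ has an invariant interval whose closure contains both $u_P$ and $s_P$ in its interior violated — precisely, if there is a short word $W$ in $A,B,C$ with $\tr W>2$ whose unstable direction $u_W$ lies on one side of $s_A$ and $W(u_B)$ lies strictly on the other side of $s_A$, while $A$ has $u_A,s_A$ in the complementary arcs — then some product $A^m W$ is elliptic (the same computation as in the proof of Theorem~\ref{t.general boundary}: $\tr A^m W\to 0$ when $W\cdot u_A=s_A$ exactly at the wall, and by continuity $\tr A^m W$ takes values in $(-2,2)$ for $m$ large on the wrong side). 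I would make this quantitative using the canonical basis: with $A=\mathrm{diag}$-ish and the crossing controlled by one inequality, $\tr A^m(\cdots)$ is an explicit monotone-in-$m$ expression whose limit is $0$ on $\{\varphi=0\}$, hence $<2$ in absolute value for suitable $m$ throughout a neighborhood on $\{\varphi<0\}$.

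The main obstacle, and the step I would spend the most care on, is identifying the \emph{right} finite word $W$ (and the exponent $m$) so that ellipticity of $A^mW$ is forced on the \emph{entire} region $V\setminus\overline H$ and not merely on a smaller sliver — i.e. ruling out that some \emph{other}, longer product stays hyperbolic and organizes a new hyperbolic component there. Here I would use the rigidity already established: by Proposition~\ref{p.limit cores} the limit cores at $(A_0,B_0,C_0)$ have only the \emph{one} contact point, so all the ``competing'' invariant configurations degenerate through exactly the same relation $Cu_B=s_A$; combined with Lemma~\ref{l.non degenerate} (non-degeneracy of core components on $H\cap K$) and the finiteness from Theorem~\ref{t.full2 boundary}-style arguments, a single word suffices after shrinking $V$. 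Concretely I expect $W=CB$ and $m$ bounded by a constant depending only on how close $u_{C_0}$ is to $s_{A_0}$; then the proof concludes: pick $(A_0,B_0,C_0)$ with that closeness small enough that the elliptic parameter $m$ is uniform, take $V$ so small that $\tr A^m W\in(-2,2)$ holds on $V\setminus\overline H$, and Proposition~\ref{p.elliptic side} follows. Finally I would double-check the first bullet of the statement, $C u(B)=s(A)$ on $V\cap\partial H$: this is immediate since on $V$ the parabolic/$\pm\id$ alternatives of Theorem~\ref{t.general boundary} are excluded by the persistence of all strict trace inequalities, so the only available boundary mechanism is the heteroclinic connection $\varphi=0$, which by our setup reads exactly $C u_B=s_A$.
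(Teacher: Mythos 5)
Your general plan — set up a canonical basis, identify the heteroclinic wall $Cu_B=s_A$ as the transverse codimension-one condition, and then locate elliptic products on the $\varphi<0$ side — starts out aligned with the paper. But the central step, where you claim that a \emph{single fixed word} $W=CB$ with a \emph{bounded} exponent $m$ gives ellipticity of $A^m W$ throughout $V\setminus\overline H$, is wrong and cannot be repaired. Write the transverse parameter as $t>0$ (in the paper's coordinates, $t$ is the off-diagonal entry of $C$ measuring how far $Cu_B$ has crossed $s_A$). The trace of $A^m C B^n$ is asymptotically $-v(m,n)\,t+u(m,n)$ with $v(m,n)\sim\lambda_1^m\lambda_2^n$ and $u(m,n)\sim\theta_1\lambda_1^m\lambda_2^{-n}+\theta_2\lambda_1^{-m}\lambda_2^n$. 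For fixed $m$ and $n=1$, the set of $t$ for which $|\tr A^m CB|<2$ is a small interval around $u(m,1)/v(m,1)\approx\theta/(\theta_1\theta_2-1)$, which is bounded away from $0$ independently of $m$. So $A^m CB$ is never elliptic for $t$ small; as you push $t\to 0^+$, the required witness must have \emph{both} exponents going to infinity, with $\lambda_1^{m}\lambda_2^{-n}$ kept in a bounded window. No single word, and no uniformly bounded family of words, can work.

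The real crux — which your proposal never touches — is that the ellipticity intervals $I_k$ for the products $A^{m_k}CB^{n_k}$ must \emph{overlap} so that their union covers the entire half-neighborhood $\{0<t<\beta_{k_0}\}$, not just a union of slivers with gaps. This is not automatic: it requires choosing the initial parameters with care, specifically the inequality $\theta<2/(\lambda-1)$ in display \eqref{e.lambda theta}, which makes $I_k\cap I_{k+1}\neq\emptyset$ for all $k$. Your appeal to ``rigidity from Proposition~\ref{p.limit cores} and Lemma~\ref{l.non degenerate}'' does not supply this, because these results do not preclude new hyperbolic components from wedging into the gaps between the $I_k$ — indeed, Proposition~\ref{p.bifurcation} constructs exactly such a configuration where on the other side of the \emph{same kind} of heteroclinic wall there are infinitely many hyperbolic components accumulating. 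That phenomenon is only ruled out here by the quantitative overlap argument, not by the soft rigidity you invoke. So the claim that ``a single word suffices after shrinking $V$'' is both incorrect in detail and conceptually missing the mechanism that distinguishes Proposition~\ref{p.elliptic side} from Proposition~\ref{p.bifurcation}.

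Your argument for the first bullet (that $V\cap\partial H$ forces $Cu_B=s_A$) is fine and agrees with the paper.
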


For another example with similar properties, see Proposition~7 in~\cite{Yoccoz_SL2R}.

\begin{proof}
Fix numbers $\lambda$, $\theta$, and $\nu$ such that:
\begin{equation}\label{e.lambda theta}
1 < \lambda < 1 +\sqrt{2}, \qquad
\frac{\lambda^2+1}{\lambda^2-1} < \theta < \frac{2}{\lambda - 1},
\qquad \nu > \theta \, .
\end{equation}

Define three matrices in $\G$ as follows:
$$
A_0 = \begin{pmatrix}
\lambda                       & 0 \\
-\theta(\lambda-\lambda^{-1}) & \lambda^{-1}
\end{pmatrix}, \quad
B_0 = \begin{pmatrix}
\lambda & \theta (\lambda - \lambda^{-1}) \\
0       & \lambda^{-1}
\end{pmatrix}, \quad
C_0 = \begin{pmatrix}
0 & -1 \\ 1 & \nu + \nu^{-1}
\end{pmatrix}
$$
All matrices have traces $>2$.
The stable and unstable directions are ordered as follows:
\begin{multline*}
u(B_0) = \begin{pmatrix} 1 \\ 0            \end{pmatrix} <
s(A_0) = \begin{pmatrix} 0 \\ 1            \end{pmatrix} <
u(C_0) = \begin{pmatrix} 1 \\ -\nu         \end{pmatrix} <
u(A_0) = \begin{pmatrix} 1 \\ -\theta      \end{pmatrix}<\\
<s(B_0)= \begin{pmatrix} 1 \\ -\theta^{-1} \end{pmatrix}<
s(C_0) = \begin{pmatrix} 1 \\ -\nu^{-1}    \end{pmatrix}<
u(B_0).
\end{multline*}
Also, $C_0(u(B_0)) = s(A_0)$.
Finally, due to one inequality in~\eqref{e.lambda theta} we have
$$
\tr A_0 B_0 = \lambda^2 - \theta^2 (\lambda - \lambda^{-1})^2 + \lambda^{-2} < -2.
$$
We conclude that
$(A_0,B_0,C_0)$ belongs to the boundary of the hyperbolic component $H$
described in \S\ref{ss.ex connection}.
Let $V$ be a small neighborhood of this $3$-tuple
such that 
$V \setminus \overline{H} = \{(A,B,C) \in V;\; u(B) < C \cdot u(B) < s(A) \}$.
To complete the proof, we will show that
this set is contained in $\cE$, provided $V$ is small enough.

For any $(A,B,C) \in V \setminus \overline{H}$,
take a basis of $\R^2$  close to the canonical basis and
formed by vectors collinear to $u(B)$, $s(A)$,
so that the matrices of $A$, $B$, and $C$ become:
$$
A = \begin{pmatrix}
\lambda_1                       & 0 \\
-\theta_1(\lambda_1-\lambda_1^{-1}) & \lambda_1^{-1}
\end{pmatrix}, \quad
B = \begin{pmatrix}
\lambda_2 & \theta_2(\lambda_2 - \lambda_2^{-1}) \\
0             & \lambda_2^{-1}
\end{pmatrix}, \quad
C = \begin{pmatrix}
t & -1 +td \\ 1 & d
\end{pmatrix},
$$
for certain numbers $\lambda_1$ and $\lambda_2$ close to $\lambda$,
$\theta_1$ and $\theta_2$ close to $\theta$,
$d$ close to $\nu + \nu^{-1}$, and
$t$ close to zero.
Since $u(B) < C(u(B)) < s(A)$,
$t$~must be positive.

We are going to look for elliptic products of the form $A^m C B^n$.
So we write
$$
A^m = \begin{pmatrix}
\lambda_1^m  & 0 \\
-\xi_1(m)    & \lambda_1^{-m}
\end{pmatrix}, \quad
B^n = \begin{pmatrix}
\lambda_2^n & \xi_2(n) \\
0           & \lambda_2^{-n}
\end{pmatrix},
\quad\text{with }
\left\{
\begin{array}{l}
\xi_1(m) = \theta_1 (\lambda_1^m - \lambda_1^{-m}), \\
\xi_2(n) = \theta_2 (\lambda_2^n - \lambda_2^{-n}).
\end{array}
\right.
$$
A computation gives
\begin{align*}
\tr A^m C B^n &=
\lambda_1^m t \lambda_2^n - \xi_1(m) t \xi_2(n) - \xi_1(m) (-1+td) \lambda_2^{-n} +
\lambda_1^{-m} \xi_2(n) + \lambda_1^{-m} d \lambda_2^{-n} \\
& = - v(m,n) t + u(m,n),
\end{align*}
where
\begin{align}
v(m,n) &= \lambda_1^m \lambda_2^n (\theta_1 \theta_2 -1)
            \left( 1+ \cO(\lambda_1^{-2m} + \lambda_2^{-2n}) \right), \label{e.v} \\
u(m,n) &= \theta_1\lambda_1^m\lambda_2^{-n} + \theta_2\lambda_1^{-m}\lambda_2^n +
            \cO(\lambda_1^{-m} \lambda_2^{-n}) \label{e.u}.
\end{align}

Choose a sequence $(m_k, n_k)$ (depending on $\lambda_1$ and $\lambda_2$ only)
starting at $(m_0,n_0)=(0,0)$,
such that for all $k$, $(m_{k+1}, n_{k+1})$ is either $(m_k + 1, n_k)$ or $(m_k, n_k + 1)$,
and
\begin{equation}\label{e.mn}
\lambda_2^{-1} \le \lambda_1^{m_k} \lambda_2^{-n_k} \le \lambda_1.
\end{equation}
Write $v_k = v(m_k, n_k)$, $u_k = u(m_k, n_k)$.
Assuming $V$ is sufficiently small,
there is some constant $k_0$ (not depending on $(A,B,C)$ in $V$)
such that $v_k > 0$ and $u_k > 2$ for every $k \ge k_0$.

Let
$$
\delta = \max \big( |\lambda_1 - \lambda|, |\lambda_2 - \lambda|,
|\theta_1 - \theta|, |\theta_2 - \theta|, | d-\nu-\nu^{-1}| \big)
$$
(Notice that $t$ does not appear above.)
Let $\cO_\delta(1)$ indicate a quantity that goes to zero as $\delta \to 0$.
It follows from \eqref{e.v}, \eqref{e.u}, and \eqref{e.mn} that
\begin{equation}\label{e.odelta}
\frac{v_{k+1}}{v_k} = \lambda + \cO_\delta(1) \, , \qquad
2\theta + \cO_\delta(1) < u_k < \theta(\lambda+\lambda^{-1}) + \cO_\delta(1) \, .
\end{equation}

For $k \ge k_0$, define intervals
$$
I_k = (\alpha_k, \beta_k) = \left( \frac{u_k-2}{v_k}, \frac{u_k+2}{v_k} \right).
$$
Each $I_k$ depends on $\lambda_1$, $\lambda_2$, $\theta_1$, $\theta_2$, and $d$,
\emph{but not on $t$}.
Also,
$$
|\tr A^{m_k} C B^{n_k} | < 2 \quad \text{iff} \quad
t \in I_k \, .
$$

We claim that if $\delta$ is sufficiently small then $I_k \cap I_{k+1} \neq \emptyset$ for all $k \ge k_0$.
Indeed, using~\eqref{e.odelta}, we get:
\begin{gather}
\frac{\alpha_k}{\beta_{k+1}} =
\frac{u_k - 2}{u_{k+1} +2} \cdot \frac{v_{k+1}}{v_k} \le
\frac{\theta (\lambda+\lambda^{-1}) - 2}{2\theta + 2} \cdot \lambda + \cO_\delta(1) \, , \label{e.babo1}\\
\frac{\alpha_{k+1}}{\beta_k} =
\frac{u_{k+1} - 2}{u_k +2} \cdot \frac{v_k}{v_{k+1}} \le
\frac{\theta (\lambda+\lambda^{-1}) - 2}{2\theta + 2} \cdot \frac{1}{\lambda} + \cO_\delta(1) \, . \label{e.babo2}
\end{gather}
From the assumption $\theta < 2/(\lambda-1)$ in \eqref{e.lambda theta},
it follows that the the right-hand side of \eqref{e.babo1}
is strictly less than $1 + \cO_\delta(1)$.
The same is true for the (smaller)   right-hand side of \eqref{e.babo2}.
Thus we have shown that if $k \ge k_0$ and $\delta$ is small enough
then $\alpha_k < \beta_{k+1}$ and $\alpha_{k+1} < \beta_k$;
in particular $I_k \cap I_{k+1} \neq \emptyset$.
Hence for small $\delta$, we have
$$
\bigcup_{k \ge k_0} I_k = \left(\liminf_{k \to \infty} \alpha_k, \, \sup_{k \ge k_0} \beta_k \right) \supset
(0, \beta_{k_0}).
$$
The number $\beta_{k_0}$ has a positive lower bound on $V$.
Therefore, reducing the neighborhood $V$ of $(A_0,B_0,C_0)$ if necessary,
we have that for any $(A,B,C) \in V \setminus \overline{H}$,
there exists some $k \ge k_0$ such that
the corresponding $t$ belongs to the corresponding $I_k$.
This means that the matrix $A^{m_k} C B^{n_k}$ is elliptic,
showing that $(A,B,C)$ belongs to the elliptic locus~$\cE$.
\end{proof}

\subsection{An Example of Accumulation of Components}\label{ss.bifurcation}

Again consider the hyperbolic component $H$ for the full shift in three symbols
that was introduced in \S\ref{ss.ex connection}.

\begin{prop}\label{p.bifurcation}
There exists a path $t \mapsto (A,B,C(t))$ with the following properties:
\begin{enumerate}
\item
$(A,B,C(t)) \in H$ for $t<0$.

\item
At the parameter $t=0$, the heteroclinic connection $C(0) \cdot u_B = s_A$ occurs;
in particular, $(A, B, C(0))$ belongs to $\partial H$.

\item
There exists a sequence of hyperbolic components $H_i$, all different,
and a sequence $t_i>0$ converging to $0$ as $i \to \infty$ such that
$(A,B,C(t_i)) \in H_i$ for all~$i$.

\item
There exist a sequence $s_i>0$ converging to $0$ as $i \to \infty$ such that
$(A,B,C(s_i))$ belongs to the elliptic locus $\cE$ for all $i$.
\end{enumerate}
\end{prop}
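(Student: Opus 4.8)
The plan is to realize the path inside the very component $H$ of \S\ref{ss.ex connection}, reusing the matrices and the trace computation from the proof of Proposition~\ref{p.elliptic side} but with one inequality reversed, so that the ``elliptic windows'' become \emph{disjoint} and leave gaps in which the triple is still hyperbolic. First I would fix $A$ and $B$ once and for all and vary only $C$. Keeping the notation of the proof of Proposition~\ref{p.elliptic side}, choose $\lambda,\theta,\nu$ with $1<\lambda<1+\sqrt2$, $\theta>\frac{2}{\lambda-1}$ and $\nu>\theta$ (so in particular $\tr A_0B_0<-2$, i.e.\ $(A_0,B_0)\in H_\id^+$), set $A=A_0$, $B=B_0$, and let $C(t)$ be the real-analytic path obtained from $C_0$ by sliding the fixed directions of $C$ so that $u_B<C(t)u_B<s_A$ for $t>0$, $C(0)u_B=s_A$, and $s_A<C(t)u_B<u_C$ for $t<0$, keeping $\tr C(t)>2$ and all the other direction inequalities of Proposition~\ref{p.hetero ex} for $|t|$ small. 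Then for $t<0$ the four conditions of Proposition~\ref{p.hetero ex} hold, so $(A,B,C(t))\in H$ (this is (i)); and $t=0$ is exactly the heteroclinic connection $C(0)u_B=s_A$, so $(A,B,C(0))\in\partial H$ (this is (ii)).

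Next, repeating verbatim the computation in the proof of Proposition~\ref{p.elliptic side}, the period-$(m{+}n{+}1)$ word $A^mC(t)B^n$ has trace of the form $-v(m,n)\,t+u(m,n)$ with $v(m,n)\asymp\lambda_1^m\lambda_2^n\to\infty$ and, along a suitably balanced diagonal sequence $(m_k,n_k)$, with $u(m_k,n_k)$ confined to $\bigl(2\theta+\cO_\delta(1),\ \theta(\lambda+\lambda^{-1})+\cO_\delta(1)\bigr)$. Hence each $I_k=\{t>0:\ |\tr A^{m_k}C(t)B^{n_k}|<2\}$ is a nonempty open interval of length $\asymp v_k^{-1}\to0$ accumulating at $0$; but now the ratio bounds \eqref{e.babo1}--\eqref{e.babo2}, whose right-hand sides are \emph{greater} than $1$ precisely because we imposed $\theta>\frac{2}{\lambda-1}$, show that for $k$ large the $I_k$ are pairwise disjoint with $\sup I_{k+1}<\inf I_k$, leaving nonempty gaps $G_k=(\sup I_{k+1},\inf I_k)$ that accumulate at $0$. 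Choosing $s_k\in I_k$ makes $A^{m_k}C(s_k)B^{n_k}$ elliptic, so $(A,B,C(s_k))\in\cE$; this is (iv).

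It remains to find, for each large $k$, a point $t_k\in G_k$ at which $(A,B,C(t_k))$ is uniformly hyperbolic, in a component $H_k$, with the $H_k$ pairwise distinct. The idea is renormalization: for $t\in G_k$ the point $C(t)u_B$ sits on the $u_B$-side of $s_A$ at distance $\asymp t$, and its forward $A$-orbit $A^j(C(t)u_B)$, $0\le j<n(t)$ with $n(t)\asymp\log(1/t)$, forms a chain of $\asymp n(t)$ points running from a neighbourhood of $s_A$ out to order $1$, after which it closes up into the part of the picture near $u_A$, $u_C$ (this is exactly what the trace estimate encodes). Accordingly I would take candidate cores $U(t)=U_{AC}\sqcup U_B\sqcup\bigsqcup_{j=1}^{n(t)-1}U^{(j)}(t)$ and the mirror $S(t)$, where $U_{AC},U_B,S_A,S_{BC}$ are the cores of Proposition~\ref{p.hetero ex} (slightly enlarged) and $U^{(j)}(t)$ is a small interval around $A^j(C(t)u_B)$, and verify the hypotheses of Lemma~\ref{l.core full}, namely $A_i(U)\subset U$, $A_i^{-1}(S)\subset S$, and the (routine) absence of $\pm\id$ among products of length at most the rank. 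Lemma~\ref{l.core full} then yields uniform hyperbolicity; call the component $H_k$. Since the unstable and stable cores vary continuously over each connected component of $\cH$ (as used in the proof of Theorem~\ref{t.general boundary}) while the cores above have $n(t){+}1\to\infty$ components, the $H_k$ are pairwise distinct, giving (iii).

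The main obstacle is this last step: controlling the finite orbit segment $\{A^j(C(t)u_B):0\le j<n(t)\}$ \emph{uniformly in $k$ and $t\in G_k$} — showing the points lie in mutually disjoint, adequately separated intervals and that the inclusions demanded by Lemma~\ref{l.core full} really hold — is a renormalization-type estimate, essentially the converse of the interval-covering argument of Proposition~\ref{p.elliptic side}, and is where all the computation lives. A secondary point to be checked is that the gaps $G_k$ are nonempty for all large $k$, which is exactly where the reversed inequality $\theta>\frac{2}{\lambda-1}$ is used.
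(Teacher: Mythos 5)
Your overall architecture — obtain (i), (ii) from Proposition~\ref{p.hetero ex}, obtain (iv) from elliptic windows, and obtain (iii) by exhibiting multicones in the gaps between those windows — matches the paper's, but two of your steps have real problems. First, the disjointness of the windows $I_k$ does not follow from reversing the sign in the \emph{upper} bounds \eqref{e.babo1}--\eqref{e.babo2}: knowing that an upper bound for $\alpha_k/\beta_{k+1}$ exceeds $1$ says nothing about whether $\alpha_k>\beta_{k+1}$. What you need is a \emph{lower} bound, which, redoing the estimate with $u_k\ge 2\theta+\cO_\delta(1)$, $u_{k+1}\le\theta(\lambda+\lambda^{-1})+\cO_\delta(1)$ and $v_{k+1}/v_k=\lambda+\cO_\delta(1)$, requires the stronger condition $\theta>\frac{2\lambda}{\lambda-1}$, not $\theta>\frac{2}{\lambda-1}$. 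This is fixable. A less fixable issue is that the explicit matrices $A_0,B_0$ from Proposition~\ref{p.elliptic side} have the \emph{same} spectral radius $\lambda$, so $\log\nu/\log\mu=1\in\Q$, which violates the irrationality condition \eqref{e.irrational} that the paper imposes on $(A,B)$ to control the diagonal sequences $(n_i,m_i)$ in \eqref{e.kappa}.

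The essential gap, which you yourself flag, is step (iii), and there your sketch has only half the picture and omits the mechanism that makes it work. The paper does \emph{not} reuse the matrices from Proposition~\ref{p.elliptic side}: it fixes a generic $(A,B)$ in the positive free component, introduces points $p,q\in(u_{BA},s_{BA})$, and requires the cross-ratio condition $(\alpha-1)(\beta-1)>1$ together with the irrationality condition. The multicone $U_i$ in \eqref{e.complicated} is then built from \emph{two} families, the $B$-iterates $I_n=B^n[Bp,BAq]$ near $u_B$ and the $A$-preimages $J_m=A^{-m}[p,q]$ near $s_A$, joined by the intervals $I_{n_i}^*$ and $V_i$; the key closure is the quantitative inclusion $C(t_i)\cdot I_{n_i}^*\Subset J_{m_i}$ (equation~\eqref{e.fundamental}), which is what the cross-ratio and irrationality hypotheses buy, and which is verified together with the inclusion table before applying Lemma~\ref{l.core full}. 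Your single chain $U^{(j)}(t)$ around $A^j(C(t)u_B)$ only captures the analogue of the $J_m$-side; there is no dual family, and it is not explained why the chain closes up under the action of $B$ (which pushes everything toward $u_B$, where your chain has no intervals) or why $C$ maps the last interval back into the first. Moreover, being in a gap $G_k$ only asserts that the single product $A^{m_k}CB^{n_k}$ is non-elliptic; this does not by itself give hyperbolicity of the triple. The paper closes this off with the winding-number computation of \eqref{e.wind claim} (proved in \S\ref{ss.wind}), which simultaneously distinguishes the components $H_i$ (via which products change trace sign) and exhibits the elliptic parameters $s_i$; your proposal handles these separately and leaves the hard case unproved.
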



\begin{proof}
Take $(A,B)$ in the positive free component of the full $2$-shift.
Assume that the order in $\P^1$ is so that
$$
u_B < s_A < u_A < s_B < u_B.
$$
Take points $p$, $q \in (u_{BA},s_{BA})$ such that
\begin{equation}\label{e.p q}
u_{BA} < BA \cdot q < p < q < s_{BA} \, .
\end{equation}
Define the following cross-ratios (recall formula \eqref{e.def cross ratio} from \S\ref{ss.if}):
\begin{equation}\label{e.cross ratios}
\alpha = [u_A, p, q, s_A] \,  , \quad
\beta = [u_B, BAq, p, s_B] \, .
\end{equation}
Then $\alpha$, $\beta>1$.
We claim that the choices of $A$, $B$, $p$, $q$ can be made so that
\begin{equation}\label{e.assumption}
(\alpha-1) (\beta-1) > 1 \, .
\end{equation}
Indeed, if $B$ is replaced with $B^T$ with $T>1$ (keeping $A$ fixed)
then $(A,B)$ remains in the free component;
moreover  \eqref{e.p q} still holds
keeping $p$, $q$ (and hence $\alpha$) fixed.
If $T$ is large enough then so is $\beta$ and \eqref{e.assumption} is satisfied.

If $\mu$, $\nu$ are the spectral radii of $A$, $B$, respectively, we also assume that
\begin{equation}\label{e.irrational}
\frac{\log \nu}{\log \mu} \not\in \Q \, .
\end{equation}

Take any smooth path $t \mapsto C(t)$ such that
$$
\tr C(t) > 2, \quad
s_{C(t)} \in (s_B, u_B), \quad
u_{C(t)} \in (s_A, u_A) \quad
\text{for all $t$},
$$
and
\begin{equation}\label{e.C}
C(0) \cdot u_B = s_A , \quad
\left. \frac{\partial}{\partial t}  C(t) \cdot u_B \right|_{t=0} < 0 \, .
\end{equation}
(In particular, $C(t) \cdot u_B$ belongs to $(s_A, u_C)$, resp.~$(s_C,s_A)$
for small negative, resp.~positive $t$.)
By Proposition~\ref{p.hetero ex}, $(A,B, C(t))$ belongs to $H$
for all small $t<0$.
So assertions (i) and (ii) of the statement hold.

\medskip

Next define (disjoint) intervals
$$
I_n = B^n    \cdot [Bp, BAq], \qquad
J_m = A^{-m} \cdot [p,q], \quad
\text{for integers $n$, $m \ge 0$.}
$$
Define also
$$
I_n^* = [u_B, B^{n+1} Aq], \quad
\text{for $n \ge 0$.}
$$
(See Figure~\ref{f.bifurcation}.)
\begin{figure}[htb]
\psfrag{A}{{\footnotesize $A$}} \psfrag{B}{{\footnotesize $B$}}
\psfrag{AB}{{\footnotesize $AB$}} \psfrag{BA}{{\footnotesize $BA$}}
\psfrag{Cti}{{\footnotesize $C(t_i)$}}
\psfrag{I0}{{\footnotesize $I_0$}} \psfrag{In}{{\footnotesize $I_{n-1}$}}
\psfrag{Ins}{{\footnotesize $I_n^*$}} \psfrag{V}{{\footnotesize $V$}}
\psfrag{J0}{{\footnotesize $J_0$}} \psfrag{J1}{{\footnotesize $J_1$}} \psfrag{Jm}{{\footnotesize $J_m$}}
\psfrag{1}[r]{{\footnotesize $B^{n+1}Aq$}}
\psfrag{2}[r]{{\footnotesize $B^n p$}}
\psfrag{3}[c]{{\footnotesize $Bp$}}
\psfrag{4}[c]{{\footnotesize $BAq$}}
\psfrag{p}[c]{{\footnotesize $p$}}
\psfrag{q}[c]{{\footnotesize $q$}}
\psfrag{5}{{\footnotesize $A^{-m} q$}}
\psfrag{6}{{\footnotesize $C(t_i) \cdot B^n p$}}
\psfrag{7}{{\footnotesize $Aq$}}
\begin{center}
\includegraphics[width=9.0cm]{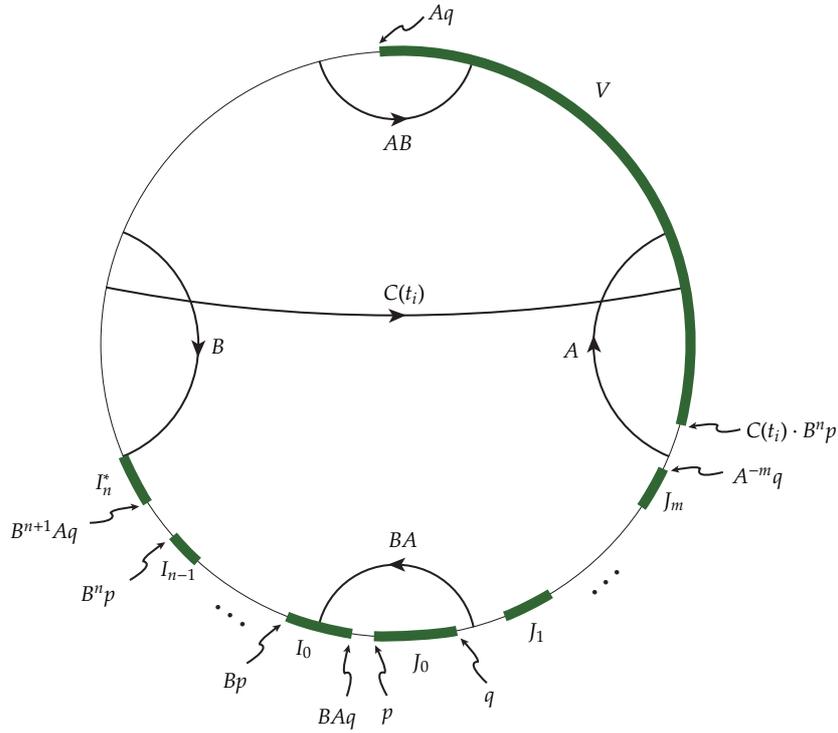}
\caption{{\small A ``non-strict'' multicone for $(A,B,C(t_i))$.}}
\label{f.bifurcation}
\end{center}
\end{figure}

In the manifold $\P^1$ we take charts using euclidian angle;
these serve to compute derivatives and speak of length of intervals.
Let $\kappa>0$ be the derivative of $C(0): \P^1 \to \P^1$ at $u_B$.
By \eqref{e.assumption}, we can find $\eps>0$ such that
\begin{equation}\label{e.choice eps}
(\alpha-1) (\beta-1) (1-2\kappa\eps) > 1  \, .
\end{equation}
We claim that
\begin{equation}\label{e.kappa}
\text{there are sequences $n_i$, $m_i \uparrow + \infty$ such that }
\kappa^{-1} - 2 \eps < \frac{|I^*_{n_i}|}{|J_{m_i}|} < \kappa^{-1} - \eps \, .
\end{equation}
Indeed, there is a projective chart (see \S\ref{ss.if})
$P : \P^1 \to \R \cup \{\infty\}$
such that $P \circ B \circ P^{-1}(t) = \nu^{-2} t$.
It follows that the limit ${\displaystyle \lim_{n \to + \infty} \nu^{2n} |I_n^*|}$ exists.
Analogously, the limit ${\displaystyle \lim_{m \to + \infty} \mu^{2m} |J_m|}$ exists.
By \eqref{e.irrational}, for any $N$ the set
$\{\mu^{2m} \nu^{-2n}; \; m,\ n > N\}$ is dense in $\R_+$.
So~\eqref{e.kappa} follows.

Define also intervals
\begin{equation}\label{e.dual intervals}
\tilde J_n   = [B^{n+1} A q , B^{n} p] \, , \qquad
\tilde I_m^* = [A^{-m} q, s_A] \, .
\end{equation}
Next we claim that
if $i$ is large enough and $t$ is sufficiently close to zero
then
\begin{align}
|C(t) \cdot I_{n_i}^*|      &< |J_{m_i}|          \, , \label{e.claim 1}\\
|C(t) \cdot \tilde J_{n_i}| &> |\tilde I^*_{m_i}| \, . \label{e.claim 2}
\end{align}
On the one hand,
$|C(t) \cdot I_{n_i}^*|/|I_{n_i}^*| \to \kappa$
as $i \to \infty$ and $t \to 0$.
So, by~\eqref{e.kappa},
$$
\limsup_{i \to \infty, \  t\to 0} \frac{|C(t) \cdot I_{n_i}^*|}{|J_{m_i}|}
 \le \kappa(\kappa^{-1} -\eps) < 1,
$$
proving~\eqref{e.claim 1}.
On the other hand,
it is easy to see that
$$
\alpha-1 = \lim_{m \to +\infty} \frac{|J_m|}{|\tilde I^*_m|} \, , \qquad
\beta-1  = \lim_{n \to +\infty} \frac{|\tilde J_n|}{|I_n^*|} \, .
$$
So we can write
\begin{alignat*}{2}
\liminf_{i \to \infty, \ t \to 0} \frac{|C(t) \cdot \tilde J_{n_i}|}{|\tilde I^*_{m_i}|}
&=   \kappa \liminf_{i \to \infty} \frac{|\tilde J_{n_i}|}{|\tilde I^*_{m_i}|}
=   (\alpha-1)(\beta-1)\kappa \liminf_{i \to \infty} \frac{|I_{n_i}^*|}{|J_{m_i}|} && \\
&\ge (\alpha-1)(\beta-1)\kappa(\kappa^{-1} -2\eps) &\quad&\text{{(by \eqref{e.kappa})}}\\
&>   1    &&\text{(by \eqref{e.choice eps}),}
\end{alignat*}
proving~\eqref{e.claim 2}.

Now, it follows from \eqref{e.C}, \eqref{e.claim 1}, and \eqref{e.claim 2} that
for every sufficiently large~$i$,
there exists a small $t_i>0$ such that
\begin{equation}\label{e.fundamental}
C(t_i) \cdot I_{n_i}^*   \Subset J_{m_i} \quad \text{and} \quad
C(t_i) \cdot I_{n_i - 1} \Subset (s_A, u_C) \, .
\end{equation}
Indeed, it is sufficient to take $t_i$ such that $C(t_i)$ maps the right endpoint of $I_{n_i}^*$
inside the interval $J_{m_i}$ and close to its right endpoint.
(See Figure~\ref{f.intervals}.)
\begin{figure}[htb]
\psfrag{uB}{{\footnotesize $u_B$}}
\psfrag{Ins}{{\footnotesize $I_{n_i}^*$}}
\psfrag{Gn}{{\footnotesize $\tilde J_{n_i}$}}
\psfrag{In}{{\footnotesize $I_{n_i - 1}$}}
\psfrag{Jm}{{\footnotesize $J_{m_i}$}}
\psfrag{Lm}{{\footnotesize $\tilde I^*_{m_i}$}}
\psfrag{Tm}{{\footnotesize $\tilde I_{m_i - 1}$}}
\psfrag{sA}{{\footnotesize $s_A$}}
\psfrag{Cti}{{\footnotesize $C(t_i)$}}
\begin{center}
\includegraphics[width=9.0cm]{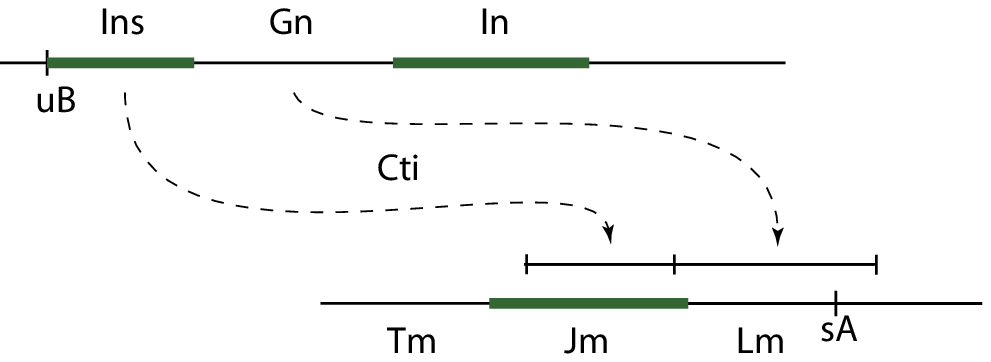}
\caption{{\small Proof of \eqref{e.fundamental}.}}
\label{f.intervals}
\end{center}
\end{figure}

Next we claim that for every sufficiently large $i$, the $3$-tuple $(A,B,C(t_i))$
is uniformly hyperbolic.
For simplicity of writing, let $i$ be fixed and
let $n=n_i$, $m=m_i$, $C=C(t_i)$.
Let $V = V_i$ be the interval $[C(t_i) \cdot B^{n} p, A q]$.
The set (see Figure~\ref{f.bifurcation})
\begin{equation}\label{e.complicated}
U_i = I_{n_i}^* \cup I_{n_i-1} \cup \cdots \cup I_0 \cup J_0 \cup \cdots \cup J_{m_i} \cup V_i \, .
\end{equation}
is mapped inside itself by each of the maps $A$, $B$, and $C$.
Indeed, the intervals are mapped into themselves as follows:
\begin{center}
\begin{tabular}[b]{|r|llllllllll|}
\hline
         & $I_n^*$ & $I_{n-1}$ & $I_{n-2}$ & \dots & $I_0$ & $J_0$ & $J_1$ & \dots & $J_m$    & $V$ \\
\hline
$A$      & $V$     & $V$       & $V$       & \dots & $V$   & $V$   & $J_0$ & \dots & $J_{m-1}$ & $V$ \\
$B$      & $I_n^*$ &$I_n^*$    & $I_{n-1}$ & \dots & $I_1$ & $I_0$ & $I_0$ & \dots & $I_0$     & $I_0$ \\
$C$      & $J_m$   & $V$       & $V$       & \dots & $V$   & $V$   & $V$   & \dots & $V$       & $V$ \\
\hline
\end{tabular}
\end{center}

We want to apply Lemma~\ref{l.core full} with $U=U_i$ given by \eqref{e.complicated};
thus we need to define also a set $S = S_i$.
We will make use of the symmetry of the example.
Define a new family of triples
$$
(\tilde A, \tilde B, \tilde C(t)) =
(B^{-1}, A^{-1}, C(t)^{-1}) \, ,
$$
We claim that the new triples meets all the requirements
we imposed on $(A,B,C(t))$,
if we consider on $\P^1$ the reverse cyclical order.
Indeed, let $\tilde p = p$ and $\tilde q = BAq$.
Define new cross-ratios $\tilde \alpha$, $\tilde \beta$ as
in \eqref{e.cross ratios} (but with reversed order);
then $\tilde \alpha = \beta$ and $\tilde \beta = \alpha$,
so the new \eqref{e.assumption} still holds.
Other conditions as \eqref{e.irrational} and \eqref{e.C} are easily checked.
Consider the new families of intervals
$\tilde I_m$, $\tilde J_n$, $\tilde I_m^*$ (it is convenient to swap the letters in the indices);
then $\tilde I_m$ is the gap between $J_m$ and $J_{m+1}$ and
$\tilde J_n$ is the gap between $I_n$ and $I_{n+1}$.
(In particular the notation~\eqref{e.dual intervals} is coherent.)
The relevant condition on $m_i$, $n_i$, $t_i$ is \eqref{e.fundamental}.
Its dual version is:
\begin{equation}\label{e.fundamental dual}
C(t_i)^{-1} \cdot \tilde I_{m_i}^*   \Subset \tilde J_{m_i} \quad \text{and} \quad
C(t_i)^{-1} \cdot \tilde I_{m_i - 1} \Subset (s_C, u_B) \, .
\end{equation}
An inspection of Figure~\ref{f.intervals} shows that it is true.
Let $\tilde V_i = [Aq, C(t_i)^{-1}A^{-m}p]$.
Then the set
$S_i = \tilde I_{m_i}^* \cup \tilde I_{m_i-1} \cup \cdots \cup \tilde I_0
\cup \tilde J_0 \cup \cdots \cup \tilde J_{m_i} \cup \tilde V_i$
is sent inside itself for $A^{-1}$, $B^{-1}$, and $C(t_i)^{-1}$.

This still not good if we want to apply Lemma~\ref{l.core full}
because $S_i$ is not disjoint from $U_i$.
To remedy that, it suffices for each $i$ to make $\tilde J_0$ slightly smaller
(making sure \eqref{e.fundamental dual} is still satisfied)
and modify the definition of $S_i$ accordingly.
In this way we can apply the lemma and conclude that $(A,B,C(t_i))$ is hyperbolic.

\medskip

Next, we claim that:
\begin{equation}\label{e.wind claim}
k, \ \ell \ge 0 \quad \Rightarrow \quad \tr C(t_i) B^\ell A^k
\begin{cases}
<-2 &\text{if $k \ge m_i + 1$ and $\ell \ge n_i+1$,} \\
>2  &\text{otherwise.} \\
\end{cases}
\end{equation}
Although the proof is not difficult, we prefer to postpone it to
\S\ref{ss.wind}.
Recall from \eqref{e.kappa} that the sequences $(n_i)$ and $(m_i)$ are strictly increasing.
Then it follows from~\eqref{e.wind claim} that
$(A,B, C(t_i))$ and $(A,B, C(t_j))$ do not belong to the same connected component of $\cH$
if $i \neq j$.
This proves assertion~(iii) of the proposition.

\medskip

At last, by \eqref{e.wind claim} again, 
for every $i$ there exists $s_i$ between $t_i$ and $t_{i+1}$ such that
$\tr C(s_i) B^{n_i+1} A^{m_i+1} =0$,
so $(A,B,C(s_i))$ belongs to the elliptic locus.
This proves the last assertion of the proposition.
\end{proof}

\begin{rem}
With a little additional work, one can find
the unstable and stable cores for $(A,B,C(t_i))$;
they are given by the subintervals below
(again we write $n=n_i$, $m=m_i$, $C=C(t_i)$ for simplicity):
\begin{alignat*}{4}
&[u_B,&\,u(B^{n+1}A^{m+1}C)] &\subset I_n^*         &\qquad &[s(B^{n+1}A^{m+1}C),&\,C^{-1}s_A] &\subset \tilde J_n \\
&[B^nA^{m}Cu_B,&\,u(B^n A^{m+1}CB)]&\subset I_{n-1} &\qquad &[s(B^n A^{m+1}CB),&\,B^{-1}C^{-1}s_A] &\subset\tilde J_{n-1} \\
&&&\cdots & &&&\dots  \\
&[BA^{m}Cu_B,&\,u(BA^{m+1}CB^n)] &\subset I_0       &\qquad &[s(BA^{m+1}CB^n),&\,B^{-n}C^{-1}s_A] &\subset\tilde J_0 \\
&[A^{m}Cu_B,&\,u(A^m CB^{n+1}A)] &\subset J_0       &\qquad &[s(A^m CB^{n+1}A),&\,A^{-1}B^{-n}C^{-1}s_A] &\subset\tilde I_0 \\
&&&\cdots & &&&\dots  \\
&[ACu_B,&\,u(ACB^{n+1}A^m)] &\subset J_{m-1}        &\qquad &[s(ACB^{n+1}A^m),&\,A^{-m}B^{-n}C^{-1}s_A] &\subset \tilde I_{m-1} \\
&[Cu_B,&\,u(CB^{n+1}A^{m+1})] &\subset J_m          &\qquad &[s(CB^{n+1}A^{m+1}),&\,s_A] &\subset \tilde I_m^* \\
&[CB^nA^mCu_B,&\,u(A^{m+1}CB^{n+1})] &\subset V     &\qquad &[s(A^{m+1}CB^{n+1}),&\,C^{-1}A^{-m}B^{-n}C^{-1}s_A] &\subset \tilde V
\end{alignat*}
In particular, the rank of the cores for the component $H_i$ is $m_i+n_i+3$;
so we get another proof that $H_i \neq H_j$ if $i \neq j$.
\end{rem}

%
%

\section{Combinatorial Multicone Dynamics}\label{s.abstract combinatorics}

\subsection{The Setting}

\subsubsection{}
A \emph{pair of combinatorial multicones}
is a finite cyclically ordered set $\M$
which is partitioned into $2$ disjoint subsets $\M_s$, $\M_u$
of the same cardinality which are met
alternately according to the cyclic ordering.
The subset $\M_s$ is the \emph{stable combinatorial multicone},
the subset $\M_u$ is the \emph{unstable combinatorial multicone}
in the pair.
The integer $q = \# \M_s = \# \M_u = \frac{1}{2} \#\M$
is the \emph{rank} of $\M$.

\subsubsection{}
A \emph{correspondence} on $\M$ is a subset of $\M \times \M$.

Given two correspondences $C$, $C'$ on $\M$, their product
$C \circ C'$ is defined by
$$
C \circ C' = \{(x,z); \; \text{$\exists$  $y\in \M$ s.t.~$(x,y) \in C$, $(y,z)\in C'$}\}.
$$
This composition law is obviously \emph{associative};
the diagonal in $\M \times \M$ is an identity (both left and right).
Thus correspondences form a monoid.

\subsubsection{}
Let $C$ be a correspondence on $\M$.
We say that $C$ is \emph{monotonic}
if the following properties hold:
\begin{itemize}
\item $C \subset (\M_s \times \M_s) \sqcup (\M_u \times \M_u)$;

\item $C \cap (\M_s \times \M_s)$ is the graph
$\{(C_s(x_s),x_s); \; x_s \in \M_s\}$ of a map ${C_s : \M_s \to \M_s}$;

\item $C \cap (\M_u \times \M_u)$ is the graph
$\{(x_u, C_u(x_u)); \; x_u \in \M_u\}$ of a map ${C_u : \M_u \to \M_u}$;

\item $C$ can be endowed with a cyclic ordering such that
the element next to $(x,y)$ is either
$(x^{++},y)$ or $(x^+,y^+)$ or $(x,y^{++})$,
where $x^+$ (resp.~$y^+$, $x^{++}$, $y^{++}$)
denotes the element next to $x$ (resp.~to $y$, $x^+$, $y^+$).
\end{itemize}

Observe that the cyclic ordering on $C$ is uniquely defined by the latter property:
if for instance $(x,y) \in \M_u \times \M_u$, then either
$x^+$ belongs to the image of $C_s$ and the next element is $(x^+,y^+)$,
or it is not the case and the next element is $(x^{++},y)$.
Similarly, if $(x,y) \in \M_s \times \M_s$ then the next element
is $(x^+, y^+)$ if $y^+ \in \Im C_u$,
and $(x,y^{++})$ otherwise.

The last condition (existence of the cyclic ordering)
in the definition of monotonicity may be reformulated as follows:
\begin{itemize}
\item for $(x_u,y_u) \in C \cap (\M_u \times \M_u)$
we must have  $x_u^+ = C_s(y_u^+)$ if $x_u^+ \in \Im C_s$
and $y_u = C_u(x_u^{++})$ if $x_u^+ \not\in \Im C_s$;

\item for $(x_s,y_s) \in C \cap (\M_s \times \M_s)$
we must have $y_s^+ = C_u(x_s^+)$ if $y_s^+ \in \Im C_u$
and $x_s = C_s (y_s^{++})$ if $y_s^+ \not\in \Im C_u$.
\end{itemize}

Obviously, a monotonic correspondence must satisfy
\begin{gather*}
\# C = \# \M = 2 \; \mathrm{rk}(\M), \\
1 \le \# \Im C_s = \# \Im C_u \le \mathrm{rk}(\M).
\end{gather*}

\subsubsection{Examples}
\begin{itemize}
\item The diagonal (or identity) correspondence is monotonic.

\item Let $a_s \in \M_s$, $a_u \in \M_u$; set
$$
C_{a_s,a_u} = \M_u \times \{a_u\} \, \sqcup \,  \{a_s\} \times \M_s
$$
(i.e.~$C_s$, $C_u$ are the constant maps with values $a_s$,
$a_u$ respectively.)
This correspondence is monotonic and is called a \emph{constant}
correspondence (with values $a_s$, $a_u$).
The left or right composition of a
monotonic correspondence with any constant correspondence
is a constant correspondence.

\item See Figures~\ref{f.correspondences free} and \ref{f.correspondences 2/5} for more examples.
\end{itemize}

\begin{figure}[hbt]
\psfrag{A}{$A$}
\psfrag{B}{$B$}
\begin{center}
\includegraphics[keepaspectratio, scale=0.7]{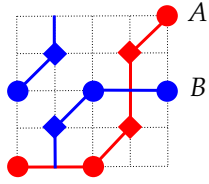}
\caption{{\small Two (constant) monotonic correspondences $A$ and $B$
(related to a free uniformly hyperbolic pair).
The rank of $\M$ is $2$.
The borders of the square should be identified in a torus-like way.
Circles and squares denote points in $\M_u \times \M_u$ and $\M_s \times \M_s$, respectively.}}
\label{f.correspondences free}
\end{center}
\end{figure}

\begin{figure}[hbt]
\psfrag{A}{$A$}
\psfrag{B}{$B$}
\begin{center}
\includegraphics[keepaspectratio, scale=0.7]{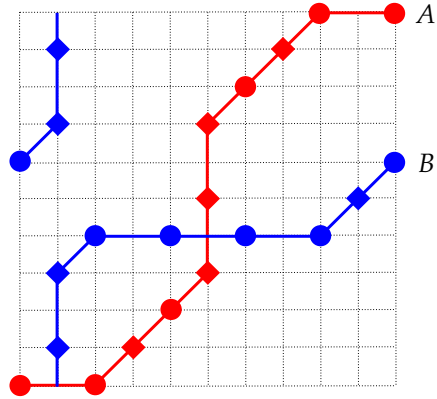}
\caption{{\small Two monotonic correspondences
$A$ and $B$ (related to the situation of Fig.~\ref{f.multicone 2/5}).
The rank of $\M$ is $5$.}}
\label{f.correspondences 2/5}
\end{center}
\end{figure}

\subsubsection{Elementary Properties}

\paragraph{}
The composition $C \circ C'$ of monotonic
correspondences is monotonic.

\begin{proof}
Let $C_s$, $C_u$, $C_s'$, $C_u'$ be the maps associated with $C$, $C'$.
From the definition of the composition law, we see that
$C \circ C' \subset (\M_s \times \M_s) \cup (\M_u \times \M_u)$ with
\begin{align*}
(C \circ C') \cap (\M_s \times \M_s) &= \{(C_s \circ C_s'(x_s), x_s); \; x_s \in \M_s\}, \\
(C \circ C') \cap (\M_u \times \M_u) &= \{(x_u, C_u' \circ C_u(x_u)); \; x_u \in \M_u\}.
\end{align*}
Let $(x_u, z_u) \in (C \circ C') \cap (\M_u \times \M_u)$;
set $y_u = C_u(x_u)$, so we have $z_u = C_u'(y_u)$.
\begin{itemize}
\item If $x_u^+ \not\in \Im C_s$, then also $x_u^+ \not\in \Im C_s \circ C_s'$
and we have $y_u = C_u(x_u^{++})$, $z_u = C_u' \circ C_u(x_u^{++})$.

\item Assume $x_u^+ \in \Im C_s$; then $x_u^+ = C_s(y)$
if and only if $y \in \M_s$ is between $y_u = C_u(x_u)$
and $C_u(x_u^{++})$.
If no such $y$ belongs to $\Im C_s'$, we must have
$$
C_u'(C_u(x_u^{++})) = C_u'(C_u(x_u)).
$$
Otherwise, let $y_s$ be the first $y$ in $\Im C_s'$
between $y_u$ and $C_u(x_u^{++})$;
we have
$$
C_u'(y_s^-) = C_u'(C_u(x_u)) = z_u, \qquad
y_s = C_s'(z_u^+), \qquad
x_u^+ = C_s(C_s'(z_u^+)) .
$$
\end{itemize}
We have checked the first half of the condition for the existence
of the cyclic ordering on $C \circ C'$;
the other half is checked in a symmetric way.
\end{proof}

\paragraph{} \label{ssss.Cs determines C}
We have seen that
$$
\# \Im C_s = \# \Im C_u \, .
$$
In particular, $C_s$ is a constant map iff $C_u$ is a constant map;
in this case, the values of $C_s$ and $C_u$ are independent.

However, when $C_s$ is \emph{not} a constant map,
there is \emph{at most} one monotonic correspondence $C$ such that
$C \cap (\M_s \times \M_s)$ is the graph of $C_s$.
(And similarly when we exchange the roles of $C_s$ and $C_u$.)
More precisely, such a monotonic correspondence exists
if and only if
the map $C_s$ is \emph{monotonic} (increasing)
in the following sense:
For any $x_s \in \M_s$, either $C_s(x_s^{++}) = C_s(x_s)$
or there is no point of the image of $C_s$ strictly between $C_s(x_s)$ and $C_s(x_s^{++})$;
we then have $x_s^+ = C_u(x_u)$ for $x_u \in \M_u$ between $C_s(x_s)$ and $C_s(x_s^{++})$.

\subsection{Free Monoids of Monotonic Correspondences}

\subsubsection{}
We have seen that the monotonic correspondences on a pair of combinatorial multicones
$\M = \M_s \sqcup \M_u$ form a monoid
that we denote by~$\cC(\M)$.

Let $N \ge 1$ and let $\cF_N$ be the free monoid on $N$ generators.
Let $\Phi: \cF_N \to \cC(\M)$ be a morphism, uniquely determined
by the images $C^{(1)}$, \ldots, $C^{(N)}$ of the canonical generators
of $\cF_N$.

\subsubsection{}
The morphism is called \emph{hyperbolic} if there exists
$\ell \ge 1$ such that the image of any word of length $\ge \ell$ in the generators
is a constant correspondence.

\subsubsection{}
The morphism is called \emph{tight} if we have
$$
\bigcup_{i=1}^N \Im C_u^{(i)} = \M_u  \quad \text{and}\quad
\bigcup_{i=1}^N \Im C_s^{(i)} = \M_s \, .
$$

A justification for this definition and terminology is the following:
assume for instance that some
$x_u' \in \M_u$ does not belong to any $\Im C_u^{(i)}$, $1 \le i \le N$;
then we have
$$
C_s^{(i)} ((x_u')^-) = C_s^{(i)} ((x_u')^+) \quad
\text{for all $1 \le i \le N$.}
$$
Consider the pair of combinatorial multicones
$\M' = \M_s' \sqcup \M'_u$
where $\M_u' = \M_u \setminus \{x_u'\}$ and
$\M_s'$ is deduced from $\M_s$ by identifying
$(x_u')^-$ with $(x_u')^+$;
$\M'$ is equipped with the obvious cyclic ordering.
One can define in an obvious way correspondences
$C^{(i)\prime}$, $1 \le i \le N$ on $\M'$,
and the study of the morphism
$\Phi: \cF_N \to \cC(\M)$ reduces to a morphism
$\Phi': \cF_N \to \cC(\M')$ with a smaller pair of combinatorial multicones.

\subsubsection{}
We would like to analyze tight hyperbolic morphisms.

For $N=1$, a morphism is tight iff the correspondence $C^{(1)}$ is invertible,
and then it cannot be hyperbolic except in the trivial case where
the rank is~$1$.

In \S\ref{ss.morphisms two}, we will
determine all tight hyperbolic morphisms when $N=2$.

\subsection{Relation with Matrices} \label{ss.relation}

Let us see how a uniformly hyperbolic $N$-tuple of matrices
induces a tight hyperbolic morphism.

Let $(A_1, \ldots, A_N) \in \G^N$  be uniformly hyperbolic.
Let $U$ and $S$
be respectively the unstable and stable cores.
Let $\M_u$, resp.~$\M_s$, be the set of connected components of $U$, resp.~$S$.
Give $\M = \M_u \sqcup \M_s$ the cyclic order induced from $\P^1$.
Then $\M$ is a pair of combinatorial multicones.

For each $i =1, \ldots, N$, let
$C^{(i)}$ be the subset of $(\M_u \times \M_u) \sqcup (\M_s \times \M_s)$
formed by the pairs $(x,y)$ such that $A_i(x) \cap y \neq \emptyset$.

\begin{lemma}
Each $C^{(i)}$ is a monotonic correspondence.
Moreover, the morphism $\Phi: \cF_N \to \cC(\M)$ determined by $C^{(1)}$, \ldots, $C^{(N)}$
is tight and hyperbolic.
\end{lemma}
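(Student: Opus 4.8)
The plan is to verify, for each $i$, that $C^{(i)}$ satisfies the four bullet-point conditions in the definition of monotonicity, and then to check tightness and hyperbolicity using the structure of the cores established in \S\ref{sss.full core} and Propositions~\ref{p.combin contraction} and \ref{p.tight}. Throughout, fix $i$ and write $A = A_i$, $C = C^{(i)}$, $C_s = C_s^{(i)}$, $C_u = C_u^{(i)}$.

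\emph{Step 1: $C$ respects the splitting and is a graph.} By property (iii) of the cores in \S\ref{sss.full core}, every connected component of $\P^1 \setminus S$ contains a unique component of $U$, and $A(U) \subset U$, $A(S) \supset$ (the relevant preimage), so $A$ maps each component $x$ of $U$ into the component of $\P^1 \setminus S$ that it lands in, hence into a unique component $C_u(x)$ of $U$; this uses that $A(U)\subset U$ and $U\cap S=\emptyset$. Thus $A(x) \cap y \neq \emptyset$ with $x,y \in \M_u$ forces $y = C_u(x)$, and we get the graph of a map $C_u:\M_u\to\M_u$. The dual statement for $A^{-1}(S)\subset S$ gives that $A(x)$ meets a unique component of $S$ is false directly; instead one argues with $A^{-1}$: each component $y$ of $S$ has $A^{-1}(y)$ inside a unique component $C_s(y)$ of $S$, equivalently $A(C_s(y))\supset y$, so $C\cap(\M_s\times\M_s)$ is the graph $\{(C_s(y),y)\}$. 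Finally $C\subset(\M_u\times\M_u)\sqcup(\M_s\times\M_s)$ because $A$ carries $U$ into $U$ and, since $A$ is a homeomorphism of $\P^1$ preserving the cyclic order or reversing it, the alternating structure of $U$ and $S$ is preserved, so no component of $U$ can overlap a component of $S$ after applying $A$ — more carefully, $A(U)\subset U$ is disjoint from $S$, and $A(S^\circ)$ (interior) is disjoint from $U$ because $U\subset A^{-1}(S)^c$... this needs a short argument that $A^{-1}(U)\cap S=\emptyset$, which follows since $A^{-1}(U)\subset$ complement of $S$ isn't immediate, so instead: the components of $U$ and $S$ alternate around $\P^1$, $A$ preserves cyclic order (WLOG, replacing by an orientation consideration), and $A(U)\subset U$, $A(S)\subset$ the family making $A^{-1}(S)\subset S$; a pigeonhole/order argument shows the images interleave correctly.

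\emph{Step 2: the cyclic ordering on $C$.} Here I would argue geometrically: list the components of $U$ and $S$ in cyclic order around $\P^1$ as $u_1, s_1, u_2, s_2, \ldots, u_q, s_q$. The map $A$ sends each $u_j$ into some $u_{\phi(j)}$ and each $s_j$ into some $s_{\psi(j)}$ (indices mod $q$), and because $A$ is an orientation-preserving circle homeomorphism (after fixing orientation; the orientation-reversing case is symmetric with the reverse cyclic order), the images $A(u_1), A(s_1), A(u_2), \ldots$ occur in this same cyclic order inside $\P^1$, hence so do the components of $U$ and $S$ they hit. Reading off which pairs $(x,y)$ with $A(x)\cap y\neq\emptyset$ occur and in what order, one sees that consecutive pairs in $C$ (ordered by the cyclic position of $A(x)$ meeting $y$) differ exactly by advancing $x$ by one component of $U$, or $y$ by one component of $S$, or both by one simultaneously — which is precisely the condition $(x^{++},y)$ / $(x^+,y^+)$ / $(x,y^{++})$ after translating between "components of $U$" and "next element of $\M$". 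Packaging this interleaving carefully is the technical heart; it amounts to saying $A$, viewed as a monotone correspondence between the $2q$-element ordered set and itself, is order-preserving, which is exactly what a circle homeomorphism does. This establishes monotonicity of each $C^{(i)}$.

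\emph{Step 3: tightness and hyperbolicity.} For tightness: by Proposition~\ref{p.tight} (applied with $M$ the minimal multicone associated to the cores, which exists by Lemma~\ref{l.core full}), every component of $M$ contains a unique component of $U$ and the multicone is tight, meaning $\bigcup_i A_i(M)$ meets every component of $M$; translated to cores, $\bigcup_i A_i(U) = U$ (each component of $U$ is hit), so $\bigcup_i \Im C_u^{(i)} = \M_u$, and dually with $A_i^{-1}$ and $S$, giving $\bigcup_i \Im C_s^{(i)} = \M_s$ — this is the definition of a tight morphism. Alternatively, tightness of the morphism follows directly from $K^u = \bigcup_\alpha A_\alpha(K^u)$ and $K^u\subset U$ forcing every component of $U$ to be an image. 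For hyperbolicity: by Proposition~\ref{p.combin contraction}, there is $k$ such that every product of $A_i$'s of length $\ge k$ sends $M$ into a single component of $M$, hence into a single component of $U$ and (dually) preimages into a single component of $S$; therefore the corresponding composed correspondence has $C_u$ and $C_s$ constant, i.e.\ it is a constant correspondence. So the morphism $\Phi$ is hyperbolic with $\ell = k$. I expect Step~2 — getting the combinatorial description of the cyclic ordering on $C$ cleanly from "$A$ is a circle homeomorphism preserving the alternating $U$/$S$ structure" — to be the main obstacle; everything else is a direct reading-off from the already-established properties (i)--(iii) of the cores and Propositions~\ref{p.combin contraction}, \ref{p.tight}, \ref{l.core full}.
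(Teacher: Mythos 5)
Your proposal follows the same strategy as the paper: show the graph conditions from the invariance of the cores, deduce the cyclic-ordering condition from the fact that $A_i$ is an order-preserving circle map, and get tightness from $K^u=\bigcup A_\alpha(K^u)$ and hyperbolicity from Proposition~\ref{p.combin contraction}. Two remarks on the details.

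First, your worry in Step~1 about $C^{(i)}\subset(\M_u\times\M_u)\sqcup(\M_s\times\M_s)$ is unnecessary: this inclusion is part of the \emph{definition} of $C^{(i)}$ in \S\ref{ss.relation}, not something to be proved. (And it would actually be false without the restriction --- for $x\in\M_s$, $y\in\M_u$ one can perfectly well have $A_i(x)\cap y\neq\emptyset$, since $A_i^{-1}(y)$ may spill out of $U$ into adjacent components of $S$; your muddled "pigeonhole/order argument" paragraph is circling around this nonexistent obligation.) The content is only the two graph statements, which you do get correctly from $A_i(U)\subset U$ and $A_i^{-1}(S)\subset S$ plus connectedness.

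Second, you correctly identify Step~2 as the technical heart but leave it as a sketch ("packaging this interleaving carefully\dots"). This is where the paper actually does work: it takes the equivalent two-case reformulation of the cyclic-ordering condition given just after the definition of monotonicity (if $x_u^+\notin\Im C_s^{(i)}$ then $C_u^{(i)}(x_u^{++})=y_u$; if $x_u^+\in\Im C_s^{(i)}$ then $C_s^{(i)}(y_u^+)=x_u^+$) and verifies each case by a short topological argument --- e.g.\ if $C_u^{(i)}(x_u^{++})\neq C_u^{(i)}(x_u)$ there must be a component of $S$ between their images, and pulling it back by $A_i^{-1}$ lands in $x_u^+$, contradicting $x_u^+\notin\Im C_s^{(i)}$. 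Your idea that monotonicity is forced by $A_i$ being an orientation-preserving homeomorphism of $\P^1$ is the right intuition (and every element of $\SL(2,\R)$ does preserve orientation on $\P^1$, so you needn't hedge about the reversing case), but to turn it into a proof you would in effect have to reproduce exactly this two-case check. Step~3 matches the paper essentially verbatim.
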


(The same $\Phi$ could also be obtained from a tight multicone $M$
and its dual $\P^1 \setminus \overline{M}$ in an obvious way, see Proposition~\ref{p.tight}.)

\begin{proof}[Proof of the lemma]
Fix $i$, and let us show that $C^{(i)}$ is monotonic.
First, if $(x_u,y_u) \in C^{(i)} \cap (\M_u \times \M_u)$ then
$A_i(x_u) \subset y_u$, so $x_u$ uniquely determines $y_u$.
Write $y_u = C^{(i)}_u(x_u)$.
Analogously, if $(x,y) \in C^{(i)} \cap (\M_s \times \M_s)$ then
$A_i^{-1}(y_s) \subset x_s$, so $y_s$ determines $x_s = C^{(i)}_s(y_s)$.

Next, let $(x_u,y_u) \in C^{(i)} \cap (\M_u \times \M_u)$.
In the case that $x_u^+ \not\in \Im C_s^{(i)}$
then we must have $C_u^{(i)}(x_u^{++}) = y_u$.
(Because if $C_u^{(i)}(x_u^{++}) \neq C_u^{(i)}(x_u)$ then
there would exist a point in the unstable core $S$ between the intervals
$C_u^{(i)}(x_u)$ and $C_u^{(i)}(x_u^{++})$; this point would be sent
by $A_i^{-1}$ into a point in $S$ between $x_u$ and $x_u^{++}$, and hence in $x_u^+$,
contradicting the fact that $x_u^+ \not\in \Im C_s^{(i)}$.)
And in the case that $x_u^+ \in \Im C_s^{(i)}$
then we must have $C_s^{(i)}(y_u^+) = x_u^+$.
(Indeed, $x_u^+$ is the $C_s^{(i)}$ image of some $z_s$;
if $z_s = y_u^+$ we are done;
otherwise $y_u^+$ is between the $U$-interval $y_u$ and $S$-interval $z_s$;
then the interval $A_i^{-1}(y_u^+)$ is
between $A_i^{-1}(y_u) \supset x_u$, and $A_i^{-1}(z_s) \subset x_u^+$,
and so it must be contained in the interval $x_u^+$,
showing that $x_u^+ = C_s^{(i)}(y_u^+)$.)
This proves ``one half'' of the monotonicity of $C^{(i)}$,
and the other half is completely analogous.

The induced morphism $\Phi$ is clearly tight,
while hyperbolicity follows from Proposition~\ref{p.combin contraction}.
\end{proof}

In view of the lemma, we call $\Phi$ the morphism induced by $(A_1, \ldots, A_N)$.
Examples from Figures~\ref{f.correspondences free} and \ref{f.correspondences 2/5}
are induced by matrices.

Sometimes we call these data (ie, the morphism $\Phi$)
the \emph{combinatorics} of $(A_1, \ldots, A_N)$.
The combinatorics is an \emph{invariant} in the sense that in remains the same
inside each connected component of $\cH$.
(More precisely, if two $N$-tuples belong to the same connected component
then they induce conjugate morphisms.)

\medskip

Let us very briefly return to the topic of the boundary of the hyperbolic components:

\begin{thm}\label{t.disjoint boundaries}
Non-principal components of $\cH$
with different combinatorics
have disjoint boundaries.
\end{thm}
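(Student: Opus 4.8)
The plan is to exploit the fact (established in Proposition~\ref{p.limit cores} and especially Proposition~\ref{p.independence}) that a boundary point of a non-principal component determines well-defined \emph{limit cores} $U$, $S$, which depend only on the $N$-tuple and not on the particular component whose boundary it lies on. First I would recall that, for an $N$-tuple $(A_1,\dots,A_N)$ in the boundary of a non-principal component $H$, Proposition~\ref{p.no id} guarantees that no product equals $\pm\id$, so the limit cores $U$, $S$ are defined; moreover by Lemma~\ref{l.non degenerate} every connected component of $U$ (and of $S$) is a non-degenerate interval, and $U$, $S$ are disjoint compact sets with finitely many components which, being limits of the true cores $U(i)$, $S(i)$, satisfy properties (i)--(iii) of \S\ref{sss.full core} (the inclusions $A_iU\subset U$, $A_i^{-1}S\subset S$ and the alternation property pass to the limit). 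From these data one reads off a pair of combinatorial multicones $\M = \M_u\sqcup\M_s$ (the sets of components of $U$ and $S$, cyclically ordered by $\P^1$) together with the monotonic correspondences $C^{(i)}$ defined by $A_i(x)\cap y\neq\emptyset$, exactly as in \S\ref{ss.relation}; this yields a morphism $\Phi:\cF_N\to\cC(\M)$ attached to the boundary point itself. The key point is that this $\Phi$ agrees, up to conjugacy, with the combinatorics of $H$: indeed the combinatorics is locally constant on $H$, the correspondences $C^{(i)}(\xi)$ are determined by the discrete incidence pattern of $A_i(U(\xi))$ against the components of $U(\xi)$ and of $A_i^{-1}(S(\xi))$ against the components of $S(\xi)$, and because the components of the limit cores are non-degenerate (Lemma~\ref{l.non degenerate}) these incidences are stable under the Hausdorff limit $\xi\to(A_1,\dots,A_N)$. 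Hence the combinatorics of the boundary point, computed from its limit cores, equals the combinatorics of $H$.

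With this in hand the theorem is immediate: suppose some $N$-tuple $\xi_0$ lay in the boundary of two non-principal components $H$ and $H'$. By Proposition~\ref{p.independence}, the limit cores of $\xi_0$ relative to $H$ and relative to $H'$ are \emph{the same} sets $U$, $S$. Therefore the pair of combinatorial multicones $\M$ and the correspondences $C^{(i)}$ extracted from them are literally the same in both cases, so the morphism $\Phi$ is the same. But by the previous paragraph $\Phi$ (up to conjugacy) is the combinatorics of $H$ on the one hand and of $H'$ on the other; hence $H$ and $H'$ have conjugate combinatorics, contradicting the hypothesis. This gives the contrapositive of the statement: if $H$ and $H'$ are non-principal with different combinatorics, their boundaries are disjoint.

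I expect the main obstacle to be the continuity/stability step in the first paragraph: one must check carefully that the incidence relations defining $C^{(i)}$ — i.e.\ which component of $U$ the image $A_i(U_j)$ is swallowed by, and whether $x_u^+$ lies in $\Im C^{(i)}_s$ — do not change in the limit. This could in principle fail if a component of a core degenerated to a point, or if an image interval abutted exactly against the endpoint of a core component in the limit while staying off it before the limit. The first is excluded by Lemma~\ref{l.non degenerate} (uniformly on compact sets inside $H$, hence the limit components have positive length), and the second is excluded because $A_i(U)\subset U$ still holds for the limit cores with $U$ having the same number of (non-degenerate) components as the approximating $U(i)$, so no collapsing or splitting of components occurs; thus the combinatorial type is genuinely locally constant up to and including the boundary. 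Everything else is bookkeeping: unwinding the definitions of \S\ref{ss.relation}, invoking Proposition~\ref{p.independence}, and noting that conjugate morphisms is exactly the equivalence ``same combinatorics.''
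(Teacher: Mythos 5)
Your proposal is correct and follows essentially the same route as the paper's proof: invoke Propositions~\ref{p.no id} and \ref{p.limit cores} to get well-defined limit cores at the boundary point, observe that these induce the combinatorics of the component, and then apply Proposition~\ref{p.independence} to conclude that two components sharing a boundary point must share the same combinatorics. The extra paragraph you add about stability of the incidence relations under the Hausdorff limit (using Lemma~\ref{l.non degenerate} to exclude degeneration) is a useful elaboration of a step the paper states without proof.
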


\begin{proof}
For each $N$-tuple in the boundary of a non-principal component $H$,
the limit cores are defined (by Propositions~\ref{p.no id} and \ref{p.limit cores}).
These limit cores induce a tight hyperbolic morphism $\Phi$ in an obvious way.
In fact $\Phi$ is the same (ie, conjugate to the) morphism determined by the component $H$ itself.
Now, if the $N$-tuple belongs also to the boundary of another component $H_1$,
then the limit cores relative to $H_1$ are exactly the same as before,
by Proposition~\ref{p.independence}.
It follows that $H$ and $H_1$ have the same combinatorics.
\end{proof}

\subsection{Winding Numbers} \label{ss.wind}

\subsubsection{The Winding Numbers for a Uniformly Hyperbolic $N$-tuple}

As mentioned above, the combinatorics $\Phi: \cF_N \to \cC(\M)$ is an invariant on $\cH$.
A much more elementary invariant was introduced in \cite{Yoccoz_SL2R};
it is the map $\tau: \cF_N \to \{+1, -1\}$
that gives the signs of the traces.

Here we will introduce another elementary (in the sense that it does not depend on the multicones)
invariant called the \emph{winding number};
it is a map $n: \cF_N \to \Z$.

\medskip

Fix a cyclic order on $\P^1$, and
identify $\P^1$ with $\R/\Z$ via an orientation-preserving homeomorphism.
So any $A \in \G$ induces a orientation-preserving homeomorphism $A: \R/\Z \to \R/\Z$.
Then we can lift $A$ with respect to the covering map $\R \to \R/\Z$ and obtain
a homeomorphism $\hat{A}: \R \to \R$.

Now, let a uniformly hyperbolic $N$-tuple $(A_1,\ldots,A_N)$ be given.
Since each $A_i$ is hyperbolic, it has a
unique lift $\hat{A}_i$ whose graph intersects the diagonal of $\R^2$.
Given a word $\omega = A_{i_j} \cdots A_{i_1}$,
its winding number $n(\omega)$ is defined as the only integer $n$ such that
$$
\hat{A}_{i_j} \circ \cdots \circ \hat{A}_{i_1} (x_0) = x_0 + n \quad
\text{for some $x_0 \in \R$.}
$$

It is clear that the winding number map $n: \cF_N \to \Z$ is an invariant, ie,
it depends only on the connected component of $\cH$ the hyperbolic $N$-tuple is in.

\medskip

Let us see that the trace signs $\tau$ essentially depend only on $n$.
More precisely,
if $\tr A_1$, \ldots, $\tr A_j$ are all positive,
then the sign of $\tr A_{i_j} \cdots A_{i_1}$ is $(-1)^n$,
where $n$ is the winding number of the word.
To see this fact,
first notice that if we substitute the covering map $\R \to \R/\Z = \P^1$
with the double covering $S^1 \to \P^1$
along the definition of the winding number,
then we obtain the invariant $n \bmod 2$.
And the relation between that invariant and signs of eigenvalues is transparent.

\medskip

To give an example, let us compute the winding numbers for the positive free
component of $\G^2$.
Consider a word $\omega$ in the letters $A$ and $B$ that contains both
(otherwise the winding number is zero).
Notice that the winding number of a word is left invariant by cyclic permutations.
(That is a general fact.)
So we can assume the word is of the form $\omega = A^{k_1} B^{\ell_1} A^{k_2} B^{\ell_2} \cdots A^{k_m} B^{\ell_m}$,
with all $k_i$, $\ell_i$ positive.
Then the winding number of $\omega$ is~$- m$.
(The winding numbers are opposite for the free component obtained from the positive by conjugation with an orientation-reversing linear map.)

\medskip

Let us pause our general discussion to give the:

\begin{proof}[Completion of the proof of Proposition~\ref{p.bifurcation}]
We need to prove \eqref{e.wind claim}.
Let $k$, $\ell \ge 0$
and consider the matrix $C(t_i) B^\ell A^k$.
Notice that its expanding direction is in $V$ if $\ell \le n_i$ and in $J_{m_i}$ otherwise.
Looking at the action of the lifts on that fixed point, we see
that if $k \ge m_i+1$ and $\ell \ge n_i+1$ then the winding number of is $-1$,
otherwise it is zero.
\end{proof}

\subsubsection{Combinatorial Definition of Winding Numbers}

Fix a pair of combinatorial multicones $\M$, and let $q$ be its rank.
Identify $\M$ with $\nicefrac{\Z}{2q\Z}$ via some bijection
that preserves the cyclic orders;
such identification will remain fixed in the sequel.
Let $x \in \Z \mapsto \bar{x} \in \nicefrac{\Z}{2q\Z}$ be the quotient map.

A subset $\hat{C}$ of $\Z^2$ is called a \emph{lifted correspondence}
if there exists  a monotonic correspondence $C$ on $\M$ such that
the following properties hold:
\begin{itemize}
\item if $(x,y) \in \hat{C}$ then $(\bar x, \bar y) \in C$;
\item there is a bijection between $\Z$ and $\hat{C}$
such that if we endow $\hat{C}$ with the order induced from $\Z$ then
the element next to $(x,y)$
is $(x+2,y)$, or $(x+1,y+1)$, or $(x, y+2)$, according to whether the element in
$C$ next to $(\bar x, \bar y)$ is
$(\bar x^{++}, \bar y)$, or $(\bar x ^+, \bar y^+)$, or $(\bar x, \bar y^{++})$.
\end{itemize}
We also say $\hat{C}$ is a \emph{lift} of $C$.
Notice $\hat{C}$ is invariant by the translation of $\Z^2$ by $(2q,2q)$,
in other words, $\hat{C} = \hat{C} +(2q,2q)$.
Also notice that if $\hat{C}$, $\hat{C}_1$ are two lifts of the same monotonic correspondence $C$
then there is an unique $n\in\Z$ such that $\hat{C}_1 = \hat{C} + (2qn,0)$.

Composition of lifted correspondences is defined
in a similar manner as for monotonic correspondences.
Associativity holds (the proof is similar).
Also, the composition of lifts is a lift of the composition of two monotonic correspondences.

If a monotonic correspondence $C$ is hyperbolic (in the sense that some power of it is a constant)
then for every lift $\hat{C}$ of $C$ there is a unique $n \in \Z$
such that $\hat{C}$ contains a point of the form $(x, x + 2qn)$;
such number $n$ is called the \emph{height} of $\hat{C}$.

Now let $\Phi: \cF_N \to \cC(\M)$ be a tight hyperbolic morphism.
Let $a_1, \ldots, a_N$ be the canonical generators of $\cF_N$,
and let the correspondences $C^{(1)}, \ldots, C^{(N)}$ be their respective images by $\Phi$.
Let $\hat{C}^{(i)}$ be the unique lift of $C^{(i)}$ of height zero.

The \emph{winding number} $n(\omega)$ of a word $\omega = a_{i_j} \cdots a_{i_1}$ in $\cF_N$
is the height of the lifted correspondence
$\hat{C}^{(i_j)} \circ \cdots \circ \hat{C}^{(i_1)}$.
The winding number of the empty word is defined as zero.

(Notice that winding numbers do not depend on the identification between
$\M$ and $\nicefrac{\Z}{2q\Z}$.)

\smallskip

It is easy to see that if the morphism $\Phi: \cF_N \to \cC(\M)$ is
induced by a hyperbolic $N$-tuple, then
our two definitions of winding numbers give the same results. 

\subsubsection{A Non-Vanishing Property}\label{sss.principal and traces}

\begin{lemma}\label{l.one wind}
If the rank of $\M$ is bigger than $1$ then
there is a word $\omega$ such that $n(\omega) = \pm 1$.
\end{lemma}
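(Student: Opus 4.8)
The plan is to split the proof into two parts: a reduction showing that it is enough to produce \emph{some} word with nonzero winding number, and then a construction of such a word under the hypothesis $\mathrm{rk}(\M)=q\ge 2$.

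For the reduction, the first remark I would make is that $n(\omega)$ is in fact defined for \emph{every} $\omega\in\cF_N$: since the morphism is hyperbolic, $\Phi(\omega)^k=\Phi(\omega^k)$ is a constant correspondence once $k$ is large, so $\Phi(\omega)$ is hyperbolic and its lifts have a height. The second, crucial, remark concerns the geometry of a lift: if $C$ is hyperbolic and $\hat C$ is a lift of height $m$, then along the staircase $\hat C$ the quantity $z-x$ runs through a block of consecutive integers of one fixed parity; by the uniqueness clause in the definition of the height this block contains $2qm$ and no other multiple of $2q$, hence $|z-x-2qm|<2q$ at every point of $\hat C$. I would apply this to $\hat C^{(i)}$ (height $0$) and to $\widehat{\Phi(\omega)}:=\hat C^{(i_j)}\circ\cdots\circ\hat C^{(i_1)}$ (height $n(\omega)$). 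Because $\widehat{\Phi(a_i\omega)}=\hat C^{(i)}\circ\widehat{\Phi(\omega)}$, every point of $\widehat{\Phi(a_i\omega)}$ satisfies $|z-x-2q\,n(\omega)|<4q$; since $2q\,n(a_i\omega)$ is itself attained by $z-x$ on $\widehat{\Phi(a_i\omega)}$ and the only multiples of $2q$ in $(2q\,n(\omega)-4q,\,2q\,n(\omega)+4q)$ are $2q(n(\omega)\pm1)$ and $2q\,n(\omega)$, I would conclude $|n(a_i\omega)-n(\omega)|\le 1$ for all $\omega$ and all generators $a_i$. As $n$ of the empty word is $0$, reading the winding numbers of the prefixes $a_{i_k}\cdots a_{i_1}$ of a word $a_{i_j}\cdots a_{i_1}$ with nonzero winding then forces the winding to hit $+1$ or $-1$ somewhere along the way.

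For the construction I would first show that when $q\ge 2$ every constant correspondence $C_{a_s,a_u}$ lies in $\mathrm{Im}\,\Phi$. Let $V_s\subseteq\M_s$ be the set of stable values of constant correspondences in the image. If $p\notin V_s$, tightness gives $p=C^{(i)}_s(p')$ for some $i$ and some $p'$, and (using $(C\circ C')_s=C_s\circ C'_s$) prepending $a_i$ to a word realizing a constant of stable value $p'$ realizes one of stable value $p$, so $p'\notin V_s$ too; iterating over the finite set $\M_s$ produces a nonempty word $v$ and a point $p^*\notin V_s$ fixed by $\Phi(v)_s$. But $\Phi(v^M)$ is constant for large $M$, with $\Phi(v^M)_s=(\Phi(v)_s)^M$ fixing $p^*$, so $p^*$ is the stable value of a constant in the image, a contradiction. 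Thus $V_s=\M_s$, dually $V_u=\M_u$, and composing a word with stable value $p_s$ with one with unstable value $p_u$ realizes $C_{p_s,p_u}$. Next comes the key computation: pick three consecutive elements $u_0,s_0,u_1$ of $\M$ (so $u_0,u_1\in\M_u$, $s_0\in\M_s$) and put $s_1:=u_1^+\in\M_s$. Inspecting the staircase of the height-$0$ lift of $C_{a_s,a_u}$ in the case $a_s=a_u^+$ shows that this lift, read as a relation, fixes every lift of $a_u$ and carries every other lift $\tilde a_u+2k$ ($1\le k\le q-1$) to $\tilde a_u+2q$; so the height-$0$ lift of $C_{s_0,u_0}$ carries $\tilde u_1:=\tilde u_0+2$ to $\tilde u_0+2q$, while the height-$0$ lift of $C_{s_1,u_1}$ carries $\tilde u_0+2q$ (a lift of $u_1-2$) to $\tilde u_1+2q$. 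Hence their composite contains $(\tilde u_1,\tilde u_1+2q)$ and has height $1$. Taking words $\omega_0,\omega_1$ with $\Phi(\omega_0)=C_{s_0,u_0}$, $\Phi(\omega_1)=C_{s_1,u_1}$, and tracking the two translations $(-2q\,n(\omega_0),0)$, $(-2q\,n(\omega_1),0)$ that relate $\widehat{\Phi(\omega_0)},\widehat{\Phi(\omega_1)}$ to those height-$0$ lifts, one gets $n(\omega_0\omega_1)=n(\omega_0)+n(\omega_1)+1$. Thus at least one of $\omega_0$, $\omega_1$, $\omega_0\omega_1$ has nonzero winding, and the reduction finishes the argument.

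The step I expect to be the real work is the lift-level bookkeeping: the confinement bound $|z-x-2qm|<2q$ for a height-$m$ lift (which powers both the $\le 1$ estimate and the final $+1$), and the explicit description of the staircase of the height-$0$ lift of $C_{a_s,a_u}$ when $a_s=a_u^+$ — this is exactly where $q\ge 2$ enters and where the composite comes out to height precisely $1$ rather than $0$.
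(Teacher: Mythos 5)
Your proof is correct and takes the same route as the paper's: establish $|n(a_i\omega)-n(\omega)|\le 1$, realize two constant correspondences in $\mathrm{Im}\,\Phi$, and show that composing them shifts the winding number by $\pm 1$. The only differences are the specific choice of constants (adjacent $C_{s_0,u_0}$, $C_{s_1,u_1}$ with $s_k=u_k^+$, yielding a $+1$ shift where the paper's $C_{e_1e_2}$, $C_{e_3e_4}$ with $e_1<e_2<e_3<e_4$ yield $-1$) and that you supply proofs — the confinement bound $|z-x-2qm|<2q$ behind the step inequality, and the cycle argument showing every constant correspondence is realized — for two assertions the paper treats as immediate from hyperbolicity and tightness.
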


\begin{proof}
It follows immediately from the definition of the winding number that,
for any word $\omega$ and any letter $a_i$, one has
$$
|n(\omega a_i) - n(\omega)| \le 1 \, , \qquad
|n(a_i \omega) - n(\omega)| \le 1 \, .
$$
On the other hand, let $e_1$, $e_2$, $e_3$, $e_4$ be elements of $\M$ such that
$$
e_1, \  e_3 \in \M_s \, , \quad
e_2, \  e_4 \in \M_u \, , \quad
e_1 < e_2 < e_3 < e_4 < e_1
$$
As $\Phi$ is hyperbolic and tight,
there exist words $\omega_{12}$ and $\omega_{34}$ such that
the image of $\omega_{12}$ is the constant correspondence $C_{e_1 e_2}$
and the image of $\omega_{34}$ is $C_{e_3 e_4}$.
We claim that
\begin{equation}\label{e.espertinhos}
n( \omega_{12} \omega_{34} ) =
n( \omega_{12} ) + n( \omega _{34} ) - 1 \, .
\end{equation}
Indeed, let $\hat{C}_{e_1 e_2}$ and $\hat{C}_{e_3 e_4}$ be the lifts of $C_{e_1 e_2}$ and $C_{e_3 e_4}$
whose heights are $n(\omega_{12})$, $n(\omega_{34})$, respectively.
Take integers $k_1 < k_2 < k_3 < k_4$ such that $\overline{k_i} = e_i$ and $k_4 - k_1 < 2q$.
Then
$$
\big(k_2, k_4 + 2q (n(\omega_{34}) -1) \big) \in \hat{C}_{e_3 e_4}
\quad \text{and} \quad
\big(k_4, k_2 + 2q n(\omega_{12}) \big) \in \hat{C}_{e_1 e_2} \, .
$$
Therefore
$$
\big(k_2, k_2 + 2q (n(\omega_{12}) + n(\omega_{34}) -1 )\big) \in \hat{C}_{e_1 e_2} \circ \hat{C}_{e_3 e_4} \, ,
$$
proving~\eqref{e.espertinhos}.
The lemma now follows at once.
\end{proof}

Lemma~\ref{l.one wind} has the following consequence:
If we restrict ourselves to $N$-tuples $(A_1, \ldots, A_N)$ with
$\tr A_1$, \ldots, $\tr A_N$ all positive,
then there is a unique component of $\cH$ where all products of $A_i$'s have positive trace,
namely the principal component.
This answers positively Question~1' of \cite{Yoccoz_SL2R}.

\subsection{Tight Hyperbolic Morphisms for $N=2$}\label{ss.morphisms two}

The aim of this section is to prove the following result:

\begin{prop}\label{p.two is realizable}
Every tight hyperbolic morphism $\Phi: \cF_2 \to \cC(\M)$ is
induced by some uniformly hyperbolic pair of matrices.
\end{prop}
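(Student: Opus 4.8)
The natural strategy is to mirror, on the combinatorial side, the structure theorem for $\cH$ in the full $2$-shift (Theorems~\ref{t.components} and~\ref{t.H and E}) and transport realizability along the monoid $\cM$. First I would set up a purely combinatorial analogue of the diffeomorphisms $F_+$, $F_-$: given a tight hyperbolic morphism $\Phi:\cF_2\to\cC(\M)$ with generators $C^{(A)}$, $C^{(B)}$, define $\Phi_+$ to be the morphism sending $A\mapsto C^{(A)}$, $B\mapsto C^{(A)}\circ C^{(B)}$ and $\Phi_-$ sending $A\mapsto C^{(A)}\circ C^{(B)}$ (i.e.\ $C^{(B)}\circ C^{(A)}$ with the opposite monoid convention --- one must be careful to match the convention used in defining $\cM^*$ in \S\ref{ss.full2 combin}), $B\mapsto C^{(B)}$. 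One checks these are again tight hyperbolic morphisms on the \emph{same} $\M$ (tightness and hyperbolicity are both preserved under such substitutions, since composition of monotonic correspondences is monotonic and a product of $\ge 2\ell$ generator images is already constant). The key combinatorial lemma to prove is: for every tight hyperbolic $\Phi$ with $\operatorname{rk}(\M)>1$, exactly one of the following holds --- $\Phi$ is a ``free'' morphism (the one of rank $2$ depicted in Figure~\ref{f.correspondences free}), or $\Phi=\Phi'\circ(\text{one of the two substitutions})$ for a tight hyperbolic $\Phi'$ of strictly smaller complexity. The complexity here should be $\operatorname{rk}(\M)$ itself (or a suitable weighted count of generator images), and the substitution should strictly decrease it, exactly paralleling the trace-sum argument of Lemma~\ref{l.stop}.

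Next I would run the descent: iterating the lemma, any tight hyperbolic $\Phi:\cF_2\to\cC(\M)$ factors as $\Psi\circ G$ where $\Psi$ is the free morphism of rank $2$ and $G$ is a composite of the elementary substitutions, i.e.\ $G$ corresponds to some $F\in\cM^*$ in the sense that its action on words realizes $F$. (The base case $\operatorname{rk}(\M)=1$ is trivial: there is essentially one monotonic correspondence structure, realized by any hyperbolic matrix pair with a common invariant cone, i.e.\ a principal component; but strictly speaking the rank-$1$ case has $\#\M_s=\#\M_u=1$ and the two generators must both be the identity correspondence or a constant, which is realized by elements of a principal component, so the free-component descent terminates correctly once we've peeled off everything.) For the realization step, the free morphism $\Psi$ of rank $2$ \emph{is} induced by any pair in the positive free component $H_\id^+$ --- this is exactly the content of Figure~\ref{f.correspondences free} together with \S\ref{sss.full2 combin end} applied with $\nicefrac{p}{q}=\nicefrac12$. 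Then, given that $(A_0,B_0)\in H_\id^+$ induces $\Psi$, Proposition~\ref{p.is hyperbolic} (more precisely its explicit refinement in \S\ref{ss.full2 combin}, especially Proposition~\ref{p.us rules} and \S\ref{sss.full2 combin end}) shows that for $F\in\cM^*$ with $j(F)=\nicefrac{p}{q}$, the pair $(A,B)\in H_F^+$ obtained by solving $F(A,B)=(A_0,B_0)$ is uniformly hyperbolic and its cores have $q$ components arranged precisely according to the combinatorics of $\Psi\circ G$. So the combinatorics induced by $(A,B)$ matches $\Phi$.

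The main obstacle --- and where the real work lies --- is the combinatorial descent lemma itself: showing that a tight hyperbolic $\Phi$ of rank $>2$ is \emph{always} a substitution-image of a smaller one, and that the substitution is \emph{unique}. This requires identifying, from the monotonic correspondences $C^{(A)}$, $C^{(B)}$ alone, which of $\Phi_+^{-1}$ or $\Phi_-^{-1}$ is legal (i.e.\ whether ``undoing'' $A\mapsto AB$ or $B\mapsto BA$ produces monotonic correspondences), and proving a dichotomy analogous to Lemma~\ref{l.iterate}: in terms of correspondences, whether the $A$-cone $\operatorname{Im}C^{(A)}_u$ sits ``inside'' the relevant gap or the $B$-cone does. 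Concretely one must show that $C^{(A)}\circ (C^{(B)})^{-1}$ (a correspondence, not necessarily monotonic, but defined via the maps on $\M_s$, $\M_u$ since $C^{(A)}_s$, $C^{(B)}_s$ are honest maps once we keep track of images) is again monotonic in exactly one of the two relevant orders, and that this is detected by the position of a single element of $\M_u$. The verification that hyperbolicity plus tightness forces non-degeneracy (no ``bad'' configurations where neither undoing works and $\Phi$ is not the free morphism) is the combinatorial shadow of the fact that ``$AB$ elliptic'' cannot occur (alternative (iv) of Lemma~\ref{l.iterate} is vacuous on the combinatorial side, since monotonic correspondences never behave elliptically), so the descent never gets stuck --- this is the payoff that makes the two-symbol case work and, as the paper notes, fails for $N\ge 3$. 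Finally, I would close the loop by checking that the realizing pair $(A,B)\in H_F^+$ we produced is \emph{uniquely determined up to conjugacy} isn't needed for the statement (which only asks for \emph{some} pair), so the argument ends here.
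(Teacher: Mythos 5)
Your plan has the right overall architecture---a combinatorial descent terminating at the rank-$2$ free morphism, matched against the explicit description of \S\ref{ss.full2 combin}---and this is indeed the shape of the paper's argument in \S\ref{ss.morphisms two}. But there is a genuine gap in how you propose to run the descent step, and it is not a detail.

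The monoid $\cC(\M)$ of monotonic correspondences is not a group, so you cannot factor $\Phi = \Phi'\circ f_+$ with $\Phi'$ \emph{on the same $\M$}: that would require $\Phi'(b) = (C^{(A)})^{-1}\circ C^{(B)}$, and the ``inverse'' of a monotonic correspondence is not a monotonic correspondence (your workaround of taking inverses on images produces something defined only on $\Im C^{(A)}_s$, not on all of $\M_s$). Worse, even if this were repaired, your proposed complexity $\operatorname{rk}(\M)$ would not decrease, since you insist $\Phi'$ lives on the same $\M$; so the induction could not close. The paper resolves both problems at once by constructing $\Phi'$ on a \emph{strictly smaller} pair of combinatorial multicones $\M'$ of rank $q'=q-p$, obtained by restricting $\M_u$ to $\Im A_u$ and collapsing the interval $[x_s^{(1)},x_s^{(0)}]\cap\M_s$ to a point, and by defining the new correspondences \emph{only via forward compositions}: $A_u'=A_u|\M_u'$, $B_u'=A_u\circ B_u|\M_u'$, $A_s'=\pi\circ A_s\circ\pi^{-1}$, $B_s'=\pi\circ B_s\circ A_s\circ\pi^{-1}$. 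This $B'$ is the combinatorial shadow of the matrix $AB$ restricted to the $A$-core, which is exactly what $F_+(A,B)=(A,AB)$ would induce, but no inversion of correspondences is ever invoked, and the rank genuinely drops. Before the reduction can be written down, the paper first needs a sequence of structure lemmas (Lemma~\ref{l.4 points}, Lemma~\ref{l.images}, the forward-invariance lemma, and the lemma forcing $p<q/2$ with both endpoints $x_u^{(0)},x_u^{(1)}\in\Im A_u$); these pin down $\Im A$ and $\Im B$ as complementary intervals with the correct invariance, and your plan does not anticipate them even though the whole construction of $\M'$ hinges on them. Your remark that the descent cannot get stuck because ``elliptic behavior'' has no combinatorial analogue is correct in spirit, but the paper achieves termination more concretely: hyperbolicity and tightness of $\Phi'$ are verified directly, and the induction is simply on the integer rank.
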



\subsubsection{}
When the rank of $\M$ is $1$, there is only one monotonic correspondence
on $\M$, namely the identity (ie, the diagonal in $\M \times \M$).
Therefore, for any $N \ge 1$, there is exactly one morphism
$\Phi: \cF_N \to \cC(\M)$.
It is tight and hyperbolic.

From now on, we assume that the rank $q$ of $\M$ is at least $2$.

\subsubsection{}
Fix some tight hyperbolic morphism  $\Phi: \cF_2 \to \cC(\M)$
and write $A$, $B$ instead of $C^{(1)}$, $C^{(2)}$ for the images of the generators of $\cF_2$.

\begin{lemma}\label{l.4 points}
There exist two distinct points $x_s^{(0)}$, $x_s^{(1)}$ in $\M_s$ such that
$$
A_s(x_s^{(0)}) = A_s (x_s^{(1)}), \qquad
B_s(x_s^{(0)}) = B_s (x_s^{(1)}).
$$
Similarly, there exist two distinct points $x_u^{(0)}$, $x_u^{(1)}$ in $\M_u$ such that
$$
A_u(x_u^{(0)}) = A_u (x_u^{(1)}), \qquad
B_u(x_u^{(0)}) = B_u (x_u^{(1)}).
$$
\end{lemma}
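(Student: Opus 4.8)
The plan is to use a counting argument against the combinatorics of the correspondences $A$ and $B$. Recall that for a monotonic correspondence $C$ we have $\#\Im C_s = \#\Im C_u$, and that if $C_s$ (equivalently $C_u$) is not constant then $C$ is entirely determined by $C_s$, the map $C_s$ being ``monotonic'' in the sense that consecutive values $C_s(x_s)$ and $C_s(x_s^{++})$ are either equal or adjacent in $\Im C_s$. First I would record the key quantitative consequence of tightness: since $\Im A_s \cup \Im B_s = \M_s$ and both images, being images of monotonic maps $\M_s \to \M_s$, are ``intervals'' (more precisely, the union of the two images covers the cyclically ordered set $\M_s$, and each image is a cyclically-convex subset once one accounts for repeated values), we get $\#\Im A_s + \#\Im B_s \ge q$. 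By the analogous statement for the $u$-parts, $\#\Im A_u + \#\Im B_u \ge q$ as well; but of course $\#\Im A_s = \#\Im A_u$ and $\#\Im B_s = \#\Im B_u$, so these two inequalities coincide.

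Next I would exploit hyperbolicity to rule out the degenerate case. If both $A_s$ and $B_s$ were injective (hence bijective, $\M_s$ being finite), then $A$ and $B$ would both be invertible monotonic correspondences, so every word in $A$, $B$ would be invertible and thus never constant (unless $q=1$), contradicting hyperbolicity. Hence at least one of $A_s$, $B_s$ is non-injective — say $A_s$ — which immediately produces $x_s^{(0)} \ne x_s^{(1)}$ with $A_s(x_s^{(0)}) = A_s(x_s^{(1)})$. The real content is to arrange that the \emph{same} pair can be taken to satisfy $B_s(x_s^{(0)}) = B_s(x_s^{(1)})$ as well. Here is where the counting bites: from $\#\Im A_s + \#\Im B_s \ge q$ and $\#\Im A_s, \#\Im B_s \le q$, I would instead argue by the pigeonhole principle on fibers. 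The map $x_s \mapsto (A_s(x_s), B_s(x_s))$ sends $\M_s$ (of size $q$) into $\Im A_s \times \Im B_s$; if this map were injective we would get $q \le \#\Im A_s \cdot \#\Im B_s$, which is far too weak by itself, so I must sharpen this. The correct sharpening uses monotonicity: because $A_s$ and $B_s$ are both monotonic maps of the \emph{cyclically ordered} set $\M_s$, the combined map $x_s \mapsto (A_s(x_s), B_s(x_s))$ is monotonic into the product order restricted to a ``staircase'', and its image has at most $\#\Im A_s + \#\Im B_s - 1$ elements (a staircase path in a grid with those side lengths). Combined with $\#\Im A_s + \#\Im B_s \le 2q$ this is still not a contradiction; so the genuinely correct route is: the number of elements $x_s \in \M_s$ at which the pair $(A_s, B_s)$ ``jumps'' (i.e.\ $A_s(x_s) \ne A_s(x_s^{++})$ or $B_s(x_s) \ne B_s(x_s^{++})$) equals $(\#\Im A_s - 1) + (\#\Im B_s - 1)$ going around the cycle — one jump per new value of each coordinate — which is $\le 2q - 2 < 2q$, so there are at least $2$ values of $x_s$ that are not jump points, and two consecutive non-jump points with the same predecessor give a coincidence pair for both $A_s$ and $B_s$ simultaneously.

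Let me restate that last step cleanly, since it is the crux. Going once around the cyclic order on $\M_s$ (which has $q$ ``steps'' $x_s \to x_s^{++}$), the coordinate $A_s$ changes value exactly $\#\Im A_s - 1 + (\text{something accounting for the wraparound})$ — more carefully, in a full cycle the number of steps on which $A_s$ changes is \emph{at least} $\#\Im A_s$ if the image is a proper cyclic interval, but by monotonicity it can be taken to be exactly $\#\Im A_s$ when $\Im A_s \ne \M_s$, and this is $\le q - \#\Im B_s + 1 \le q - 1$ using the tightness inequality $\#\Im A_s + \#\Im B_s \ge q+1$ that I should establish (tightness with ranks $\ge 2$ forces the strict inequality). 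Symmetrically $B_s$ changes on at most $q-1$ steps, so by inclusion–exclusion there are at least $q - (q-1) - (q-1) < 0$... which overshoots, confirming that there exist steps $x_s \to x_s^{++}$ on which \emph{neither} coordinate changes, i.e.\ $A_s(x_s) = A_s(x_s^{++})$ and $B_s(x_s) = B_s(x_s^{++})$; taking $x_s^{(0)} = x_s$, $x_s^{(1)} = x_s^{++}$ finishes the $s$-part. The $u$-part is identical with the roles of $A_s, B_s$ replaced by $A_u, B_u$, using that $\#\Im A_u = \#\Im A_s$ and $\#\Im B_u = \#\Im B_s$. The main obstacle, as the tentative arithmetic above shows, is pinning down the precise tightness inequality — whether it is $\ge q$ or the strict $\ge q+1$ — and making the ``number of jumps = size of image'' bookkeeping exactly right in the cyclic setting; once the strict inequality $\#\Im A_s + \#\Im B_s \ge q+1$ is in hand (which is where ``tight'' and ``rank $\ge 2$'' are both genuinely used, as in the remark after the definition of tightness), the inclusion–exclusion count is immediate.
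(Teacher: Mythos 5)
Your approach is fundamentally different from the paper's, which uses a short contradiction with the hyperbolicity hypothesis: if no such pair existed, then for every distinct pair $(x_s,x_s')$ in $\M_s$ at least one of $A_s$, $B_s$ separates them, and one could therefore inductively build arbitrarily long words $w$ with $[\Phi(w)]_s(x_s)\neq[\Phi(w)]_s(x_s')$, contradicting the fact that the images of long words are constant correspondences. That argument is both shorter and works at any rank.

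Your counting route has a genuine gap, and you essentially flag it yourself: the inclusion--exclusion bound $q-(q-1)-(q-1)=2-q$ is negative for $q\ge 3$, and a negative lower bound does not ``confirm'' existence of a step with no jump --- that final sentence is a non sequitur. There are two deeper problems. First, the tightness inequality you hope to establish, $\#\Im A_s+\#\Im B_s\ge q+1$, is false: the images $\Im A_s$ and $\Im B_s$ turn out to be disjoint with union $\M_s$, so $\#\Im A_s+\#\Im B_s=q$ exactly. Second, and more fundamentally, you have silently replaced the statement ``there exist two distinct points'' with the strictly stronger ``there exists an \emph{adjacent} pair $x_s\to x_s^{++}$ on which neither coordinate changes,'' and the stronger statement is simply not true for $q>2$. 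Indeed, the arc of $\M_s$ on which $A_s$ is constant and the arc on which $B_s$ is constant are the two complementary closed arcs bounded by $\{x_s^{(0)},x_s^{(1)}\}$; they meet only at these two endpoints, which are not adjacent when $q>2$. An adjacent pair in the interior of one arc is collapsed by one of $A_s$, $B_s$ but separated by the other. So no counting argument based on finding a non-jump step can succeed, and the lemma must be proved by a mechanism that allows $x_s^{(0)}$ and $x_s^{(1)}$ to be far apart, such as the paper's hyperbolicity argument.
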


\begin{rem}
We will see later that $\{x_s^{(0)}, x_s^{(1)}\}$, $\{x_u^{(0)}, x_u^{(1)}\}$
are uniquely determined by these properties.
\end{rem}

\begin{proof}[Proof of the lemma]
We prove the first half of the lemma.
Take two distinct points $x_s$, $x_s'$ in $\M_s$.
If the conclusion of the lemma does not hold, one can
construct inductively arbitrarily long words $w$ such that
$$
[\Phi(w)]_s (x_s) \neq [\Phi(w)]_s (x_s'),
$$
which contradicts hyperbolicity.
\end{proof}

\subsubsection{}
Let $x_s^{(0)}$, $x_s^{(1)}$, $x_u^{(0)}$, $x_u^{(1)}$ be as in Lemma~\ref{l.4 points}.
Renaming if necessary $x_s^{(0)}$ and $x_s^{(1)}$,
we can assume that the image of $A_u$ contains a point
between  $x_s^{(0)}$ and $x_s^{(1)}$.

\begin{lemma}\label{l.images}
The image of $A_u$ is the set of points in $\M_u$ between $x_s^{(0)}$ and $x_s^{(1)}$.
The image of $B_u$ is the set of points in $\M_u$ between $x_s^{(1)}$ and $x_s^{(0)}$.
\end{lemma}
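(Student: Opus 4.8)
The plan is to deduce from the combinatorics of monotonic correspondences a ``duality'' between the image of $C_u$ and the level sets of $C_s$, and then to combine it with the tightness of $\Phi$.

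First I would record two structural facts, valid for every monotonic correspondence $C$ on $\M$ (rank $q\ge 2$); both are read off from the cyclic ordering that accompanies $C$ by definition. \emph{Fact 1.} The map $C_s\colon\M_s\to\M_s$ is monotonic and winds exactly once around $\M_s$: traversing the $2q$ elements of the cyclic order on $C$, each elementary step $(x,y)\mapsto(x^{++},y),(x^{+},y^{+})$ or $(x,y^{++})$ raises the sum of the two $\M$-coordinates by $2$, so the combined winding of the two coordinates around $\M$ equals $4q$; since each coordinate winds a non-negative integer number of times and neither can wind zero times (the first coordinates of the elements of $C$ already run over all of $\M_u$, the second ones over all of $\M_s$), each winds exactly once, and hence every level set of $C_s$ is an arc of $\M_s$. \emph{Fact 2.} These level sets are precisely the traces on $\M_s$ of the complementary arcs (``gaps'') of $\Im C_u$ in $\M$: if $G=(v,v')$ is a gap of $\Im C_u$ then $C_s$ is constant on $\M_s\cap G$, and conversely every maximal level set arises this way. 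This I would obtain by following a maximal run of elements of $C$ lying in $\M_s\times\M_s$ --- along which $C_s$ is constant: the diagonal steps entering and leaving the run force the points of $\M_u$ lying just before and just after it (in $\M$) to belong to $\Im C_u$, while the fact that a maximal run is entered only once forces every $\M_u$-point strictly inside it to lie outside $\Im C_u$.

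Now let $x_s^{(0)},x_s^{(1)}$ be as in Lemma~\ref{l.4 points}, labelled (as in the sentence preceding the statement) so that $\Im A_u$ meets the open arc $\delta\subset\M$ from $x_s^{(0)}$ to $x_s^{(1)}$; write $\bar\delta$ for the complementary open arc, so that $\M=\delta\sqcup\bar\delta\sqcup\{x_s^{(0)},x_s^{(1)}\}$. Since $A_s(x_s^{(0)})=A_s(x_s^{(1)})$, these two points lie in one level set of $A_s$, which by Fact~2 is $\M_s\cap G$ for some gap $G=(v,v')$ of $\Im A_u$ (with $v,v'\in\Im A_u$, possibly $v=v'$). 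As $G$ is an open arc containing $x_s^{(0)}$ and $x_s^{(1)}$ with $G\neq\M$, exactly one of $\delta,\bar\delta$ is contained in $G$, and the complementary closed arc $\M\setminus G$ --- which contains $v$ and $v'$ --- lies in the other. Since $\delta$ meets $\Im A_u$ while $G$ does not, it must be $\bar\delta$ that lies in $G$; hence $\bar\delta\cap\Im A_u=\emptyset$, i.e.\ $\Im A_u\subseteq\M_u\cap\delta$. Running the same argument with $B$ (using $B_s(x_s^{(0)})=B_s(x_s^{(1)})$) yields a gap $G'$ of $\Im B_u$ containing $x_s^{(0)},x_s^{(1)}$, so that one of $\delta,\bar\delta$ lies in $G'$ and is disjoint from $\Im B_u$. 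I would rule out that this arc is $\bar\delta$: that would give $\bar\delta\cap(\Im A_u\cup\Im B_u)=\emptyset$, hence $\bar\delta\cap\M_u=\emptyset$ by tightness of $\Phi$, which is impossible because $\M_s$ and $\M_u$ alternate and $\bar\delta$ is bounded by two points of $\M_s$. Therefore $\Im B_u\subseteq\M_u\cap\bar\delta$.

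Finally, $\M_u\cap\delta$ and $\M_u\cap\bar\delta$ partition $\M_u$; since $\Im A_u\subseteq\M_u\cap\delta$ and $\Im B_u\subseteq\M_u\cap\bar\delta$ while $\Im A_u\cup\Im B_u=\M_u$ by tightness, both inclusions are forced to be equalities, which is exactly the assertion of the lemma. I expect the one genuinely laborious step to be the proof of Fact~2, the image/level-set duality: it is a careful but entirely mechanical walk through the cyclic ordering on the correspondences. Everything downstream of it --- Fact~1 and the short geometric argument above --- uses nothing beyond tightness and the alternation of $\M_s$ with $\M_u$.
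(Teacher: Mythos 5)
Your proof is correct and follows essentially the same route as the paper's: you use the fact that the level set of $A_s$ containing $x_s^{(0)}$ and $x_s^{(1)}$ is a gap of $\Im A_u$ to confine $\Im A_u$ to the arc $\delta$, then tightness plus the symmetric argument for $B$ pin down $\Im B_u$ and force both inclusions to be equalities. Your Facts~1 and~2 simply make explicit the combinatorial content that the paper compresses into ``it follows from the definition of monotonicity.''
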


\begin{proof}
As $A_s(x_s^{(0)}) = A_s(x_s^{(1)})$, it follows from the definition of monotonicity
that there cannot be any point of the image of $A_u$ between  $x_s^{(1)}$ and $x_s^{(0)}$.
Therefore, as $\M_u = \Im A_u \sqcup \Im B_u$, every point in $\M_u$ between
$x_s^{(1)}$ and $x_s^{(0)}$ belongs to the image of $B_u$.
Exchanging $A_u$, $B_u$ we get all the conclusions of the lemma.
\end{proof}

In the same manner, after renaming if necessary  $x_u^{(0)}$, $x_u^{(1)}$, we see that
$\Im A_s$ is the set of points in $\M_s$ between $x_u^{(1)}$ and $x_u^{(0)}$,
while $\Im B_s$ is the set of points in $\M_s$ between $x_u^{(0)}$ and $x_u^{(1)}$.

It follows immediately from Lemma~\ref{l.images} that
$x_s^{(0)}$, $x_s^{(1)}$, $x_u^{(0)}$, $x_u^{(1)}$ are now uniquely defined.

\subsubsection{}
\begin{lemma}
We have
$A_u([x_u^{(0)}, x_u^{(1)}]) \subset [x_u^{(0)}, x_u^{(1)}]$ and similarly
$B_u([x_u^{(1)}, x_u^{(0)}]) \subset [x_u^{(1)}, x_u^{(0)}]$,
$A_s([x_s^{(1)}, x_s^{(0)}]) \subset [x_s^{(1)}, x_s^{(0)}]$,
$B_s([x_s^{(0)}, x_s^{(1)}]) \subset [x_s^{(0)}, x_s^{(1)}]$.
\end{lemma}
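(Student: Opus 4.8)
The plan is to reduce all four inclusions to the first one and then prove that one. Exchanging the generators $A$ and $B$ interchanges $x_u^{(0)}$ with $x_u^{(1)}$ and $x_s^{(0)}$ with $x_s^{(1)}$ — by Lemma~\ref{l.images} and the normalizations made right after it — so it carries the first assertion to the second and the third to the fourth; and passing to the transposed correspondences (reverse the cyclic order on $\M$ and swap $\M_u$ with $\M_s$, so that $A_u,B_u$ take over the role of $A_s,B_s$) carries the first two assertions to the last two. So I only need to prove $A_u([x_u^{(0)},x_u^{(1)}])\subset[x_u^{(0)},x_u^{(1)}]$, where $[x_u^{(0)},x_u^{(1)}]$ denotes the closed arc of $\M_u$ running positively from $x_u^{(0)}$ to $x_u^{(1)}$.

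Since $A_u(\M_u)=\Im A_u$, this will follow at once once I show $\Im A_u\subset[x_u^{(0)},x_u^{(1)}]$ (in fact I expect equality). Here is the combinatorial input I would assemble first: $A_u\colon\M_u\to\M_u$ is weakly order-preserving (definition of a monotonic correspondence), so each of its fibres is a closed arc of $\M_u$; hyperbolicity of $\Phi$ gives that $A^{\ell}$ has constant image for some $\ell$, hence $A_u^{\ell}$ is constant and already $A_u$ maps all of $\M_u$ onto $\Im A_u$; and, by Lemma~\ref{l.images} together with the normalizations, $\Im A_u$ is the arc of $\M_u$ from $x_s^{(0)}$ to $x_s^{(1)}$ (and $\Im B_u$ the complementary arc), while $\Im A_s$ is the arc of $\M_s$ from $x_u^{(1)}$ to $x_u^{(0)}$ and $\Im B_s$ the arc from $x_u^{(0)}$ to $x_u^{(1)}$.

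The core of the proof will be to pin down the positions of $x_s^{(0)},x_s^{(1)}$ relative to $x_u^{(0)},x_u^{(1)}$ in the cyclic order of $\M$. I expect to show that $x_s^{(0)}$ is the $\M$-predecessor of $x_u^{(0)}$ and $x_s^{(1)}$ the $\M$-successor of $x_u^{(1)}$, so that going around $\M$ positively one meets $x_s^{(0)},x_u^{(0)},\dots,x_u^{(1)},x_s^{(1)},\dots$; then the arc of $\M_u$ from $x_s^{(0)}$ to $x_s^{(1)}$ contains exactly the same $\M_u$-points as $[x_u^{(0)},x_u^{(1)}]$, i.e.\ $\Im A_u=[x_u^{(0)},x_u^{(1)}]$, and we are done (the transposed picture yields the analogous inclusions for $B_u$, $A_s$ and $B_s$ needed for the remaining three assertions). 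To establish the key claim I would apply the reformulated monotonicity condition for the correspondence $A$ at the two elements $(x_u^{(0)},p)$ and $(x_u^{(1)},p)$ of $A\cap(\M_u\times\M_u)$, where $p=A_u(x_u^{(0)})=A_u(x_u^{(1)})$: at each end of the fibre arc $A_u^{-1}(p)$ the outside $\M_u$-neighbour has a different $A_u$-value, so the clause ``$x_u^{+}=A_s(y_u^{+})$ if $x_u^{+}\in\Im A_s$, and $y_u=A_u(x_u^{++})$ otherwise'' forces the flanking $\M_s$-points into $\Im A_s$ as prescribed $A_s$-images; comparing this with the descriptions of $\Im A_s$ and $\Im B_s$ above then forces $A_u^{-1}(p)=[x_u^{(0)},x_u^{(1)}]$ and places $x_s^{(0)},x_s^{(1)}$ as claimed. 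The one genuinely laborious step is exactly this last bookkeeping — reading off the precise cyclic arrangement of the four special points (and the positions of the image arcs relative to them) from the monotonicity axioms, tightness and the normalizations; once that combinatorial picture is secured, the four inclusions of the lemma are immediate.
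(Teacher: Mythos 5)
Your reductions (swapping $A\leftrightarrow B$ and transposing $\M_u\leftrightarrow\M_s$) are fine, but the proof of the first inclusion rests on a claim that is false in general, namely $\Im A_u\subset[x_u^{(0)},x_u^{(1)}]$ (equivalently, that $x_s^{(0)}$ is the $\M$-predecessor of $x_u^{(0)}$ and $x_s^{(1)}$ the $\M$-successor of $x_u^{(1)}$). A cardinality count rules it out: $\#\Im A_u=q-p$, whereas $\#(\M_u\cap[x_u^{(0)},x_u^{(1)}])=p+1$ (since $\Im B_s=\M_s\cap(x_u^{(0)},x_u^{(1)})$ has $p$ elements and $\M_s,\M_u$ alternate). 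Since $p<\nicefrac{q}{2}$ one has $q-p\ge p+1$, with equality only when $q=2p+1$; for, say, $\nicefrac{p}{q}=\nicefrac{1}{4}$ the arc $[x_u^{(0)},x_u^{(1)}]$ meets $\M_u$ in just $2$ points while $\Im A_u$ has $3$. In fact the inclusion goes the other way: the very next lemma of the paper establishes $x_u^{(0)},x_u^{(1)}\in\Im A_u$, so $[x_u^{(0)},x_u^{(1)}]\cap\M_u\subset\Im A_u$, and the summary right after records the cyclic order $x_u^{(0)}<x_u^{(1)}<x_s^{(1)}<x_s^{(0)}<x_u^{(0)}$, i.e.\ both $x_s^{(0)},x_s^{(1)}$ sit in the arc $(x_u^{(1)},x_u^{(0)})$, not flanking $[x_u^{(0)},x_u^{(1)}]$ as you predicted. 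So the ``bookkeeping'' you deferred cannot be made to work.

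The actual content of the lemma is much finer: $A_u$ is constant on $[x_u^{(0)},x_u^{(1)}]$ (this much follows from $A_u(x_u^{(0)})=A_u(x_u^{(1)})$ plus monotonicity and the cardinality bound above), and the point is to show that its \emph{single} value lies back in $[x_u^{(0)},x_u^{(1)}]$. The paper does this via the fixed point: hyperbolicity gives $x^*\in\M_u$ with $A_u(x^*)=x^*$ and $\Im A_u^n=\{x^*\}$ for large $n$; if $x^*\notin[x_u^{(0)},x_u^{(1)}]$ then (because $\M_s\setminus\Im A_s\subset[x_u^{(0)},x_u^{(1)}]$, so $A_u$ does not collapse any consecutive pair outside that arc) the fibre of $x^*$ would be $\{x^*\}$, incompatible with $\Im A_u^n$ shrinking to $\{x^*\}$. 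This fixed-point/contradiction step is the idea your proposal is missing; replacing it with the false containment $\Im A_u\subset[x_u^{(0)},x_u^{(1)}]$ is where the argument breaks.
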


\begin{proof}
We prove the first statement.
As the image of $A^{n+1}_u$ is contained in the image of $A_u^n$,
we deduce from the hyperbolicity of $\Phi$ that there exists $x^* \in \M_u$ such that $A_u(x^*)=x^*$
and $\Im A_u^n = \{x^*\}$ for large $n$.
If one had $x^* \not\in [x_u^{(0)}, x_u^{(1)}]$ then one would have $A_u^{-1}(x^*) = \{x^*\}$,
which is not compatible with $\Im A_u^n = \{x^*\}$.
Therefore $x^* \in [x_u^{(0)}, x_u^{(1)}]$ and
$A_u([x_u^{(0)}, x_u^{(1)}]) = \{x^* \}$.
\end{proof}

\begin{figure}[hbt]
\psfrag{6}[r][r]{{\small $\Fix A_s$}}
\psfrag{8}[r][r]{{\small $x_s^{(0)}$}}
\psfrag{9}[r][r]{{\small $x_u^{(0)}$}}
\psfrag{1}[r][r]{{\small $\Fix A_u$}}
\psfrag{2}[l][l]{{\small $\Fix B_s$}}
\psfrag{3}[l][l]{{\small $x_u^{(1)}$}}
\psfrag{4}[l][l]{{\small $x_s^{(1)}$}}
\psfrag{5}[l][l]{{\small $\Fix B_u$}}
\begin{center}
\includegraphics[keepaspectratio, scale=0.7]{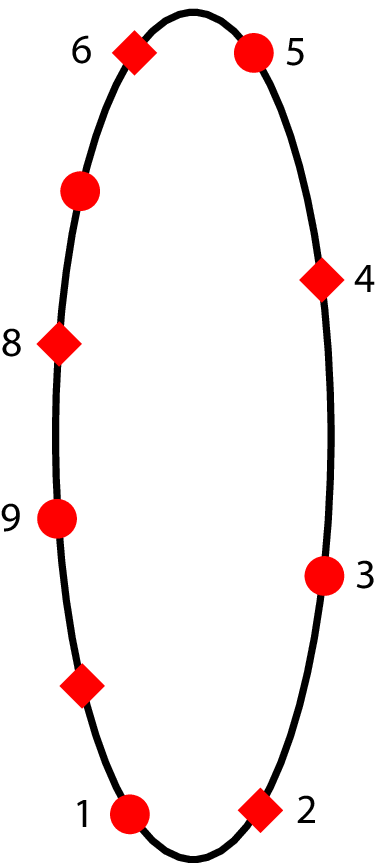}
\caption{{\small The pair of multicones $\M$ for $\nicefrac{p}{q} = \nicefrac{2}{5}$.}}
\label{f.ellipse}
\end{center}
\end{figure}

\subsubsection{}
Recall that we have denoted $q = \# \M_u = \# \M_s$ the rank of $\M$.
Let us denote
$$
p=   \# \Im B_u  = \# \Im B_s \, , \quad \text{hence} \quad
q-p= \# \Im A_u  = \# \Im A_s \, .
$$
If $q=2$ then $p=1$; both $A$ and $B$ are constant correspondences
and these are the dynamics associated to the free components.
We  will therefore assume that $q>2$.
By exchanging $A$ and $B$ we can assume that $p \le \nicefrac{q}{2}$.

\begin{lemma}
One has $p< \nicefrac{q}{2}$ and $x_u^{(0)}$, $x_1^{(1)} \in \Im A_u$.
\end{lemma}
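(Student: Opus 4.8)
The plan is to prove the two assertions separately, obtaining the strict inequality $p<q/2$ from the inclusion $x_u^{(0)},x_u^{(1)}\in\Im A_u$ by a symmetry trick; so I would prove the inclusion first.

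For the inclusion, the starting point is the preceding lemma, which says that $A_u$ collapses the arc $[x_u^{(0)},x_u^{(1)}]$ to the single point $\Fix A_u\in[x_u^{(0)},x_u^{(1)}]$, and that $B_u$ collapses the complementary arc to $\Fix B_u$. Since by Lemma~\ref{l.images} the arc $\Im B_s$ has $p$ elements and sits between $x_u^{(0)}$ and $x_u^{(1)}$, the arc $[x_u^{(0)},x_u^{(1)}]$ contains exactly $p+1$ points of $\M_u$; on the other hand $A_u$, being a monotonic surjection of $\M_u$ onto the arc $\Im A_u$ of cardinality $q-p$, identifies exactly $p$ consecutive pairs. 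As $A_u^{-1}(\Fix A_u)$ is an arc (by monotonicity of $A_u$) containing $[x_u^{(0)},x_u^{(1)}]$, counting identified pairs forces $A_u^{-1}(\Fix A_u)=[x_u^{(0)},x_u^{(1)}]$ and forces $A_u$ to be injective on the complementary open arc. Monotonicity then lets one read off that the two neighbours of $\Fix A_u$ inside $\Im A_u$ are the images under $A_u$ of the two points of $\M_u$ flanking $[x_u^{(0)},x_u^{(1)}]$. Finally, hyperbolicity of $\Phi$ — so that $A_u^n$ is eventually the constant map $\Fix A_u$, which forces the orbits of $A_u$ to fall into $[x_u^{(0)},x_u^{(1)}]$ and the images $A_u^n(\M_u)$ to genuinely shrink — forces $\Im A_u$ to overlap $[x_u^{(0)},x_u^{(1)}]$ in enough points; combining this with $\#\Im A_u=q-p$, with $p\le q/2$, and with the cyclic-order information just gathered, I would conclude that the arc $\Im A_u$ reaches both endpoints $x_u^{(0)}$ and $x_u^{(1)}$.

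Granting the inclusion, the inequality $p<q/2$ is immediate. Suppose $p=q/2$. Exchanging the two generators of $\cF_2$ gives another tight hyperbolic morphism on the same $\M$, with $A$ and $B$ interchanged, whose analogue of $p$ is $q-p=q/2\le q/2$, so the standing hypotheses still hold; and the unordered pair $\{x_u^{(0)},x_u^{(1)}\}$, being characterised symmetrically in $A$ and $B$ by Lemma~\ref{l.4 points} and unique, is unchanged. Applying the inclusion to the original morphism gives $\{x_u^{(0)},x_u^{(1)}\}\subset\Im A_u$, and applying it to the swapped one gives $\{x_u^{(0)},x_u^{(1)}\}\subset\Im B_u$; since $\Im A_u\cap\Im B_u=\emptyset$ and the pair is non-empty, this is a contradiction. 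Hence $p<q/2$.

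The main obstacle is the first part: extracting from the soft inputs (the collapsing lemma, the arc picture of Lemma~\ref{l.images}, hyperbolicity, monotonicity) the precise conclusion that $\Im A_u$ contains both endpoints of $[x_u^{(0)},x_u^{(1)}]$; this is elementary but requires careful tracking of positions in the cyclic order. One must also take care that this first part uses only $p\le q/2$ and not the strict inequality, so that the symmetry argument above is not circular.
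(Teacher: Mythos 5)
Your proof of the inclusion has a gap at its last step. You invoke hyperbolicity of $\Phi$ only through the consequence that the powers $A_u^n$ are eventually constant. That statement involves $A$ alone, and together with the structural facts you have already established about $A_u$ (it collapses $[x_u^{(0)},x_u^{(1)}]\cap\M_u$ to $\Fix A_u$, is injective on the complementary arc, and its fibres are arcs), it does not force $x_u^{(0)},x_u^{(1)}\in\Im A_u$. For instance take $q=5$, $p=2$, identify $\M_u$ cyclically with $\{0,1,2,3,4\}$, set $[x_u^{(0)},x_u^{(1)}]\cap\M_u=\{0,1,2\}$ with $\Fix A_u=1$, and let $A_u$ send $0,1,2\mapsto 1$, $3\mapsto 2$, $4\mapsto 3$. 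Then every property you use holds ($A_u^3$ is already constant, the overlap $\Im A_u\cap[x_u^{(0)},x_u^{(1)}]=\{1,2\}$ has two points, and $p< q/2$), yet $\Im A_u=\{1,2,3\}$ misses $x_u^{(0)}=0$. What actually fails in any such configuration is hyperbolicity of the \emph{full} morphism $\Phi$, i.e.\ of the joint action of $A$ and $B$, and this is exactly the input the paper's proof uses. It assumes for contradiction that exactly one of $x_u^{(0)},x_u^{(1)}$ lies in $\Im A_u$ (your counting already gives at least one, and at most one when $q=2p$). Under that assumption $\Im A_u$ and $\Im B_u$ each contain exactly one of the two special points; so if $\{x,x'\}$ is a pair of distinct points of $\M_u$ that is sent by $A_u$ or by $B_u$ to a pair of distinct points, the image pair sits inside $\Im A_u$ or $\Im B_u$ and therefore can never equal $\{x_u^{(0)},x_u^{(1)}\}$. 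Combining this with the uniqueness of the special pair (established after Lemma~\ref{l.images}), one iterates indefinitely, producing arbitrarily long words in $A,B$ whose unstable maps are not constant — contradicting hyperbolicity. That interplay of $A$ against $B$ is the idea missing from your sketch.

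A smaller remark: your step~2 is correct but superfluous. Once $x_u^{(0)},x_u^{(1)}\in\Im A_u$, the arc $\Im A_u$ (which has $q-p$ elements and contains $\Fix A_u\in[x_u^{(0)},x_u^{(1)}]$) must contain all of $[x_u^{(0)},x_u^{(1)}]\cap\M_u$, i.e.\ $p+1$ elements; thus $q-p\ge p+1$ and $p<q/2$ follows immediately. This also shows the inclusion is outright false when $p=q/2$, so no argument can deduce the inclusion from $p\le q/2$ without simultaneously eliminating the boundary case — which the paper's contradiction argument does in one stroke.
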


\begin{proof}
$\Im A_u$ and $[x_u^{(1)}, x_u^{(0)}] \cap \M_u$ are intervals in $\M_u$
with respective cardinalities $q-p$ and $q-p+1$;
therefore at least  one of the two points $x_u^{(0)}$, $x_u^{(1)}$ belongs to $\Im A_u$,
and exactly one if $q = 2p$.
Assume that only one of the points $x_u^{(0)}$, $x_u^{(1)}$ belongs to $\Im A_u$.
Starting with $x_0$, $x_0' \in \M_u$ with $x_0 \neq x_0'$, $\{x_0, x_0'\} \neq \{x_u^{(0)}, x_u^{(1)}\}$,
we can construct sequences $(x_n)_{n \ge 0}$, $(x_n)_{n \ge 0}$ in $\M_u$ such that
$x_n \neq x_n'$ and $x_n = C_n x_{n-1}$, $x'_n = C_n x'_{n-1}$
for some $C_n \in \{A,B\}$: indeed one can never have
$\{x_{n-1}, x_{n-1}'\} \neq \{x_u^{(0)}, x_u^{(1)}\}$
as both $x_{n-1}$, $x_{n-1}'$ belong to $\Im C_{n-1}$.
Such sequences would contradict hyperbolicity.
Therefore the lemma is proved.
\end{proof}

\subsubsection{}
Let us summarize what we know so far about the correspondences $A$, $B$.
(See Figure~\ref{f.so far} for $\nicefrac{p}{q} = \nicefrac{2}{5}$.)

\begin{figure}[hbt]
\psfrag{A}[l][l]{{\footnotesize $A$}}
\psfrag{B}[l][l]{{\footnotesize $B$}}
\psfrag{1}[c][c]{{\footnotesize $\Fix A_u$}}
\psfrag{2}[c][c]{{\footnotesize $\Fix B_s$}}
\psfrag{3}[c][c]{{\footnotesize $x_u^{(1)}$}}
\psfrag{4}[c][c]{{\footnotesize $\Fix B_u$}}
\psfrag{5}[c][c]{{\footnotesize $\Fix A_s$}}
\psfrag{6}[c][c]{{\footnotesize $x_u^{(0)}$}}
\psfrag{a}[r][r]{{\footnotesize $\Fix B_s$}}
\psfrag{b}[r][r]{{\footnotesize $x_s^{(1)}$}}
\psfrag{c}[r][r]{{\footnotesize $\Fix B_u$}}
\psfrag{d}[r][r]{{\footnotesize $\Fix A_s$}}
\psfrag{e}[r][r]{{\footnotesize $x_s^{(0)}$}}
\psfrag{f}[r][r]{{\footnotesize $\Fix A_u$}}
\begin{center}
\includegraphics[keepaspectratio, scale=0.7]{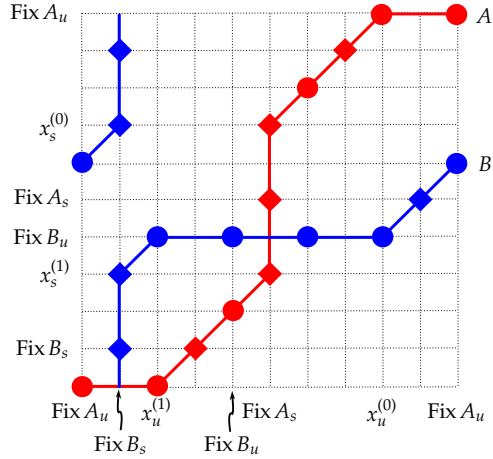}
\caption{{\small The correspondences $A$ and $B$ for $\nicefrac{p}{q} = \nicefrac{2}{5}$.}}
\label{f.so far}
\end{center}
\end{figure}

As a subset of $\M \times \M$, $A$ is made of:
\begin{itemize}
\item a horizontal segment from $(x_u^{(0)}, \Fix A_u)$ to $(x_u^{(1)}, \Fix A_u)$;
\item a vertical segment from $(\Fix A_s, x_s^{(1)})$ to $(\Fix A_s, x_s^{(0)})$;
\item two diagonal segments from $(x_u^{(1)}, \Fix A_u)$ to $(\Fix A_s, x_s^{(1)})$
and from $(\Fix A_s, x_s^{(0)})$ to $(x_u^{(0)}, \Fix A_u)$.
\end{itemize}
Here we have
\begin{gather*}
x_u^{(0)} < x_u^{(1)} < x_s^{(1)} < x_s^{(0)} < x_u^{(0)} \, , \\
x_u^{(0)} \le \Fix A_u \le x_u^{(1)} \, , \quad
x_s^{(1)} \le \Fix A_s \le x_s^{(0)} \, .
\end{gather*}
Similarly, $B$ is made of:
\begin{itemize}
\item a horizontal segment from $(x_u^{(1)}, \Fix B_u)$ to $(x_u^{(0)}, \Fix B_u)$;
\item a vertical segment from $(\Fix B_s, x_s^{(0)})$ to $(\Fix B_s, x_s^{(1)})$;
\item two diagonal segments from $(x_u^{(0)}, \Fix B_u)$ to $(\Fix B_s, x_s^{(0)})$
and from $(\Fix B_s, x_s^{(1)})$ to $(x_u^{(1)}, \Fix B_u)$.
\end{itemize}
We also have
$$
x_s^{(1)} < \Fix B_u < x_s^{(0)} \, , \quad
x_u^{(0)} < \Fix B_s < x_u^{(1)} \, .
$$

We would like to show that $q$ and $p$ are relatively prime
and that $(A,B)$ is obtained from the component described in Subsection~\ref{ss.full2 combin}
(or its mirror image).
This will be done by induction on $q$, the case $q=2$ having been checked already.

Changing the cyclic orientation if necessary, we may also assume that
$$
x_u^{(0)} \le \Fix A_u < \Fix B_s < x_u^{(1)} \, .
$$
Observe that the pair $(A,B)$ is completely determined by the following data (besides $p$, $q$):
\begin{itemize}
\item the number
$\bar p_0 := \# (\M_u \cap [x_u^{(0)}, \Fix A_u))$;
\item the number
$\bar q_0 := \bar p_0 + \# (\M_s \cap [x_s^{(0)}, \Fix B_s))$;
\item the number
$\delta := \# (\Fix A_u, \Fix B_s)$.
\end{itemize}
Indeed these numbers determine the relative positions of
$x_u^{(0)}$, $x_u^{(1)}$, $x_s^{(0)}$, $x_s^{(1)}$,
$\Fix A_u$, $\Fix A_s$, $\Fix B_u$, $\Fix B_s$ on $\M$.
Setting $\bar p_1 = p - \bar p_0$, $\bar q_1 = q - \bar q_0$, we have
\begin{gather*}
\bar p_1 = \# (\M_u \cap [\Fix A_u, x_u^{(1)}))\,,\\
\bar q_1 = \bar p_1 + \# (\M_s \cap [\Fix B_s, x_s^{(1)}))\,.
\end{gather*}
For the component described in Subsection~\ref{ss.full2 combin},
one checks that $\bar p_0 = p_0$, $\bar q_0 = q_0$, $\bar p_1 = p_1$, $\bar q_1 = q_1$, $\delta=0$,
where $\nicefrac{p}{q}$ is the Farey center of the Farey interval $[\nicefrac{p_0}{q_0}, \nicefrac{p_1}{q_1}]$.
We have to prove these relations in our case.

\subsubsection{}
From $A$, $B$ we will construct a new par of combinatorial multicones
$\M' = \M'_s \sqcup \M'_u$ of rank $q' := q-p$,
and two monotone correspondences $A'$, $B'$ on $\M'$ which generate a tight hyperbolic morphism.
Applying the induction hypothesis will allow us to conclude.

We define $\M' := \M_s' \sqcup \M_u'$ where
$\M_u' := \Im A_u = (x_s^{(0)}, x_s^{(1)}) \cap \M_u$
and $\M_s'$ is obtained from $\M_s$
by collapsing the interval $[x_s^{(1)}, x_s^{(0)}] \cap \M_s$ into a point
denoted by $\bar x '$.
We write $\pi$ for the canonical map from $\M_s$ to $\M_s'$.
Observe that $A_s$ is constant on $[x_s^{(1)}, x_s^{(0)}] \cap \M_s$,
with value $\Fix A_s$.
Therefore the composition $A_s \circ \pi^{-1}$ is well defined
and is a bijection from $\M'_s$ to $\Im A_s$.
(This shows that the asymmetry of the definition of $\M'$ is only apparent.)

We equip $\M'$ with the obvious cyclic order inherited from $\M$.
We define:
\begin{alignat*}{2}
A_u' &= A_u|\M_u' \, ,          &\quad A_s' = \pi\circ A_s \circ \pi^{-1} \, , \\
B_u' &= A_u\circ B_u|\M_u' \, , &\quad B_s' = \pi\circ B_s \circ A_s \circ \pi^{-1} \, .
\end{alignat*}
One checks easily that this defines monotone correspondences $A'$, $B'$ on $\M'$.
Let $\Phi' : \cF_2 \to \cC(\M')$ be the morphism generated by $A'$, $B'$.

Let us check that $\Phi'$ is hyperbolic:
for any long enough word $w'$ in $A'$, $B'$, the unstable part
$w_u'$ is an even longer word in $A_u$, $B_u$;
as $\Phi$ is hyperbolic, the image is reduced to a point.
This proves that $w'$ is a constant correspondence.

Let us check that $\Phi'$ is tight.
Any $x_u' \in \M_u'$ can be written as $A_u(x_u)$ with $x_u \in \M_u$;
as $\Phi$ is tight, either $x_u \in \M_u'$ and $x_u' \in \Im A_u'$
or $x_u \in \Im B_u$;
as $B_u(\M'_u) = \Im B_u$, we have
$x_u' \in \Im B_u'$ in this case.
Similarly,
let $x_s' \in M_s'$;
if $x_s' \in \pi(\Im A_s)$ then $x'_s\in \Im A_s'$;
if  $x_s' \in \pi(\Im B_s)$ then, as $\Im B_s = \Im B_s A_s$, we have $x'_s\in \Im B_s'$.
Therefore $\Phi'$ is tight.

As $A_u$ is injective on $(x_s^{(1)}, x_s^{(0)}) \cap \M_u$
and the image of this set is disjoint from $A_u (\Im A_u)$, we have
$$
\# \Im A_u' = \# \Im A_u - p = q - 2p \, ,
$$
and therefore (as $\Im A_u' \cap \Im B_u' = \emptyset$)
$$
p' := \Im B_u' = p \, , \quad
\# \Im A_u' = q'-p' \, .
$$
We will apply the induction hypothesis to the tight hyperbolic morphism $\Phi'$
and therefore we have to identify the parameters
$\bar p_0'$, $\bar q_0'$, $\delta'$ for this morphism.

We have
\begin{align*}
A_u'(x_u^{(0)}) &= A_u'(x_u^{(1)}) = \Fix A_u \, , \\
B_u'(x_u^{(0)}) &= B_u'(x_u^{(1)}) \, ,
\end{align*}
therefore $\Fix A_u' = \Fix A_u$ and $\bar p_0' = \bar p_0$.
Let $x^{\prime (0)}_s$, $x^{\prime (1)}_s$ be the points in $\M_s$ such that
$A_s(x^{\prime (0)}_s) = x^{(0)}_s$, $A_s(x^{\prime (1)}_s) = x^{(1)}_s$
(if $\Fix A_s \neq x_s^{(i)}$ then $x^{\prime (i)}_s$ is uniquely determined
by this condition;
if $\Fix A_s = x_s^{(i)}$ then we take $x^{\prime (i)}_s = \Fix A_s$).
It is easy to see that $\pi(x^{\prime (0)}_s) \neq \pi(x^{\prime (1)}_s)$.
We have then
\begin{align*}
A_s'(\pi(x^{\prime (0)}_s)) &= A_s'(\pi(x^{\prime (1)}_s)) = \bar x' \, , \\
B_s'(\pi(x^{\prime (0)}_s)) &= B_s'(\pi(x^{\prime (1)}_s)) = \pi(\Fix B_s) \, .
\end{align*}
This shows that $\bar q_0' = \bar q_0 - \bar p_0$,
$\Fix B_s' = \pi(\Fix B_s)$ and therefore $\delta' = \delta$.
From the inductive hypothesis, we must have
$\delta'=0$, $\bar q_0' = q_0'$, $\bar p_0' = p_0'$,
where $\left[\nicefrac{p_0'}{q_0'}, \nicefrac{(p' - p_0')}{(q' - q_0')}\right]$ is the Farey interval with
center $\nicefrac{p'}{q'}$.
But then we have also
$\delta=0$, $\bar p_0 = p_0$, $\bar q_0 = q_0$.
This is the end of the proof of Proposition~\ref{p.two is realizable}.

\subsection{Non-Realizable Multicone Dynamics}

Here we will show that Proposition~\ref{p.two is realizable}
does not extend to every $N$:

\begin{prop}\label{p.non realizable}
There exists a tight hyperbolic morphism $\Phi:\cF_N \to \cC(\M)$
which is not induced by any uniformly hyperbolic $N$-tuple.
\end{prop}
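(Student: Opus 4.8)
The plan is to construct an explicit counterexample. The idea is to exploit a rigidity phenomenon that is absent in the purely combinatorial world: when a monotonic correspondence is constant, the two values $a_s\in\M_s$ and $a_u\in\M_u$ can be chosen \emph{independently}, whereas a constant correspondence coming from a matrix $A_i$ forces a relation — namely, if $A_i$ collapses everything near $\P^1$ onto a small neighborhood, then $A_i$ is (nearly) hyperbolic and the target component in $\M_u$ sits next to the source component in $\M_s$ according to the geometry of $u(A_i)$, $s(A_i)$. More precisely, the realizability of the combinatorics of several \emph{constant} correspondences simultaneously imposes constraints on the winding numbers and on the cyclic positions of the pairs $(a_s^{(i)},a_u^{(i)})$ that a free combinatorial construction need not satisfy. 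So I would first isolate such a constraint, then build $\M$ and correspondences $C^{(1)},\dots,C^{(N)}$ violating it.

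First I would recall from \S\ref{ss.relation} and \S\ref{ss.wind} the invariants attached to a uniformly hyperbolic $N$-tuple: the morphism $\Phi$, the trace-sign map $\tau$, and the winding number map $n:\cF_N\to\Z$, together with the facts that (a) $n$ is additive-up-to-$\pm1$ under concatenation, (b) the relation $\tau(\omega)=(-1)^{n(\omega)}$ when all individual traces are positive, and (c) the combinatorial recipe (via lifted correspondences of height zero) that computes $n$ directly from $\Phi$. The point is that for a \emph{matrix} realization these three invariants are not independent: the winding number of a word, computed combinatorially from $\Phi$, must agree with the winding number computed from the matrix lifts, and in particular the positivity structure of $\Phi$ (which constant correspondences appear, and with what heights) is tied to $\tau$. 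I would then take $N$ large enough (three symbols suffice) and exhibit a pair of combinatorial multicones $\M$ of some rank $q\ge 3$, together with constant or low-complexity monotonic correspondences $C^{(1)},C^{(2)},C^{(3)}$, whose induced morphism is tight and hyperbolic (both easy to check directly for constants — any word of length $\ge 2$ is constant — once tightness $\bigcup\Im C_u^{(i)}=\M_u$, $\bigcup\Im C_s^{(i)}=\M_s$ is arranged), but for which the combinatorial winding numbers are \emph{inconsistent} with any assignment of matrices: e.g.\ there are two words $\omega,\omega'$ that are cyclically equal (hence must have the same trace sign, and the same winding number, for a matrix realization), yet the combinatorial $n(\omega)$ and $n(\omega')$ computed from $\Phi$ differ in parity, or differ from what the lifts force. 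Concretely I expect to take three constant correspondences with values arranged so that $\omega_{12}\omega_{13}$ and a cyclic rearrangement give contradictory heights via the mechanism of Lemma~\ref{l.one wind} and \eqref{e.espertinhos}.

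The main obstacle — and the place where real care is needed — is producing a morphism that is genuinely tight and hyperbolic while simultaneously violating the matrix-realizability constraint; it is easy to write down combinatorial data that fails one of the two hypotheses. The cleanest route is to start from a \emph{known} realizable configuration (for instance the one from \S\ref{ss.full2 combin} with $\nicefrac{p}{q}$ of moderate denominator, whose correspondences are completely described), add a third generator $C^{(3)}$ as a constant correspondence placed so that tightness is preserved and hyperbolicity is automatic, and then observe that the only constraint one must break is the geometric compatibility between the position of the ``value'' of $C^{(3)}$ in $\M_u$ versus $\M_s$. Since a matrix $C_3$ would force $u(C_3)$ and $s(C_3)$ to be a genuine hyperbolic pair — in particular $u(C_3)\ne s(C_3)$ and, more importantly, the cyclic gap between them is constrained once the other matrices and the winding numbers of words like $C_3 C_1$, $C_3 C_2$ are fixed — I would pick the combinatorial values of $C^{(3)}_u$ and $C^{(3)}_s$ to place $a_u^{(3)}$ and $a_s^{(3)}$ in ``opposite'' positions, forcing, say, $n(C^{(3)}C^{(1)})$ and $n(C^{(1)}C^{(3)})$ to disagree by more than $1$ in absolute value, contradicting the Lipschitz-type bound that holds for matrix winding numbers. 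Writing out one such explicit $\M$ (drawing the analogue of Figure~\ref{f.so far}), checking the four monotonicity conditions for each $C^{(i)}$, verifying tightness and hyperbolicity by inspection, and then computing two winding numbers to exhibit the inconsistency, is the content of the proof; the verification is routine but must be done carefully, and the delicate design choice is exactly which numerical invariant to make contradictory.
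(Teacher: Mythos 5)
Your approach is genuinely different from the paper's, and it has a fundamental gap. The obstruction you want to extract — some inconsistency in the winding number map $n:\cF_N\to\Z$, or in the placement of values of constant correspondences — cannot work, because these are \emph{combinatorial} invariants of the morphism $\Phi$ alone, and every structural property you hope to violate already holds for an arbitrary tight hyperbolic morphism. In particular, the Lipschitz-type bound $|n(\omega a_i)-n(\omega)|\le 1$, $|n(a_i\omega)-n(\omega)|\le 1$ follows "immediately from the definition" of the combinatorial winding number (this is how the paper begins the proof of Lemma~\ref{l.one wind}), and cyclic invariance of $n$ is likewise a general fact about heights of composed lifted correspondences: if $\hat C_1\circ\hat C_2$ contains $(x,x+2qm)$ via an intermediate $y$, then $\hat C_2\circ\hat C_1$ contains $(y,y+2qm)$ by translation-invariance. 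So you cannot design a tight hyperbolic $\Phi$ in which these constraints fail; and if they hold, no conflict with a hypothetical matrix realization can appear. The same applies to your observation about constant correspondences: a single hyperbolic matrix $C_3$ with $u(C_3)$ and $s(C_3)$ placed in \emph{any} prescribed components of $U$ and $S$ exists, so there is no rigidity relating $a_u^{(3)}$ to $a_s^{(3)}$ at the level of one correspondence.

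The obstruction the paper actually uses is of a different nature: it is a constraint coming from \emph{projective geometry}, not from the combinatorics. The paper builds a rank-$15$ pair $\M$ and correspondences $A$, $B$, $C$ (plus some auxiliary constants to force tightness) so arranged that, if a realizing $N$-tuple of matrices existed, one could pick four points $\mathbf a<\mathbf b<\mathbf c<\mathbf d$ in four specified unstable intervals such that the combinatorics of $A$ forces a nested configuration giving
$[\mathbf b,\mathbf c,\mathbf d,\mathbf a]<[\mathbf a,\mathbf b,\mathbf c,\mathbf d]$ via the elementary cross-ratio comparison Lemma~\ref{l.cross}, while the combinatorics of $B$ followed by $C$ applied to the same four points forces the \emph{reverse} strict inequality. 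Since $\G$ preserves cross-ratios, these two conclusions contradict each other. The cross-ratio is exactly what you are missing: it is \emph{not} determined by $\Phi$ (the combinatorics only fixes cyclic order, not relative sizes of gaps), yet it is rigid under the projective action. If you want to make your plan work, you must find an obstruction with that same character; none of the numerical invariants you list in \S\ref{ss.wind} qualifies, because they all factor through $\Phi$.
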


Recall our definition of cross-ratio \eqref{e.def cross ratio} from \S\ref{ss.if}.
It may be useful to bear in mind that
$1 < [a,b,c,d]< \infty$ if $a<b<c<d\mathrel{(\, <}a)$
(where $<$ is the cyclic ordering on $\R \cup \{\infty\}$)
The following lemma compares certain cross-ratios:

\begin{lemma}\label{l.cross}
Take eight distinct points in $\P^1$:
$$
a' < a < b < b' < c' < c < d < d' \mathrel{(\, <}a') \, .
$$
Then $[a', b', c', d'] < [a, b, c, d]$.
\end{lemma}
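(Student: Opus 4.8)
The plan is to reduce the statement to the real line and then deform the inner quadruple into the outer one, one coordinate at a time. By the Möbius invariance of the cross-ratio, I would first pick a projective chart $P:\P^1\to\R\cup\{\infty\}$ sending to $\infty$ a point lying, in the given cyclic order, strictly between $d'$ and $a'$ (such a point exists and is distinct from the eight given ones). Then the eight points are carried to real numbers which, in the usual order of $\R$, satisfy
$$a'<a<b<b'<c'<c<d<d'$$
(I keep the same names). Since $[x,y,z,w]=\frac{(z-x)(w-y)}{(y-x)(w-z)}$ is unchanged, it now suffices to prove $[a',b',c',d']<[a,b,c,d]$ for these reals.

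The heart of the argument is four elementary monotonicity facts for the function $(p_1,p_2,p_3,p_4)\mapsto[p_1,p_2,p_3,p_4]$ on the region $p_1<p_2<p_3<p_4$: it is strictly increasing in $p_1$, strictly decreasing in $p_2$, strictly increasing in $p_3$, and strictly decreasing in $p_4$. Each is a one-line computation after writing $[p_1,p_2,p_3,p_4]$ as a positive constant (in the variable under consideration) times a single fraction; for instance $\partial_{p_1}\frac{p_3-p_1}{p_2-p_1}=\frac{p_3-p_2}{(p_2-p_1)^2}>0$ gives the first, $\partial_{p_2}\frac{p_4-p_2}{p_2-p_1}=\frac{-(p_4-p_1)}{(p_2-p_1)^2}<0$ the second, and the remaining two are entirely similar (the relevant ``constant'' factor, e.g.\ $\frac{p_4-p_2}{p_4-p_3}$ or $\frac{p_3-p_1}{p_2-p_1}$, is positive throughout the region).

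Next I would deform $(a,b,c,d)$ to $(a',b',c',d')$ along the chain
$$(a,b,c,d)\ \longrightarrow\ (a',b,c,d)\ \longrightarrow\ (a',b',c,d)\ \longrightarrow\ (a',b',c',d)\ \longrightarrow\ (a',b',c',d'),$$
moving one coordinate at each step. From the single chain $a'<a<b<b'<c'<c<d<d'$ one reads off that, throughout each step, the four coordinates remain in strictly increasing order (so the relevant monotonicity fact applies and no denominator vanishes), and that the moving coordinate travels in the $[\,\cdot\,]$-decreasing direction: $a$ decreases while $[\,\cdot\,]$ is increasing in its first slot, $b$ increases while $[\,\cdot\,]$ is decreasing in its second, $c$ decreases while $[\,\cdot\,]$ is increasing in its third, and $d$ increases while $[\,\cdot\,]$ is decreasing in its fourth. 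Hence each arrow strictly decreases the cross-ratio, and chaining the four strict inequalities gives $[a',b',c',d']<[a,b,c,d]$.

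The only real obstacle is the bookkeeping in the preceding paragraph — checking at each of the four steps that the four points involved are still in strictly increasing order — but this is immediate from $a'<a<b<b'<c'<c<d<d'$. (One may also argue metrically, in the spirit of \S\ref{ss.if}: since $a'<b'<c'<d'$ and $a<b<c<d$ one has $[a,b,c,d]=\exp d_{(a,d)}(b,c)$ and $[a',b',c',d']=\exp d_{(a',d')}(b',c')$, and the inequality follows by combining the strict inclusion $(a,d)\subsetneq(a',d')$, which makes the Hilbert metric strictly smaller on the larger interval, with the strict inclusion $[b',c']\subsetneq[b,c]$ inside the fixed interval $(a,d)$, which strictly shrinks Hilbert distance since the Hilbert metric identifies $(a,d)$ order-isometrically with $\R$.)
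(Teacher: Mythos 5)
Your proof is correct, and the main line of reasoning is essentially the one the paper uses: pass to an affine chart of $\R\cup\{\infty\}$ and chain elementary monotonicity inequalities for the cross-ratio. The paper gets slight economy by sending $d'$ itself to $\infty$, which makes $[a',b',c',d']$ collapse to $\frac{c'-a'}{b'-a'}$; it then needs only the inequality $\frac{d-b}{d-c}>1$ plus two ratio comparisons. You instead send a point strictly between $d'$ and $a'$ to $\infty$ and carry out the deformation coordinate-by-coordinate via the four slot-wise monotonicity facts of $[p_1,p_2,p_3,p_4]$; this is the same mechanism, just organized more systematically, and your bookkeeping that each intermediate quadruple stays strictly ordered is right (each moving coordinate stays trapped between its two fixed neighbours in the eight-point chain). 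The Hilbert-metric reformulation you sketch at the end is a pleasant alternate packaging of the same content and dovetails nicely with the properties already recalled in \S\ref{ss.if}: $[a,b,c,d]=\exp d_{(a,d)}(b,c)$, the strict inclusion $(a,d)\subsetneq(a',d')$ strictly shrinks Hilbert distances, and shrinking $[b,c]$ to $[b',c']$ inside the fixed interval $(a,d)$ also strictly shrinks them; chaining gives the claim.
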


\begin{figure}[hbt]
\psfrag{a}{{\footnotesize $a$}} \psfrag{b}{{\footnotesize $b$}}
\psfrag{c}{{\footnotesize $c$}} \psfrag{d}{{\footnotesize $d$}}
\psfrag{al}{{\footnotesize $a'$}} \psfrag{bl}{{\footnotesize $b'$}}
\psfrag{cl}{{\footnotesize $c'$}} \psfrag{dl}{{\footnotesize $d'$}}
\begin{center}
\includegraphics[width=4.0cm]{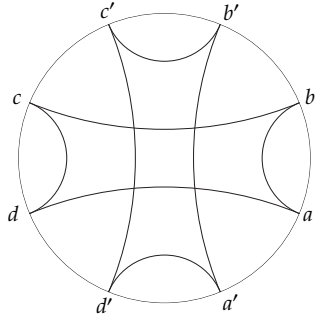}
\caption{{\small Cross-ratio comparison.}}
\label{f.cross}
\end{center}
\end{figure}

\begin{proof}
Using a orientation-preserving projective chart (see \S\ref{ss.if}) we can identify $\P^1$
with the extended line $\R \cup\{\infty\}$,
and also assume that $d' = \infty$.
Then
$$
[a,b,c,d] = \frac{c-a}{b-a} \cdot \frac{d-b}{d-c}
              > \frac{c-a}{b-a} > \frac{c-a'}{b-a'} > \frac{c'-a'}{b'-a'} = [a', b', c', d'].
              \qedhere
$$
\end{proof}

\begin{proof}[Proof of Proposition~\ref{p.non realizable}]
Consider a pair of combinatorial multicones $\M = \M_s \sqcup \M_u$ of order $15$.
Write the unstable combinatorial multicone as:
$$
\M_u = \{\alpha<a<b<\omega<c<d<\beta<\beta'<d'<o<a'<\omega'< b' < c'< \alpha' < \alpha \}
$$
\begin{figure}[!hbt]
\psfrag{a}{{\footnotesize $a$}} \psfrag{b}{{\footnotesize $b$}}
\psfrag{c}{{\footnotesize $c$}} \psfrag{d}{{\footnotesize $d$}}
\psfrag{al}{{\footnotesize $a'$}} \psfrag{bl}{{\footnotesize $b'$}}
\psfrag{cl}{{\footnotesize $c'$}} \psfrag{dl}{{\footnotesize $d'$}}
\psfrag{ga}{{\footnotesize $\alpha$}} \psfrag{gb}{{\footnotesize $\beta$}}
\psfrag{gal}{{\footnotesize $\alpha'$}} \psfrag{gbl}{{\footnotesize $\beta'$}}
\psfrag{w}{{\footnotesize $\omega$}} \psfrag{wl}{{\footnotesize $\omega'$}}
\psfrag{o}{{\footnotesize $o$}}
\psfrag{Q}{{\footnotesize $Q$}} \psfrag{AQ}{{\footnotesize $AQ$}}
\psfrag{BQ}{{\footnotesize $BQ$}} \psfrag{CBQ}{{\footnotesize $CBQ$}}
\begin{center}
\includegraphics[width=8.0cm]{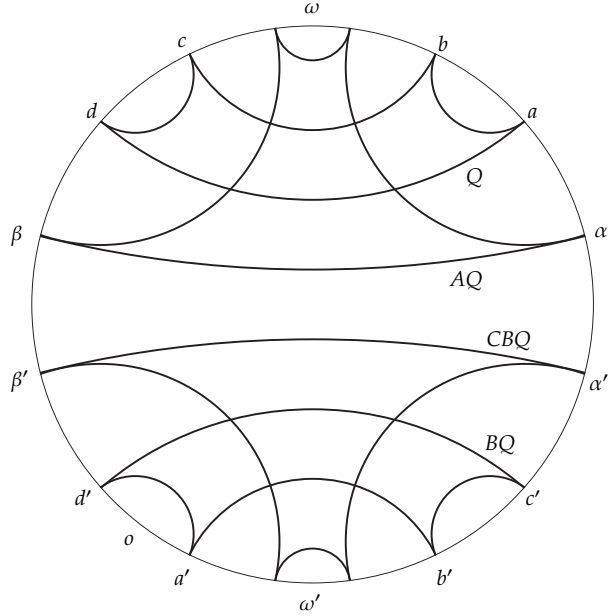}
\caption{{\small The unstable combinatorial multicone.
We will compare cross-ratios of the four rectangles $Q$, $AQ$, $BQ$, $CBQ$.}}
\label{f.non realizable}
\end{center}
\end{figure}

Let maps $A_u$, $B_u$, $C_u: \M_u \to \M_u$ be defined by:
\begin{center}
\begin{tabular}[b]{|r|lllllllllllllll|}
\hline
$x_u$& $\alpha$& $a$& $b$& $\omega$& $c$& $d$& $\beta$& $\beta'$& $d'$& $o$& $a'$& $\omega'$& $b'$& $c'$& $\alpha'$ \\
\hline
$A_u(x_u)$& $\omega$& $\beta$& $\alpha$& $\omega$& $\omega$& $\omega$& $\omega$& $\omega$& $\omega$& $\omega$& $\omega$& $\omega$& $\omega$& $\omega$& $\omega$ \\
$B_u(x_u)$& $a'$& $a'$& $b'$& $c'$& $c'$& $d'$& $d'$& $o$& $o$& $o$& $o$& $o$& $o$& $o$& $o$ \\
$C_u(x_u)$& $\omega'$& $\omega'$& $\omega'$& $\omega'$& $\omega'$& $\omega'$& $\omega'$& $\omega'$& $\omega'$& $\omega'$& $\omega'$& $\omega'$&  $\alpha'$& $\beta'$& $\omega'$ \\
\hline
\end{tabular}
\end{center}
The maps above are monotonic in the sense of \S\ref{ssss.Cs determines C}.
Therefore there exist unique correspondences $A$, $B$, $C$ on $\M$
whose respective $u$-maps are $A_u$, $B_u$, $C_u$, respectively.

Choose some constant correspondences $C^{(4)}$, \ldots, $C^{(N)}$ such that the
morphism $\Phi$ determined by $A$, $B$, $C$, $C^{(4)}$, \ldots, $C^{(N)}$ is tight.

Let us see that the morphism is hyperbolic.
We only need to consider products of the correspondences $A$, $B$, $C$, because
the others are constant.
Inspecting the following diagram,
one sees that any product of length $\ge 4$ of the maps $A_u$, $B_u$, $C_u$
is constant:
$$
\xymatrix@C=50pt{
     & \{\alpha,\omega,\beta\} \ar[r] \ar@{.>}[d] \ar@{-->}[rdd] & \{\omega\} \\
\M_u  \ar[ur]^{A_u} \ar@{.>}[r]^{B_u} \ar@{-->}[dr]_{C_u}
& \{a',b',c',d',o\} \ar[ur] \ar@{.>}[r] \ar@{-->}[d] & \{o\} \\
     & \{\alpha',\omega',\beta'\} \ar[ruu] \ar@{.>}[ru] \ar@{-->}[r] & \{\omega'\}
}
$$
Since any correspondence is constant iff so is its unstable map,
we conclude that the morphism $\Phi$ is hyperbolic.

By contradiction, assume that the morphism $\Phi$
is induced by some hyperbolic $N$-tuple.
Then 
there is a tight multicone composed of $15$ intervals,
and each element $\xi \in \M_u$ corresponds to one of those intervals, say $I_\xi$.

With abuse of notation, let $A$, $B$, $C$ indicate the first three matrices
of the $N$-tuple.
Choose four points in the circle:
$\mathbf{a} \in I_a$, $\mathbf{b} \in I_b$, $\mathbf{c} \in I_c$, $\mathbf{d} \in I_d$.
Then their images by $A$ belong respectively to $I_\beta$, $I_\alpha$, $I_\omega$, $I_\omega$.
So
$$
A(\mathbf{b}) < \mathbf{a} < \mathbf{b} < A(\mathbf{c}) < A(\mathbf{d})
< \mathbf{c} < \mathbf{d} < A(\mathbf{a}) \mathrel{(\, <} A(\mathbf{b}) ) \, ,
$$
and therefore Lemma~\ref{l.cross} gives
$$
[\mathbf{b}, \mathbf{c}, \mathbf{d}, \mathbf{a}] =
[A(\mathbf{b}), A(\mathbf{c}), A(\mathbf{d}), A(\mathbf{a})] <
[\mathbf{a}, \mathbf{b}, \mathbf{c}, \mathbf{d}] \, .
$$
On the other hand, defining
$\mathbf{a}' = B(\mathbf{a}) \in I_{a'}$,
$\mathbf{b}' = B(\mathbf{b}) \in I_{b'}$,
$\mathbf{c}' = B(\mathbf{c}) \in I_{c'}$,
$\mathbf{d}' = B(\mathbf{d}) \in I_{d'}$,
then
$$
C(\mathbf{a}') < \mathbf{b}' < \mathbf{c}' < C(\mathbf{b}') < C(\mathbf{c}')
< \mathbf{d}' < \mathbf{a}' < C(\mathbf{d}') \mathrel{(\, <} C(\mathbf{a}') ) \, ,
$$
and so using Lemma~\ref{l.cross} again:
$$
[\mathbf{a}, \mathbf{b}, \mathbf{c}, \mathbf{d}] =
[\mathbf{a}', \mathbf{b}', \mathbf{c}', \mathbf{d}'] <
[\mathbf{b}', \mathbf{c}', \mathbf{d}', \mathbf{a}'] =
[\mathbf{b}, \mathbf{c}, \mathbf{d}, \mathbf{a}].
$$
We have reached a contradiction.
\end{proof}

\subsection{Non-Linear Realization of Multicone Dynamics}

We will now see that any combinatorial multicone dynamics has a
non-linear realization.

Given $N$ homeomorphisms $f_1$, \ldots, $f_N : \P^1 \to \P^1$,
we define a skew-product homeomorphism $F: N^\Z \times \P^1 \to N^\Z \times \P^1$
over the shift $\sigma: N^\Z \to N^\Z$
by $(\omega,x) \mapsto (\sigma(\omega), f_{\omega_0}(x))$.

\begin{prop}\label{p.non lin realization}
Let $C^{(1)}$, \ldots, $C^{(N)}$ be correspondences on a pair of combinatorial multicones $\M$.
Then there exist:
\begin{itemize}
\item orientation-preserving diffeomorphisms $f_1$, \ldots, $f_N : \P^1 \to \P^1$;
\item a family of disjoint closed intervals
$I_\xi \subset \P^1$, for $\xi \in \M$, such that
the order inherited from $\M$ is compatible with an orientation of the circle $\P^1$;
\end{itemize}
with the following properties:
\begin{enumerate}
\item
for each $\xi \in \M_u$, we have $f_i (I_\xi) \Subset I_{C^{(i)}_u(\xi)}$ and $f_i'|I_\xi < 1$;

\item
for each $\xi \in \M_s$, we have $f_i^{-1} (I_\xi) \Subset I_{C^{(i)}_s(\xi)}$ and $(f_i^{-1})'|I_\xi < 1$;

\item
if ${F: N^\Z \times \P^1 \hookleftarrow}$ is the skew-product homeomorphism induced by the $f_i$'s
then its non-wandering set $\Omega(F)$ is the union of two disjoint compact $F$-invariant sets
$\Lambda_s$ and $\Lambda_u$, contained respectively in
$N^\Z \times \bigcup_{\xi \in \M_s} I_\xi$ and $N^\Z \times \bigcup_{\xi \in \M_u} I_\xi$;

\item
if the morphism  $\Phi: \cF_N \to \cC(\M)$ induced by the correspondences $C^{(i)}$'s is hyperbolic then
the $F$-invariant sets $\Lambda_u$ and $\Lambda_s$ are topologically transitive.

\item
if the morphism $\Phi$ is tight then $\Omega(F)$ intersects $N^\Z \times I_\xi$ for every $\xi \in \M$;
\end{enumerate}
\end{prop}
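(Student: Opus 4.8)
The construction is essentially a ``combinatorial Smale horseshoe'' built by hand on $\P^1$. First I would choose the closed intervals $I_\xi$, $\xi \in \M$, to be disjoint arcs of $\P^1$ arranged in the cyclic order dictated by $\M$ (alternating between $\M_u$-arcs and $\M_s$-arcs), leaving a ``gap'' arc between each consecutive pair; this is possible precisely because $\P^1$ is a circle and $\M$ is cyclically ordered. Then, for each symbol $i$, I would build the diffeomorphism $f_i : \P^1 \to \P^1$ so that it implements the two maps $C^{(i)}_u$ and $C^{(i)}_s$ at the level of arcs. Concretely: on each $I_\xi$ with $\xi \in \M_u$, $f_i$ should map $I_\xi$ into the interior of $I_{C^{(i)}_u(\xi)}$ as a strong contraction ($f_i'|I_\xi<1$); dually, $f_i^{-1}$ should map each $I_\eta$ with $\eta\in\M_s$ into the interior of $I_{C^{(i)}_s(\eta)}$ as a strong contraction. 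The content of the monotonicity hypothesis on the correspondences is exactly what guarantees these two prescriptions are mutually compatible with a single orientation-preserving homeomorphism of the circle: the images $f_i(I_\xi)$ (for $\xi$ unstable) and the preimages $f_i^{-1}(I_\eta)$ (for $\eta$ stable) can be interleaved consistently around $\P^1$. On the complementary gap arcs one extends $f_i$ by any orientation-preserving diffeomorphism filling in the remaining circle arcs monotonically; the derivative conditions are only imposed on the $I_\xi$'s, so this extension is free. Smoothing at the endpoints makes $f_i$ a genuine $C^1$ (indeed $C^\infty$) diffeomorphism.

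\textbf{The dynamical conclusions.}
With the $f_i$ fixed, set $F(\omega,x)=(\sigma\omega,f_{\omega_0}(x))$. Property (i)--(ii) hold by construction. For (iii): consider the compact set $N^\Z\times\bigcup_{\xi\in\M_u}I_\xi$; by (i) its forward $F$-image lies in its interior (fiberwise), so $\Lambda_u:=\bigcap_{n\ge 0}F^n\big(N^\Z\times\bigcup_\xi I_\xi\big)$ is a compact $F$-invariant set, and symmetrically $\Lambda_s$ using $f_i^{-1}$ and the $\M_s$-arcs, and $\Lambda_s$, $\Lambda_u$ are disjoint since the arcs are. A point $(\omega,x)$ not in $N^\Z\times(\bigcup_{\xi\in\M_u}I_\xi\cup\bigcup_{\eta\in\M_s}I_\eta)$ lies over a gap; tracking where gaps go, one sees such a point is wandering — its forward orbit enters the unstable arcs and its backward orbit enters the stable arcs, never returning to a fixed neighborhood — so $\Omega(F)\subset \Lambda_s\cup\Lambda_u$, and conversely $\Lambda_s,\Lambda_u\subset\Omega(F)$ because the strong fiberwise contraction (resp.\ expansion) makes them attractors (resp.\ repellers) of the corresponding subsystems. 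For (iv): if $\Phi$ is hyperbolic, some length-$\ell$ product of the $C^{(i)}_u$ is a constant map; this means that for any two admissible length-$\ell$ words, the corresponding compositions of the $f_i$ land in the \emph{same} arc $I_\xi$, which is exactly the mixing-type condition needed to run the standard argument that $\Lambda_u$ (a graph-directed self-similar set fibered over the full shift) is topologically transitive — pick a dense orbit in the base whose itinerary realizes every finite admissible transition, and use the contraction to shadow. The same for $\Lambda_s$. For (v): tightness says $\bigcup_i\Im C^{(i)}_u=\M_u$ and $\bigcup_i\Im C^{(i)}_s=\M_s$, so every arc $I_\xi$ with $\xi\in\M_u$ is hit by some $f_i(I_{\xi'})$ and hence meets $\Lambda_u\subset\Omega(F)$; dually for $\xi\in\M_s$. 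Thus $\Omega(F)$ meets $N^\Z\times I_\xi$ for every $\xi\in\M$.

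\textbf{Main obstacle.}
The only genuinely delicate step is the very first one: verifying that the monotonicity axioms for the correspondences $C^{(i)}$ really do permit a simultaneous consistent choice of the arcs $\{I_\xi\}$ and of each circle homeomorphism $f_i$ realizing both $C^{(i)}_u$ on the unstable arcs and $C^{(i)}_s$ on the stable arcs. One has to check that the cyclic order on the correspondence $C^{(i)}$ (guaranteed by the monotonicity definition) translates into a compatible cyclic interleaving on $\P^1$ of the arcs $f_i(I_\xi)$, the arcs $I_\eta$, and the gaps — i.e.\ that no ``braiding'' is forced that a single orientation-preserving homeomorphism cannot realize. This is where the reformulation of monotonicity in \S\ref{ssss.Cs determines C} (the case analysis ``$x_u^+\in\Im C_s$ vs.\ not'') is used: it is precisely the statement that the next arc in the target is determined in an order-preserving way, which is what lets one build $f_i$ arc-by-arc without contradiction. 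Once this combinatorial bookkeeping is set up cleanly, the derivative bounds and the extension over the gaps are routine, and parts (iii)--(v) follow from standard hyperbolic-dynamics arguments applied fiberwise over the shift.
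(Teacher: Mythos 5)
Your overall strategy is the same as the paper's (choose disjoint arcs $I_\xi$ in the prescribed cyclic order, then build each $f_i$ so that it realizes $C^{(i)}_u$ on the unstable arcs and $C^{(i)}_s$ on the stable ones), and you correctly identify the role of monotonicity in making those two prescriptions simultaneously realizable by one orientation-preserving circle diffeomorphism. But there is a genuine gap in your treatment of assertion (iii), and it comes precisely from the sentence claiming that the extension of $f_i$ over the gaps is ``free.'' It is not. Conditions (i) and (ii) only constrain $f_i$ on $\bigcup_\xi I_\xi$; they do not by themselves imply that a point lying in a gap eventually enters $U=\bigcup_{\xi\in\M_u} I_\xi$ under forward iteration. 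Already in the trivial case of rank $1$ and $N=1$, take $I_\xi$, $I_\eta$ disjoint arcs and a gap $G$ between them; you only impose $f_1(I_\xi)\Subset I_\xi$ and $I_\eta\Subset f_1(I_\eta)$, so $f_1$ moves both endpoints of $G$ in the same direction, and nothing prevents $f_1|_G$ from having an interior fixed point (take $f_1$ with slope $>1$ then $<1$ in the middle of $G$). Such a point is non-wandering and lies in a gap, so $\Omega(F)\not\subset\Lambda_s\sqcup\Lambda_u$ and (iii) fails. Your phrase ``tracking where gaps go, one sees such a point is wandering'' is exactly the step that needs an argument and does not follow from (i)--(ii) alone.

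What the paper does to close this hole is to pin down $f_i$ on the gaps as well, by choosing $f_i$ to be a bijection between two explicit interval partitions of $\P^1$: one by the arcs $I_\xi$, $\xi\in\Im C_s^{(i)}$, together with the components $J^{(i)}_\eta$ of their complement (indexed by $\eta\in\Im C_u^{(i)}$), and the other by the arcs $I_\eta$, $\eta\in\Im C_u^{(i)}$, together with the components $J^{(i)}_\xi$ of their complement. Requiring $f_i\big(\cl J^{(i)}_\eta\big)=I_\eta$ and $f_i(I_\xi)=\cl J^{(i)}_\xi$ forces the key containment $f_i\big(\P^1\setminus S\big)\subset U$ and, dually, $f_i^{-1}\big(\P^1\setminus U\big)\subset S$. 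These two inclusions are exactly what makes every point over a gap wander in \emph{one} step into $N^\Z\times U$ (forward) and $N^\Z\times S$ (backward), proving (iii). Without this, (iii) is false for a bad choice of extension. The rest of your argument for (iv) and (v) (mixing via the constancy of long products, tightness giving that every arc is hit) is in line with the paper once (iii) is established correctly, though note that hyperbolicity gives that \emph{each} long word acts as a constant correspondence, not that all long words of the same length land in the same arc; the transitivity argument should be phrased using uniform fiberwise contraction rather than coincidence of target arcs.
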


\begin{proof}
Let $C^{(1)}, \ldots, C^{(N)}$ be correspondences on a pair of multicones $\M$.

Choose a family  $I_\xi$, indexed by $\xi \in \M$,
of disjoint closed intervals contained in the circle $\P^1$, all with the same positive length,
and such that
the order inherited from $\M$ is compatible with an orientation of the circle.

Fix some $i =1$, \dots, $N$.
For each $\eta \in \Im C_u^{(i)}$, there exist a unique connected component $J_\eta^{(i)}$
of $\P^1 \setminus \bigsqcup_{\xi \in \Im C_s^{(i)}} I_\xi$ that contains all the intervals
$I_x$ such that $(x, \eta) \in C^{(i)} \subset \M \times \M$.
We have
\begin{equation}\label{e.partition 1}
\P^1 = \bigsqcup_{\xi \in \Im C_s^{(i)}} I_\xi \sqcup \bigsqcup_{\eta \in \Im C_u^{(i)}} J_\eta^{(i)} \, .
\end{equation}
Analogously, for each $\xi \in \Im C_s^{(i)}$, there exist a unique
connected component $J_\xi^{(i)}$ of $\P^1 \setminus \bigsqcup_{\eta \in \Im C_u^{(i)}} I_\eta$
that contains all the intervals $I_y$ for which $(\xi,y) \in C^{(i)}$.
In addition,
\begin{equation}\label{e.partition 2}
\P^1 = \bigsqcup_{\xi \in \Im C_s^{(i)}} J_\xi^{(i)} \sqcup \bigsqcup_{\eta \in \Im C_u^{(i)}} I_\eta \, .
\end{equation}
Let $f_i: \P^1 \to \P^1$
be an orientation-preserving diffeomorphism
such that
$$
f_i \big(\cl J_\eta^{(i)}\big) = I_\eta          \quad \forall \eta \in \Im C_u^{(i)}, \qquad
f_i (I_\xi)                    = \cl J_\xi^{(i)}  \quad \forall \xi  \in \Im C_s^{(i)}.
$$
Then for each $\xi \in \M_u$, we have
$f_i (I_\xi) \subset f_i \big( \cl J_{C_u^{(i)}(\xi)}^{(i)} \big) = I_{C_u^{(i)}(\xi)}$.
Also, $f_i$ can be chosen to be linear in $I_\xi$.
Analogously, for each $\xi \in \M_s$ we have $f_i^{-1} (I_\xi) \subset I_{C^{(i)}_s(\xi)}$,
and we can take $f_i^{-1} | I_\xi$ linear.
Then the maps $f_i$ satisfy properties (i) and (ii) of the proposition.

Define two disjoint subsets of $\P^1$ by
$S = \bigsqcup_{\xi \in \M_s} I_\xi$ and $U = \bigsqcup_{\xi \in \M_u} I_\xi$.
Next we claim that for any $i$,
\begin{equation}\label{e.omega}
f_i \left( \P^1 \setminus S \right) \subset U \, , \qquad
f_i^{-1} \left( \P^1 \setminus U \right) \subset S \, .
\end{equation}
Indeed, if $x \in \P^1 \setminus S$ then by \eqref{e.partition 1}
$x$ belongs to $J_\eta^{(i)}$ for some $\eta \in \Im C_u^{(i)}$.
In particular, $f_i(x) \in I_\eta$, proving the first part of \eqref{e.omega}.
The second part follows by symmetry.

It follows from \eqref{e.omega} that all points in $N^\Z \times \big( \P^1 \setminus (U \cup S) \big)$
are wandering. Hence assertion (iii)  holds.

Now assume the morphism $\Phi$ is hyperbolic.
Given symbols $i_0$, \ldots, $i_{n-1}$,
the set $f_{i_{n-1}} \circ \cdots f_{i_0} (\P^1 \setminus S)$ is
contained in the union of the intervals $I_\xi$ such that
$\xi$ belongs to the image of $C_u^{(n-1)} \circ \cdots \circ C_u^{(0)}$.
So $\xi$ becomes uniquely determined if $n$ is large enough.
By the contraction property (i), we get that
$$
\mathrm{dist}\, \big( F^n(\omega, x), F^n(\omega , y) \big)\xrightarrow[n \to +\infty]{} 0
\quad \text{uniformly for $\omega \in N^\Z$, $x$, $y \in \P^1 \setminus S$.}
$$
Using this, it is easy to show that the $F$-invariant set
$\bigcap_{n \ge 0} F^n \left(N^\Z \times U \right)$ is topologically transitive.
In particular, this set must be equal to $\Omega(F) \cap \big(N^\Z \times U\big)$, that is, $\Lambda_u$.
Analogously, one shows that
$\Lambda_s = \bigcap_{n \ge 0} F^{-n}\left(N^\Z \times S \right)$
is topologically transitive.
This proves part (iv).

The simple proof of assertion (v) is left to the reader.
\end{proof}

\section{Questions}\label{s.questions}

The questions and problems proposed in~\cite{Yoccoz_SL2R}
are solved for the full $2$-shift,
but for the general case many questions remain unanswered.
To summarize:

\begin{center}
{\footnotesize
\begin{tabular}{|l|l|l|}
\hline
\emph{Question or Problem from \cite{Yoccoz_SL2R}} & \emph{Full $2$-shift} & \emph{General case}\\
\hline
Q1 (trace signs)                     & yes      & unknown \\
P1 (trace signs)                     & easy now -- use \S\ref{ss.full2 combin}, \S\ref{ss.wind} & unknown \\
Q1' (trace signs $\times$ principal) & no       & no -- see \S\ref{sss.principal and traces}\\
P2 (principal)                       & --       & unknown \\
Q2 (boundary)                        & no       & no, if Q3' is ``yes'' -- see Thm.~\ref{t.general boundary}\\
Q3 (boundary)                        & yes      & no (in general) -- see Prop.~\ref{p.bifurcation}\\
Q3'(boundary)                        & yes      & unknown \\
Q4  (elliptic products)              & yes      & unknown\\
\hline
\end{tabular}
}
\end{center}


We will recall and discuss some of those questions, and also propose new ones.

We return to the general situation where $\Sigma$ is some subshift of finite type,
and $\cH$ is associated hyperbolic locus.

\subsubsection{Boundaries of the Components}

\begin{question}\label{q.boundaries 1}
Are the boundaries of the connected components of $\cH$ disjoint?
\end{question}

A result that goes in the direction of answering (positively) Question~\ref{q.boundaries 1}
is Theorem~\ref{t.disjoint boundaries}.

\begin{question}(Question~3' in \cite{Yoccoz_SL2R})\label{q.boundaries 2}
Is the union of the boundaries of the components equal to the boundary of $\cH$?
\end{question}

A positive answer to Question~\ref{q.boundaries 2}
would answer Question~2 from \cite{Yoccoz_SL2R} negatively
(using Theorem~\ref{t.general boundary}).

\begin{question}\label{q.analytic curve}
If $\gamma : [a,b] \to \G^N$ is an analytic curve,
does the set $\gamma^{-1}(\partial\cH)$ necessarily have countably many components?
\end{question}

A negative answer to Question~\ref{q.analytic curve} would
answer Question~\ref{q.boundaries 2} negatively
(because the components of $\cH$ are semialgebraic).


\subsubsection{Elliptic Products}

Denote by $\cE \subset \G^N$ the set of $N$-tuples such that
there exists a periodic point for the subshift over which
the corresponding product is an elliptic matrix.

It is shown in  \cite{Yoccoz_SL2R} that $\overline{\cE} = \cH^c$.

\begin{question}(Question~4 in \cite{Yoccoz_SL2R}) \label{q.H and E}
Is $\overline{\cH} = \cE^c$\,?
Equivalently, is $\partial \cH = \partial \cE = (\cH \cup \cE)^c$\,?
\end{question}

We remark that $\cE$ is connected: see Proposition~\ref{p.E connected} in the Appendix.

\subsubsection{Unboundedness of the Components} \label{sss.unboundedness}

Let us say that a set $Z \subset \G^N$ is \emph{bounded modulo conjugacy}
if there exists a compact set $K \subset \G^N$ such that
every $N$-tuple in $Z$ is of the form
$(R A_1 R^{-1}, \ldots, R A_N R^{-1})$,
for some $(A_1,\ldots,A_N) \in K$ and $R \in \G$.
Otherwise, we say that $Z$ is \emph{unbounded modulo conjugacy}.

\begin{question}\label{q.unbounded 1}
Is every connected component of $\cH$ unbounded modulo conjugacy?
\end{question}

Theorem~\ref{t.compactness} in the Appendix
says that a set of $N$-tuples $(A_1,\ldots,A_N)$ is bounded modulo conjugacy iff
the traces of $A_i$'s and $A_i A_j$'s are all bounded.
Motivated by it, we pose a stronger version of Question~\ref{q.unbounded 2}:

\begin{question}[For full shifts]\label{q.unbounded 2}
Are \emph{all} functions $\tr A_i$ and
$\tr A_i A_j$ unbounded in each component?
\end{question}

If $A$ is a uniformly hyperbolic $N$-tuple w.r.t.~some subshift $\Sigma$,
we define its (least) \emph{hyperbolicity rate} as
$$
\rho(A) = \liminf_{n \to \infty}
\min \big\{\,\|A^n(x)\|^{1/n} ; \text{ $x \in \Sigma$ has period $n$}\big\}.
$$
Of course, $\rho(A)>1$.

\begin{question}\label{q.unbounded 3}
Is $\rho$ unbounded in each component?
\end{question}

A positive answer to Question~\ref{q.unbounded 3}
implies positive answers to Questions~\ref{q.unbounded 1}
(because of Theorem~\ref{t.compactness}) and \ref{q.unbounded 2}
(because $\rho(A)$ is a lower bound for the modulus of the trace of \emph{any} product of the
matrices in the $N$-tuple $A$).



It is easy to see that $\rho$ is unbounded in principal components
(for full shifts, of course).
The case $\Sigma = 2^\Z$ is also easily settled:

\begin{prop}
For the case of the full $2$-shift, the answer of Question~\ref{q.unbounded 3} is positive.
\end{prop}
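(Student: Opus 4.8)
The plan is to handle the three kinds of connected components of $\cH$ given by Theorem~\ref{t.components} separately: principal components, free components, and the remaining non-principal components $F^{-1}(H)$ with $F\in\cM$ and $H$ a free component. The principal case is immediate: such a component contains a pair $(\pm A_*,\pm A_*)$ with $\tr A_*>2$, and it contains $(\pm A_*^{\,j},\pm A_*^{\,j})$ for every $j\ge 1$ as well, since all these pairs share the forward‑invariant arc around $u_{A_*}$ and therefore stay in one component. Any word of length $n$ in the two matrices of $(\pm A_*^{\,j},\pm A_*^{\,j})$ equals $\pm A_*^{\,jn}$, of norm $\lambda(A_*)^{jn}$ where $\lambda(A_*)>1$ is the spectral radius, so $\rho(\pm A_*^{\,j},\pm A_*^{\,j})=\lambda(A_*)^{\,j}\to\infty$.

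For a free component, the symmetries that permute the eight free components — the sign changes $(A,B)\mapsto(\pm A,\pm B)$ and conjugation by a linear map reversing the orientation of $\P^1$, all of which preserve $\rho$ and the notion of group‑hyperbolicity — reduce the task to producing one unbounded family in some free component. I would use the explicit path $t>2\mapsto(A_t,B_t)$,
\[
A_t=\begin{pmatrix}t & t\\ 0 & t^{-1}\end{pmatrix},\qquad
B_t=\begin{pmatrix}t^{-1} & 0\\ -t & t\end{pmatrix},
\]
for which $\tr A_t=\tr B_t=t+t^{-1}>2$ and $\tr A_tB_t=2-t^2<-2$, so by Proposition~\ref{p.free} these pairs lie in $H_\id$, hence — being a connected path — in one fixed free component. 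As $t\to\infty$ the four directions $u_{A_t},s_{A_t},u_{B_t},s_{B_t}$ tend to four fixed distinct points of $\P^1$, so one can fix once and for all four intervals $I_1,\dots,I_4$ of $t$‑independent length and run the ping‑pong of Lemma~\ref{l.free is gr hyp}. The quantitative point is that each of $A_t,B_t,A_t^{-1},B_t^{-1}$, whose eigenvalue ratio is of order $t^2$, has projective derivative at most $Ct^{-2}$ on a fixed neighbourhood of the relevant $I_j$'s (these stay bounded away from the corresponding stable direction), with $C$ independent of $t$. Composing $\ell$ such contractions, and using that every $M\in\G$ contracts the round metric by at least $\|M\|^{-2}$, one gets $\|\langle\omega,(A_t,B_t)\rangle\|\ge(C^{-1}t^2)^{\ell/2}$ for every reduced word $\omega$ of length $\ell$ in $A_t^{\pm1},B_t^{\pm1}$; thus $(A_t,B_t)$ is group‑hyperbolic with rate $\tau_t\ge\tfrac12\log(C^{-1}t^2)\to\infty$. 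Since a periodic point of the full $2$‑shift of period $n$ corresponds to a positive, hence reduced, word of length $n$, we obtain $\|A_t^n(x)\|\ge c_t\,e^{\tau_t n}$ and therefore $\rho(A_t,B_t)\ge e^{\tau_t}\to\infty$. Applying the symmetries above transports this to every free component.

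For a general non‑principal component $H=F^{-1}(H^+)$, with $H^+$ a free component and $F\in\cM$ of length $k$ in $F_\pm$, take the family $(A_0^{(t)},B_0^{(t)})\in H^+$ just constructed, with group‑hyperbolicity rate $\tau_t\to\infty$, and set $(A^{(t)},B^{(t)})=F^{-1}(A_0^{(t)},B_0^{(t)})\in H$. The length‑comparison argument in the proof of Proposition~\ref{p.is hyperbolic} applies verbatim and shows that $(A^{(t)},B^{(t)})$ is group‑hyperbolic with rate at least $2^{-k}\tau_t$; since again periodic points give positive (reduced) words, $\rho(A^{(t)},B^{(t)})\ge e^{2^{-k}\tau_t}\to\infty$. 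This exhausts all components of $\cH$.

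The main obstacle is the free‑component step, and specifically the need for the group‑hyperbolicity \emph{rate}, not just its positivity, to diverge: this is exactly what forces a ping‑pong estimate uniform in the parameter, i.e.\ intervals of $t$‑independent size together with a uniform $O(t^{-2})$ bound on the projective contraction of the four matrices. The passage to $F^{-1}(H^+)$ is then routine, but it is essential to argue throughout with group‑hyperbolicity rather than hyperbolicity relative to the full $2$‑shift, because $F^{-1}$ expresses the new pair using \emph{inverses} of the old matrices, so only control over all reduced words of $\F_2$ survives the pullback.
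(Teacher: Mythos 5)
Your proof follows essentially the same route as the paper: first show the group-hyperbolicity rate is unbounded on a free component, then transport that bound to every $H_F=F^{-1}(H_\id)$ via the length-comparison argument of Proposition~\ref{p.is hyperbolic}. The only real difference is that where the paper simply asserts that $\rho_\Sigma$ is ``easily seen'' to be unbounded on a free component, you supply an explicit one-parameter family $(A_t,B_t)$ together with the quantitative ping-pong estimate (fixed intervals, $O(t^{-2})$ projective contraction, and the $\|M\|\ge (\inf M')^{-1/2}$ bound), which fills in that gap correctly.
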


\begin{proof}
It suffices to see that $\rho$ is unbounded on non-principal components.

First consider a free component $H$.
Let $\Sigma$ be the subshift on four symbols considered in \S\ref{ss.ping pong}.
If $(A,B) \in H$,
then $(A,B,A^{-1}, B^{-1})$ is uniformly hyperbolic with respect to $\Sigma$;
see Lemma~\ref{l.free is gr hyp};
let $\rho_\Sigma(A,B)$ indicate the hyperbolicity rate of $(A,B,A^{-1}, B^{-1})$ with respect to $\Sigma$.
It is easy to see that $\rho_\Sigma$ (and in particular, $\rho$) is unbounded in $H$.

Now, consider any other component $H_F = F^{-1}(H)$, where $F \in \cM$.
Given $\tau>1$, take $(A_0,B_0) \in H$ with $\rho_\Sigma(A_0,B_0) > \tau$,
and let $(A,B) = F^{-1}(A_0, B_0) \in H_F$.
In the notations of \S\ref{ss.length} we have that
there exist $c>0$ such that
$$
\| \langle \omega, (A_0,B_0) \rangle \| \ge c \exp( \tau |\omega|)
\quad \text{for every $\omega \in \F_2$.}
$$
As in the proof of Proposition~\ref{p.is hyperbolic}, it follows that
$$
\| \langle \omega, (A, B) \rangle \|
\ge c \exp \left( 2^{-k} \tau |\omega| \right)
\quad \text{for any $\omega \in \F_2$}
$$
(where $k$ depends only on $F$).
Therefore
$$
\rho(A,B) \ge \liminf_{|\omega| \to \infty} \| \langle \omega, (A, B) \rangle \| ^{1/ |\omega|}
\ge \exp \left( 2^{-k} \tau \right).
$$
Hence $\rho$ is unbounded on $H_F$.
\end{proof}

%
%

\subsubsection{Topology of the Components}

\begin{question}
What are the possible homotopy types of the hyperbolic components?
What about the elliptic locus $\cE$?
\end{question}

In the case of the full $2$-shift, each component has the homotopy type
of a circle.

In Appendix~\ref{ss.ell connected}, we show that $\cE$ is connected.

\subsubsection{Combinatorial Characterization of the Components}

Assume the subshift is full in $N$ letters.

An uniformly hyperbolic $N$-tuple induces a multicone $\M$
in the sense of Section~\ref{s.abstract combinatorics},
and a tight hyperbolic morphism $\Phi$.

Recall that if two uniformly hyperbolic $N$-tuples belong to the same connected component
then they have the same combinatorics,
in the sense the respective morphisms $\Phi$ are conjugate.

\begin{question}\label{q.combinatorics}
Does the combinatorics characterize the connected components of $\cH$,
modulo reflections $(A_1,\ldots,A_N) \mapsto (\pm A_1, \ldots, \pm A_N)$?
\end{question}

In the case $\Sigma = 2^\Z$, our description of the multicone dynamics
(see \S\ref{ss.full2 combin}) gives a positive answer to Question~\ref{q.combinatorics}.


%
%
%
%

\appendix
\section{Appendices}

\subsection{A Compactness Criterion for Finite Families of Matrices
in $\SL(2,\R)$ Modulo Conjugacy}\label{ss.compactness criterium}

Let $K$ be a compact subset of $\G$.
Then there exists $C=C(K)>0$ such that, for any $A$, $B \in K$,
$$
|\tr A| \le C, \quad |\tr AB| \le C.
$$
This also holds if $A$, $B$ belong to some conjugate $R^{-1}KR$, $R\in\G$.
We prove that the converse is true:

\begin{thm}\label{t.compactness}
Let $C>0$.
There exists a compact set $K=K(C)$ with the following property:
If $A_1$, \ldots, $A_N \in \G$ satisfy
\begin{alignat}{2}
|\tr A_i|     &\le C, &\quad &1 \le i   \le N, \label{e.criterium 1} \\
|\tr A_i A_j| &\le C, &\quad &1 \le i<j \le N, \label{e.criterium 2}
\end{alignat}
then there exists $R\in\G$ such that $R A_i R^{-1} \in K$ for $1 \le i \le N$.
\end{thm}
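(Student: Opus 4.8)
The plan is to split according to whether the matrices $A_1,\dots,A_N$ have a common invariant line in $\R^2$.

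\textit{Reducible case.} If some line $\ell$ is invariant under all $A_i$, I would conjugate so that $\ell=\R e_1$; then each $A_i=\left(\begin{smallmatrix}\lambda_i&b_i\\0&\lambda_i^{-1}\end{smallmatrix}\right)$. Having a real eigenvalue, $A_i$ is not elliptic, so $2\le|\tr A_i|=|\lambda_i+\lambda_i^{-1}|\le C$, whence $|\lambda_i|,|\lambda_i|^{-1}\le C$; conjugating further by $\operatorname{diag}(s,s^{-1})$ with $s^2=(1+\max_i|b_i|)^{-1}$ turns every $b_i$ into something of modulus $\le 1$ without altering the $\lambda_i$. So all the $A_i$ lie in a fixed compact set; here \eqref{e.criterium 2} is not even used.

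\textit{Irreducible case.} Here I would use the symmetric space of $\G$. Let $\mathcal P$ be the space of positive-definite symmetric $2\times2$ matrices of determinant $1$; it is a smooth surface, and $P=R^TR$ for some $R\in\G$. With $\|M\|^2:=\tr(MM^T)$ one has, whenever $R^TR=P$,
$$
\sum_{i=1}^N\|RA_iR^{-1}\|^2=\sum_{i=1}^N\tr\!\big(A_iP^{-1}A_i^TP\big)=:\Phi(P),
$$
so it suffices to show that $\min_{\mathcal P}\Phi$ exists and is $\le\psi(C,N)$ for some explicit $\psi$. Properness of $\Phi$ is exactly where irreducibility enters: if $P_n$ leaves all compacta then, up to a subsequence, $P_n=K_n\operatorname{diag}(e^{2t_n},e^{-2t_n})K_n^{-1}$ with $K_n\in\mathrm{SO}(2)$, $K_n\to K_\infty$, $t_n\to+\infty$, and evaluating $\Phi$ in the eigenbasis of $P_n$ gives $\Phi(P_n)\ge e^{4t_n}\sum_i\big((K_ne_1)^TA_i(K_ne_2)\big)^2$; a bounded $\Phi(P_n)$ would force $(K_\infty e_1)^TA_i(K_\infty e_2)=0$ for all $i$, i.e.\ the line $\R\,K_\infty e_2$ invariant under every $A_i$, contrary to hypothesis. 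Being continuous, bounded below (by $2N$), and proper, $\Phi$ attains a minimum at some $P_*$.

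Conjugating by $R_*$ with $R_*^TR_*=P_*$, I may assume $P_*=I$; set $B_i=R_*A_iR_*^{-1}$, which still satisfy \eqref{e.criterium 1}--\eqref{e.criterium 2} and have $\sum_i\|B_i\|^2=\Phi(P_*)$. The critical-point equation $d\Phi(P_*)=0$ becomes $\sum_i(B_iB_i^T-B_i^TB_i)=0$. Decompose $B_i=m_iI+p_iJ+S_i$ with $m_i=\tfrac12\tr B_i$, $J=\left(\begin{smallmatrix}0&1\\-1&0\end{smallmatrix}\right)$, $S_i$ traceless symmetric, and let $\zeta_i\in\R^2$ represent $S_i$, $r_i=|\zeta_i|$. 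A direct computation gives $B_iB_i^T-B_i^TB_i=2p_i(JS_i-S_iJ)$, and since $S\mapsto JS-SJ$ is a linear automorphism of the traceless symmetric matrices, the critical equation says precisely $\sum_i p_i\zeta_i=0$. Cayley--Hamilton ($B_i^2=(\tr B_i)B_i-I$) yields $r_i^2-p_i^2=\tfrac14\big((\tr B_i)^2-4\big)$, while $\tr(B_iB_j)=\tfrac12\tr B_i\,\tr B_j-2p_ip_j+2\langle\zeta_i,\zeta_j\rangle$ together with \eqref{e.criterium 1}--\eqref{e.criterium 2} (and, for $i=j$, $\tr B_i^2=(\tr B_i)^2-2$) gives $|\langle\zeta_i,\zeta_j\rangle-p_ip_j|\le C'$ for some $C'=C'(C)$. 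Expanding,
$$
0=\Big\|\sum_i p_i\zeta_i\Big\|^2=\sum_{i,j}p_ip_j\langle\zeta_i,\zeta_j\rangle=\Big(\sum_i p_i^2\Big)^2+\sum_{i,j}p_ip_j\big(\langle\zeta_i,\zeta_j\rangle-p_ip_j\big),
$$
and the last sum has modulus $\le C'\big(\sum_i|p_i|\big)^2\le C'N\sum_i p_i^2$; hence $\sum_i p_i^2\le C'N$, then $\sum_i r_i^2\le C'N+\tfrac N4C^2$, and finally $\Phi(P_*)=\sum_i\|B_i\|^2=\sum_i\big(\tfrac12(\tr B_i)^2+2p_i^2+2r_i^2\big)\le\psi(C,N)$. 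Thus all $B_i$ lie in the compact set $K=\{M\in\G:\|M\|^2\le\psi(C,N)\}$, which together with the reducible case proves the theorem. The one genuinely non-routine step is this a priori bound on the minimal value of $\Phi$; it is precisely there that hypothesis \eqref{e.criterium 2} — unused in the reducible case and invisible in the properness argument — does its work, through the quadratic-form identity above.
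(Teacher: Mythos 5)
Your proof is correct, and it takes a genuinely different route from the paper. The paper works directly with matrix entries: writing $A_i=\left(\begin{smallmatrix}x_i&y_i\\z_i&t_i\end{smallmatrix}\right)$, it first conjugates by a rotation $S_\theta$ chosen so that the first (largest) matrix satisfies $|x_1(\theta)|\le C$, uses the trace identities to propagate the bound $|x_i|\le C_2$ to all $i$, and then conjugates by a diagonal matrix $\operatorname{diag}(\lambda,\lambda^{-1})$ to balance the off-diagonal entries. It is a single uniform computation without a case split. You instead use the symmetric space $\cP$ of positive-definite determinant-one matrices: the energy $\Phi(P)=\sum_i\tr(A_iP^{-1}A_i^TP)$ is proper whenever the family is irreducible, and at a minimizer the critical equation $\sum_i(B_iB_i^T-B_i^TB_i)=0$, translated through the decomposition $B_i=m_iI+p_iJ+S_i$ and combined with Cayley--Hamilton and the trace identities for $\tr B_iB_j$, yields an explicit upper bound on the minimal energy. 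This is close in spirit to the Kempf--Ness / ``minimal vector'' technique; it is more structural and has the merit of cleanly isolating where \eqref{e.criterium 2} is actually needed (only in the irreducible case, to bound the value of $\Phi$ at its minimum, not for properness). The cost is the extra step of separating out the reducible case, and the properness argument as a separate ingredient; the paper avoids the dichotomy entirely. Both proofs end with an explicit compact set, and both are complete as written -- your computations ($B_iB_i^T-B_i^TB_i=2p_i(JS_i-S_iJ)$, $r_i^2-p_i^2=\tfrac14((\tr B_i)^2-4)$, $\langle\zeta_i,\zeta_j\rangle-p_ip_j=\tfrac12(\tr B_iB_j-\tfrac12\tr B_i\tr B_j)$, and the quadratic-form estimate $\big(\sum p_i^2\big)^2\le C'N\sum p_i^2$) all check out.
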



\begin{rem}
It follows that if the inequalities \eqref{e.criterium 1},  \eqref{e.criterium 2}
are satisfied over a subset $Z$  of $\SL(2,\R)^N$ then 
there is a compact set $K \subset \SL(2,\R)^N$ such that 
the union of conjugacy classes of elements of $K$ covers $Z$.
This result does \emph{not } hold for infinite families $(A_i)_{i \in \N}$.
More precisely, consider in $\SL(2,\R)^\N$ the product topology.
If  $f:\N \to \N$ is any map, let $A_i^f = \begin{pmatrix} 1 & f(i) \\ 0 & 1 \end{pmatrix}$.
Let $Z \subset \SL(2,\R)^\N$ be the set of $A^f = (A^f_i)_{i \in \N}$ for all possible $f$.
We have $\tr A_i^f = \tr A_i^f A_j^f = 2$ for all $i$, $j$, $f$.
On the other hand, given any compact set $K \subset \SL(2,\R)^\N$,
there exist $c_i >0$ such that $(B_i)\in K$ implies $\|B_i\| \le c_i$ for every $i\in \N$.
Now, if $f:\N\to \N$ is such that $f(i)/c_i \to \infty$
then $A^f \in Z$ does not belong to any conjugacy class of elements of $K$.
\end{rem}

\begin{proof}[Proof of Theorem~\ref{t.compactness}]
Write
$$
A_i = \begin{pmatrix} x_i & y_i \\ z_i & t_i \end{pmatrix}.
$$
We have
\begin{alignat}{2}
x_i t_i - y_i z_i &= 1 &\quad &\forall i,                     \label{e.1} \\
|x_i + t_i| &\le C &\quad &\forall i,                         \label{e.2} \\
|x_i x_j + t_i t_j + y_i z_j + y_j z_i|&\le C &\quad &\forall i<j. \label{e.3}
\end{alignat}
We want to find a common conjugacy after which all coefficients are bounded
by $C_1 = C_1(C)$.

We start with a particular case:

\noindent \emph{Special case:} Assume that we have moreover
\begin{equation}\label{e.4}
|x_i| \le C_2, \quad \forall i,
\end{equation}
for some $C_2$ depending only on $C$.
We will then conjugate all $A_i$ by the same diagonal matrix.
Observe that from \eqref{e.1}, \eqref{e.2}, \eqref{e.3}, \eqref{e.4},
we get (for some $C_3 = C_3(C)$)
\begin{alignat}{2}
|t_i| &\le C_3 &\quad &\forall i,                               \\
|y_i z_i| &\le C_3 &\quad &\forall i,              \label{e.6}  \\
|y_i z_j + y_j z_i| &\le C_3 &\quad &\forall i<j.  \label{e.7}
\end{alignat}
From \eqref{e.6}, \eqref{e.7} we also get
\begin{alignat}{2}
|y_i z_i y_j z_j| &\le C_3^2 &\quad &\forall i<j,              \\
|y_i z_j| &\le C_4 &\quad &\forall i, \forall j.  \label{e.9}
\end{alignat}
Let
$$
R_\lambda = \begin{pmatrix}
\lambda & 0 \\ 0 & \lambda^{-1}
\end{pmatrix}, \quad
A'_i = R_\lambda A_i R_\lambda^{-1} =
\begin{pmatrix}
x_i & \lambda^2 y_i \\
\lambda^{-2} z_i & t_i \end{pmatrix}.
$$
From \eqref{e.9}, we have
$$
\max_i |y_i| \cdot  \max_j |z_j| \le C_4.
$$
Thus we can choose $\lambda$ such that
\begin{align*}
\max_i |\lambda^2 y_i| \le C_4^{1/2}, \\
\max_i |\lambda^{-2} z_i| \le C_4^{1/2},
\end{align*}
which concludes the proof in the special case.

Let $S_\theta = \begin{pmatrix} \cos \theta  & \sin \theta \\ -\sin\theta & \cos \theta \end{pmatrix}$.
Write
$
S_\theta A_i S_\theta^{-1} =
\begin{pmatrix} x_i(\theta) & y_i(\theta) \\ z_i(\theta) & t_i(\theta) \end{pmatrix}
$.
We have
$$
x_i(\theta) = x_i \cos^2 \theta + t_i \sin^2 \theta + (y_i+z_i)\sin\theta \cos\theta.
$$
We want to prove that there exists $C_2 = C_2(C)$ and $\theta$ such that
\begin{equation}\label{e.10}
|x_i(\theta)| \le C_2 \quad \forall i.
\end{equation}
Indeed, in this case we are reduced to the special case above.
From~\eqref{e.2}, we see that \eqref{e.10} is equivalent to
\begin{equation}
\left| x_i \cos 2\theta + \frac{y_i+z_i}{2} \sin 2\theta \right| \le C_2'
\quad \forall i.
\end{equation}
Observe that
$$
x_i^2(\theta) + y_i^2(\theta) + z_i^2(\theta) + t_i^2(\theta) =
\tr S_\theta \; A_i \; {}^t \! A_i \; S_\theta^{-1}
$$
does not depend on $\theta$.
We can assume that
\begin{equation}\label{e.11}
x_1^2 + y_1^2 + z_1^2 + t_1^2 \ge x_i^2 + y_i^2 + z_i^2 + t_i^2,
\quad \forall i \ge 1.
\end{equation}
Choose $\theta$ such that
\begin{equation}\label{e.12}
x_1 \cos 2\theta + \frac{y_1+z_1}{2} \sin 2\theta = 0.
\end{equation}
Replacing $A_i$ by $S_\theta A_i S_\theta^{-1}$, we can assume that
\begin{equation}\label{e.13}
|x_1| \le C.
\end{equation}
We will show that \eqref{e.1}, \eqref{e.2}, \eqref{e.3}, \eqref{e.12}, \eqref{e.13}
together imply \eqref{e.4}.
Actually, we only need \eqref{e.3} for $i=1$, i.e.,
\begin{equation}\label{e.14}
|x_1 x_i + t_1 t_i + y_1 z_i + y_i z_1| \le C, \quad \forall i>1.
\end{equation}
Observe first that from \eqref{e.1}, \eqref{e.2}, \eqref{e.13} we get
\begin{align}
|t_1| &\le 2C,           \label{e.15} \\
|y_1 z_1| &\le 1 + 4C^2. \label{e.16}
\end{align}
Replacing if necessary all $A_i$ by ${}^t \! A_i$, we can assume that
\begin{equation}\label{e.17}
|y_1| \ge |z_1|
\end{equation}
From \eqref{e.13}, \eqref{e.15}, \eqref{e.16}, \eqref{e.17}, we have
\begin{equation}\label{e.18}
x_1^2+z_1^2+t_1^2 \le C_5 = C_5(C).
\end{equation}
From \eqref{e.11}, we then get
\begin{equation}\label{e.19}
\max \left( |x_i|, |y_i|, |z_i|, |t_i| \right) \le |y_1| + C_6.
\end{equation}
In particular,
\begin{equation}\label{e.20}
|y_i z_1| \le |y_1 z_1| + C |z_1| \le C_7
\end{equation}
and thus, from \eqref{e.14},
\begin{equation}\label{e.21}
|x_1 x_i + t_1 t_i + y_1 z_i| \le C_7 + C.
\end{equation}
From \eqref{e.2}, \eqref{e.15} we also have
\begin{equation}\label{e.22}
|t_1 t_i + t_1 x_i| \le 2 C^2
\end{equation}
and therefore, using \eqref{e.13}, \eqref{e.15}, \eqref{e.21},
\begin{equation}\label{e.23}
|y_1 z_i| \le C_8 (|x_i|+1).
\end{equation}
If $|y_1| \le C_2'''$, we conclude directly from \eqref{e.19}
that $|x_i| \le C_2$.
Assume therefore that $|y_1|$ is large.
Then, from \eqref{e.23} we have
\begin{equation}\label{e.24}
|z_i| \le C_8 \frac{1+|x_i|}{|y_1|}
\end{equation}
We have also, from \eqref{e.19}, \eqref{e.2},
\begin{align}
|y_i| &\le |y_1| + C_6, \label{e.25} \\
|t_i| &\ge |x_i| - C.   \label{e.26}
\end{align}
Therefore, from \eqref{e.1},
\begin{align*}
|x_i| \left( |x_i| - C \right)
&\le 1 + |y_i z_i| \\
&\le 1 + C_8 \frac{|y_1|+C_6}{|y_1|} \left(1 + |x_i| \right) \\
&\le C_9 |x_i|,
\end{align*}
which gives finally \eqref{e.4}.
\end{proof}

\subsection{Connectivity of the Elliptic Locus} \label{ss.ell connected}

Recall that in the case of the full shift in $N$ symbols,
$\cE$ denotes the (open) subset of $\G^N$ formed by the $N$-tuples which
have an elliptic product.

\begin{prop}\label{p.E connected}
$\cE$ is connected.
\end{prop}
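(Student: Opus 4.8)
The plan is to exhibit one explicit connected subset of $\cE$ and then to join every point of $\cE$ to it by a path inside $\cE$. Let $SO(2)\subset\G$ be the rotation subgroup, write $R_\theta$ for the rotation by angle $\theta$, and put $\cR:=SO(2)^N\cap\cE$. Since a product of rotations is a rotation and $R_\theta$ is elliptic exactly when $R_\theta\neq\pm\id$, an $N$-tuple $(\rho_1,\dots,\rho_N)\in SO(2)^N$ lies in $\cE$ iff some $\rho_j\neq\pm\id$; hence $\cR=SO(2)^N\setminus\{(\pm\id,\dots,\pm\id)\}$ is the $N$-torus with $2^N$ points deleted, and this is path-connected because $N\ge 2$. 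So the proposition reduces to the claim $(\star)$: every $(A_1,\dots,A_N)\in\cE$ can be joined to a point of $\cR$ by a path in $\cE$. I will use repeatedly that conjugation is harmless: since $\tr$ is conjugation invariant, for $g\in\G$ and any path $g_t$ from $\id$ to $g$ the path $t\mapsto(g_tA_1g_t^{-1},\dots,g_tA_Ng_t^{-1})$ stays in $\cE$, so the whole tuple may be conjugated freely within $\cE$.

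Now take $(A_1,\dots,A_N)\in\cE$ and fix an elliptic word $w=A_{i_1}\cdots A_{i_k}$ of minimal length. Minimality forces $w$ not to be a proper power as a word, since if $w=v^m$ with $m\ge2$ then $v(A_1,\dots,A_N)^m$ elliptic forces $v(A_1,\dots,A_N)$ elliptic (a non-elliptic element of $\G$ has only non-elliptic powers), contradicting minimality. Conjugating, we may assume $w(A_1,\dots,A_N)=R_{\psi_0}$ for some $\psi_0\in(0,2\pi)\setminus\{\pi\}$. I first treat the case in which some letter $A_p$ occurs exactly once in $w$; this always happens when $k\le 2$ (for $k=2$, $i_1=i_2$ would make $w$ a square). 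Write $w=\alpha\,A_p\,\beta$ where the words $\alpha,\beta$ do not involve $A_p$. For $j\neq p$ take the polar decomposition $A_j=O_jP_j$ with $O_j\in SO(2)$ and $P_j$ positive definite symmetric, set $A_j(t)=O_jP_j^{\,t}$ for $t\in[0,1]$, and define
\[
A_p(t):=\bigl[\alpha\bigl(A_1(t),\dots,A_N(t)\bigr)\bigr]^{-1}\,R_{\psi_0}\,\bigl[\beta\bigl(A_1(t),\dots,A_N(t)\bigr)\bigr]^{-1}\in\G ,
\]
the two bracketed products involving only the $A_j(t)$ with $j\neq p$. Then $t\mapsto(A_1(t),\dots,A_N(t))$ is a continuous path starting at $(A_1,\dots,A_N)$ (using $w(A_1,\dots,A_N)=R_{\psi_0}$ for $t=0$) along which $w$ evaluates identically to $R_{\psi_0}$, so the path lies in $\cE$; and at $t=1$ every $A_j(1)$, including $A_p(1)$, is a rotation and the value of $w$ is $R_{\psi_0}\neq\pm\id$, so the endpoint lies in $\cR$. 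This proves $(\star)$ in this case, and together with the reference set $\cR$ proves the proposition whenever $k\le 2$.

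There remains the case in which every letter appearing in the minimal elliptic word $w$ occurs at least twice, so $w$ uses at least two letters and $k\ge4$; I expect this to be the main obstacle. The plan here is an induction on $k$: choose adjacent positions $a\neq b$ in the cyclic word $w$ and deform the pair $(A_a,A_b)$ while keeping the matrix product $A_aA_b$ fixed (a path in the connected set $\{(X,X^{-1}A_aA_b):X\in\G\}$), aiming to reach a configuration where the relevant product has become a fixed rotation while one letter has been made redundant, hence a strictly shorter elliptic word to which the inductive hypothesis applies — the technical heart being to keep this deformation inside $\cE$. An alternative is to show that the word map $\Phi_w\colon\G^N\to\G$, $\Phi_w=w(\,\cdot\,)$, restricts over each of the two contractible components of the set of elliptic elements of $\G$ to a locally trivial fibration, so that the connected components of $\{\vec B:w(\vec B)\ \text{elliptic}\}$ correspond to those of a single fibre $\Phi_w^{-1}(R_{\psi_0})$, and then to check that every such component already meets $SO(2)^N$ through the coset $\{\vec\rho\in SO(2)^N:\textstyle\sum_j a_j\theta_j\equiv\psi_0\ (\mathrm{mod}\ 2\pi)\}$, where $a_j$ is the multiplicity of $A_j$ in $w$, by a polar-decomposition retraction as above. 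Once $(\star)$ is established in full, the connectedness of $\cR$ completes the proof.
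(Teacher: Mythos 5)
Your reduction to joining a point of $\cE$ to the connected set $\cR=SO(2)^N\setminus\{(\pm\id,\dots,\pm\id)\}$ is sound, and your Case~1 argument (some letter occurs exactly once in the minimal elliptic word) does work, modulo a reversed $t$-parametrization: with $A_j(t)=O_jP_j^t$ you have $A_j(1)=A_j$ and $A_j(0)=O_j$, so the path should run from $t=1$ down to $t=0$ and it is $w(A_\bullet(1))$, not $w(A_\bullet(0))$, that equals $R_{\psi_0}$ to start. These are cosmetic. The genuine gap is Case~2, which you yourself flag: when every letter of the minimal elliptic word appears at least twice, your polar-retraction trick breaks down because there is no distinguished letter to solve for, and neither of your two sketches (pairwise deformation induction, or a fibration property for the word map) is carried out. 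Without one of them the proof is incomplete.

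The paper avoids the case split entirely by a different global argument that sidesteps the issue. Pick any elliptic word $A_{j_1}\cdots A_{j_m}$ (not necessarily minimal), pick $\ell\in\{j_1,\dots,j_m\}$ and (the only case needing work) assume some $j_i\neq\ell$; choose a path $A_\ell(t)$ from $A_\ell$ to an elliptic matrix, and set $A_i(t,\theta)=R_\theta A_i$ for $i\neq\ell$. The trace $F(t,\theta)$ of the deformed word is then of the form $\tr\bigl(B_1R_\theta\cdots B_nR_\theta\bigr)$ with $n\ge1$, and the paper invokes Lemma~\ref{l.rotation} (the Avila--Bochi non-degeneracy statement) to get $\partial F/\partial\theta\neq0$ whenever $|F|<2$. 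This turns the constraint $F\equiv\text{const}$ into a well-posed ODE for $\theta(t)$, yielding a path in $\cE$ along which $A_\ell$ becomes elliptic. This is precisely the uniform, word-shape-independent input your Case~2 lacks; your ``fibration'' alternative is morally heading the same way (a submersion/transversality statement for the trace of the word), but Lemma~\ref{l.rotation} is the concrete tool that makes it work and you would need to prove something equivalent to it.
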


Let $R_\theta \in \G$ denote the rotation by angle $\theta$.
The proof of connectivity of $\cE$ needs the following:
\begin{lemma}\label{l.rotation}
Fix $B_1, \ldots, B_n \in \G$, and let
$$
F(\theta) = \tr \big( B_1 R_\theta B_2 R_\theta \cdots B_n R_\theta \big)
$$
Then for every parameter $\theta$ for which $|F(\theta)| < 2$ we have
$F'(\theta) \neq 0$.
\end{lemma}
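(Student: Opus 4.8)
The plan is to think of $R_\theta$ as a one-parameter subgroup and differentiate the trace. Write $R_\theta = \exp(\theta X)$ where $X = \begin{pmatrix} 0 & 1 \\ -1 & 0 \end{pmatrix}$ is the infinitesimal rotation generator, so $R_\theta' = X R_\theta = R_\theta X$. Setting $P = B_1 R_\theta B_2 R_\theta \cdots B_n R_\theta$, the product rule gives $F'(\theta) = \sum_{k=1}^n \tr\big( B_1 R_\theta \cdots B_k R_\theta X \cdots B_n R_\theta \big)$, i.e.\ each term replaces one factor $R_\theta$ by $R_\theta X$. By cyclicity of the trace, the $k$-th term can be rewritten as $\tr(X Q_k)$ where $Q_k$ is a cyclic rearrangement of $P$ (an element of $\G$ conjugate to $P$); so $F'(\theta) = \sum_k \tr(X Q_k)$. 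Now for any $2\times 2$ matrix $Q = \begin{pmatrix} a & b \\ c & d \end{pmatrix}$ one computes $\tr(XQ) = \tr\begin{pmatrix} c & d \\ -a & -b\end{pmatrix} = c - b$, which is (up to sign and factor) the coefficient measuring the ``antisymmetric part'' of $Q$; concretely $\tr(XQ) = -\langle Q v, v\rangle$-type quantity — more precisely $c-b$ equals $\tfrac12 \tr(XQ - Q^T X^T)$ massaged suitably. The key point is a rigidity statement: if $Q \in \G$ is such that $|\tr Q| < 2$ (elliptic), then $Q$ is conjugate to a genuine rotation $R_\phi$ with $\phi \neq 0, \pi$, and for a rotation $c - b = -2\sin\phi \neq 0$; I would like to leverage this for all the $Q_k$ simultaneously.

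The crucial observation is that all the matrices $Q_k$ are \emph{conjugate to $P$ itself}, but more importantly they share a common structure relative to $X$. Here is the cleaner route: fix $\theta$ with $|F(\theta)| < 2$, so $P$ is elliptic, hence has a unique fixed point $\xi \in \P^1$ at which its derivative (as a circle map) is... no — better, $P$ conjugate to $R_\phi$ means there is an inner product on $\R^2$ (equivalently a point in the hyperbolic plane, or a complex structure $J_P$) preserved by $P$. I would argue that $\tr(X Q_k)$ all have the \emph{same sign}. Indeed, each $Q_k = P_k$ is of the form $S_k \cdots$ — wait, let me instead use: $Q_k = (B_k R_\theta \cdots B_n R_\theta)(B_1 R_\theta \cdots B_{k-1} R_\theta) \cdot R_\theta^{-1}\cdot R_\theta$... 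The honest statement I would prove is that writing $P = g R_\phi g^{-1}$ is not quite uniform across $k$, so instead I would use the following: $\tr(XQ) < 0$ for \emph{every} elliptic $Q \in \G$ with a fixed orientation, OR $\tr(XQ)>0$; the sign is determined by whether $Q$ rotates $\P^1$ positively or negatively. Since all $Q_k$ are conjugate, they induce conjugate (hence same rotation-sense) circle homeomorphisms, so all $\tr(XQ_k)$ are nonzero with the same sign — therefore their sum $F'(\theta)$ is nonzero. This is essentially the content of Lemma~\ref{l.rotation}.

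The main obstacle is making the sign argument rigorous and uniform. The subtle points are: (a) verifying $\tr(XQ) \neq 0$ when $Q$ is elliptic — this should follow since $\tr(XQ) = 0$ forces $Q$ symmetric (as $XQ$ having zero $(c-b)$ part plus $Q \in \G$ constrains it), and a symmetric element of $\G$ is either $\pm\id$ or hyperbolic, contradicting ellipticity; (b) showing the sign of $\tr(XQ)$ depends only on the conjugacy class, which I would get from the fact that for $Q$ elliptic, $\tr(XQ)$ has the same sign as the rotation number / translation direction of $Q$ acting on $\P^1$, and conjugation by $\G$ (orientation-preserving) preserves this. Care must be taken that the $Q_k$ are genuinely pairwise conjugate in $\G$ (not just $\GL(2,\R)$): this holds because each $Q_k$ is obtained from $P$ by conjugating by a product of the $B_i$'s and $R_\theta$'s, all in $\G$. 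Once (a) and (b) are in place, $F'(\theta) = \sum_{k=1}^n \tr(X Q_k)$ is a sum of $n$ nonzero reals all of the same sign, hence $F'(\theta) \neq 0$, completing the proof.
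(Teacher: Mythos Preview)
Your approach is correct and genuinely different from the paper's. The paper complexifies: it writes $F(\theta)=Q(e^{i\theta})$ for a rational function $Q(z)=P(z)/z^n$ with $\deg P\le 2n$, invokes a topological lemma from \cite{AvilaBochi_Israel} to show $Q^{-1}([-2,2])\cap S^1$ has at least $2n$ components (under a genericity assumption later removed by perturbation), and then counts zeros of $Q'$ to force them all outside $Q^{-1}((-2,2))$. Your argument is more direct and self-contained: differentiate to get $F'(\theta)=\sum_{k=1}^n \tr(XQ_k)$ with each $Q_k$ a cyclic conjugate of $P$; observe that $\tr(XQ)=c-b$ vanishes iff $Q$ is symmetric, which for $Q\in\G$ forces real eigenvalues and $|\tr Q|\ge 2$; and conclude that the sign of $\tr(XQ)$ is constant on each elliptic conjugacy class because the class is connected and $\tr(XQ)$ never vanishes there. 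This avoids both the external lemma and the perturbation step. The paper's route does yield the extra structural information that $F^{-1}([-2,2])$ consists of exactly $2n$ intervals, which your argument does not immediately give, but for the lemma as stated your method is cleaner.

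One small point: your justification of (b) via ``rotation sense on $\P^1$'' is more complicated than necessary. The one-line argument is that each elliptic conjugacy class is the continuous image of the connected group $\G$, hence connected, and $\tr(XQ)$ is continuous and nonvanishing on it by (a). You should state it that way rather than appealing to rotation numbers.
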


\begin{proof}
This lemma is essentially proved in \cite{AvilaBochi_Israel}.
Complexification gives a rational function $Q(z)$ such that
$Q(e^{i\theta}) = F(\theta)$ for real $\theta$.
Moreover, $Q(z) = P(z) / z^n$ where $P(z)$ is a polynomial of degree at most $2n$.

First assume that the matrices $B_i$ satisfy:
\begin{equation}\label{e.israel}
B_1 R_\theta B_2 R_\theta \cdots B_n R_\theta \neq \pm \id \quad
\text{for all $\theta \in \R$.}
\end{equation}
A topological argument then gives that
the intersection of $Q^{-1}([-2,2])$ with the unit circle $S^1$ has at least $2n$
connected components -- this is Lemma~10 in \cite{AvilaBochi_Israel}.
On the other hand, $Q$ restricted to $S^1$ is real-valued
and thus each connected component of $S^1 \setminus Q^{-1}([-2,2])$
contains at least one zero of $Q'(z) = (zP'(z) - nP(z))/z^{n+1}$.
It follows that all the zeros of $Q'$ are simple and contained in
$S^1 \setminus Q^{-1}([-2,2])$.
Moreover, $Q^{-1}([-2,2])$ consists of exactly $2n$ intervals in $S^1$,
each with length at least $4\|Q'|S^1\|_\infty^{-1}$.

Now it follows by a perturbation argument that
even if condition~\eqref{e.israel} is not satisfied,
all the zeros of $Q'$ are simple
and contained in $S^1 \setminus Q^{-1}((-2,2))$.
This concludes the proof of the lemma.
\end{proof}

\begin{proof}[Proof of Proposition~\ref{p.E connected}]
First notice that the set of elliptic matrices is connected,
that is, the proposition is true for $N=1$.

Now let $N \ge 2$.
Take $(A_1, \ldots, A_N)$ in $\cE$, so
some product $A_{j_1} \cdots A_{j_m}$ is elliptic.
To prove the proposition, it suffices to find
a path $t \in [0,1] \mapsto (A_i(t))_i$ in $\cE$ starting from $(A_i)$
such that $A_\ell(1)$ is elliptic for \emph{some} $\ell$.
Let $\ell$ be any of $j_1, \ldots, j_m$.
We can assume some $j_i$ is different from $\ell$,
because otherwise there is nothing to prove.

Take a path $t \in [0,1] \mapsto A_\ell(t)$ starting at $A_\ell$
and ending at some elliptic matrix.
Let $A_i(t,\theta)$ be equal to
$R_\theta A_i$ if $i\neq \ell$,
and $A_\ell(t,\theta) = A_\ell(t)$.
Also, let $F(t, \theta)$ be the trace of
$A_{j_1}(t,\theta) \cdots A_{j_m}(t,\theta)$.
Lemma~\ref{l.rotation} (together with the assumption that some $j_i$ is different from $\ell$)
guarantees that $\frac{\partial F}{\partial \theta} \neq 0$ when $|F|<2$.
Therefore the differential equation
$\frac{d}{dt}F(t,\theta(t)) = 0$ with initial condition $\theta(0)=0$
has a solution $\theta(t)$ defined for $t\in [0,1]$.
Consider the path $t \mapsto (A_i(t))$ where $A_i(t) = A_i(t,\theta(t))$.
The path is contained in $\cE$ because the trace of $A_{j_1}(t) \cdots A_{j_m}(t)$
is constant; also, $A_\ell(1)$ is elliptic.
So we are done.
\end{proof}


\bibliography{../bib/tudo_mathscinet}
\bibliographystyle{abbrv}  

\vfill

\noindent  Artur Avila (artur@math.sunysb.edu)

\noindent CNRS UMR 7599, Laboratoire de Probabilit\'es et Mod\`eles al\'eatoires -- Universit\'e Pierre et Marie Curie--Bo\^\i te courrier 188 --
75252 Paris Cedex 05, France.

\noindent \textit{Current address:} IMPA. Estrada Dona Castorina 110, Rio de Janeiro, 22460-320, Brazil.

\medskip

\noindent Jairo Bochi (jairo@mat.puc-rio.br)

\noindent PUC-Rio, Departamento de Matem\'atica. Rua Marqu\^es de S\~ao Vicente~225.  Rio de Janeiro, 22453-900, Brazil.

\medskip

\noindent Jean-Christophe Yoccoz (jean-c.yoccoz@college-de-france.fr)

\noindent Coll\`ege de France. 3, rue d'Ulm. 75005 Paris, France.

\end{document}